\newtheorem{theorem}{Theorem}
\newtheorem{corollary}[theorem]{Corollary}
\newtheorem{lemma}[theorem]{Lemma}
{\theorembodyfont{\rmfamily}}
\newtheorem{proposition}[theorem]{Proposition}
{\theorembodyfont{\rmfamily}}
\newenvironment{proof}{\noindent\textit{Proof.\ }}{\hspace*{\fill}$\Box$\medskip}
\newenvironment{proofof}[1]{\noindent\textit{Proof of #1.\ }}{\hspace*{\fill}$\Box$\medskip}
\numberwithin{equation}{section}
\numberwithin{theorem}{section}
\numberwithin{figure}{section}
\renewcommand{\geq}{\geqslant}
\renewcommand{\leq}{\leqslant}
\newcommand{\tsum}{\textstyle\sum\limits}
\newcommand{\tprod}{\textstyle\prod\limits}
\DeclareMathOperator*{\esssup}{ess\,sup}
\newcommand{\vect}[1]{\underline{\boldsymbol{#1}}}
\newcommand{\vectgamma}{\text{\setul{1.3pt}{.4pt}\ul{$\boldsymbol{\gamma}$}}}
\newcommand{\indexvectgamma}{\text{\setul{0.9pt}{.4pt}\ul{$\boldsymbol{\gamma}$}}}
\DeclareMathOperator{\lcm}{lcm}
\DeclareMathOperator{\vspan}{span}
\DeclareMathOperator{\tmodop}{mod}
\newcommand{\tmod}{\,\tmodop}
\DeclareMathOperator{\LCop}{LC}
\newcommand{\LC}{\vect{\LCop}}
\DeclareMathOperator{\Iop}{I}
\newcommand{\I}{\vect{\Iop}}
\DeclareMathOperator{\id}{id}
\DeclareMathOperator{\FL}{L\!}
\newcommand{\llceil}{\lceil\;\;\;\!\!\!\!\!\!\!\lceil}
\newcommand{\rrceil}{\rceil\;\;\;\!\!\!\!\!\!\!\rceil}
\newcommand{\llfloor}{\lfloor\;\;\;\!\!\!\!\!\!\!\lfloor}
\newcommand{\rrfloor}{\rfloor\;\;\;\!\!\!\!\!\!\!\rfloor}
\begin{document}

\definecolor{light-gray}{gray}{0.2}
\color{light-gray}

\setlength\abovedisplayskip      { 7pt }
\setlength\abovedisplayshortskip { 3pt }
\setlength\belowdisplayskip      { 7pt }
\setlength\belowdisplayshortskip { 3pt }

\title{Lebesgue constants for polyhedral sets and
\text{polynomial interpolation on Lissajous-Chebyshev nodes}}

\author{ \begin{tabular}{ll} Peter Dencker$^{\mathrm{1)}}$\setcounter{footnote}{-1}\footnote{$^{\mathrm{1)}}$Institut f\"ur Mathematik, Universit\"at zu L\"ubeck.},\quad & Wolfgang Erb$^{\mathrm{1)},\mathrm{a)}}$,\\
 Yurii Kolomoitsev$^{\mathrm{1)},\mathrm{2)},\mathrm{b)},\mathrm{*}}$\setcounter{footnote}{-1}\footnote{$^{\mathrm{2)}}$Institute of Applied Mathematics and Mechanics, Academy of Sciences of Ukraine.}\setcounter{footnote}{-1}\footnote{$^{\mathrm{a)}}$Supported by the  German Research Foundation (DFG, grant number ER 777/1-1).}\setcounter{footnote}{-1}\footnote{$^{\mathrm{b)}}$Supported by H2020-MSCA-RISE-2014 Project number 645672.}\setcounter{footnote}{-1}\footnote{$\ ^{\mathrm{*}}$Corresponding author.}, \quad &  Tetiana Lomako$^{\mathrm{1)},\mathrm{2)},\mathrm{b)}}$
 \\[5mm]
\texttt{\normalsize dencker@math.uni-luebeck.de,} &  \texttt{\normalsize erb@math.uni-luebeck.de,}\\[-0.2em]
\texttt{\normalsize kolomus1@mail.ru,} &
\texttt{\normalsize tlomako@yandex.ru}
\end{tabular}}

\date{\ \\[2mm] \normalsize 23. August 2017}

\maketitle
\tableofcontents

\bigskip

\begin{abstract}
To analyze the absolute condition number of multivariate polynomial interpolation on Lissajous-Chebyshev node points, we derive upper and lower bounds for
the respective Lebesgue constant. The proof is based on a relation between the Lebesgue constant for the polynomial interpolation problem and the
Lebesgue constant linked to the polyhedral partial sums of Fourier series. The magnitude of the obtained bounds is determined by a product of logarithms of the side lengths of
the considered polyhedral sets and shows the same behavior as the magnitude of the Lebesgue constant for polynomial interpolation on the tensor product Chebyshev grid.

\vspace{2em}

\noindent\hspace{-2em}\text{\textbf{Keywords:} interpolation, Lissajous-Chebyshev nodes, Lebesgue constants, polyhedra}
\end{abstract}

\newpage

\section{Introduction}

In \cite{DenckerErb2015a,ErbKaethnerAhlborgBuzug2015,ErbKaethnerDenckerAhlborg2015}, a multivariate polynomial interpolation scheme was developed to interpolate function values on
equidistant node points along Lissajous trajectories. The consideration of such node points is motivated by applications
in a novel medical imaging modality called Magnetic Particle Imaging (MPI) (see \cite{BuzugKnopp2012,KaethnerErbAhlborg2016}). In this imaging technology,
the magnetic response of superparamagnetic nanoparticles is measured along particular sampling paths generated by applied magnetic fields. For a typical kind of MPI scanner,
these sampling paths are Lissajous curves. 

In two dimensions, the polynomial interpolation scheme given in \cite{ErbKaethnerAhlborgBuzug2015} was used to recover
the distribution of the magnetic particles from a reduced reconstruction on equidistant nodes along the Lissajous trajectory \cite{KaethnerErbAhlborg2016}. 
A particular feature of this bivariate interpolation scheme is the fact that the self-intersection and the boundary points of Lissajous curves are used as interpolation nodes and that 
the spectral index set of the underlying polynomial space has a triangular structure. 
In \cite{DenckerErb2015a}, this bivariate construction was extended to higher dimensional Lissajous curves by using polynomial spaces with a particular polygonal spectral structure that
will be studied in more detail in this work.
In the literature, there exist also other polynomial approximation schemes that use Lissajous trajectories as generating curves. 
Two such constructions for three and more dimensions for polynomial spaces of a bounded total or maximal degree can be found in \cite{BosDeMarchiVianello2017,BosDeMarchiVianello2017b}.
Note that in the choice of the Lissajous curves and the polynomial spaces these constructions differ from the approach considered in this work.

Using polynomials for interpolation, special attention has to be given to the numerical condition of the interpolation scheme. 
In order to exclude bad conditioning, the structure of the interpolation nodes as well as the spectral structure of
the polynomial interpolants have to be studied. The goal of this article is to provide such an analysis for the absolute condition number of the
polynomial interpolation schemes considered in \cite{DenckerErb2015a,ErbKaethnerAhlborgBuzug2015,ErbKaethnerDenckerAhlborg2015}. The interpolation nodes under consideration have been 
introduced in \cite{DenckerErb2015a} as
Lissajous-Chebyshev node points $\LC^{(\epsilon\vect{n})}_{\vect{\kappa}}$ (see \eqref{eq:201605170810}). In this notation, the parameters
$\vect{\kappa}\in  \mathbb{Z}^{\mathsf{d}}$ and $\epsilon\in\{1,2\}$ determine the underlying types of Lissajous curves, and the vector
\begin{equation}\label{1608042338}
\text{$\vect{n}=(n_1,\ldots,n_{\mathsf{d}})\in \mathbb{N}^{\mathsf{d}}$ with \emph{pairwise relatively prime} entries $n_1,\ldots,n_{\mathsf{d}}\in\mathbb{N}$}
\end{equation}
describes the frequencies of the Lissajous curve with respect to the coordinate axis.
The interpolation problem itself is given as follows:

\vspace{2mm}

\emph{For the node points $\LC^{(\epsilon\vect{n})}_{\vect{\kappa}}$  and a function $f:[-1,1]^{\mathsf{d}} \to \mathbb{R}$ with values
$f(\vect{z})$ at the node points $\vect{z} = (z_1, \ldots, z_{\mathsf{d}}) \in $ $\LC^{(\epsilon\vect{n})}_{\vect{\kappa}}$, find a $\mathsf{d}$-variate interpolation polynomial $P^{(\epsilon\vect{n})}_{\!\vect{\kappa}}f$ such that
\begin{equation} \label{1502182305} P^{(\epsilon\vect{n})}_{\!\vect{\kappa}}f (\vect{z}) = f(\vect{z}) \quad \text{for all}\quad \vect{z}\in \LC^{(\epsilon\vect{n})}_{\vect{\kappa}}.\end{equation}}

\vspace{-4mm}

It was shown in \cite{DenckerErb2015a} that the interpolation problem \eqref{1502182305} has a unique solution in the polynomial space $\Pi^{(\epsilon\vect{n})}_{\vect{\kappa}}$
that is linearly spanned by all $\mathsf{d}$-variate Chebyshev polynomials $T_{\indexvectgamma}$, where  $\vectgamma$ is an element of the spectral index set
\[\vect{\Gamma}^{(\epsilon\vect{n})}_{\vect{\kappa}} = \left\{\,\vectgamma\in\mathbb{N}_0^{\mathsf{d}}\ \left|\begin {array}{rl}
\gamma_{\mathsf{i}}/n_{\mathsf{i}}< \epsilon &  \text{$\forall\,\mathsf{i}\in\{1,\ldots,\mathsf{d}\}$,} \\
\gamma_{\mathsf{i}}/n_{\mathsf{i}}+\gamma_{\mathsf{j}}/n_{\mathsf{j}}
\leq \epsilon &
\text{$\forall\,\mathsf{i},\mathsf{j}$ with $\mathsf{i} \neq \mathsf{j}$}, \\
\gamma_{\mathsf{i}}/n_{\mathsf{i}}+\gamma_{\mathsf{j}}/n_{\mathsf{j}}
< \epsilon & \text{$\forall\,\mathsf{i},\mathsf{j}$ with  $\kappa_{\mathsf{i}} \not \equiv \kappa_{\mathsf{j}} \tmod 2 $}
\end{array}\right.
\right\}\cup\{(0,\ldots,0,\epsilon n_{\mathsf{d}})\}.\]
The nodes $\LC^{(\epsilon\vect{n})}_{\vect{\kappa}}$, the Chebyshev polynomials $T_{\indexvectgamma}$, and the interpolation problem will be recapitulated in more detail
in Section \ref{1502171626} of this article.

\medskip

The absolute condition number of the interpolation problem \eqref{1502182305} with respect to the uniform norm
$\|f\|_{\infty} = \displaystyle\esssup_{\vect{x} \in [-1,1]^{\mathsf{d}}}|f(\vect{x})|$ (see \cite[p. 26]{DeuflhardHohmann2003}) is given by the  \emph{Lebesgue constant of the interpolation problem}, i.e.
\begin{equation} \label{eq:201605281730} \Lambda^{(\epsilon\vect{n})}_{\vect{\kappa}} = \sup\limits_{f\in C([-1,1]^{\mathsf{d}}):\,\|f\|_{\infty}\leq 1}\|P^{(\epsilon\vect{n})}_{\!\vect{\kappa}}f\|_{\infty}.
\end{equation}
Besides its relation to the numerical stability of the interpolation problem \eqref{1502182305}, the Lebesgue constant \eqref{eq:201605281730} is also an essential tool for the investigation of the approximation error $\|f - P^{(\epsilon\vect{n})}_{\!\vect{\kappa}}f\|_\infty$.

 A main goal of this article is to provide for all $\vect{n}$ satisfying \eqref{1608042338}  asymptotic upper and  lower bounds for the  Lebesgue constants \eqref{eq:201605281730} in the sense of \eqref{1606181233}. The  corresponding result in
Theorem \ref{1503120230} states

\vspace{-1.5em}

\begin{equation} \label{1606180830}
\Lambda^{(\epsilon\vect{n})}_{\vect{\kappa}}  \asymp \prod_{\mathsf{i}=1}^{\mathsf{d}}\ln (n_{\mathsf{i}}+1).
\end{equation}
In particular, the upper and lower estimates  have asymptotically the same magnitude as the Lebesgue constants for polynomial
interpolation on the tensor product Chebyshev grid (see \cite{Brutman1997}). Therefore, the interpolation problem \eqref{1502182305} in  $\Pi^{(\epsilon\vect{n})}_{\vect{\kappa}}$ is asymptotically  as well-conditioned as the mentioned tensor product case.
The upper estimate in \eqref{1606180830} of the Lebesgue constant $\Lambda^{(\epsilon\vect{n})}_{\vect{\kappa}}$ is further  used in Corollary \ref{cor-dinilipschitz} to formulate a multivariate error estimate and an example  of a Dini-Lipschitz-type  condition for the uniform convergence of the  interpolation polynomials $P^{(\epsilon\vect{n})}_{\!\vect{\kappa}}f$.

In the bivariate setting, the obtained results are generalizations of the corresponding results for the Padua points in \cite{BosDeMarchiVianelloXu2006,CaliariDeMarchiVianello2008,
VecchiaMastroianniVertesi2009} and improvements of estimates given in \cite{Erb2015}.

\medskip

We sketch  our program for the proof of \eqref{1606180830}.
For a finite set $\vect{\Gamma}\subset\mathbb{Z}^{\mathsf{d}}$, the \emph{Lebesgue constant} $\FL\left(\vect{\Gamma}\right)$ related to partial Fourier series is defined as
\[\FL\left(\vect{\Gamma}\right)=\dfrac1{(2\pi)^{\mathsf{d}}}\int_{[-\pi,\pi)^{\mathsf{d}}}\left|\tsum_{\indexvectgamma\in \vect{\Gamma}}
\mathrm{e}^{\mathbf{i}(\indexvectgamma,\vect{t})}\right|\,\mathsf{d}\vect{t},\]
where \[(\vectgamma,\vect{t}) = \displaystyle \sum_{\mathsf{i} = 1}^{\mathsf{d}} \gamma_{\mathsf{i}} t_{\mathsf{i}}.\]
To obtain the upper and lower bounds for \eqref{eq:201605281730}, our strategy in the proof of Theorem~\ref{1503120230} consists in establishing the relations
\begin{equation} \label{1606180824}
\Lambda^{(\epsilon\vect{n})}_{\vect{\kappa}} \lesssim \FL\left(\vect{\Gamma}^{(\epsilon\vect{n}),\ast}_{\vect{\kappa}}\right)+\prod_{\mathsf{i}=1}^{\mathsf{d}}\ln (n_{\mathsf{i}}+1),\qquad \FL\left(\vect{\Gamma}^{(\epsilon\vect{n}),\ast}_{\vect{\kappa}}\right)\lesssim \Lambda^{(\epsilon\vect{n})}_{\vect{\kappa}}
\end{equation}  between $\Lambda^{(\epsilon\vect{n})}_{\vect{\kappa}}$ and
the Lebesgue constants $\FL\left(\vect{\Gamma}^{(\epsilon\vect{n}),\ast}_{\vect{\kappa}}\right)$ of the symmetrized sets $\vect{\Gamma}^{(\epsilon\vect{n}),\ast}_{\vect{\kappa}}$.
  Here and in the following, for every $\vect{\Gamma} \subset {\mathbb{Z}^{\mathsf{d}}}$ its symmetrization  $\vect{\Gamma}^{\, \ast}$ is defined as
\begin{equation}\label{201606171550}
\vect{\Gamma}^{\, \ast}=\left\{\,\vectgamma\in \mathbb{Z}^{\mathsf{d}}\,\left|\ (|\gamma_1|,\ldots,|\gamma_{\mathsf{d}}|)\in \vect{\Gamma}\right.\right\}.
\end{equation}
Using the  methods developed in  Section  \ref{1606171653},  Corollary \ref{1502171206} states that
\begin{equation} \label{201609051530}
 \FL\left(\vect{\Gamma}^{(\epsilon\vect{n}),\ast}_{\vect{\kappa}}\right) \asymp \prod_{\mathsf{i}=1}^{\mathsf{d}}\ln (n_{\mathsf{i}}+1).
\end{equation}
Then, combining \eqref{1606180824} and  \eqref{201609051530}  yields \eqref{1606180830}.

\begin{figure}[htb]
	\centering
\includegraphics[scale=0.25]{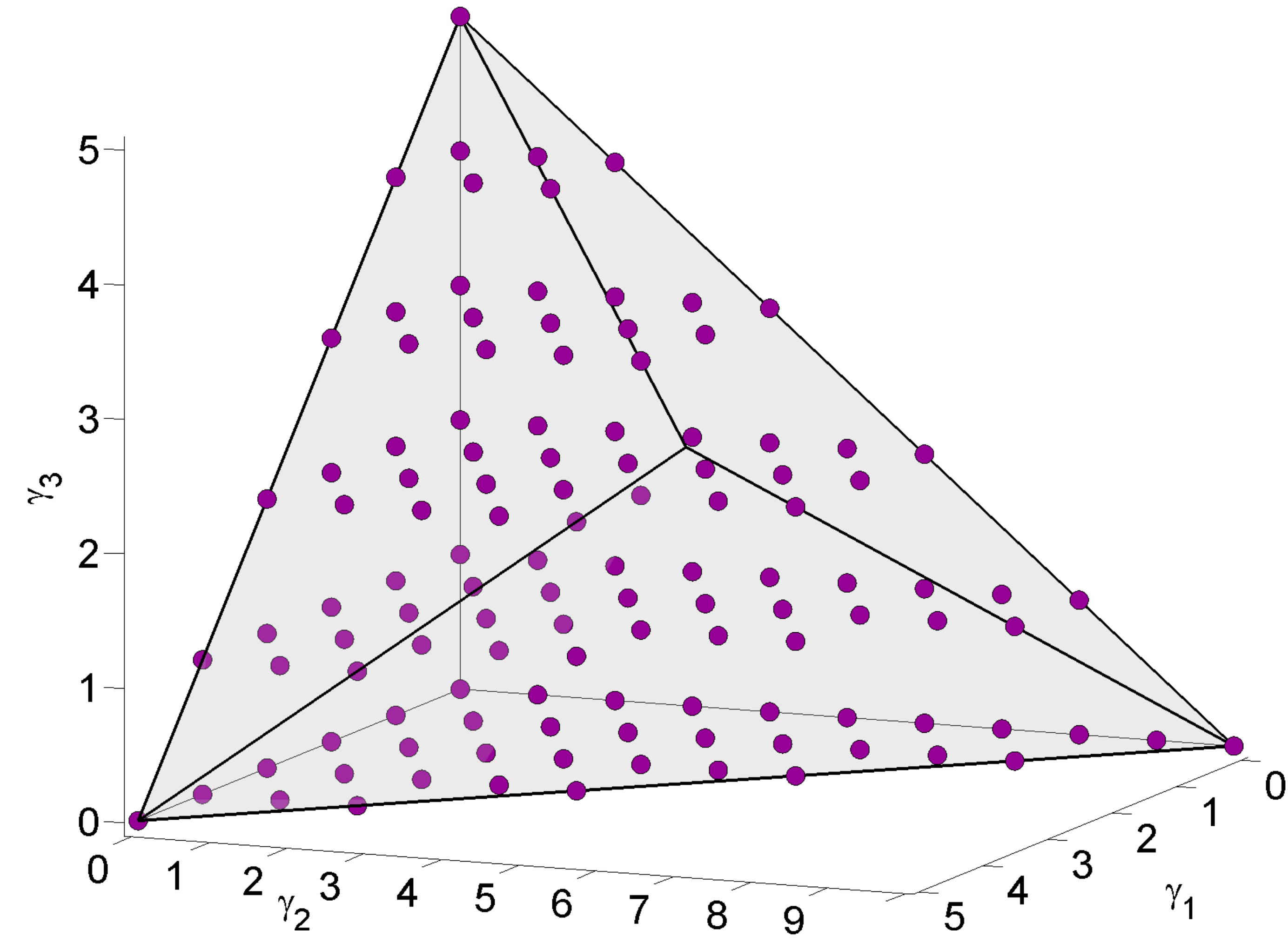}
	\hspace{0.5cm}	
\includegraphics[scale=0.25]{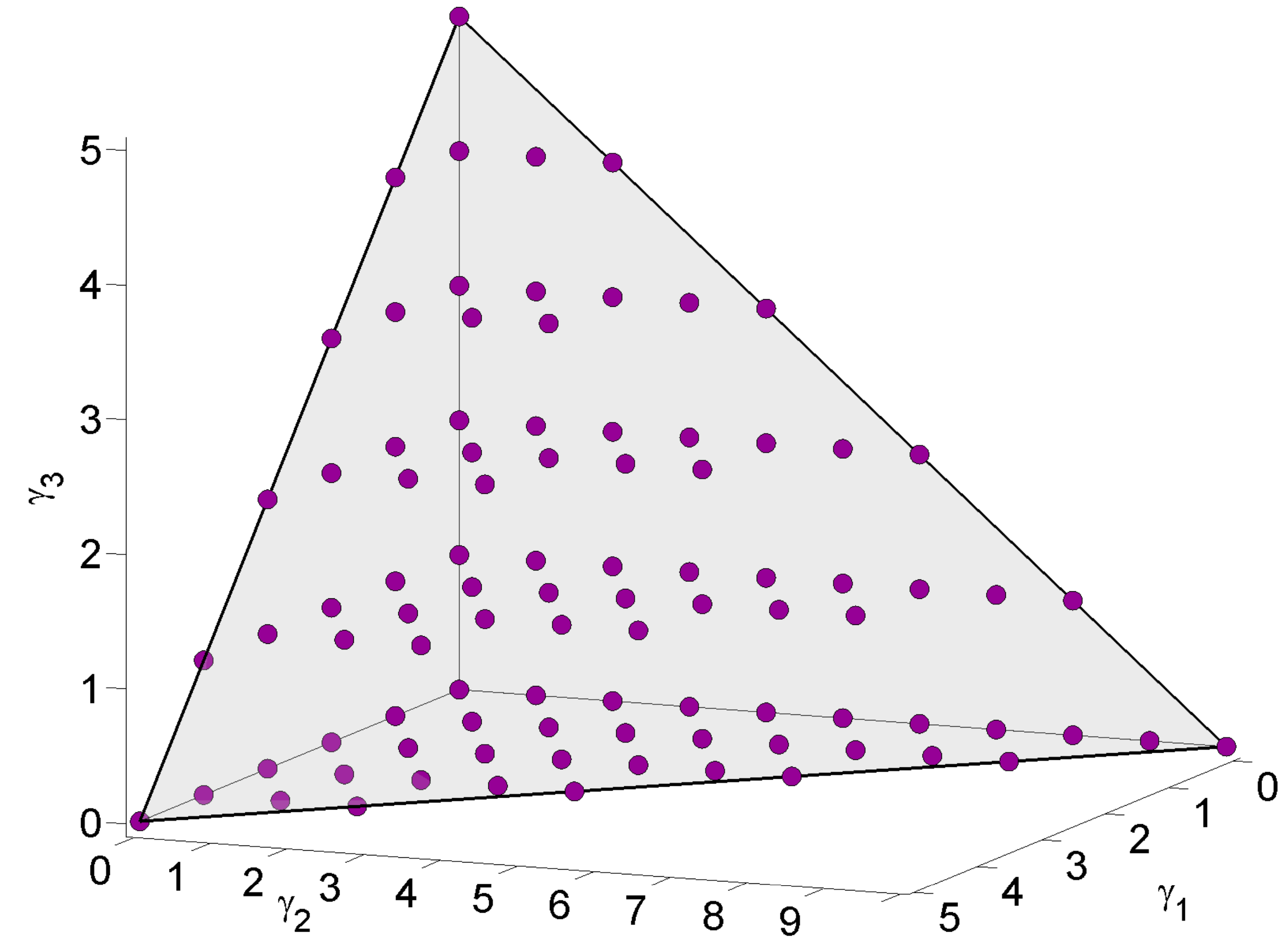}
  	\caption{Illustration of the sets $\overline{\vect{\Gamma}}^{(\vect{m})}$ (left) and $\vect{\Sigma}^{(\vect{m})}_1$ (right) for $\vect{m} = (5,10,5)$.
  	}
	\label{fig:1}
\end{figure}

\medskip

The technically more sophisticated part of the sketched program is the proof of \eqref{201609051530}.  The used methods are developed in Section \ref{1606171653}. Therein we consider the  sets
\[\overline{\vect{\Gamma}}^{(\vect{m})} =\left\{\,\vectgamma\in\mathbb{N}_0^{\mathsf{d}}\ \left|\begin {array}{rl}
\gamma_{\mathsf{i}}/m_{\mathsf{i}}\leq  1& \forall\,\mathsf{i}\in\{1,\ldots,\mathsf{d}\},\\
\gamma_{\mathsf{i}}/m_{\mathsf{i}}+\gamma_{\mathsf{j}}/m_{\mathsf{j}}\leq  1 & \forall \ \mathsf{i},\mathsf{j}\ \text{with}\ \mathsf{i}\neq\mathsf{j}
\end{array}\right.\right\}\]
and its symmetrizations $\overline{\vect{\Gamma}}^{(\vect{m}),\ast}$ according to \eqref{201606171550}.
  The used methods for these sets are templates for the corresponding methods for the sets $\vect{\Gamma}^{(\epsilon \vect{n})}_{\vect{\kappa}}$ and $\vect{\Gamma}^{(\epsilon \vect{n}),\ast}_{\kappa}$, respectively. It turns out that similar methods can be used to estimate for fixed rational $r>0$ the Lebesgue constants of families of sets $\vect{\Sigma}^{(\vect{m})}_{\,r}$ and its symmetrizations  $\vect{\Sigma}^{(\vect{m}),\ast}_{\,r}$, where
\[\vect{\Sigma}^{(\vect{m})}_{\,r} =\left\{\,\vectgamma\in \mathbb{N}_0^{\mathsf{d}}\,\left|\ \tsum_{\mathsf{i}=1}^{\mathsf{d}}\dfrac{\gamma_{\mathsf{i}}}{m_{\mathsf{i}}}\leq r\right.\right\}.\]
Note that $\overline{\vect{\Gamma}}^{(m_1,m_2),\ast}= \vect{\Sigma}^{(m_1,m_2),\ast}_{\,1}$ in dimension $\mathsf{d} = 2$.
Sets of this kind are illustrated in Figure \ref{fig:1} and are of interest since they might  be used as elementary building blocks for more complex polyhedra. Further, our results could be useful for the investigation of generalizations of the triangular partial Fourier series in \cite{Weisz2012}.

\medskip

Estimates of the Lebesgue constant $\FL\left(\vect{\Gamma}\right)$ for various types of sets $\vect{\Gamma}$ are extensively investigated in the literature. An overview about the state of the art  can be found in the survey article \cite{Liflyand2006}.
Since we are dealing with sets  having  a polyhedral structure, estimates of the Lebesgue constants for those sets  are particularly interesting for us. If $\vect{E}$ is a fixed $\mathsf{d}$-dimensional convex polyhedron  containing the origin, then it is well-known (see \cite{Baiborodov1982,Belinskii1977,Podkorytov1981,Travaglini1993,Yudin1979,YudinYudin1985}) that  for all real $m\geq 1$ we have
\[\FL\left(m \vect{E} \cap \mathbb{Z}^{\mathsf{d}}\right) \asymp \left(\ln(m+1)\right)^{\mathsf{d}}.\]

In this work, we want to refine this asymptotic result for special $\mathsf{d}$-dimensional polyhedra in which  integer-valued directional dilation parameters  $m_1,\ldots, m_{\mathsf{d}} \in \mathbb{N}$ are given.
An example for different directional parameters is the case of   rectangular sets $\vect{R}^{(\vect{m})}=[0,m_1] \times \cdots \times [0,m_{\mathsf{d}}]$.
In this case, for all $m_1,\ldots,m_{\mathsf{d}}\geq 1$, we have
\begin{equation}\label{1606171646}
\FL\left(\vect{R}^{(\vect{m})}\cap \mathbb{Z}^{\mathsf{d}}\right) \asymp \FL\left(\vect{R}^{(\vect{m}),\ast}\cap \mathbb{Z}^{\mathsf{d}}\right)
\asymp \prod_{\mathsf{i}=1}^{\mathsf{d}}\ln (m_{\mathsf{i}}+1).
\end{equation}
This immediately follows from the well-known one-dimensional case (see
\cite{Alimov1992}).

\medskip

The starting points for our investigations of $\FL\left(\vect{\Gamma}\right)$ are two estimates
of the Lebesgue constant given in \cite{YudinYudin1985} and \cite{Yudin1979}. In \cite[Theorem 2]{YudinYudin1985} it is stated that
for all polyhedra $\vect{E} \in \mathbb{R}^2$ with $n$ edges, we have the uniform upper bound
\begin{equation}\label{1502221953}\displaystyle \FL\left(\vect{E} \cap \mathbb{Z}^{2}\right) \lesssim n \left(\ln\mathrm{diam}(\vect{E})\right)^2.
\end{equation}
Further, it is shown in \cite{Yudin1979} that for all convex sets $\vect{E} \in \mathbb{R}^{\mathsf{d}}$ containing a ball with radius $r\geq 1$ we have the lower bound
\begin{equation}\label{1502221954}\displaystyle \FL\left(\vect{E} \cap \mathbb{Z}^{\mathsf{d}}\right) \gtrsim \left(\ln(r+1)\right)^{\mathsf{d}}.
 \end{equation}
Combining \eqref{1502221953} and \eqref{1502221954} yields that for all real $m_1,m_2\geq 1$  we have  the uniform upper and lower bound
\begin{equation}\label{1502221955} \left(\ln \left(\min(m_1,m_2)+1 \right)\right)^2 \lesssim \FL\left(\overline{\vect{\Gamma}}^{(m_1,m_2),\ast}\right) \lesssim \left(\ln \left(\max(m_1,m_2)+1 \right)\right)^2.
\end{equation}
A special case of our result (see Theorem \ref{1602041200}) is that for all positive integers $m_1,m_2$ we can improve \eqref{1502221955} to
$\FL\left(\overline{\vect{\Gamma}}^{(m_1,m_2),\ast}\right) \asymp \ln (m_1+1) \ln (m_2+1)$.
Under the  strongly restrictive condition that $m_2$ is a multiple of $m_1$ this result appears already in \cite{Kuznetsova1977}.

\medskip

In general, Theorem \ref{1602041200} states  that for  all $\vect{m}\in\mathbb{N}^{\mathsf{d}}$ we have
\[\FL\left(\overline{\vect{\Gamma}}^{(\vect{m})}\right)\asymp
\FL\left(\overline{\vect{\Gamma}}^{(\vect{m}),\ast}\right)\asymp\tprod_{\mathsf{i}=1}^{\mathsf{d}}\ln (m_{\mathsf{i}}+1).\]
Thus,
the magnitude of the uniform upper and lower bounds is the same as in the rectangular case \eqref{1606171646}.
Similarly, Theorem  \ref{1511231653}  states  that for a fixed $r\in\mathbb{Q}$, $r>0$, and all  $\vect{m}\in\mathbb{N}^{\mathsf{d}}$ we have
\[\FL\left(\vect{\Sigma}^{(\vect{m})}_r\right)\asymp
\FL\left(\vect{\Sigma}^{(\vect{m}),\ast}_r\right)\asymp\tprod_{\mathsf{i}=1}^{\mathsf{d}}\ln (m_{\mathsf{i}}+1).\]

\medskip

In Section  \ref{1606171653}, we consider also another type of  polyhedral sets given by
\begin{equation} \label{1501032000c}
\vect{\Xi}_{(r,s)}^{(\vect{m})}=\left\{\,\vectgamma\in \mathbb{Z}^{\mathsf{d}}\,\left|\ r\leq \dfrac{\gamma_{\mathsf{d}}}{m_{\mathsf{d}}}\leq \cdots
\leq  \dfrac{\gamma_2}{m_2}\leq \dfrac{\gamma_1}{m_1}\leq s\right.\right\}.
\end{equation}
For fixed   $r,s\in\mathbb{R}$ and all positive integers $m_1,\ldots,m_{\mathsf{d}}\in \mathbb{N}$, a uniform upper bound
$\FL\left(\vect{\Xi}_{(r,s)}^{(\vect{m})}\right)\lesssim\displaystyle \prod_{\mathsf{i}=1}^{\mathsf{d}}\ln (m_{\mathsf{i}}+1)$
is established for the  corresponding  Lebesgue constant in Theorem \ref{A1511231653}.
The proof of the upper bound of the Lebesgue constants for the  polyhedral sets $\overline{\vect{\Gamma}}^{(\vect{m})}$ and $\overline{\vect{\Gamma}}^{(\vect{m}),\ast}$ uses slightly generalized versions (see \eqref{1601032151})
of the polyhedral sets \eqref{1501032000c} as building blocks. The techniques presented in the proofs of Section  \ref{1606171653} are interesting in their own regard and might be as well useful for the consideration of other types of polyhedral sets.

\section*{General notation}
For $x\in\mathbb{R}$, we use  $\lfloor x\rfloor =\max\{n\in\mathbb{Z}\,|\,n\leq x\}$, $\lceil x\rceil =\min\{n\in\mathbb{Z}\,|\,n\geq x\}$ and denote
\[\llfloor x\rrfloor =x-\lfloor x\rfloor,\quad \llceil x\rrceil=\lceil x\rceil-x.\]

\medskip

Let $f$ and $g$ be real functions on a set $X$. The notation \[ \text{$f(x)\lesssim g(x)$ for  all $x\in X$}\]
has by definition the following meaning: \[\text{There exists a constant $C > 0$ such that $f(x)\leq C g(x)$
for all $x\in X$.}\] Furthermore, we write
\begin{equation}\label{1606181233}
\text{$f(x)\asymp g(x)$ for all $x\in X$},
\end{equation}

\vspace{2mm}

\noindent if for all $x\in X$ we have both $f(x)\lesssim g(x)$ and $g(x)\lesssim f(x)$.

\medskip

 We write $\vect{x}=(x_1,\ldots,x_{\mathsf{d}})$ for elements of the Euclidean space $\mathbb{R}^{\mathsf{d}}$ with fixed $\mathsf{d}\in\mathbb{N}$.\\
\noindent For $a,b\in\mathbb{R}$, $a<b$, $1\leq p<\infty$ and  Lebesgue-measurable $f:\,[a,b)^{\mathsf{d}}\to \mathbb{R}$, we set
\[\|f\|_{L^p([a,b)^{\mathsf{d}})} = \left( \dfrac1{(b-a)^{\mathsf{d}}}\int_{[a,b)^{\mathsf{d}}}\left|f(\vect{t})\right|^p\,\mathsf{d}\vect{t}
\right)^{1/p},\]
and for Lebesgue-measurable functions  $f: [-1,1]^{\mathsf{d}} \to \mathbb{R}$, and  $1 \leq p < \infty$, we define
\[  \|f\|_{w_{\mathsf{d}},\;\;\!\!\!p} = \left( \dfrac1{\pi^{\mathsf{d}}}\int_{[-1,1]^{\mathsf{d}}}\left|f(\vect{x})\right|^p\,w_{\mathsf{d}}(\vect{x}) \mathsf{d}\vect{x}
\right)^{1/p}, \quad w_{\mathsf{d}}(\vect{x}) = \tprod_{\mathsf{i} = 1}^{\mathsf{d}}  \displaystyle\dfrac{1}{\sqrt{1-x_{\mathsf{i}}^2}}.\]

\section{Lebesgue constants for polyhedral partial sums of Fourier series}\label{1606171653}

We summarize the main results of this section.

\begin{theorem}\label{1602041200}
For all $\vect{m}\in\mathbb{N}^{\mathsf{d}}$, we have
\[\FL\left(\overline{\vect{\Gamma}}^{(\vect{m})}\right)\asymp  \FL\left(\overline{\vect{\Gamma}}^{(\vect{m}),\ast}\right)\asymp \tprod_{\mathsf{i}=1}^{\mathsf{d}}\ln (m_{\mathsf{i}}+1).\]
\end{theorem}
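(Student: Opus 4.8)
The plan is to prove the two $\asymp$ relations by establishing the chain of inequalities
\[
\tprod_{\mathsf{i}=1}^{\mathsf{d}}\ln (m_{\mathsf{i}}+1)
\lesssim \FL\left(\overline{\vect{\Gamma}}^{(\vect{m}),\ast}\right)
\lesssim \FL\left(\overline{\vect{\Gamma}}^{(\vect{m})}\right)
\lesssim \tprod_{\mathsf{i}=1}^{\mathsf{d}}\ln (m_{\mathsf{i}}+1),
\]
together with $\FL\bigl(\overline{\vect{\Gamma}}^{(\vect{m})}\bigr)\lesssim\FL\bigl(\overline{\vect{\Gamma}}^{(\vect{m}),\ast}\bigr)$, which closes the loop. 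The lower bound is the easy end: since $\overline{\vect{\Gamma}}^{(\vect{m})}$ contains the rectangular box $[0,m_1/\mathsf{d}]\times\cdots\times[0,m_{\mathsf{d}}/\mathsf{d}]\cap\mathbb Z^{\mathsf{d}}$ (the simplex constraints $\gamma_{\mathsf i}/m_{\mathsf i}+\gamma_{\mathsf j}/m_{\mathsf j}\le 1$ are automatically satisfied there), its symmetrization contains the symmetrized box, and one invokes \eqref{1606171646}. Alternatively one uses the lower bound \eqref{1502221954} of Yudin after checking that $\overline{\vect{\Gamma}}^{(\vect m),\ast}$ contains a ball of radius $\asymp\min_{\mathsf i}m_{\mathsf i}$ — but the box argument gives the sharper $\prod\ln(m_{\mathsf i}+1)$ directly and is what we want. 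For the monotonicity step $\FL(\vect\Gamma^\ast)\lesssim\FL(\vect\Gamma)$ (with $\vect\Gamma\subset\mathbb N_0^{\mathsf d}$), one writes the Dirichlet-type kernel for the symmetrized set as a product/combination of one-dimensional cosine sums and relates the integral over the symmetrized cube to the integral over the positive orthant; this is a general lemma that I expect is proved separately in Section~\ref{1606171653} and can be quoted.

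The substantial content is the upper bound $\FL\bigl(\overline{\vect{\Gamma}}^{(\vect{m})}\bigr)\lesssim \prod_{\mathsf i}\ln(m_{\mathsf i}+1)$. Here the strategy is a decomposition of the polytope $\overline{\vect{\Gamma}}^{(\vect m)}$ into $O(1)$ many (in $\mathsf d$) simpler pieces of the form \eqref{1501032000c}, i.e.\ order-cone sets $\vect\Xi^{(\vect m)}_{(r,s)}$ on which the coordinates, rescaled by $m_{\mathsf i}$, are monotonically ordered. Concretely: the constraint region $\{\gamma_{\mathsf i}/m_{\mathsf i}+\gamma_{\mathsf j}/m_{\mathsf j}\le 1\ \forall\, \mathsf i\ne\mathsf j\}$ can be split according to which coordinate $\gamma_{\mathsf k}/m_{\mathsf k}$ is largest and second largest; on each such region the pair constraint involving the two largest coordinates implies all the others, and after reordering one is left (up to the half-space cut-off coming from that single binding constraint) with a set of the shape $\vect\Xi^{(\vect m)}_{(r,s)}$ or a mild generalization \eqref{1601032151} thereof. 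One then uses the triangle inequality on the Dirichlet kernels: $\FL(\vect\Gamma)\le\sum_k\FL(\vect\Gamma_k)$ when $\vect\Gamma=\bigsqcup_k\vect\Gamma_k$, and invokes Theorem~\ref{A1511231653}, which supplies $\FL\bigl(\vect\Xi^{(\vect m)}_{(r,s)}\bigr)\lesssim\prod_{\mathsf i}\ln(m_{\mathsf i}+1)$ uniformly. Because the number of pieces depends only on $\mathsf d$, the constant is absorbed.

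The main obstacle I anticipate is the bookkeeping in the decomposition: the region $\overline{\vect{\Gamma}}^{(\vect m)}$ is not literally a finite union of sets $\vect\Xi^{(\vect m)}_{(r,s)}$ — after selecting the two largest rescaled coordinates $\gamma_{\mathsf a}/m_{\mathsf a}\ge\gamma_{\mathsf b}/m_{\mathsf b}\ge\cdots$, the binding constraint $\gamma_{\mathsf a}/m_{\mathsf a}+\gamma_{\mathsf b}/m_{\mathsf b}\le 1$ is an extra affine hyperplane that does not fit the template \eqref{1501032000c}, which is exactly why the excerpt announces the "slightly generalized versions \eqref{1601032151}" and the broader family $\vect\Sigma^{(\vect m)}_r$. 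So the real work is (i) to formulate precisely the generalized order-cone-with-one-extra-cut building block, (ii) to verify it covers $\overline{\vect{\Gamma}}^{(\vect m)}$ after a finite partition indexed by permutations of $\{1,\ldots,\mathsf d\}$ plus a choice of which pair constraint is active, and (iii) to push the uniform upper bound of Theorem~\ref{A1511231653} through that mild generalization — likely by a further one-step splitting that separates the extra half-space, writing its characteristic function as a difference of two "full order-cone" characteristic functions and again using subadditivity of $\FL$. Handling the symmetrized set $\overline{\vect{\Gamma}}^{(\vect m),\ast}$ is then either a direct corollary of the unsymmetrized bound via the monotonicity lemma, or one repeats the decomposition with the symmetrized order cones, whichever the section's machinery makes cleaner.
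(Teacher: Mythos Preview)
Your high-level architecture is right --- decompose $\overline{\vect{\Gamma}}^{(\vect{m})}$ into $O_{\mathsf{d}}(1)$ pieces on which Theorem~\ref{A1511231653} applies, and handle lower bounds separately --- but two steps in your plan do not work as stated.

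\textbf{The lower bound.} You write that since $\overline{\vect{\Gamma}}^{(\vect{m}),\ast}$ contains a symmetrized box one may invoke~\eqref{1606171646}. This is a genuine error: the Lebesgue constant $\FL(\cdot)$ is \emph{not} monotone under set inclusion, so $\vect{A}\subset\vect{B}$ gives no relation between $\FL(\vect{A})$ and $\FL(\vect{B})$. The paper instead uses the multivariate Hardy--Littlewood inequality
\[
\FL(\vect{\Gamma})\ge \pi^{-\mathsf d}\tsum_{\indexvectgamma\in\vect{\Gamma}}\tprod_{\mathsf i}(\gamma_{\mathsf i}+1)^{-1},
\]
for which containment of the box $\{2\gamma_{\mathsf i}\le m_{\mathsf i}\}$ \emph{does} give the product of logarithms. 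For the symmetrized set the paper needs a further Yudin-type slicing argument (Proposition~\ref{1602171132}); there is again no shortcut through monotonicity. Relatedly, your claimed general lemma $\FL(\vect{\Gamma}^{\ast})\lesssim\FL(\vect{\Gamma})$ is not proved in the paper and is not used; both upper bounds are established independently (Corollaries~\ref{1602280259} and~\ref{1501051402}), the latter via inclusion--exclusion over sign orthants together with the observation that the intersections are lower-dimensional copies of $\overline{\vect{\Gamma}}$.

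\textbf{The decomposition.} You correctly flag that after ordering the rescaled coordinates the binding constraint $\gamma_{\mathsf a}/m_{\mathsf a}+\gamma_{\mathsf b}/m_{\mathsf b}\le 1$ is an extra affine cut that does not match the template~\eqref{1501032000c}, and you propose to handle it by a further difference of characteristic functions. The paper's resolution is cleaner and worth knowing: if $\vectgamma\in\overline{\vect{\Gamma}}^{(\vect m)}$ has $\gamma_{\mathsf k}/m_{\mathsf k}>1/2$ for some (necessarily unique) $\mathsf k$, apply the reflection $\mathfrak{s}^{(\vect m)}_{\mathsf k}\colon\gamma_{\mathsf k}\mapsto m_{\mathsf k}-\gamma_{\mathsf k}$. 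This sends the sum constraint $\gamma_{\mathsf i}/m_{\mathsf i}+\gamma_{\mathsf k}/m_{\mathsf k}\le 1$ to the \emph{ordering} constraint $\gamma_{\mathsf i}/m_{\mathsf i}\le \gamma'_{\mathsf k}/m_{\mathsf k}$, so the image lands in the small box $\{2\gamma'_{\mathsf i}<m_{\mathsf i}\}$ with $\mathsf k$ realising the maximum --- exactly a set of the form~\eqref{1601032151} with $r=0$, $s=1/2$ (Propositions~\ref{1605201512} and~\ref{1605182041}). No extra half-space survives. Your proposed difference-of-cones route may also work, but you would have to check that the subtracted cone still satisfies a uniform $\prod\ln(m_{\mathsf i}+1)$ bound, which is not immediate.
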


In Section \ref{1502171626}, we will apply this theorem to obtain estimates of the Lebesgue constant for the interpolation problem on the Lissajous-Chebyshev nodes.
To  prove \mbox{Theorem~\ref{1602041200}} we will use the following statement which is also interesting by itself.

\begin{theorem} \label{A1511231653}  Let $r,s\in\mathbb{R}$, $0\leq r<s$, be fixed. For all $\vect{m}\in\mathbb{N}^{\mathsf{d}}$, we have
\begin{equation}\label{eqth1}
  \FL\left(\vect{\Xi}_{(r,s)}^{(\vect{m})}\right)\lesssim \tprod_{\mathsf{i}=1}^{\mathsf{d}}\ln (m_{\mathsf{i}}+1).
\end{equation}
\end{theorem}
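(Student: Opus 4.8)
The plan is to estimate the Dirichlet-type kernel
\[
D_{\vect{\Xi}}(\vect{t}) = \sum_{\indexvectgamma \in \vect{\Xi}_{(r,s)}^{(\vect{m})}} \mathrm{e}^{\mathbf{i}(\indexvectgamma,\vect{t})}
\]
by exploiting the nested chain of inequalities $r \leq \gamma_{\mathsf{d}}/m_{\mathsf{d}} \leq \cdots \leq \gamma_1/m_1 \leq s$ that defines the set. The key structural observation is that this ordering lets one sum the geometric series coordinate by coordinate, from the innermost variable $\gamma_{\mathsf{d}}$ outward: for fixed $\gamma_1,\ldots,\gamma_{\mathsf{d}-1}$, the index $\gamma_{\mathsf{d}}$ ranges over a full interval of integers $\lceil r m_{\mathsf{d}}\rceil \leq \gamma_{\mathsf{d}} \leq \lfloor (\gamma_{\mathsf{d}-1}/m_{\mathsf{d}-1}) m_{\mathsf{d}}\rfloor$, so the inner sum is a geometric sum in $\mathrm{e}^{\mathbf{i}t_{\mathsf{d}}}$ whose value is controlled by $\min\bigl(m_{\mathsf{d}}, |1-\mathrm{e}^{\mathbf{i}t_{\mathsf{d}}}|^{-1}\bigr)$ — but with a remaining phase factor depending on $\gamma_{\mathsf{d}-1}$ through the upper endpoint $\lfloor (\gamma_{\mathsf{d}-1}/m_{\mathsf{d}-1})m_{\mathsf{d}}\rfloor$. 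The strategy is then an induction on $\mathsf{d}$: after performing the $\gamma_{\mathsf{d}}$-summation, the resulting expression should be writable as a combination of (a) a Dirichlet kernel in the variables $t_1,\ldots,t_{\mathsf{d}-1}$ for a set of the same type $\vect{\Xi}_{(r',s')}^{(m_1,\ldots,m_{\mathsf{d}-1})}$ with modified $r',s'$, times the factor $\ln(m_{\mathsf{d}}+1)$ after integrating $|\cdot|$ in $t_{\mathsf{d}}$, plus (b) an error term coming from the fractional parts $\llceil \cdot \rrceil$, $\llfloor \cdot \rrfloor$ of the endpoints, which must be shown to be of the same or lower order.

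In detail, I would first integrate in $t_{\mathsf{d}}$ alone. Using the standard one-dimensional estimate $\frac{1}{2\pi}\int_{-\pi}^{\pi}\bigl|\sum_{k=a}^{b}\mathrm{e}^{\mathbf{i}kt}\bigr|\,\mathrm{d}t \lesssim \ln(b-a+2)$ together with the fact that the endpoint $b = \lfloor (\gamma_{\mathsf{d}-1}/m_{\mathsf{d}-1})m_{\mathsf{d}}\rfloor \leq m_{\mathsf{d}}$ (since $\gamma_{\mathsf{d}-1}/m_{\mathsf{d}-1} \leq s$ would overshoot, so really one should first reduce to the case $s \leq 1$ by a separate argument, or absorb the constant $s$ into the implied constant), this produces a bound $\lesssim \ln(m_{\mathsf{d}}+1)$ but with the phase of the partial sum still present. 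The technical heart is to handle this phase: writing $\sum_{k=a}^{b}\mathrm{e}^{\mathbf{i}kt_{\mathsf{d}}} = \mathrm{e}^{\mathbf{i}a t_{\mathsf{d}}}\frac{\mathrm{e}^{\mathbf{i}(b-a+1)t_{\mathsf{d}}}-1}{\mathrm{e}^{\mathbf{i}t_{\mathsf{d}}}-1}$, I would substitute $b - a + 1 = (\gamma_{\mathsf{d}-1}/m_{\mathsf{d}-1})m_{\mathsf{d}} - \lceil r m_{\mathsf{d}}\rceil + 1 - \llfloor (\gamma_{\mathsf{d}-1}/m_{\mathsf{d}-1})m_{\mathsf{d}}\rrfloor$, separate the term $\mathrm{e}^{\mathbf{i}(\gamma_{\mathsf{d}-1}/m_{\mathsf{d}-1})m_{\mathsf{d}} t_{\mathsf{d}}}$ which couples cleanly to $\gamma_{\mathsf{d}-1}$ and, after the substitution $t_{\mathsf{d}} \mapsto t_{\mathsf{d}}/m_{\mathsf{d}}$ or rather treating $m_{\mathsf{d}}t_{\mathsf{d}}/m_{\mathsf{d}-1}$ as a shift in the $t_{\mathsf{d}-1}$ phase, recombine with the sum over $\gamma_{\mathsf{d}-1}$ to yield a set of the same form. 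The error from the fractional part $\llfloor (\gamma_{\mathsf{d}-1}/m_{\mathsf{d}-1})m_{\mathsf{d}}\rrfloor \in [0,1)$ contributes a bounded multiplicative perturbation to the geometric sum, which after integration against $|1-\mathrm{e}^{\mathbf{i}t_{\mathsf{d}}}|^{-1}$ costs at most another $\ln(m_{\mathsf{d}}+1)$ and should be absorbable.

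The main obstacle I anticipate is precisely this bookkeeping of the fractional-part error terms and verifying that after the $t_{\mathsf{d}}$-integration one genuinely recovers a Lebesgue constant of the \emph{same family} $\vect{\Xi}_{(r',s')}^{(m_1,\ldots,m_{\mathsf{d}-1})}$ (possibly a finite sum of such, with shifted parameters $r',s'$ that remain in a bounded range) rather than something more complicated — the nesting $\gamma_{\mathsf{d}}/m_{\mathsf{d}} \leq \gamma_{\mathsf{d}-1}/m_{\mathsf{d}-1}$ is exactly what makes this closure possible, but one must check that the induction hypothesis applies uniformly in the new parameters. A secondary subtlety is the base case $\mathsf{d}=1$, where $\vect{\Xi}_{(r,s)}^{(m_1)}$ is simply an interval of integers of length $\asymp (s-r)m_1$, giving $\FL \lesssim \ln(m_1+1)$ directly from the classical estimate, plus the need to first dispose of the constants $r,s$ (e.g.\ by noting the bound is monotone in $s$ and translation in $\vectgamma$ does not change $\FL$, so one may assume $r=0$ and $s$ is, say, an integer). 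Once these points are settled, the induction closes and gives \eqref{eqth1}; this upper bound for $\vect{\Xi}_{(r,s)}^{(\vect{m})}$ is then, as indicated in the excerpt, the building block from which the upper bound in Theorem~\ref{1602041200} for $\overline{\vect{\Gamma}}^{(\vect{m})}$ and its symmetrization is assembled by decomposing $\overline{\vect{\Gamma}}^{(\vect{m})}$ into finitely many pieces of type $\vect{\Xi}$.
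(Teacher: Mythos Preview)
Your skeleton matches the paper's approach quite closely: sum the geometric series in the innermost variable, split off the main phase $\mathrm{e}^{\mathbf{i}(\gamma_{\mathsf{d}-1}/m_{\mathsf{d}-1})m_{\mathsf{d}}t_{\mathsf{d}}}$ as a shift in $t_{\mathsf{d}-1}$ (this is the paper's term $D^{\sharp}$), control the resulting difference $D^{\sharp}-D^{\circ}$ divided by $\mathrm{e}^{\mathbf{i}t_{\mathsf{d}}}-1$ via the Bernstein/modulus trick (the paper's $G$ term, Proposition~\ref{A1511231706}), and induct. Where your proposal has a genuine gap is the sentence ``the error from the fractional part \ldots\ contributes a bounded multiplicative perturbation \ldots\ which after integration against $|1-\mathrm{e}^{\mathbf{i}t_{\mathsf{d}}}|^{-1}$ costs at most another $\ln(m_{\mathsf{d}}+1)$ and should be absorbable.'' That factor $\bigl(\mathrm{e}^{-\mathbf{i}\llfloor \gamma_{\mathsf{d}-1}m_{\mathsf{d}}/m_{\mathsf{d}-1}\rrfloor t_{\mathsf{d}}}-1\bigr)/(\mathrm{e}^{\mathbf{i}t_{\mathsf{d}}}-1)$ is indeed uniformly bounded, but it \emph{depends on $\gamma_{\mathsf{d}-1}$}, so you cannot pull it outside the sum without destroying all cancellation in the remaining variables; integrating its modulus in $t_{\mathsf{d}}$ alone yields nothing useful.

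The paper handles this term (called $F^{\sharp}$) by a nontrivial device going back to Yudin--Yudin: expand $\mathrm{e}^{-\mathbf{i}\alpha t_{\mathsf{d}}}-1$ as a power series in $t_{\mathsf{d}}$, so that the $\gamma_{\mathsf{d}-1}$-dependence enters only through the powers $\alpha^{\nu}=\llfloor \gamma_{\mathsf{d}-1}m_{\mathsf{d}}/m_{\mathsf{d}-1}\rrfloor^{\nu}$; then observe that $\alpha^{\nu}=h_{\nu,m_{\mathsf{d}-1}}(\gamma_{\mathsf{d}-1}m_{\mathsf{d}}/m_{\mathsf{d}-1})$ for an explicit $1$-periodic piecewise-polynomial function $h_{\nu,m}$, and replace $h_{\nu,m}$ by its Fourier series. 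Each Fourier mode turns the sum into a \emph{shifted} copy of the lower-dimensional Dirichlet kernel, and the total cost is $\sum_{\mu}|\widehat{h}_{\nu,m_{\mathsf{d}-1}}(\mu)|\lesssim \ln(m_{\mathsf{d}-1}\nu+1)$ (Proposition~\ref{A1511231707}). The crucial point you missed is that this produces the \emph{wrong} logarithm, $\ln(m_{\mathsf{d}-1}+1)$ rather than $\ln(m_{\mathsf{d}}+1)$. If $m_{\mathsf{d}-1}\gg m_{\mathsf{d}}$ your induction would output $\ln(m_{\mathsf{d}-1}+1)\prod_{\mathsf{i}=1}^{\mathsf{d}-1}\ln(m_{\mathsf{i}}+1)$, which is too large. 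This is why the paper does \emph{not} always peel off the last coordinate: it constructs a decomposition (Proposition~\ref{A1511231547}) valid for any index $\mathsf{k}$, with analogous ``$\flat$'' terms coupling to $\gamma_{\mathsf{k}+1}$, and then chooses $\mathsf{k}=\mathsf{k}(\vect{m})$ to be the index with $m_{\mathsf{k}}$ \emph{maximal}, so that both $\ln(m_{\mathsf{k}-1}+1)$ and $\ln(m_{\mathsf{k}+1}+1)$ are dominated by $\ln(m_{\mathsf{k}}+1)$ and the induction closes cleanly.
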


\medskip

Further, let us consider the sets $\vect{\Sigma}^{(\vect{m})}_{\,r}$ and $\vect{\Sigma}^{(\vect{m})\ast}_{\,r}$. These sets  can be considered as  another possible generalization of the sets considered in  \cite{YudinYudin1985}  for $\vect{m}\in\mathbb{Z}^{\mathsf{d}}$, and they are interesting since they may be used as  building blocks for certain polyhedra.

\begin{theorem}\label{1511231653} Let $r\in\mathbb{Q}$, $r>0$, be fixed. For all $\vect{m}\in\mathbb{N}^{\mathsf{d}}$, we have
\begin{equation}\label{estimatesS}
  \FL\left( \vect{\Sigma}^{(\vect{m})}_{r}\right)\asymp  \FL\left( \vect{\Sigma}^{(\vect{m}),\ast}_{r}\right)\asymp \tprod_{\mathsf{i}=1}^{\mathsf{d}}\ln (m_{\mathsf{i}}+1).
\end{equation}
\end{theorem}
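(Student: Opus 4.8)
\textbf{Proof proposal for Theorem \ref{1511231653}.}

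The plan is to reduce the estimate for $\vect{\Sigma}^{(\vect{m})}_{\,r}$ to the already-available estimate for $\overline{\vect{\Gamma}}^{(\vect{m})}$ in Theorem \ref{1602041200} (together with the building-block bound for $\vect{\Xi}^{(\vect{m})}_{(r,s)}$ in Theorem \ref{A1511231653}), so that only the geometric passage between the two families of polyhedra has to be analyzed. First I would treat the \emph{upper bound}. Writing $r = p/q$ with $p,q\in\mathbb{N}$, I would observe that $\vect{\Sigma}^{(\vect{m})}_{\,r}\cap\mathbb{Z}^{\mathsf{d}}$ is, up to the dilation $\vect{m}\mapsto p\,\vect{m}$ in the obvious sense, a scaled copy of the standard simplex, and that a simplex decomposes into finitely many (a number depending only on $\mathsf{d}$ and $q$, hence $\mathsf{O}(1)$) translated chambers of the type $\vect{\Xi}^{(\vect{m}')}_{(r',s')}$ appearing in \eqref{1501032000c}, where each $m'_{\mathsf{i}}\lesssim m_{\mathsf{i}}$. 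Since the Lebesgue constant $\FL(\cdot)$ is subadditive under disjoint unions of lattice sets and invariant under integer translations and under permutation of coordinates, Theorem \ref{A1511231653} applied to each chamber gives $\FL(\vect{\Sigma}^{(\vect{m})}_{\,r})\lesssim\prod_{\mathsf{i}}\ln(m_{\mathsf{i}}+1)$; the same decomposition of the symmetrized simplex into reflected copies of $\vect{\Xi}$-type sets yields the bound for $\vect{\Sigma}^{(\vect{m}),\ast}_{\,r}$ as well.

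For the \emph{lower bound} I would use the monotonicity already exploited in the introduction: if $c>0$ is chosen so that the box $[0,cm_1]\times\cdots\times[0,cm_{\mathsf{d}}]$ is contained in $\vect{\Sigma}^{(\vect{m})}_{\,r}$ (one may take $c = r/\mathsf{d}$), then the corresponding containment holds after symmetrization, and combining the inclusion $\vect{R}^{(c\vect{m}),\ast}\cap\mathbb{Z}^{\mathsf{d}}\subseteq\vect{\Sigma}^{(\vect{m}),\ast}_{\,r}$ with the rectangular estimate \eqref{1606171646} and the general principle that $\FL(\vect{\Gamma})\gtrsim\FL(\vect{\Gamma}_0)$ whenever $\vect{\Gamma}_0\subseteq\vect{\Gamma}$ is obtained from $\vect{\Gamma}$ by a suitable projection/averaging argument (this is the same mechanism underlying \eqref{1502221954}) gives $\FL(\vect{\Sigma}^{(\vect{m}),\ast}_{\,r})\gtrsim\prod_{\mathsf{i}}\ln(c m_{\mathsf{i}}+1)\asymp\prod_{\mathsf{i}}\ln(m_{\mathsf{i}}+1)$, since $c$ depends only on the fixed $r$ and $\mathsf{d}$. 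The non-symmetrized lower bound then follows because the symmetrization has Lebesgue constant of the same order as the original set once a one-dimensional symmetrization estimate is applied coordinate by coordinate, exactly as for $\overline{\vect{\Gamma}}^{(\vect{m})}$ in Theorem \ref{1602041200}.

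The main obstacle I anticipate is making the simplex-to-chamber decomposition completely rigorous in arbitrary dimension $\mathsf{d}$ while keeping the number of pieces and the distortion of the dilation parameters uniformly under control, and — more delicately — verifying that each piece, after the necessary integer translation, really is of the form \eqref{1501032000c} (or its mild generalization \eqref{1601032151}) rather than merely a convex polytope with the right facet directions; the ordering constraints $\gamma_{\mathsf{d}}/m_{\mathsf{d}}\leq\cdots\leq\gamma_1/m_1$ in \eqref{1501032000c} are rigid, so one has to permute coordinates chamber by chamber. The rationality hypothesis on $r$ enters precisely here: it guarantees that the cutting hyperplanes can be chosen to pass through lattice-commensurable points, so that the translated chambers have integer vertices (after the rescaling by $q$) and the subadditivity of $\FL$ applies to honest lattice sets. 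I would handle this by first proving the two-dimensional case by hand, where the picture is transparent (and where $\overline{\vect{\Gamma}}^{(m_1,m_2),\ast}=\vect{\Sigma}^{(m_1,m_2),\ast}_{\,1}$ already), and then organizing the general case as an induction on $\mathsf{d}$, slicing $\vect{\Sigma}^{(\vect{m})}_{\,r}$ along the last coordinate into sets of the lower-dimensional type with an extra ordering variable, which is exactly the structure that Theorem \ref{A1511231653} is designed to absorb.
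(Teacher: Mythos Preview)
Your proposal contains two genuine gaps, one for each direction of the estimate.

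\medskip

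\textbf{Upper bound.} The hoped-for decomposition of the simplex $\vect{\Sigma}^{(\vect{m})}_{r}$ into finitely many integer-translated, coordinate-permuted $\vect{\Xi}$-chambers does not exist in general. The single non-axis facet of $\vect{\Sigma}^{(\vect{m})}_{r}$ has normal direction $(1/m_1,\ldots,1/m_{\mathsf{d}})$, involving \emph{all} coordinates simultaneously, whereas every facet of a set of the form \eqref{1601032151} has a normal in the span of at most two of the $(1/m_{\mathsf{i}})e_{\mathsf{i}}$. Reflections $\gamma_{\mathsf{k}}\mapsto c-\gamma_{\mathsf{k}}$ and coordinate permutations do not change this. (The unimodular map $\gamma\mapsto(\gamma_1+\cdots+\gamma_{\mathsf{d}},\gamma_2+\cdots+\gamma_{\mathsf{d}},\ldots,\gamma_{\mathsf{d}})$ does convert the simplex into a $\vect{\Xi}$-set, but only when all $m_{\mathsf{i}}$ are equal; with distinct $m_{\mathsf{i}}$ the image is not of the form \eqref{1601032151} for any $\vect{m}'\in\mathbb{N}^{\mathsf{d}}$.) Your fallback---slicing in $\gamma_{\mathsf{d}}$ and inducting---is exactly what the paper does, but the slice at $\gamma_{\mathsf{d}}=\gamma$ is $\vect{\Sigma}^{(m_1,\ldots,m_{\mathsf{d}-1})}_{r-\gamma/m_{\mathsf{d}}}$, a \emph{lower-dimensional simplex with varying radius}, not a $\vect{\Xi}$-type set; Theorem \ref{A1511231653} cannot absorb this as a black box. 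One must redo the analytic machinery (the $G$/$F$ splitting, the Bernstein-type estimate, and the $h_{\nu,m}$ Fourier-coefficient argument) adapted to the recursion \eqref{1511230927}. This is precisely the content of Propositions \ref{1608081106}, \ref{1511231706}, and \ref{1511231707}; note that the rationality of $r$ enters in Proposition \ref{1511231707} through the bound $\llfloor\lambda_r(\gamma_1,\ldots,\gamma_{\mathsf{d}-1})\rrfloor\leq 1-1/\lcm(q,m_1,\ldots,m_{\mathsf{d}-1})$, not through any lattice-vertex condition on chambers.

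\medskip

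\textbf{Lower bound.} The ``general principle'' $\FL(\vect{\Gamma})\gtrsim\FL(\vect{\Gamma}_0)$ for $\vect{\Gamma}_0\subseteq\vect{\Gamma}$ is false: the Lebesgue constant is not monotone under inclusion (a ball of radius $m$ has $\FL\asymp m^{(\mathsf{d}-1)/2}$, vastly larger than the $(\ln m)^{\mathsf{d}}$ of the inscribed cube). Yudin's bound \eqref{1502221954} gives only $(\ln(\min_{\mathsf{i}} m_{\mathsf{i}}+1))^{\mathsf{d}}$, not the product, and no de la Vall\'ee Poussin--type averaging produces the box kernel from the symmetrized-simplex kernel with $\mathsf{O}(1)$ cost, because the required multiplier would have to vanish on $\vect{\Sigma}^{(\vect{m}),\ast}_{r}\setminus\vect{R}^{(c\vect{m}),\ast}$. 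For the \emph{non}-symmetrized set the Hardy--Littlewood inequality \eqref{Hardy-Littlewood} together with the inscribed box does give the product bound directly (this is how the paper argues), but for $\vect{\Sigma}^{(\vect{m}),\ast}_{r}$ one needs the inductive slicing argument \eqref{1602292033}--\eqref{1602270715}, parallel to Proposition \ref{1602171132}: apply Hardy--Littlewood in one variable, recognize each slice as a lower-dimensional symmetrized simplex, and close the induction via \eqref{1608051847} and Lemma \ref{1508071608}.
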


\noindent The proofs of these results are given in Subsections \ref{1602041256}, \ref{1601030918},  and \ref{1601031532}, respectively.

\subsection{Proof of Theorem \ref{A1511231653}}\label{1601030918}

Let us first formulate and prove several auxiliary statements.

\medskip
For  $\mathsf{d}\in\mathbb{N}$, $\vect{m}\in (0,\infty)^{\mathsf{d}}$, and $r,s \in \mathbb{R}$, $r<s$, we set
\[D^{(\vect{m})}_{(r,s)}(\vect{t})=\tsum_{\indexvectgamma\in \vect{\Xi}_{(r,s)}^{(\vect{m})}}\mathrm{e}^{\mathbf{i}(\indexvectgamma,\vect{t})}.\]
Let  $\mathsf{d}\geq 2$ everywhere below.
For $1\leq \mathsf{k}\leq \mathsf{d}$, we denote
\[D^{\circ,(\vect{m})}_{\mathsf{k},(r,s)}(\vect{t})=
D_{(r,s)}^{(m_1,\ldots,m_{\mathsf{k}-1},m_{\mathsf{k}+1},\ldots,m_{\mathsf{d}})}
(t_1,\ldots,t_{\mathsf{k}-1},t_{\mathsf{k}+1},\ldots,t_{\mathsf{d}}).\]
For  $2\leq \mathsf{k}\leq \mathsf{d}$, we introduce
\[
D^{\sharp,(\vect{m})}_{\mathsf{k},(r,s)}(\vect{t})=
D_{(r,s)}^{(m_1,\ldots,m_{\mathsf{k}-1},m_{\mathsf{k}+1},\ldots,m_{\mathsf{d}})}
(t_1,\ldots,t_{\mathsf{k}-2},t_{\mathsf{k}-1}+t_{\mathsf{k}}m_{\mathsf{k}}/m_{\mathsf{k}-1},t_{\mathsf{k}+1},\ldots,t_{\mathsf{d}}),
\]
\begin{equation}\label{star1}
  \Delta^{\sharp,(\vect{m})}_{\mathsf{k},(r,s)}(\vect{t})=D^{\sharp,(\vect{m})}_{\mathsf{k},(r,s)}(\vect{t})-
D^{\circ,(\vect{m})}_{\mathsf{k},(r,s)}(\vect{t}),
\end{equation}
and
\begin{equation*}
  \begin{split}
     F_{\mathsf{k},(r,s)}^{\sharp,(\vect{m})}(\vect{t})={ \frac{\mathrm{e}^{\mathbf{i}t_{\mathsf{k}}}}{\mathrm{e}^{\mathbf{i}t_{\mathsf{k}}}-1}}\sum\nolimits^{\circ\mathsf{k}}&
\mathrm{e}^{\mathbf{i}(\gamma_1t_1+\ldots+\gamma_{\mathsf{k}-2}t_{\mathsf{k}-2}
+\gamma_{\mathsf{k}+1}t_{\mathsf{k}+1}+\ldots+\gamma_{\mathsf{d}}t_{\mathsf{d}})}\\
&\times\mathrm{e}^{\mathbf{i}\gamma_{\mathsf{k}-1}(t_{\mathsf{k}-1}+t_{\mathsf{k}} m_{\mathsf{k}}/m_{\mathsf{k}-1})}(\mathrm{e}^{-\mathbf{i}
\llfloor \gamma_{\mathsf{k}-1}m_{\mathsf{k}}/m_{\mathsf{k}-1}\rrfloor t_{\mathsf{k}}}-1).
   \end{split}
\end{equation*}
Here and in the following, \begin{equation}\label{1505202138}
\displaystyle\sum\nolimits^{\circ\mathsf{k}}\ \text{means the sum over} \ (\gamma_1,\ldots,\gamma_{\mathsf{k}-1},\gamma_{\mathsf{k}+1},\ldots,\gamma_{\mathsf{d}})\in \vect{\Xi}_{(r,s)}^{(m_1,\ldots,m_{\mathsf{k}-1},m_{\mathsf{k}+1},\ldots,m_{\mathsf{d}})}.
\end{equation}
In the special case $\mathsf{k}=\mathsf{d}$, for simplicity, we denote
\[D^{\circ,(\vect{m})}_{(r,s)}(\vect{t})=D^{\circ,(\vect{m})}_{\mathsf{d},(r,s)}(\vect{t}),\quad D^{\sharp,(\vect{m})}_{(r,s)}(\vect{t})=D^{\sharp,(\vect{m})}_{\mathsf{d},(r,s)}(\vect{t}),\] \[\Delta^{\sharp,(\vect{m})}_{(r,s)}(\vect{t})=\Delta^{\sharp,(\vect{m})}_{\mathsf{d},(r,s)}(\vect{t}),\quad F_{(r,s)}^{\sharp,(\vect{m})}(\vect{t})=F_{\mathsf{d},(r,s)}^{\sharp,(\vect{m})}(\vect{t}),\]
and \[G^{(\vect{m})}_{(r,s)}(\vect{t})=\dfrac1{\mathrm{e}^{\mathbf{i}t_{\mathsf{d}}}-1}\left(\Delta^{\sharp,(\vect{m})}_{(r,s)}(\vect{t})-(\mathrm{e}^{\mathbf{i}\lceil rm_{\mathsf{d}}\rceil t_{\mathsf{d}}}-1)D^{\circ,(\vect{m})}_{(r,s)}(\vect{t})\right).\]

\begin{proposition}\label{1601051328}
Let   $\vect{m}\in(0,\infty)^{\mathsf{d}}$ and $r,s \in \mathbb{R}$, $r<s$. Then
\begin{equation}\label{1501042252}
D^{(\vect{m})}_{(r,s)}(\vect{t})
=G^{(\vect{m})}_{(r,s)}(\vect{t})+D^{\sharp,(\vect{m})}_{(r,s)}(\vect{t})+F^{\sharp,(\vect{m})}_{(r,s)}(\vect{t}).
\end{equation}
\end{proposition}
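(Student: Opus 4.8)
The plan is to prove \eqref{1501042252} by a direct manipulation of the exponential sum defining $D^{(\vect{m})}_{(r,s)}(\vect{t})$, isolating the role of the last coordinate $\gamma_{\mathsf{d}}$ and the constraint $r \le \gamma_{\mathsf{d}}/m_{\mathsf{d}} \le \gamma_{\mathsf{d}-1}/m_{\mathsf{d}-1}$. First I would fix the values of $(\gamma_1,\ldots,\gamma_{\mathsf{d}-1})$ and sum a geometric series in $\gamma_{\mathsf{d}}$: for those indices the admissible range is $\lceil r m_{\mathsf{d}}\rceil \le \gamma_{\mathsf{d}} \le \lfloor \gamma_{\mathsf{d}-1} m_{\mathsf{d}}/m_{\mathsf{d}-1}\rfloor$, provided the range is nonempty, so that
\[
\sum_{\gamma_{\mathsf{d}}} \mathrm{e}^{\mathbf{i}\gamma_{\mathsf{d}}t_{\mathsf{d}}}
= \frac{\mathrm{e}^{\mathbf{i}(\lfloor \gamma_{\mathsf{d}-1}m_{\mathsf{d}}/m_{\mathsf{d}-1}\rfloor + 1)t_{\mathsf{d}}} - \mathrm{e}^{\mathbf{i}\lceil r m_{\mathsf{d}}\rceil t_{\mathsf{d}}}}{\mathrm{e}^{\mathbf{i}t_{\mathsf{d}}} - 1}.
\]
The key algebraic trick is to write $\lfloor \gamma_{\mathsf{d}-1}m_{\mathsf{d}}/m_{\mathsf{d}-1}\rfloor = \gamma_{\mathsf{d}-1}m_{\mathsf{d}}/m_{\mathsf{d}-1} - \llfloor \gamma_{\mathsf{d}-1}m_{\mathsf{d}}/m_{\mathsf{d}-1}\rrfloor$, which converts the ``straightened'' exponential $\mathrm{e}^{\mathbf{i}\gamma_{\mathsf{d}-1}t_{\mathsf{d}}m_{\mathsf{d}}/m_{\mathsf{d}-1}}$ — exactly the substitution $t_{\mathsf{d}-1}\mapsto t_{\mathsf{d}-1}+t_{\mathsf{d}}m_{\mathsf{d}}/m_{\mathsf{d}-1}$ appearing in $D^{\sharp,(\vect{m})}_{(r,s)}$ — into the integer-truncated exponential, at the cost of a multiplicative factor $\mathrm{e}^{-\mathbf{i}\llfloor\gamma_{\mathsf{d}-1}m_{\mathsf{d}}/m_{\mathsf{d}-1}\rrfloor t_{\mathsf{d}}}$.

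Next I would organize the resulting expression into three pieces. Splitting the numerator of the geometric sum and using $\frac{\mathrm{e}^{\mathbf{i}x t_{\mathsf{d}}}}{\mathrm{e}^{\mathbf{i}t_{\mathsf{d}}}-1} = \frac{\mathrm{e}^{\mathbf{i}t_{\mathsf{d}}}}{\mathrm{e}^{\mathbf{i}t_{\mathsf{d}}}-1}\,\mathrm{e}^{\mathbf{i}(x-1)t_{\mathsf{d}}}$ together with the identity $\mathrm{e}^{\mathbf{i}(\lfloor\cdot\rfloor)t_{\mathsf{d}}} = \mathrm{e}^{\mathbf{i}\gamma_{\mathsf{d}-1}t_{\mathsf{d}}m_{\mathsf{d}}/m_{\mathsf{d}-1}} - \mathrm{e}^{\mathbf{i}\gamma_{\mathsf{d}-1}t_{\mathsf{d}}m_{\mathsf{d}}/m_{\mathsf{d}-1}}(1-\mathrm{e}^{-\mathbf{i}\llfloor\cdot\rrfloor t_{\mathsf{d}}})$, the ``leading'' term reassembles (after summing over $(\gamma_1,\ldots,\gamma_{\mathsf{d}-1})$ in $\vect{\Xi}^{\circ\mathsf{d}}_{(r,s)}$) into precisely $D^{\sharp,(\vect{m})}_{(r,s)}(\vect{t})$, since the straightened argument restores the full sum over the $(\mathsf{d}-1)$-dimensional polyhedral index set. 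The correction term built from $(1-\mathrm{e}^{-\mathbf{i}\llfloor\cdot\rrfloor t_{\mathsf{d}}})$, carrying the prefactor $\mathrm{e}^{\mathbf{i}t_{\mathsf{d}}}/(\mathrm{e}^{\mathbf{i}t_{\mathsf{d}}}-1)$, becomes $F^{\sharp,(\vect{m})}_{(r,s)}(\vect{t})$ (up to a sign, which I should track carefully). The remaining term, involving $\mathrm{e}^{\mathbf{i}\lceil r m_{\mathsf{d}}\rceil t_{\mathsf{d}}}$ and $\Delta^{\sharp,(\vect{m})}_{(r,s)} = D^{\sharp,(\vect{m})}_{(r,s)} - D^{\circ,(\vect{m})}_{(r,s)}$, is what is packaged into $G^{(\vect{m})}_{(r,s)}(\vect{t})$; the appearance of $\Delta^{\sharp}$ rather than just $D^{\sharp}$ or $D^{\circ}$ is the bookkeeping device that absorbs the discrepancy between summing over indices where the $\gamma_{\mathsf{d}}$-range is empty versus nonempty.

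I expect the main obstacle to be precisely this last point: the geometric-sum formula for $\gamma_{\mathsf{d}}$ is valid only when $\lceil r m_{\mathsf{d}}\rceil \le \lfloor \gamma_{\mathsf{d}-1}m_{\mathsf{d}}/m_{\mathsf{d}-1}\rfloor$, and one must check that the terms contributed by $(\gamma_1,\ldots,\gamma_{\mathsf{d}-1})$ for which this fails — i.e.\ indices in $\vect{\Xi}^{\circ\mathsf{d}}$ but with no valid $\gamma_{\mathsf{d}}$ — are exactly cancelled by the difference $D^{\sharp,(\vect{m})}_{(r,s)} - D^{\circ,(\vect{m})}_{(r,s)}$ appearing in $G^{(\vect{m})}_{(r,s)}$. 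Getting all the boundary conventions ($\lceil\cdot\rceil$ versus $\lfloor\cdot\rfloor$, the role of the extra point corresponding to $\gamma_{\mathsf{d}}/m_{\mathsf{d}} = \gamma_{\mathsf{d}-1}/m_{\mathsf{d}-1}$, and the constraint $\gamma_{\mathsf{d}}/m_{\mathsf{d}}\ge r$ versus $> r$) to line up, and keeping the signs of the three terms consistent with the definitions of $F^{\sharp}$ and $G$, will be the delicate part; the rest is routine geometric summation. Once the three pieces are identified with $G^{(\vect{m})}_{(r,s)}$, $D^{\sharp,(\vect{m})}_{(r,s)}$, and $F^{\sharp,(\vect{m})}_{(r,s)}$, summing over $(\gamma_1,\ldots,\gamma_{\mathsf{d}-1})$ and collecting terms yields \eqref{1501042252}.
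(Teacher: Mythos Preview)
Your approach is correct and is essentially the same computation the paper carries out: the paper first establishes the two-dimensional identity \eqref{1605180017} by exactly the geometric-series-in-$\gamma_{\mathsf{d}}$ manipulation you describe (using the key identity $\mathrm{e}^{\mathbf{i}\gamma_{\mathsf{d}-1}t_{\mathsf{d}-1}}\mathrm{e}^{\mathbf{i}(\lfloor\gamma_{\mathsf{d}-1}m_{\mathsf{d}}/m_{\mathsf{d}-1}\rfloor+1)t_{\mathsf{d}}}=\mathrm{e}^{\mathbf{i}t_{\mathsf{d}}}\mathrm{e}^{\mathbf{i}\gamma_{\mathsf{d}-1}(t_{\mathsf{d}-1}+t_{\mathsf{d}}m_{\mathsf{d}}/m_{\mathsf{d}-1})}\mathrm{e}^{-\mathbf{i}\llfloor\gamma_{\mathsf{d}-1}m_{\mathsf{d}}/m_{\mathsf{d}-1}\rrfloor t_{\mathsf{d}}}$), and then promotes this to general $\mathsf{d}$ via the descending recursion \eqref{1512180056}, which amounts to summing your two-variable identity over the remaining $(\gamma_1,\ldots,\gamma_{\mathsf{d}-2})$. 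Your direct-in-$\mathsf{d}$ version is the same argument with the outer sum kept intact throughout.

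One small clarification: your concern about indices with an empty $\gamma_{\mathsf{d}}$-range is overstated. Since $(\gamma_1,\ldots,\gamma_{\mathsf{d}-1})\in\vect{\Xi}^{(m_1,\ldots,m_{\mathsf{d}-1})}_{(r,s)}$ forces $\gamma_{\mathsf{d}-1}m_{\mathsf{d}}/m_{\mathsf{d}-1}\ge r m_{\mathsf{d}}$, one always has $\lfloor\gamma_{\mathsf{d}-1}m_{\mathsf{d}}/m_{\mathsf{d}-1}\rfloor\ge\lceil r m_{\mathsf{d}}\rceil-1$, and the geometric-sum formula already returns $0$ in that borderline case; no separate cancellation via $\Delta^{\sharp}$ is needed. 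The appearance of $\Delta^{\sharp}$ in $G^{(\vect{m})}_{(r,s)}$ is purely an algebraic regrouping so that the pieces $G$, $D^{\sharp}$, $F^{\sharp}$ each satisfy the recursion \eqref{1512180056}.
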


\begin{proof} First, we show that
\begin{equation}\label{1605180017}\begin{split}
D_{(r,s)}^{(m_{\mathsf{d}-1},m_{\mathsf{d}})}(t_{\mathsf{d}-1},t_{\mathsf{d}}) =&G_{(r,s)}^{(m_{\mathsf{d}-1},m_{\mathsf{d}})}(t_{\mathsf{d}-1},t_{\mathsf{d}})+D_{(r,s)}^{\sharp,(m_{\mathsf{d}-1},m_{\mathsf{d}})}(t_{\mathsf{d}-1},t_{\mathsf{d}})\\&+F_{(r,s)}^{\sharp,(m_{\mathsf{d}-1},m_{\mathsf{d}})}(t_{\mathsf{d}-1},t_{\mathsf{d}}).
\end{split}\end{equation}
Indeed, we have that $G_{(r,s)}^{(m_{\mathsf{d}-1},m_{\mathsf{d}})}(t_{\mathsf{d}-1},t_{\mathsf{d}})+D^{\sharp,(m_{\mathsf{d}-1},m_{\mathsf{d}})}_{(r,s)}(t_{\mathsf{d}-1},t_{\mathsf{d}})$ equals
\begin{align*}
&\dfrac1{\mathrm{e}^{\mathbf{i}t_{\mathsf{d}}}-1}\Delta^{\sharp,(m_{\mathsf{d}-1},m_{\mathsf{d}})}_{(r,s)}(t_{\mathsf{d}-1},t_{\mathsf{d}})-\dfrac{\mathrm{e}^{\mathbf{i}\lceil rm_{\mathsf{d}}\rceil t_{\mathsf{d}}}-1}{\mathrm{e}^{\mathbf{i}t_{\mathsf{d}}}-1}D_{(r,s)}^{(m_{\mathsf{d}-1})}(t_{\mathsf{d}-1})+D_{(r,s)}^{(m_{\mathsf{d}-1})}(t_{\mathsf{d}-1}+t_{\mathsf{d}} m_{\mathsf{d}}/m_{\mathsf{d}-1})\\
&=\dfrac1{\mathrm{e}^{\mathbf{i}t_{\mathsf{d}}}-1}\left(\mathrm{e}^{\mathbf{i}t_{\mathsf{d}}}D_{(r,s)}^{(m_{\mathsf{d}-1})}(t_{\mathsf{d}-1}+t_{\mathsf{d}} m_{\mathsf{d}}/m_{\mathsf{d}-1})-\mathrm{e}^{\mathbf{i}\lceil rm_{\mathsf{d}}\rceil t_{\mathsf{d}}}D_{(r,s)}^{(m_{\mathsf{d}-1})}(t_{\mathsf{d}-1})\right),
\end{align*}
and \[F_{(r,s)}^{\sharp,(m_{\mathsf{d}-1},m_{\mathsf{d}})}(t_{\mathsf{d}-1},t_{\mathsf{d}})=\dfrac{\mathrm{e}^{\mathbf{i}t_{\mathsf{d}}}}{\mathrm{e}^{\mathbf{i}t_{\mathsf{d}}}-1}\tsum_{\gamma_{\mathsf{d}-1}=\lceil rm_{\mathsf{d}-1}\rceil}^{\lfloor sm_{\mathsf{d}-1}\rfloor}\mathrm{e}^{\mathbf{i}\gamma_{\mathsf{d}-1}(t_{\mathsf{d}-1}+t_{\mathsf{d}} m_{\mathsf{d}}/m_{\mathsf{d}-1})}\left(\mathrm{e}^{-\mathbf{i}\llfloor \gamma_{\mathsf{d}-1}m_{\mathsf{d}}/m_{\mathsf{d}-1}\rrfloor t_{\mathsf{d}}}-1\right).\]
Now, \eqref{1605180017} follows from
\[\mathrm{e}^{\mathbf{i}\gamma_{\mathsf{d}-1}t_{\mathsf{d}-1}}\mathrm{e}^{\mathbf{i}(\lfloor \gamma_{\mathsf{d}-1}m_{\mathsf{d}}/m_{\mathsf{d}-1}\rfloor +1)t_{\mathsf{d}}}=\mathrm{e}^{\mathbf{i}t_{\mathsf{d}}}\mathrm{e}^{\mathbf{i}\gamma_{\mathsf{d}-1}(t_{\mathsf{d}-1}+t_{\mathsf{d}} m_{\mathsf{d}}/m_{\mathsf{d}-1})}\mathrm{e}^{-\mathbf{i}\llfloor \gamma_{\mathsf{d}-1}m_{\mathsf{d}}/m_{\mathsf{d}-1}\rrfloor t_{\mathsf{d}}}.\]

 For the functions corresponding to the symbols $S\in \{D,D^{\circ},D^{\sharp},\Delta^{\sharp},G,F^ {\sharp}\}$, we have  the descending recursive relation
\begin{equation}\label{1512180056}
S_{(r,s)}^{(m_{\mathsf{i}},\ldots,m_{\mathsf{d}})}(t_{\mathsf{i}},\ldots,t_{\mathsf{d}})
=\tsum_{\gamma_{\mathsf{i}}=\lceil rm_{\mathsf{i}}\rceil}^{\lfloor sm_{\mathsf{i}}\rfloor}\mathrm{e}^{\mathbf{i}\gamma_{\mathsf{i}}t_{\mathsf{i}}}
S_{(r,(\gamma_{\mathsf{i}}/m_{\mathsf{i}}))}^{(m_{\mathsf{i}+1},\ldots,m_{\mathsf{d}})}(t_{\mathsf{i}+1},\ldots,t_{\mathsf{d}}),\quad 1\leq \mathsf{i}\leq \mathsf{d}-2.
\end{equation}

Equality \eqref{1605180017} means that we have \eqref{1501042252} with $(m_{\mathsf{d}-1},m_{\mathsf{d}})$ in place of $\vect{m}$, and $(t_{\mathsf{d}-1},t_{\mathsf{d}})$ in place of $\vect{t}$.
Thus, induction argument using the relation \eqref{1512180056} for $S\in \{D,G,D^{\sharp},F^{\sharp}\}$ implies that for $\mathsf{i}\in\{\mathsf{d}-2,\ldots,2,1\}$ we have  \eqref{1501042252} with $(m_{\mathsf{i}},\ldots,m_{\mathsf{d}})$ in place of $\vect{m}$, and $(t_{\mathsf{i}},\ldots,t_{\mathsf{d}})$  in place of $\vect{t}$. In particular, for $\mathsf{i}=1$, we have \eqref{1501042252}.
\end{proof}

Next, for  $1\leq \mathsf{k}\leq \mathsf{d}-1$, we  introduce
\[D^{\flat,(\vect{m})}_{\mathsf{k},(r,s)}(\vect{t})=D_{(r,s)}^{(m_1,\ldots,m_{\mathsf{k}-1},m_{\mathsf{k}+1},\ldots,m_{\mathsf{d}})}(t_1,\ldots,t_{\mathsf{k}-1},t_{\mathsf{k}+1}+t_{\mathsf{k}}m_{\mathsf{k}}/m_{\mathsf{k}+1},t_{\mathsf{k}+2},\ldots,t_{\mathsf{d}}),\]
and
\[\Delta^{\flat,(\vect{m})}_{\mathsf{k},(r,s)}(\vect{t})
=D^{\flat,(\vect{m})}_{\mathsf{k},(r,s)}(\vect{t})-D^{\circ,(\vect{m})}_{\mathsf{k},(r,s)}(\vect{t}),\]
and, using \eqref{1505202138}, we set
\begin{equation*}
  \begin{split}
     F_{\mathsf{k},(r,s)}^{\flat,(\vect{m})}(\vect{t})=
\dfrac1{\mathrm{e}^{\mathbf{i}t_{\mathsf{k}}}-1}\displaystyle\sum\nolimits^{\circ\mathsf{k}}
&\mathrm{e}^{\mathbf{i}
(\gamma_1t_1+\ldots+\gamma_{\mathsf{k}-1}t_{\mathsf{k}-1}+\gamma_{\mathsf{k}+2}t_{\mathsf{k}+2}+\ldots+\gamma_{\mathsf{d}}t_{\mathsf{d}})}\\
&\times\mathrm{e}^{\mathbf{i}\gamma_{\mathsf{k}+1}(t_{\mathsf{k}+1}+t_{\mathsf{k}} m_{\mathsf{k}}/m_{\mathsf{k}+1})}\left(\mathrm{e}^{\mathbf{i}\llceil\gamma_{\mathsf{k}+1}m_{\mathsf{k}}/m_{\mathsf{k}+1}\rrceil t_{\mathsf{k}}}-1\right).
   \end{split}
\end{equation*}
We also denote
\[G^{(\vect{m})}_{\mathsf{k},(r,s)}(\vect{t})=\dfrac1{\mathrm{e}^{\mathbf{i}t_{\mathsf{k}}}-1}\left\{
  \begin{array}{cl} (\mathrm{e}^{\mathbf{i}(\lfloor sm_1\rfloor+1) t_1}-1)D^{\circ,(\vect{m})}_{1,(r,s)}(\vect{t})-\Delta^{\flat,(\vect{m})}_{1,(r,s)}(\vect{t})& \text{if}\quad \mathsf{k}=1,\\
\Delta^{\sharp,(\vect{m})}_{\mathsf{d},(r,s)}(\vect{t})-(\mathrm{e}^{\mathbf{i}\lceil rm_{\mathsf{d}}\rceil t_{\mathsf{d}}}-1)D^{\circ,(\vect{m})}_{\mathsf{d},(r,s)}(\vect{t})& \text{if} \quad \mathsf{k}=\mathsf{d},\\
\Delta^{\sharp,(\vect{m})}_{\mathsf{k},(r,s)}(\vect{t})-\Delta^{\flat,(\vect{m})}_{\mathsf{k},(r,s)}(\vect{t}) &\hspace*{-1.85em}\text{if} \quad 2\leq \mathsf{k}\leq \mathsf{d}-1,
\end{array}\right.\]

\[F^{(\vect{m})}_{\mathsf{k},(r,s)}(\vect{t})=\left\{
\begin{array}{cll} -F_{1,(r,s)}^{\flat,(\vect{m})}(\vect{t})&\text{if} & \hspace*{-0.5em}\mathsf{k}=1,\\
F_{\mathsf{d},(r,s)}^{\sharp,(\vect{m})}(\vect{t}) &\text{if} & \hspace*{-0.5em}\mathsf{k}=\mathsf{d},\\
F_{\mathsf{k},(r,s)}^{\sharp,(\vect{m})}(\vect{t})-F_{\mathsf{k},(r,s)}^{\flat,(\vect{m})}(\vect{t})&\text{if} \quad 2 \leq&\hspace*{-0.5em} \mathsf{k}\leq \mathsf{d}-1,
\end{array}\right.\]
and
\[H^{(\vect{m})}_{\mathsf{k},(r,s)}(\vect{t})=\left\{
\begin{array}{cll} 0&\text{if} & \hspace*{-0.5em}\mathsf{k}=1,\\
D_{\mathsf{k},(r,s)}^{\sharp,(\vect{m})}(\vect{t})&\text{if} \quad 2 \leq& \hspace*{-0.5em} \mathsf{k}\leq \mathsf{d}.
\end{array}\right.\]

%\newpage

\begin{proposition}\label{A1511231547}
Let  $\vect{m}\in(0,\infty)^{\mathsf{d}}$, $r,s \in \mathbb{R}$, $r<s$, and $\mathsf{k}\in\{1,\ldots,\mathsf{d}\}$. Then
\begin{equation}\label{1606182110}
D^{(\vect{m})}_{(r,s)}(\vect{t})=G^{(\vect{m})}_{\mathsf{k},(r,s)}(\vect{t})+H^{(\vect{m})}_{\mathsf{k},(r,s)}(\vect{t})
+F^{(\vect{m})}_{\mathsf{k},(r,s)}(\vect{t}).
\end{equation}
\end{proposition}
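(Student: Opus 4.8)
The plan is to prove \eqref{1606182110} by a case analysis on $\mathsf{k}$, reducing the cases $\mathsf{k}=\mathsf{d}$ and $\mathsf{k}=1$ to the already-established Proposition \ref{1601051328} and its mirror image, and then handling the intermediate cases $2\le\mathsf{k}\le\mathsf{d}-1$ by combining a "$\sharp$-type" splitting (peeling off the coordinate $t_{\mathsf{k}}$ by shifting it into $t_{\mathsf{k}-1}$) with a "$\flat$-type" splitting (shifting $t_{\mathsf{k}}$ into $t_{\mathsf{k}+1}$ instead). First I would dispose of $\mathsf{k}=\mathsf{d}$: here $H^{(\vect{m})}_{\mathsf{d},(r,s)}=D^{\sharp,(\vect{m})}_{\mathsf{d},(r,s)}=D^{\sharp,(\vect{m})}_{(r,s)}$, $F^{(\vect{m})}_{\mathsf{d},(r,s)}=F^{\sharp,(\vect{m})}_{(r,s)}$, and $G^{(\vect{m})}_{\mathsf{d},(r,s)}=G^{(\vect{m})}_{(r,s)}$ by the very definitions, so \eqref{1606182110} is literally \eqref{1501042252} and there is nothing new to prove. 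The case $\mathsf{k}=1$ is the reflection of this: one runs the summation over $\gamma_1$ from the top index $\lfloor sm_1\rfloor$ downward instead of from $\lceil rm_1\rceil$ upward, i.e. one applies the ascending analogue of the recursion; equivalently, substituting $\gamma_1\mapsto \lceil rm_1\rceil+\lfloor sm_1\rfloor-\gamma_1$ and $t_1\mapsto -t_1$ turns a $\flat$-splitting at the bottom into the $\sharp$-splitting already treated. I would state this symmetry once and invoke it.

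The core of the argument is the range $2\le\mathsf{k}\le\mathsf{d}-1$. The idea is that we can write $D^{(\vect{m})}_{(r,s)}$ in two ways: via the $\sharp$-splitting at index $\mathsf{k}$, which by the same Abel-summation identity used in the proof of Proposition \ref{1601051328} (now applied to the $\gamma_{\mathsf{k}}$-summation, shifting into $t_{\mathsf{k}-1}$) gives a telescoping plus error
\begin{equation*}
D^{(\vect{m})}_{(r,s)}(\vect{t})=\frac{\mathrm{e}^{\mathbf{i}t_{\mathsf{k}}}}{\mathrm{e}^{\mathbf{i}t_{\mathsf{k}}}-1}\,\Delta^{\sharp,(\vect{m})}_{\mathsf{k},(r,s)}(\vect{t})+\frac{1}{\mathrm{e}^{\mathbf{i}t_{\mathsf{k}}}-1}\Big(\text{boundary term}\Big)+D^{\sharp,(\vect{m})}_{\mathsf{k},(r,s)}(\vect{t})+F^{\sharp,(\vect{m})}_{\mathsf{k},(r,s)}(\vect{t}),
\end{equation*}
and via the analogous $\flat$-splitting at index $\mathsf{k}$, shifting $t_{\mathsf{k}}$ into $t_{\mathsf{k}+1}$, which gives a parallel identity involving $\Delta^{\flat,(\vect{m})}_{\mathsf{k},(r,s)}$ and $F^{\flat,(\vect{m})}_{\mathsf{k},(r,s)}$. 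Subtracting the two representations (or rather: averaging so that the singular $\tfrac{1}{\mathrm{e}^{\mathbf{i}t_{\mathsf{k}}}-1}$-prefactors recombine) makes the $\tfrac{\mathrm{e}^{\mathbf{i}t_{\mathsf{k}}}}{\mathrm{e}^{\mathbf{i}t_{\mathsf{k}}}-1}$ and $\tfrac{1}{\mathrm{e}^{\mathbf{i}t_{\mathsf{k}}}-1}$ pieces collapse to the single clean expression $\tfrac{1}{\mathrm{e}^{\mathbf{i}t_{\mathsf{k}}}-1}\big(\Delta^{\sharp}_{\mathsf{k}}-\Delta^{\flat}_{\mathsf{k}}\big)=G^{(\vect{m})}_{\mathsf{k},(r,s)}$, while $D^{\sharp,(\vect{m})}_{\mathsf{k},(r,s)}=H^{(\vect{m})}_{\mathsf{k},(r,s)}$ survives and the $F$-pieces combine to $F^{\sharp}_{\mathsf{k}}-F^{\flat}_{\mathsf{k}}=F^{(\vect{m})}_{\mathsf{k},(r,s)}$. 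As in Proposition \ref{1601051328}, the passage from the two-variable prototype (in variables $(t_{\mathsf{k}-1},t_{\mathsf{k}},t_{\mathsf{k}+1})$) to the full $\mathsf{d}$-variable statement is by the descending recursion \eqref{1512180056} applied with $S\in\{D,G,H,F\}$.

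The main obstacle is bookkeeping the boundary terms so that they cancel exactly. In the $\sharp$-splitting the boundary contribution at the lower end $\gamma_{\mathsf{k}}=\lceil rm_{\mathsf{k}}\rceil$ carries a factor $\mathrm{e}^{\mathbf{i}\lceil rm_{\mathsf{k}}\rceil t_{\mathsf{k}}}$ acting on $D^{\circ}_{\mathsf{k}}$, whereas in the $\flat$-splitting the surviving boundary is at the upper end $\gamma_{\mathsf{k}}=\lfloor sm_{\mathsf{k}}\rfloor$ with factor $\mathrm{e}^{\mathbf{i}(\lfloor sm_{\mathsf{k}}\rfloor+1)t_{\mathsf{k}}}$; one must check that after the recombination these do not leave a residue — precisely the reason the definition of $G^{(\vect{m})}_{\mathsf{k},(r,s)}$ is $\Delta^{\sharp}_{\mathsf{k}}-\Delta^{\flat}_{\mathsf{k}}$ for the intermediate $\mathsf{k}$ but has the extra boundary summand for $\mathsf{k}=1,\mathsf{d}$. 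The cleanest way to avoid fighting with $\tfrac{1}{\mathrm{e}^{\mathbf{i}t_{\mathsf{k}}}-1}$ is to clear denominators: multiply the claimed identity \eqref{1606182110} through by $\mathrm{e}^{\mathbf{i}t_{\mathsf{k}}}-1$, expand both sides into explicit finite sums of exponentials indexed by $\gamma_{\mathsf{k}}$ ranging over $\lceil rm_{\mathsf{k}}\rceil,\dots,\lfloor sm_{\mathsf{k}}\rfloor$, and verify that the coefficients of each basis exponential $\mathrm{e}^{\mathbf{i}\gamma_{\mathsf{k}}t_{\mathsf{k}}}$ match — a telescoping check using $\mathrm{e}^{\mathbf{i}\gamma_{\mathsf{k}}t_{\mathsf{k}}}\mathrm{e}^{\mathbf{i}(\lfloor\gamma_{\mathsf{k}}m_{\mathsf{k}}/m_{\mathsf{k}\pm1}\rfloor)t_{\mathsf{k}\pm1}}$-type identities exactly as displayed at the end of the proof of Proposition \ref{1601051328}. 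This reduces the whole proposition to elementary algebra of geometric-type sums, with the case split above organizing which boundary term appears where.
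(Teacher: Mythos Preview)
Your reduction of the case $\mathsf{k}=\mathsf{d}$ to Proposition \ref{1601051328} is correct and matches the paper. However, two concrete parts of your plan do not work as written.

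First, the reflection you propose for $\mathsf{k}=1$, namely $\gamma_1\mapsto\lceil rm_1\rceil+\lfloor sm_1\rfloor-\gamma_1$ together with $t_1\mapsto -t_1$, does not send $\vect{\Xi}^{(\vect{m})}_{(r,s)}$ to itself or to any set of the same form: the constraint $\gamma_2/m_2\le\gamma_1/m_1$ becomes $\gamma_2/m_2\le(\lceil rm_1\rceil+\lfloor sm_1\rfloor-\gamma_1)/m_1$, which is a different inequality. So you cannot transport the $\mathsf{k}=\mathsf{d}$ result to $\mathsf{k}=1$ by this symmetry. Second, for $2\le\mathsf{k}\le\mathsf{d}-1$ you plan to pass from a three-variable prototype in $(t_{\mathsf{k}-1},t_{\mathsf{k}},t_{\mathsf{k}+1})$ to the full statement using only the \emph{descending} recursion \eqref{1512180056}. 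That recursion peels off coordinates with index $\le\mathsf{k}-2$ (replacing $s$ by $\gamma_{\mathsf{i}}/m_{\mathsf{i}}$); it cannot supply the coordinates $\mathsf{k}+2,\ldots,\mathsf{d}$. The paper instead invokes Proposition \ref{1601051328} on the first $\mathsf{k}$ coordinates and then uses the \emph{ascending} recursion \eqref{A1512022134} for $\mathsf{i}\ge\mathsf{k}+1$, where the key point is that the lower boundary term $\tfrac{1}{\mathrm{e}^{\mathbf{i}t_{\mathsf{k}}}-1}\mathrm{e}^{\mathbf{i}\lceil(\gamma_{\mathsf{k}+1}/m_{\mathsf{k}+1})m_{\mathsf{k}}\rceil t_{\mathsf{k}}}D^{\circ}_{\mathsf{k}}$, summed over $\gamma_{\mathsf{k}+1}$, is exactly the $\flat$-part via the identity \eqref{1601022111}. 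Your ``two representations subtracted'' picture is not the mechanism; there is a single identity in which the $\sharp$-boundary term is \emph{reinterpreted} as the $\flat$-contribution once $\gamma_{\mathsf{k}+1}$ enters.

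That said, your fallback (clear denominators, verify coefficients) is sound, and in fact a cleaner route than either plan is available: sum $\gamma_{\mathsf{k}}$ directly as a geometric series inside the full $\sum^{\circ\mathsf{k}}$, obtaining
\[
D^{(\vect{m})}_{(r,s)}(\vect{t})=\sum\nolimits^{\circ\mathsf{k}}\Bigl(\tprod_{\mathsf{j}\neq\mathsf{k}}\mathrm{e}^{\mathbf{i}\gamma_{\mathsf{j}}t_{\mathsf{j}}}\Bigr)\,\frac{\mathrm{e}^{\mathbf{i}(\lfloor\gamma_{\mathsf{k}-1}m_{\mathsf{k}}/m_{\mathsf{k}-1}\rfloor+1)t_{\mathsf{k}}}-\mathrm{e}^{\mathbf{i}\lceil\gamma_{\mathsf{k}+1}m_{\mathsf{k}}/m_{\mathsf{k}+1}\rceil t_{\mathsf{k}}}}{\mathrm{e}^{\mathbf{i}t_{\mathsf{k}}}-1},
\]
then rewrite each numerator exponent via $\lfloor x\rfloor=x-\llfloor x\rrfloor$ and $\lceil x\rceil=x+\llceil x\rrceil$ and split off the $(\mathrm{e}^{\pm\mathbf{i}\llfloor\cdot\rrfloor t_{\mathsf{k}}}-1)$ pieces. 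This gives $G^{(\vect{m})}_{\mathsf{k},(r,s)}+H^{(\vect{m})}_{\mathsf{k},(r,s)}+F^{(\vect{m})}_{\mathsf{k},(r,s)}$ in one step, with the boundary cases $\mathsf{k}=1$ (upper limit $\lfloor sm_1\rfloor$ constant) and $\mathsf{k}=\mathsf{d}$ (lower limit $\lceil rm_{\mathsf{d}}\rceil$ constant) explaining exactly why the extra $D^{\circ}$-summands appear in the definition of $G_{\mathsf{k}}$ only there. No recursion is needed at all.
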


\begin{proof}  In the case  $\mathsf{k}=\mathsf{d}$, the equality \eqref{1606182110}  is proved in Proposition \ref{1601051328}. Let us consider the case $2\leq \mathsf{k}\leq \mathsf{d}-1$.
 By the  definitions of $G^{(\vect{m})}_{\mathsf{d},(r,s)}(\vect{t})$, $\Delta^{\sharp,(\vect{m})}_{\mathsf{d},(r,s)}(\vect{t})$ with $\mathsf{k}$ instead of $\mathsf{d}$, $(m_1,\ldots,m_{\mathsf{k}})$ instead of $\vect{m}$, and $(t_1,\ldots,t_{\mathsf{k}})$ instead of $\vect{t}$, we have
\begin{align*}
&G^{(m_1,\ldots,m_{\mathsf{k}})}_{\mathsf{k},(r,s)}(t_1,\ldots,t_{\mathsf{k}})\\
&\quad\quad\quad\quad=\dfrac1{\mathrm{e}^{\mathbf{i}t_{\mathsf{k}}}-1}\Delta^{\sharp,(m_1,\ldots,m_{\mathsf{k}})}_{\mathsf{k},(r,s)}(t_1,\ldots,t_{\mathsf{k}})-\dfrac{\mathrm{e}^{\mathbf{i}\lceil rm_{\mathsf{k}}\rceil t_{\mathsf{k}}}-1}{\mathrm{e}^{\mathbf{i}t_{\mathsf{k}}}-1}D^{\circ,(m_1,\dots,m_k)}_{\mathsf{k},(r,s)}(t_1,\ldots,t_{\mathsf{k}})\\
&\quad\quad\quad\quad=\dfrac{1}{\mathrm{e}^{\mathbf{i}t_{\mathsf{k}}}-1} D^{\sharp,(m_1,\ldots,m_{\mathsf{k}})}_{\mathsf{k},(r,s)}(t_1,\ldots,t_{\mathsf{k}})-\dfrac{\mathrm{e}^{\mathbf{i}\lceil rm_{\mathsf{k}}\rceil t_{\mathsf{k}}}}{\mathrm{e}^{\mathbf{i}t_{\mathsf{k}}}-1}D^{\circ,(m_1,\dots,m_k)}_{\mathsf{k},(r,s)}(t_1,\ldots,t_{\mathsf{k}}).
\end{align*}
At the same time, Proposition \ref{1601051328} with $(m_1,\ldots,m_{\mathsf{k}})$ instead of $\vect{m}$ and  $(t_1,\ldots,t_{\mathsf{k}})$ instead of $\vect{t}$ gives the equality
\begin{equation}\label{1605201330}\begin{split}
D_{(r,s)}^{(m_1,\ldots,m_{\mathsf{k}})}&(t_1,\ldots,t_{\mathsf{k}})+\dfrac{\mathrm{e}^{\mathbf{i}\lceil rm_{\mathsf{k}}\rceil t_{\mathsf{k}}}}{\mathrm{e}^{\mathbf{i}t_{\mathsf{k}}}-1}D^{\circ,(m_1,\dots,m_k)}_{\mathsf{k},(r,s)}(t_1,\ldots,t_{\mathsf{k}})\\
&=\dfrac{\mathrm{e}^{\mathbf{i}t_{\mathsf{k}}}}{\mathrm{e}^{\mathbf{i}t_{\mathsf{k}}}-1} D^{\sharp,(m_1,\ldots,m_{\mathsf{k}})}_{\mathsf{k},(r,s)}(t_1,\ldots,t_{\mathsf{k}})
+F^{\sharp,(m_1,\ldots,m_{\mathsf{k}})}_{\mathsf{k},(r,s)}(t_1,\ldots,t_{\mathsf{k}}).
\end{split}\end{equation}
 For the functions corresponding to  the symbols $S\in \{D,G,H,F\}$, we have the ascending recursion relation
\begin{equation}\label{A1512022134}S_{\mathsf{k},(r,s)}^{(m_1,\ldots,m_{\mathsf{i}})}(t_1,\ldots,t_{\mathsf{i}})=\tsum_{\gamma_{\mathsf{i}}=\lceil rm_{\mathsf{i}}\rceil}^{\lfloor sm_{\mathsf{i}}\rfloor}\mathrm{e}^{\mathbf{i}\gamma_{\mathsf{i}}t_{\mathsf{i}}}S_{\mathsf{k},((\gamma_{\mathsf{i}}/m_{\mathsf{i}}),s)}^{(m_1,\ldots,m_{\mathsf{i}-1})}(t_1,\ldots,t_{\mathsf{i}-1}),\quad \mathsf{k}+2\leq \mathsf{i}\leq \mathsf{d}.
\end{equation}
Below, we will show that \eqref{1606182110} is satisfied with $(m_1,\ldots,m_{\mathsf{k}+1})$ instead of $\vect{m}$ and $(t_1,\ldots,t_{\mathsf{k}+1})$ instead of $\vect{t}$, i.e.
\begin{equation}\label{1501131319}\begin{split}
D_{(r,s)}^{(m_1,\ldots,m_{\mathsf{k}+1})}(t_1,\ldots,&t_{\mathsf{k}+1}) =
G^{(m_1,\ldots,m_{\mathsf{k}+1})}_{\mathsf{k},(r,s)}(t_1,\ldots,t_{\mathsf{k}+1})\\
&+H^{(m_1,\ldots,m_{\mathsf{k}+1})}_{\mathsf{k},(r,s)}(t_1,\ldots,t_{\mathsf{k}+1})+F^{(m_1,\ldots,m_{\mathsf{k}+1})}_{\mathsf{k},(r,s)}(t_1,\ldots,t_{\mathsf{k}+1}).\end{split}
\end{equation}
If \eqref{1501131319} is shown, then by using induction arguments and the relation \eqref{A1512022134} for $S\in \{D,G,H,F\}$ we obtain \eqref{1606182110}  with $(m_1,\ldots,m_{\mathsf{i}})$ in place of $\vect{m}$, and $(t_1,\ldots,t_{\mathsf{i}})$  in place of $\vect{t}$ for $\mathsf{i}\in\{\mathsf{k}+2,\mathsf{k}+3,\ldots,\mathsf{d}\}$.
In particular, for $\mathsf{i}=\mathsf{d}$ we have formula \eqref{1606182110}.

\medskip

Thus, it remains to show \eqref{1501131319}. By the  definitions of $G^{(\vect{m})}_{\mathsf{k},(r,s)}(\vect{t})$ and $F^{(\vect{m})}_{\mathsf{k},(r,s)}(\vect{t})$
with $(m_1,\ldots,m_{\mathsf{k}+1})$ instead of $\vect{m}$ and $(t_1,\ldots,t_{\mathsf{k}+1})$ instead of $\vect{t}$, we have
\[\begin{split}G^{(m_1,\ldots,m_{\mathsf{k}+1})}_{\mathsf{k},(r,s)}(t_1,\ldots,t_{\mathsf{k}+1})=
\dfrac{D^{\sharp,(m_1,\ldots,m_{\mathsf{k}+1})}_{\mathsf{k},(r,s)}(t_1,\ldots,t_{\mathsf{k}+1})
- D^{\flat,(m_1,\ldots,m_{\mathsf{k}+1})}_{\mathsf{k},(r,s)}(t_1,\ldots,t_{\mathsf{k}+1})}{\mathrm{e}^{\mathbf{i}t_{\mathsf{k}}}-1} \end{split}\]
and
\[F^{(m_1,\ldots,m_{\mathsf{k}+1})}_{\mathsf{k},(r,s)}(t_1,\ldots,t_{\mathsf{k}+1})
=F^{\sharp,(m_1,\ldots,m_{\mathsf{k}+1})}_{\mathsf{k},(r,s)}(t_1,\ldots,t_{\mathsf{k}+1})-F^{\flat,(m_1,\ldots,m_{\mathsf{k}})}_{\mathsf{k},(r,s)}(t_1,\ldots,t_{\mathsf{k}+1}).\]
Therefore, \eqref{1501131319} is equivalent to
\begin{align}
\label{1605201027} D_{(r,s)}^{(m_1,\ldots,m_{\mathsf{k}+1})}&(t_1,\ldots,t_{\mathsf{k}+1})+\dfrac1{\mathrm{e}^{\mathbf{i}t_{\mathsf{k}}}-1}
D^{\flat,(m_1,\ldots,m_{\mathsf{k}+1})}_{\mathsf{k},(r,s)}(\ldots)
+F^{\flat,(m_1,\ldots,m_{\mathsf{k}+1})}_{\mathsf{k},(r,s)}(\ldots)\\
&\nonumber=\dfrac{\mathrm{e}^{\mathbf{i}t_{\mathsf{k}}}}{\mathrm{e}^{\mathbf{i}t_{\mathsf{k}}}-1}
D^{\sharp,(m_1,\ldots,m_{\mathsf{k}+1})}_{\mathsf{k},(r,s)}(t_1,\ldots,t_{\mathsf{k}+1})
+F^{\sharp,(m_1,\ldots,m_{\mathsf{k}+1})}_{\mathsf{k},(r,s)}(t_1,\ldots,t_{\mathsf{k}+1}).
\end{align}
Now, we observe that for   $S\in \{D,D^{\sharp},F^{\sharp}\}$ the equation in
\eqref{A1512022134} is satisfied also for $\mathsf{i}=\mathsf{k}+1$. Hence, \eqref{1605201330} implies that \eqref{1605201027} and, therefore, \eqref{1501131319} is  equivalent to
\begin{equation}\label{1601131350}
\begin{split}
\dfrac1{\mathrm{e}^{\mathbf{i}t_{\mathsf{k}}}-1}&\tsum_{\gamma_{\mathsf{k}+1}=\lceil rm_{\mathsf{k}+1}\rceil}^{\lfloor sm_{\mathsf{k}+1}\rfloor}\mathrm{e}^{\mathbf{i}\gamma_{\mathsf{k}+1}t_{\mathsf{k}+1}}\mathrm{e}^{\mathbf{i}\lceil \gamma_{\mathsf{k}+1}m_{\mathsf{k}}/m_{\mathsf{k}+1}\rceil t_{\mathsf{k}}}D^{\circ,(m_1,\ldots,m_{\mathsf{k}})}_{\mathsf{k},(\gamma_{\mathsf{k}+1}/m_{\mathsf{k}+1},s)}(t_1,\ldots,t_{\mathsf{k}})\\
&=\dfrac1{\mathrm{e}^{\mathbf{i}t_{\mathsf{k}}}-1}D^{\flat,(m_1,\ldots,m_{\mathsf{k}+1})}_{\mathsf{k},(r,s)}(t_1,\ldots,t_{\mathsf{k}+1})+F^{\flat,(m_1,\ldots,m_{\mathsf{k}+1})}_{\mathsf{k},(r,s)}(t_1,\ldots,t_{\mathsf{k}+1}).
\end{split}
\end{equation}
But \eqref{1601131350} easily follows from
\begin{equation}\label{1601022111}
\mathrm{e}^{\mathbf{i}\gamma_{\mathsf{k}+1}t_{\mathsf{k}+1}}\mathrm{e}^{\mathbf{i}\lceil \gamma_{\mathsf{k}+1}m_{\mathsf{k}}/m_{\mathsf{k}+1}\rceil t_{\mathsf{k}}}=\mathrm{e}^{\mathbf{i}\gamma_{\mathsf{k}+1}(t_{\mathsf{k}+1}+t_{\mathsf{k}} m_{\mathsf{k}}/m_{\mathsf{k}+1})}\mathrm{e}^{\mathbf{i}\llceil \gamma_{\mathsf{k}+1}m_{\mathsf{k}}/m_{\mathsf{k}+1}\rrceil t_{\mathsf{k}}}. \end{equation}
Thus, we get \eqref{1605201027} and therefore \eqref{1501131319}.

\medskip

Finally, we consider the case $\mathsf{k}=1$. Equation \eqref{1601022111} yields
\[D_{(r,s)}^{(m_1,m_2)}=G^{(m_1,m_2)}_{1,(r,s)}+F^{(m_1,m_2)}_{1,(r,s)}=G^{(m_1,m_2)}_{1,(r,s)}+H^{(m_1,m_2)}_{1,(r,s)}+F^{(m_1,m_2)}_{1,(r,s)}.\]
Thus, induction arguments and the relation \eqref{A1512022134} for $S\in \{D,G,H,F\}$ yield that for $\mathsf{i}\in\{3,4,\ldots,\mathsf{d}\}$ we have \eqref{1606182110}  with $(m_1,\ldots,m_{\mathsf{i}})$ in place of $\vect{m}$, and $(t_1,\ldots,t_{\mathsf{i}})$  in place of $\vect{t}$. In particular, for $\mathsf{i}=\mathsf{d}$ we have the assertion \eqref{1606182110}.
\end{proof}

%\newpage

\begin{proposition} \label{A1511231706} Let $r,s\in\mathbb{R}$, $r<s$, be fixed.
Then, for all
$\vect{m}\in[1,\infty)^{\mathsf{d}}$ and all $\mathsf{k}\in\{1,\ldots,\mathsf{d}\}$ we have
\[\|G_{\mathsf{k},(r,s)}^{(\vect{m})}\|_{L^1([-\pi,\pi)^{\mathsf{d}})}\lesssim \ln (m_{\mathsf{k}}+1) \|D_{(r,s)}^{(m_1,\ldots,m_{\mathsf{k}-1},m_{\mathsf{k}+1},\ldots,m_{\mathsf{d}})}\|_{L^1([-\pi,\pi)^{\mathsf{d}-1})}.\]
\end{proposition}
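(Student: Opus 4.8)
The plan is to split $G^{(\vect{m})}_{\mathsf{k},(r,s)}$ into the elementary summands exhibited by its definition and to estimate each of them. Write $\vect{m}'=(m_1,\dots,m_{\mathsf{k}-1},m_{\mathsf{k}+1},\dots,m_{\mathsf{d}})$ and $D'=D^{(\vect{m}')}_{(r,s)}$, so that the assertion reads $\|G^{(\vect{m})}_{\mathsf{k},(r,s)}\|_{L^1([-\pi,\pi)^{\mathsf{d}})}\lesssim \ln(m_{\mathsf{k}}+1)\,\|D'\|_{L^1([-\pi,\pi)^{\mathsf{d}-1})}$. Reading off the definition, for $2\le\mathsf{k}\le\mathsf{d}-1$ one has $G^{(\vect{m})}_{\mathsf{k},(r,s)}=(\mathrm{e}^{\mathbf{i}t_{\mathsf{k}}}-1)^{-1}\Delta^{\sharp,(\vect{m})}_{\mathsf{k},(r,s)}-(\mathrm{e}^{\mathbf{i}t_{\mathsf{k}}}-1)^{-1}\Delta^{\flat,(\vect{m})}_{\mathsf{k},(r,s)}$, and for $\mathsf{k}\in\{1,\mathsf{d}\}$ exactly one further summand occurs, of the form $\pm(\mathrm{e}^{\mathbf{i}Nt_{\mathsf{k}}}-1)(\mathrm{e}^{\mathbf{i}t_{\mathsf{k}}}-1)^{-1}D^{\circ,(\vect{m})}_{\mathsf{k},(r,s)}$ with $N=\lfloor sm_1\rfloor+1$ resp.\ $N=\lceil rm_{\mathsf{d}}\rceil$, so that $|N|\lesssim m_{\mathsf{k}}$ because $r,s$ are fixed and $m_{\mathsf{k}}\ge1$. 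For this further summand I use that $D^{\circ,(\vect{m})}_{\mathsf{k},(r,s)}=D'$ does not involve $t_{\mathsf{k}}$, so by Fubini its $L^1([-\pi,\pi)^{\mathsf{d}})$-norm equals $\|(\mathrm{e}^{\mathbf{i}Nt_{\mathsf{k}}}-1)(\mathrm{e}^{\mathbf{i}t_{\mathsf{k}}}-1)^{-1}\|_{L^1([-\pi,\pi))}\,\|D'\|_{L^1([-\pi,\pi)^{\mathsf{d}-1})}$; the one-dimensional factor is the $L^1$-norm of a one-sided Dirichlet-type kernel and hence $\lesssim\ln(|N|+1)\lesssim\ln(m_{\mathsf{k}}+1)$ by the classical one-dimensional estimate (cf.\ \cite{Alimov1992}). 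Thus it remains to bound the $\Delta$-terms.

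Each $\Delta$-term has the form $(\mathrm{e}^{\mathbf{i}t_{\mathsf{k}}}-1)^{-1}\Phi$, where $\Phi(\vect{t})=\psi(\dots,t_j+(m_{\mathsf{k}}/m_j)t_{\mathsf{k}},\dots)-\psi(\dots,t_j,\dots)$ with $\psi=D'$ and with $j=\mathsf{k}-1$ for the $\sharp$-term and $j=\mathsf{k}+1$ for the $\flat$-term; here $\psi$ is a trigonometric polynomial, $2\pi$-periodic in each variable, and, since $\gamma_j/m_j\in[r,s]$ holds on $\vect{\Xi}^{(\vect{m}')}_{(r,s)}$, of degree $\lesssim m_j$ in the variable $t_j$. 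I shall prove that $\|(\mathrm{e}^{\mathbf{i}t_{\mathsf{k}}}-1)^{-1}\Phi\|_{L^1([-\pi,\pi)^{\mathsf{d}})}\lesssim \ln(m_{\mathsf{k}}+1)\,\|\psi\|_{L^1([-\pi,\pi)^{\mathsf{d}-1})}$ for every such $\Phi$; applying this to the $\sharp$- and $\flat$-terms (both with $\psi=D'$) and adding the Dirichlet-kernel bound above gives the proposition. Since $|\mathrm{e}^{\mathbf{i}t_{\mathsf{k}}}-1|^{-1}\lesssim|t_{\mathsf{k}}|^{-1}$ on $[-\pi,\pi)$, I split the $t_{\mathsf{k}}$-integral at $|t_{\mathsf{k}}|=1/m_{\mathsf{k}}$, which lies in $(0,\pi)$ because $m_{\mathsf{k}}\ge1$. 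On the outer range $1/m_{\mathsf{k}}\le|t_{\mathsf{k}}|<\pi$ I bound $|\Phi|$ by the sum of the two shifted copies of $|\psi|$; for fixed $t_{\mathsf{k}}$ the substitution $u=t_j+(m_{\mathsf{k}}/m_j)t_{\mathsf{k}}$ together with the $2\pi$-periodicity of $\psi$ shows that the integral of each copy over the remaining $\mathsf{d}-1$ variables is independent of $t_{\mathsf{k}}$ and, after normalisation, equals $\|\psi\|_{L^1([-\pi,\pi)^{\mathsf{d}-1})}$; integrating $|t_{\mathsf{k}}|^{-1}$ over $1/m_{\mathsf{k}}\le|t_{\mathsf{k}}|<\pi$ then produces precisely the factor $\asymp\ln(m_{\mathsf{k}}+1)$.

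On the inner range $|t_{\mathsf{k}}|<1/m_{\mathsf{k}}$ I use the vanishing of $\Phi$ at $t_{\mathsf{k}}=0$. By the fundamental theorem of calculus $\Phi(\vect{t})=\int_0^{(m_{\mathsf{k}}/m_j)t_{\mathsf{k}}}\partial_j\psi(\dots,t_j+\sigma,\dots)\,\mathsf{d}\sigma$, so Tonelli's theorem and the $2\pi$-periodicity in $t_j$ show that, for fixed $t_{\mathsf{k}}$, the integral of $|\Phi|$ over the variables $t_i$, $i\neq\mathsf{k}$, is $\lesssim (m_{\mathsf{k}}/m_j)\,|t_{\mathsf{k}}|\,\|\partial_j\psi\|_{L^1([-\pi,\pi)^{\mathsf{d}-1})}$ (up to the fixed normalisation); Bernstein's inequality in $L^1$, available because $\psi$ has degree $\lesssim m_j$ in $t_j$, gives $\|\partial_j\psi\|_{L^1}\lesssim m_j\|\psi\|_{L^1}$, whence this quantity is $\lesssim m_{\mathsf{k}}\,|t_{\mathsf{k}}|\,\|\psi\|_{L^1([-\pi,\pi)^{\mathsf{d}-1})}$, and multiplying by $|t_{\mathsf{k}}|^{-1}$ and integrating over $|t_{\mathsf{k}}|<1/m_{\mathsf{k}}$ yields only the bounded factor $O(1)\le\ln(m_{\mathsf{k}}+1)/\ln2$. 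Adding the contributions of the two ranges proves the displayed estimate, and hence the proposition. The one genuinely delicate point is this interplay between the simple pole of $(\mathrm{e}^{\mathbf{i}t_{\mathsf{k}}}-1)^{-1}$ at $t_{\mathsf{k}}=0$ and the first difference $\Phi$: the cut-off scale $|t_{\mathsf{k}}|\sim1/m_{\mathsf{k}}$ is dictated by $\Phi$, and near $t_{\mathsf{k}}=0$ one must trade one order of smallness of $\Phi$ for a Bernstein factor $m_j$, which is exactly cancelled by the dilation ratio $m_{\mathsf{k}}/m_j$ built into the argument shift of $\Phi$. The remaining ingredients — changes of variables on the torus, Fubini/Tonelli, and the $L^1$-behaviour of the Dirichlet kernel — are routine.
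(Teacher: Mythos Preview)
Your proof is correct and follows essentially the same approach as the paper's: split off the Dirichlet-type summand for $\mathsf{k}\in\{1,\mathsf{d}\}$ and bound it by the classical one-dimensional estimate, then for each $\Delta$-term split the $t_{\mathsf{k}}$-integral at scale $\sim 1/m_{\mathsf{k}}$, use shift-invariance on the outer range, and the $L^1$-Lipschitz bound together with Bernstein's inequality on the inner range. The only cosmetic difference is your cut-off at $1/m_{\mathsf{k}}$ versus the paper's $1/(m_{\mathsf{k}}+1)$, and your phrasing for $\mathsf{k}\in\{1,\mathsf{d}\}$ is slightly imprecise (there only \emph{one} of $\Delta^\sharp,\Delta^\flat$ is present, not both plus a further term), but this does not affect the argument.
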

\begin{proof}
By using the inequality
\begin{equation}\label{star2}
   \frac1{|\mathrm{e}^{\mathbf{i}t}-1|}\lesssim \frac1{|t|},
   \quad \text{$t\in [-\pi,\pi)\setminus\{0\}$},
\end{equation}
it is easy to see that for  all $m_1,m_{\mathsf{d}}\in [1,\infty)$ we have
\begin{equation}\label{kv1}
  \int_{[-\pi,\pi)} \left|\dfrac{\mathrm{e}^{\mathbf{i}(\lfloor rm_{1}\rfloor+1) t_{1}}-1}{\mathrm{e}^{\mathbf{i}t_{1}}-1}\right|\mathrm{d}t_{1}\lesssim\ln (m_{1}+1)
\end{equation}
and
\begin{equation}\label{kv2}
 \int_{[-\pi,\pi)} \left|\dfrac{\mathrm{e}^{\mathbf{i}\lceil rm_{\mathsf{d}}\rceil t_{\mathsf{d}}}-1}{\mathrm{e}^{\mathbf{i}t_{\mathsf{d}}}-1}\right|\mathsf{d}t_{\mathsf{d}}\lesssim\ln (m_{\mathsf{d}}+1).
\end{equation}

Let $\mathsf{k}\in\{2,\ldots,\mathsf{d}\}$. Denoting
\begin{equation} \label{1502270544}
\vect{A}_{\mathsf{k}}(m_{\mathsf{k}})=\left\{\left.\,\vect{t}\in [-\pi,\pi)^{\mathsf{d}}\,\right|\,|t_{\mathsf{k}}|\leq \frac1{m_{\mathsf{k}}+1}\right\},\quad \vect{B}_{\mathsf{k}}(m_{\mathsf{k}})=[-\pi,\pi)^{\mathsf{d}}\setminus \vect{A}_{\mathsf{k}}(m_{\mathsf{k}}),
\end{equation}
we have
\begin{equation} \label{1505202552}\int_{[-\pi,\pi)^{\mathsf{d}}}\left|\dfrac{\Delta^{\sharp,(\vect{m})}_{\mathsf{k},(r,s)}(\vect{t})}{\mathrm{e}^{\mathbf{i}t_{\mathsf{k}}}-1}\right|\mathsf{d}\vect{t}=\int_{\vect{A}_{\mathsf{k}}(m_{\mathsf{k}})}\left|\dfrac{\Delta^{\sharp,(\vect{m})}_{\mathsf{k},(r,s)}(\vect{t})}{\mathrm{e}^{\mathbf{i}t_{\mathsf{k}}}-1}\right|\mathsf{d}\vect{t}+\int_{\vect{B}_{\mathsf{k}}(m_{\mathsf{k}})}\left|\dfrac{\Delta^{\sharp,(\vect{m})}_{\mathsf{k},(r,s)}(\vect{t})}{\mathrm{e}^{\mathbf{i}t_{\mathsf{k}}}-1}\right|\mathsf{d}\vect{t}=I+J.
\end{equation}
By using \eqref{star1} and \eqref{star2}, for all $\vect{m}\in[1,\infty)^{\mathsf{d}}$, we obtain
\begin{align}
\nonumber J&\leq \int_{\vect{B}_{\mathsf{k}}(m_{\mathsf{k}})}\dfrac{|D^{\sharp,(\vect{m})}_{\mathsf{k},(r,s)}(\vect{t})|+|D^{\circ,(\vect{m})}_{\mathsf{k},(r,s)}(\vect{t})|}{|\mathrm{e}^{\mathbf{i}t_{\mathsf{k}}}-1|}\mathsf{d}\vect{t}
\lesssim\int_{\vect{B}_{\mathsf{k}}(m_{\mathsf{k}})}\dfrac1{|t_{\mathsf{k}}|}|D^{\circ,(\vect{m})}_{\mathsf{k},(r,s)}(\vect{t})|\mathsf{d}\vect{t}\\
\label{1502272315}&\lesssim \ln (m_{\mathsf{k}}+1) \|D_{(r,s)}^{(m_1,\ldots,m_{\mathsf{k}-1},m_{\mathsf{k}+1},\ldots,m_{\mathsf{k}})}\|_{L^1([-\pi,\pi)^{\mathsf{d}-1})}
\end{align}
and
\begin{equation} \label{1505201601}
I\lesssim \int_{\vect{A}_{\mathsf{k}}(m_{\mathsf{k}})}\dfrac1{|t_{\mathsf{k}}|}\left|\Delta^{\sharp,(\vect{m})}_{\mathsf{k},(r,s)}
(\vect{t})\right|\mathsf{d}\vect{t}.
\end{equation}

In the following, we will use the next two well-known statements:\\
{\textit{For all continuously differentiable $2\pi$-periodic  $g:\,\mathbb{R}\to \mathbb{R}$ and $\delta\in\mathbb{R}$ (see \cite[p. 46]{DevoreLorentz1993}):
\begin{equation} \label{1601041903}
\|g(\,\cdot+\delta)-g\|_{L^1([-\pi,\pi))}\leq |\delta|\|g'\|_{L^1([-\pi,\pi))}.
\end{equation}
 For  all trigonometric polynomials $\tau_n$ of degree at most $n$, one has (see \cite[p. 102]{DevoreLorentz1993}):
\begin{equation} \label{1501041845}
\|\tau'_n\|_{L^1([-\pi,\pi))}\leq n\|\tau_n\|_{L^1([-\pi,\pi))}.
\end{equation}}}

Denoting $D^{\circ}_{(r,s)}=D_{(r,s)}^{(m_1,\ldots,m_{\mathsf{k}-1},m_{\mathsf{k}+1},\ldots,m_{\mathsf{k}})}$ and $\delta=t_{\mathsf{k}}m_{\mathsf{k}}/m_{\mathsf{k}-1}$, we can write
\[
\Delta^{\sharp,(\vect{m})}_{\mathsf{k},(r,s)}(\vect{t})=
D^{\circ}_{(r,s)}(t_1,\ldots,t_{\mathsf{k}-2},t_{\mathsf{k}-1}+
\delta,t_{\mathsf{k}+1},\ldots,t_{\mathsf{d}})-D^{\circ}_{(r,s)}(t_1,\ldots,,t_{\mathsf{k}-1},t_{\mathsf{k}+1},\ldots,t_{\mathsf{d}}).
\]
Since the  degree of the trigonometric polynomial
$D^{\circ}_{(r,s)}(t_1,\ldots,t_{\mathsf{k}-1},t_{\mathsf{k}+1},\ldots,t_{\mathsf{d}})$
in the variable $t_{\mathsf{k}-1}$
is at most $\lfloor sm_{\mathsf{k}-1}\rfloor$,
using \eqref{1601041903} and \eqref{1501041845},  we obtain
\[\int_{[-\pi,\pi)} |\Delta^{\sharp,(\vect{m})}_{\mathsf{k},(r,s)}(\vect{t})|\,\mathsf{d}t_{\mathsf{k}-1}\leq  |t_{\mathsf{k}}| \dfrac{m_{\mathsf{k}}}{m_{\mathsf{k}-1}} sm_{\mathsf{k}-1} \int_{[-\pi,\pi)}\left|D^{\circ}_{(r,s)}(t_1,\ldots,t_{\mathsf{k}-1},t_{\mathsf{k}+1},\ldots,t_{\mathsf{k}})\right|\mathrm{d}t_{\mathsf{k}-1}.\]
Thus, since \eqref{1505201601}, we have for  all $\vect{m}\in[1,\infty)^{\mathsf{d}}$ that
\[\displaystyle I\lesssim m_{\mathsf{k}}\displaystyle\int\nolimits_{-1/(m_{\mathsf{k}}+1)}^{1/(m_{\mathsf{k}}+1)}\mathrm{d}t_{\mathsf{k}}\,\|D^{\circ}_{(r,s)}\|_{L^1([-\pi,\pi)^{\mathsf{d}-1})}\lesssim \|D_{(r,s)}^{(m_1,\ldots,m_{\mathsf{k}-1},m_{\mathsf{k}+1},\ldots,m_{\mathsf{k}})}\|_{L^1([-\pi,\pi)^{\mathsf{d}-1})}.
\]
Combining this with \eqref{1502272315}, \eqref{1505202552} yields: For  $\vect{m}\in[1,\infty)^{\mathsf{d}}$, $\mathsf{k}\in\{2,\ldots,\mathsf{d}\}$, we have
\begin{equation}\label{1512180154}
\int_{[-\pi,\pi)^{\mathsf{d}}}\left|\dfrac{\Delta^{\sharp,(\vect{m})}_{\mathsf{k},(r,s)}(\vect{t})}{\mathrm{e}^{\mathbf{i}t_{\mathsf{k}}}-1}\right|\mathsf{d}\vect{t}\lesssim \ln (m_{\mathsf{k}}+1) \|D_{(r,s)}^{(m_1,\ldots,m_{\mathsf{k}-1},m_{\mathsf{k}+1},\ldots,m_{\mathsf{k}})}\|_{L^1([-\pi,\pi)^{\mathsf{d}-1})}.
\end{equation}

In analogy to \eqref{1512180154}, we derive that for all $\vect{m}\in[1,\infty)^{\mathsf{d}}$ and all $\mathsf{k}\in\{1,\ldots,\mathsf{d}-1\}$
\begin{equation} \label{1502272318}\int_{[-\pi,\pi)^{\mathsf{d}}}\left|\dfrac{\Delta^{\flat,(\vect{m})}_{\mathsf{k},(r,s)}(\vect{t})}{\mathrm{e}^{\mathbf{i}t_{\mathsf{k}}}-1}\right|\mathsf{d}\vect{t}\lesssim \ln (m_{\mathsf{k}}+1) \|D_{(r,s)}^{(m_1,\ldots,m_{\mathsf{k}-1},m_{\mathsf{k}+1},\ldots,m_{\mathsf{k}})}\|_{L^1([-\pi,\pi)^{\mathsf{d}-1})}.
\end{equation}
Finally, having in mind  the definition of $G_{\mathsf{k},(r,s)}^{(\vect{m})}$, we finish the proof by combining the inequalities \eqref{1512180154}, \eqref{1502272318}, \eqref{kv1}, and \eqref{kv2}.
\end{proof}

\begin{proposition}\label{A1511231707} Let $r,s\in\mathbb{R}$, $r<s$,  be fixed. Then,

a) for  all $\vect{m}\in\mathbb{N}^{\mathsf{d}}$ and all $\mathsf{k}\in\{2,\ldots,\mathsf{d}\}$, we have
\begin{equation}\label{1601022312}
\|F_{\mathsf{k},(r,s)}^{\sharp,(\vect{m})}\|_{L^1([-\pi,\pi)^{\mathsf{d}})}\lesssim \ln \left(m_{\mathsf{k}-1}+1\right) \|D_{(r,s)}^{(m_1,\ldots,m_{\mathsf{k}-1},m_{\mathsf{k}+1},\ldots,m_{\mathsf{d}})}\|_{L^1([-\pi,\pi)^{\mathsf{d}-1})},
\end{equation}

b) for all $\vect{m}\in\mathbb{N}^{\mathsf{d}}$ and all $\mathsf{k}\in\{1,\ldots,\mathsf{d}-1\}$, we have
\begin{equation}\label{1602280059}\|F_{\mathsf{k},(r,s)}^{\flat, (\vect{m})}\|_{L^1([-\pi,\pi)^{\mathsf{d}})}\lesssim \ln \left(m_{\mathsf{k}+1}+1\right) \|D_{(r,s)}^{(m_1,\ldots,m_{\mathsf{k}-1},m_{\mathsf{k}+1},\ldots,m_{\mathsf{d}})}\|_{L^1([-\pi,\pi)^{\mathsf{d}-1})}.
\end{equation}
\end{proposition}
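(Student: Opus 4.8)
The plan is to prove a) and to deduce b) from it by a reflection, and to prove a) by using the recursion relations to peel off dimensions until only two variables remain. The hypothesis $\vect m\in\mathbb N^{\mathsf d}$ (rather than $\vect m\in[1,\infty)^{\mathsf d}$ as in Proposition~\ref{A1511231706}) will be used in an essential way, because it is integrality of $m_{\mathsf k-1}$ and $m_{\mathsf k}$ that makes the sequence $\gamma\mapsto\llfloor\gamma m_{\mathsf k}/m_{\mathsf k-1}\rrfloor$ periodic. For b), the substitution $\vect\gamma\mapsto-\vect\gamma$ together with the reversal $\mathsf i\mapsto\mathsf d+1-\mathsf i$ of the coordinate order maps $\vect\Xi^{(\vect m)}_{(r,s)}$ bijectively onto $\vect\Xi^{(\vect m')}_{(-s,-r)}$ with $m'_{\mathsf i}=m_{\mathsf d+1-\mathsf i}$, and—using $\llceil x\rrceil=\llfloor-x\rrfloor$—it turns $F^{\flat,(\vect m)}_{\mathsf k,(r,s)}(\vect t)$ into $F^{\sharp,(\vect m')}_{\mathsf d+1-\mathsf k,(-s,-r)}(-\vect t)$, the index $\mathsf k+1$ passing to $(\mathsf d+1-\mathsf k)-1$ and $m_{\mathsf k+1}$ to $m'_{(\mathsf d+1-\mathsf k)-1}=m_{\mathsf k+1}$; the Dirichlet kernel $D^{(m_1,\ldots,m_{\mathsf k-1},m_{\mathsf k+1},\ldots,m_{\mathsf d})}_{(r,s)}$ is transformed the same way. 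Since the $L^1([-\pi,\pi)^{\mathsf d})$-norm is invariant under permutations of the variables and under $\vect t\mapsto-\vect t$, the estimate \eqref{1602280059} at the index $\mathsf k$ is equivalent to \eqref{1601022312} at the mirrored index, so it suffices to prove a).

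For a), one integrates out successively $t_{\mathsf d},\ldots,t_{\mathsf k+1}$ and $t_1,\ldots,t_{\mathsf k-2}$ using \eqref{1512180056} for $S\in\{D^\sharp,F^\sharp\}$ and the corresponding relations for $S\in\{D,D^\circ\}$, together with \eqref{A1512022134} (which, as noted, holds for $S\in\{D,D^\sharp,F^\sharp\}$ also for $\mathsf i=\mathsf k+1$). At each step $\tsum_{\gamma_{\mathsf i}}\mathrm e^{\mathbf i\gamma_{\mathsf i}t_{\mathsf i}}D^{(\ldots)}_{(\ldots)}(\ldots)$ re-builds the Dirichlet kernel of the enlarged index set, so the right-hand side of \eqref{1601022312} is reproduced, while $(r,s)$ is replaced by $(r,\gamma_{\mathsf i}/m_{\mathsf i})$, resp. $(\gamma_{\mathsf i}/m_{\mathsf i},s)$; hence one must carry the estimate uniformly in all $0\le r'<s'$ with $r',s'\in[r,s]$. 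This reduces \eqref{1601022312} to the two-dimensional bound
\[
\bigl\|F^{\sharp,(m_1,m_2)}_{(r',s')}\bigr\|_{L^1([-\pi,\pi)^2)}\ \lesssim\ \ln(m_1+1)\,\bigl\|D^{(m_1)}_{(r',s')}\bigr\|_{L^1([-\pi,\pi))}\qquad(m_1,m_2\in\mathbb N),
\]
to be proved uniformly in $0\le r'<s'$.

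To prove this, put $a=\lceil r'm_1\rceil$, $b=\lfloor s'm_1\rfloor$; writing out $F^{\sharp,(m_1,m_2)}_{(r',s')}$, substituting $u=t_1+t_2m_2/m_1$ in the inner integral (a translation, so the integral is unchanged) and using $|\mathrm e^{\mathbf it_2}-1|^{-1}\lesssim|t_2|^{-1}$ yields
\[
\bigl\|F^{\sharp,(m_1,m_2)}_{(r',s')}\bigr\|_{L^1([-\pi,\pi)^2)}\ \lesssim\ \int_{-\pi}^{\pi}\frac1{|t_2|}\Bigl\|\tsum_{\gamma=a}^{b}\mathrm e^{\mathbf i\gamma u}\bigl(\mathrm e^{-\mathbf i\llfloor\gamma m_2/m_1\rrfloor t_2}-1\bigr)\Bigr\|_{L^1([-\pi,\pi),\mathrm du)}\mathrm dt_2 .
\]
Split into $|t_2|\le1/(m_1+1)$ and $|t_2|>1/(m_1+1)$. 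On the first set every coefficient of the inner sum is $O(|t_2|)$ (since $\llfloor\cdot\rrfloor<1$), so its $L^1$-norm is at most its $L^2$-norm $\lesssim\sqrt{b-a+1}\,|t_2|\lesssim\sqrt{m_1}\,|t_2|$, whence this part is $\lesssim\sqrt{m_1}/(m_1+1)\lesssim1\le\|D^{(m_1)}_{(r',s')}\|_{L^1}$. On the second set one writes the inner sum as $\tsum_\gamma\mathrm e^{\mathbf i\gamma u}\mathrm e^{-\mathbf i\llfloor\gamma m_2/m_1\rrfloor t_2}-\tsum_\gamma\mathrm e^{\mathbf i\gamma u}$; the second part is $D^{(m_1)}_{(r',s')}(u)$ and contributes $\lesssim\ln(m_1+1)\,\|D^{(m_1)}_{(r',s')}\|_{L^1}$ via $\int_{|t_2|>1/(m_1+1)}|t_2|^{-1}\mathrm dt_2\lesssim\ln(m_1+1)$ (as in the estimate of $J$ in Proposition~\ref{A1511231706}), while the first part is handled using the key structural fact that, $m_1,m_2$ being integers, $\gamma\mapsto\mathrm e^{-\mathbf i\llfloor\gamma m_2/m_1\rrfloor t_2}$ is periodic in $\gamma$ with period $P:=m_1/\gcd(m_1,m_2)$, so that up to two boundary blocks of length $<P$ (each treated by the $L^2$-bound above) the sum factors as a Dirichlet-type kernel in $Pu$ with $\lesssim s'-r'$ terms times a trigonometric polynomial of degree $<P\le m_1$; the $\ln(m_1+1)$ then comes from the Dirichlet-kernel bound for the first factor, and the inequalities \eqref{1601041903}, \eqref{1501041845} applied to the low-degree factor absorb the singularity in $t_2$.

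The technically most delicate point is precisely the first part on $\{|t_2|>1/(m_1+1)\}$: a naive term-by-term bound, or a single Abel summation, only yields an estimate of order $m_2$ (or $m_1$) instead of $\ln(m_1+1)$, so one must genuinely exploit the arithmetic of $\gamma\mapsto\llfloor\gamma m_2/m_1\rrfloor$ through its period $P$ and carefully control the $L^1$-norm of the product of the $P$-periodic factor and the degree-$<P$ factor (and of the two boundary blocks). A secondary difficulty is keeping all implied constants uniform in the auxiliary parameters $(r',s')$ produced by the recursive reduction.
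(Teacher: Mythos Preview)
The reduction from dimension $\mathsf d$ to dimension $2$ does not work. The recursion \eqref{A1512022134} writes $F^{\sharp,(\vect m)}_{\mathsf k,(r,s)}=\tsum_{\gamma_{\mathsf d}}\mathrm{e}^{\mathbf{i}\gamma_{\mathsf d}t_{\mathsf d}}F^{\sharp,(m_1,\ldots,m_{\mathsf{d}-1})}_{\mathsf k,(\gamma_{\mathsf d}/m_{\mathsf d},s)}$, but taking $L^1$-norms via the triangle inequality only yields $\tsum_{\gamma_{\mathsf d}}\|F^{\sharp,(m_1,\ldots,m_{\mathsf{d}-1})}_{\mathsf k,(\gamma_{\mathsf d}/m_{\mathsf d},s)}\|_{L^1}$; the analogous recursion for $D$ gives the inequality in the \emph{opposite} direction, namely $\|D^{(m_1,\ldots,m_{\mathsf{k}-1},m_{\mathsf{k}+1},\ldots,m_{\mathsf d})}_{(r,s)}\|_{L^1}\le\tsum_{\gamma_{\mathsf d}}\|D^{(m_1,\ldots,m_{\mathsf{k}-1},m_{\mathsf{k}+1},\ldots,m_{\mathsf{d}-1})}_{(\gamma_{\mathsf d}/m_{\mathsf d},s)}\|_{L^1}$, so the right-hand side is not ``reproduced'' and each peeling step loses a factor of order $m_{\mathsf i}$. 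Your two-dimensional argument is also incomplete: the ``Dirichlet-type kernel in $Pu$'' has $\asymp(s'-r')\gcd(m_1,m_2)$ terms, not $\lesssim s'-r'$; the $L^2$-bound on a boundary block of length $<P$ (coefficients of modulus $1$ after the $-1$ has been split off) gives $\sqrt P$, which after $\int_{|t_2|>1/(m_1+1)}|t_2|^{-1}\,\mathrm{d}t_2$ is polynomial in $m_1$; and the $L^1$-norm of the product of the $2\pi/P$-periodic factor with the degree-$<P$ factor is not controlled by the product of their $L^1$-norms, while \eqref{1601041903}--\eqref{1501041845} act in the variable $u$ and do not touch the singularity in $t_2$.

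The paper avoids both difficulties by working directly in $\mathsf d$ variables. Using $|\mathrm{e}^{\mathbf{i}t_{\mathsf k}}-1|^{-1}\lesssim|t_{\mathsf k}|^{-1}$ and the Taylor expansion in $t_{\mathsf k}$, one is reduced to bounding $\|Q^{(\vect m)}_{\mathsf k,\nu}\|_{L^1}$ with $Q^{(\vect m)}_{\mathsf k,\nu}=\sum\nolimits^{\circ\mathsf k}\mathrm{e}^{\mathbf{i}(\gamma_1 t_1+\cdots)}\,\llfloor\gamma_{\mathsf{k}-1}m_{\mathsf k}/m_{\mathsf{k}-1}\rrfloor^\nu$. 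Since $m:=m_{\mathsf{k}-1}\in\mathbb N$ forces $0\le\llfloor\gamma_{\mathsf{k}-1}m_{\mathsf k}/m\rrfloor\le1-m^{-1}$, one may write $\llfloor\gamma_{\mathsf{k}-1}m_{\mathsf k}/m\rrfloor^\nu=h_{\nu,m}(\gamma_{\mathsf{k}-1}m_{\mathsf k}/m)$ for an explicit $1$-periodic piecewise-smooth function $h_{\nu,m}$; inserting its Fourier series turns $Q^{(\vect m)}_{\mathsf k,\nu}$ into $\sum_{\mu\in\mathbb Z}\widehat h_{\nu,m}(\mu)$ times the full kernel $D^{(m_1,\ldots,m_{\mathsf{k}-1},m_{\mathsf{k}+1},\ldots,m_{\mathsf d})}_{(r,s)}$ evaluated at a translate in the $t_{\mathsf{k}-1}$-variable. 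Translations are $L^1$-isometries, and the Yudin--Yudin estimate $\sum_\mu|\widehat h_{\nu,m}(\mu)|\lesssim\ln(m\nu+1)$ then gives \eqref{1601022312} after summing $\sum_{\nu\ge1}\pi^\nu/\nu!$. Thus integrality of $m_{\mathsf{k}-1}$ enters through the gap $1-m^{-1}$ (controlling the Fourier decay of $h_{\nu,m}$), not through periodicity of $\gamma\mapsto\llfloor\gamma m_{\mathsf k}/m_{\mathsf{k}-1}\rrfloor$; and the Dirichlet kernel on the right of \eqref{1601022312} appears directly, with no recursive peeling needed.
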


\begin{proof} Let  $\mathsf{k}\in\{2,\ldots,\mathsf{d}\}$. We will show \eqref{1601022312} for all $\vect{m}\in\mathbb{N}^{\mathsf{d}}$. Denote
\begin{equation}\label{1606182059}
\begin{split}
   Q^{(\vect{m})}_{\mathsf{k},\nu}&(t_1,\ldots,t_{\mathsf{k}-1},t_{\mathsf{k}+1},\ldots,t_{\mathsf{d}})\\
   &=\sum\nolimits^{\circ\mathsf{k}}\mathrm{e}^{\mathbf{i}(\gamma_1t_1+\ldots+\gamma_{\mathsf{k}-1}t_{\mathsf{k}-1}+\gamma_{\mathsf{k}+1}t_{\mathsf{k}+1}+\ldots+\gamma_{\mathsf{d}}t_{\mathsf{d}})}\llfloor  \gamma_{\mathsf{k}-1}m_{\mathsf{k}}/m_{\mathsf{k}-1}\rrfloor^\nu,
\end{split}
\end{equation}
where $\sum\nolimits^{\circ\mathsf{k}}$ is given by \eqref{1505202138}. Using the equality
\[
\dfrac1{t_{\mathsf{k}}}\left(\mathrm{e}^{-\mathbf{i}\llfloor \gamma_{\mathsf{k}-1}m_{\mathsf{k}}/m_{\mathsf{k}-1}\rrfloor t_{\mathsf{k}}}-1\right)=\displaystyle\sum_{\nu=1}^{\infty}\dfrac1{\nu!}(-\mathbf{i})^{\nu}\llfloor \gamma_{\mathsf{k}-1}m_{\mathsf{k}}/m_{\mathsf{k}-1}\rrfloor^{\nu}t_{\mathsf{k}}^{\nu-1},
\]
and  \eqref{star2}, we obtain that for all $\vect{m}\in\mathbb{N}^{\mathsf{d}}$ and  all $\vect{t}\in [-\pi,\pi)^{\mathsf{d}}$
\[\left|F_{\mathsf{k},(r,s)}^{\sharp,(\vect{m})}(\vect{t})\right|\lesssim \sum_{\nu=1}^\infty \frac{\pi^{\nu-1}}{\nu!} \left|Q^{(\vect{m})}_{\mathsf{k},\nu}(t_1,\ldots,t_{\mathsf{k}-2},t_{\mathsf{k}-1}+t_{\mathsf{k}} m_{\mathsf{k}}/m_{\mathsf{k}-1},t_{\mathsf{k}+1},\ldots,t_{\mathsf{d}})\right|.\]
We conclude that for all $\vect{m}\in\mathbb{N}^{\mathsf{d}}$ the following inequality holds
\begin{align*}
\|F_{\mathsf{k},(r,s)}^{\sharp,(\vect{m})}\|_{L^1([-\pi,\pi)^{\mathsf{d}})}\lesssim
 \displaystyle\sum_{\nu=1}^{\infty}\dfrac{\pi^{\nu}}{\nu!}\|Q^{(\vect{m})}_{\mathsf{k},\nu}\|_{L^1([-\pi,\pi)^{\mathsf{d}-1})}.
\end{align*}
Thus, to prove \eqref{1601022312} it is sufficient to verify that for  all $\nu\geq 1$  and all $\vect{m}\in\mathbb{N}^{\mathsf{d}}$
\begin{equation}
\label{A1511231603}\|Q^{(\vect{m})}_{\mathsf{k},\nu}\|_{L^1([-\pi,\pi)^{\mathsf{d}-1})}\lesssim \ln (m_{\mathsf{k}-1}\nu+1)\|D_{(r,s)}^{(m_1,\ldots,m_{\mathsf{k}-1},m_{\mathsf{k}+1},\ldots,m_{\mathsf{d}})}\|_{L^1([-\pi,\pi)^{\mathsf{d}-1})}.
\end{equation}
For this we will use the  $1$-periodic function  $h_{\nu,m}$, $m\geq 1$, determined by
\begin{equation}\label{fuctionh}
  h_{\nu,m}(t)=\left\{ \begin {array}{cl} t^\nu  & \text{if}\quad 0\leq t\leq 1-m^{-1},\\
  m\left(1-m^{-1}\right)^\nu(1-t)& \text{if} \quad 1-m^{-1}\leq t<1.
\end{array}\right.
\end{equation}

Let us abbreviate $m=m_{\mathsf{k}-1}$. Since $\gamma_{\mathsf{k}-1}$, $m$, and $m_{\mathsf{k}}$ are integers, we have that
 \text{$\displaystyle 0\leq \llfloor \gamma_{\mathsf{k}-1}m_{\mathsf{k}}/m\rrfloor\leq 1-m^{-1}$}. Thus, taking into account that by $1$-periodicity of $h_{\nu,m}$ we have \text{$h_{\nu,m}(t)=h_{\nu,m}(\llfloor t\rrfloor)$}, $t\in\mathbb{R}$, we derive
 \begin{equation}\label{1606182100}
   h_{\nu,m}(\gamma_{\mathsf{k}-1}m_{\mathsf{k}}/m)=\llfloor \gamma_{\mathsf{k}-1}m_{\mathsf{k}}/m\rrfloor^\nu.
 \end{equation}

Next, by the Fourier inversion theorem, it holds
\begin{equation}\label{1606182105}
  h_{\nu,m}(t)=\sum_{\mu\in\mathbb{Z}}\widehat{h}_{\nu,m}(\mu)\mathrm{e}^{2\pi\mathbf{i}\mu t}\quad\text{in}\quad L^1([0,1)),
\end{equation} where
\[\widehat{h}_{\nu,m}(\nu)=\int_{[0,1)}h_{\nu,m}(t)\mathrm{e}^{-2\pi\mathbf{i}\nu t}\mathrm{d}t.\]
Combining \eqref{1606182059}, \eqref{1606182105}, \eqref{1606182100}, we get that $Q^{(\vect{m})}_{\mathsf{k},\nu}(t_1,\ldots,t_{\mathsf{k}-1},t_{\mathsf{k}+1},\ldots,t_{\mathsf{d}})$ equals
\begin{align*}
&\sum\nolimits^{\circ\mathsf{k}}
 \mathrm{e}^{\mathbf{i}(\gamma_1t_1+\ldots+\gamma_{\mathsf{k}-1}t_{\mathsf{k}-1}+\gamma_{\mathsf{k}+1}t_{\mathsf{k}+1}
 +\gamma_{\mathsf{d}}t_{\mathsf{d}})}\displaystyle\sum_{\mu\in\mathbb{Z}}\widehat{h}_{\nu,m}(\mu)\mathrm{e}^{2\pi\mathbf{i} \mu \gamma_{\mathsf{k}-1}m_{\mathsf{k}}/m}\\
&\quad =\sum_{\mu\in\mathbb{Z}}\widehat{h}_{\nu,m}(\mu)D^{(m_1,\ldots,m_{\mathsf{k}-1},m_{\mathsf{k}+1},\ldots,m_{\mathsf{d}})}_{(r,s)}
(t_1,\ldots,t_{\mathsf{k}-2},t_{\mathsf{k}-1}+2\pi\mu m_{\mathsf{k}}/m,t_{\mathsf{k}+1},\ldots,t_{\mathsf{d}})
\end{align*}
in $L^1([-\pi,\pi)^{\mathsf{d}-1})$. Hence, we have for all $\vect{m}\in\mathbb{N}^{\mathsf{d}}$ that
\begin{equation}
\label{A1511231603+}\|Q^{(\vect{m})}_{\mathsf{k},\nu}\|_{L^1([-\pi,\pi)^{\mathsf{d}-1})}\lesssim \|D_{(r,s)}^{(m_1,\ldots,m_{\mathsf{k}-1},m_{\mathsf{k}+1},\ldots,m_{\mathsf{d}})}\|_{L^1([-\pi,\pi)^{\mathsf{d}-1})}\sum_{\mu\in\mathbb{Z}}\left|\widehat{h}_{\nu,m}(\mu)\right|.
\end{equation}
In \cite{YudinYudin1985} (see also \cite{Ash2010}), it is shown that for all $\nu,m\geq 1$, we have
\begin{equation}\label{this}
  \sum_{\mu\in\mathbb{Z}}\left|\widehat{h}_{\nu,m}(\mu)\right|\lesssim \ln(m \nu+1).
\end{equation}
Combining this inequality and \eqref{A1511231603+}, we get \eqref{A1511231603} and, therefore, we have  \eqref{1601022312}.

By analogy, we can prove \eqref{1602280059}.
\end{proof}

\bigskip

\begin{proofof}{Theorem \ref{A1511231653}}
Inequality \eqref{eqth1} is well-known for $\mathsf{d}=1$, since
\begin{equation}\label{1505202327}
\FL\left(\vect{\Xi}_{(r,s)}\right)=\|D_{(r,s)}^{(\vect{m})}\|_{L^1([-\pi,\pi)^{\mathsf{d}})}.
\end{equation}
Let $\mathsf{d}\geq 2$.
For  all $\mathsf{k} \in \{1, \ldots, \mathsf{d}\}$, we get by Proposition \ref{A1511231547} that
\begin{equation}\label{eqth3}
\begin{split}
    \| D^{(\vect{m})}_{(r,s)}\|_{L^1([-\pi,\pi)^{\mathsf{d}})}\leq
    &\|G^{(\vect{m})}_{\mathsf{k},(r,s)}\|_{L^1([-\pi,\pi)^{\mathsf{d}})}+\|H^{(\vect{m})}_{\mathsf{k},(r,s)}\|_{L^1([-\pi,\pi)^{\mathsf{d}})}\\
    &+\|F^{(\vect{m})}_{\mathsf{k},(r,s)}\|_{L^1([-\pi,\pi)^{\mathsf{d}})}.
\end{split}
\end{equation}
Clearly, for all $\vect{m}\in\mathbb{N}^{\mathsf{d}}$ and all $\mathsf{k}\in\{1,\ldots,\mathsf{d}\}$, we have
\begin{equation}\label{eqth4}
  \| H^{(\vect{m})}_{\mathsf{k},(r,s)}\|_{L^1([-\pi,\pi)^{\mathsf{d}})}\leq \|D_{(r,s)}^{(m_1,\ldots,m_{\mathsf{k}-1},m_{\mathsf{k}+1},\ldots,m_{\mathsf{d}})}\|_{L^1([-\pi,\pi)^{\mathsf{d}-1})}
\end{equation}
and, by Proposition~\ref{A1511231706}, for all $\vect{m}\in\mathbb{N}^{\mathsf{d}}$ and all $\mathsf{k}\in\{1,\ldots,\mathsf{d}\}$ we have
\begin{equation}\label{eqth5}
  \| G^{(\vect{m})}_{\mathsf{k},(r,s)}\|_{L^1([-\pi,\pi)^{\mathsf{d}})}\lesssim \ln \left(m_{\mathsf{k}}+1\right) \|D_{(r,s)}^{(m_1,\ldots,m_{\mathsf{k}-1},m_{\mathsf{k}+1},\ldots,m_{\mathsf{d}})}\|_{L^1([-\pi,\pi)^{\mathsf{d}-1})}.
\end{equation}

Thus, we need to estimate only $\|F_{\mathsf{k},(r,s)}^{(\vect{m})}\|_{L^1([-\pi,\pi)^{\mathsf{d}})}$. This is done with a particular choice of the index $\mathsf{k}$.
Let $\mathsf{k}=\mathsf{k}(\vect{m})$ be such that $m_{\mathsf{i}}\leq m_{\mathsf{k}}$ for all $\mathsf{i}\in\{1,\ldots,\mathsf{d}\}$.
We consider the following three cases:

 (i) If $\mathsf{k}=\mathsf{k}(\vect{m})\in\{2,\ldots,\mathsf{d}-1\}$, we have
\[\|F_{\mathsf{k},(r,s)}^{(\vect{m})}\|_{L^1([-\pi,\pi)^{\mathsf{d}})}\leq \|F_{\mathsf{k},(r,s)}^{\sharp,(\vect{m})}\|_{L^1([-\pi,\pi)^{\mathsf{d}})}+\|F_{\mathsf{k},(r,s)}^{\flat,(\vect{m})}\|_{L^1([-\pi,\pi)^{\mathsf{d}})}\]

\quad\, and \[\ln \left(m_{\mathsf{k}-1}+1\right)+\ln \left(m_{\mathsf{k}+1}+1\right)\leq 2\ln \left(m_{\mathsf{k}}+1\right).\]

(ii) If $\mathsf{k}=\mathsf{k}(\vect{m})=1$, we have \[ \|F_{\mathsf{k},(r,s)}^{(\vect{m})}\|_{L^1([-\pi,\pi)^{\mathsf{d}})}=\|F_{\mathsf{k},(r,s)}^{\flat,(\vect{m})}\|_{L^1([-\pi,\pi)^{\mathsf{d}})}\quad \text{and}\quad \ln \left(m_{\mathsf{k}+1}+1\right)\leq \ln \left(m_{\mathsf{k}}+1\right).\]

(iii) If $\mathsf{k}=\mathsf{k}(\vect{m})=\mathsf{d}$, we have \[\|F_{\mathsf{k},(r,s)}^{(\vect{m})}\|_{L^1([-\pi,\pi)^{\mathsf{d}})}=\|F_{\mathsf{k},(r,s)}^{\sharp,(\vect{m})}\|_{L^1([-\pi,\pi)^{\mathsf{d}})}\quad \text{and}\quad \ln \left(m_{\mathsf{k}-1}+1\right)\leq \ln \left(m_{\mathsf{k}}+1\right).\]
Therefore, by  Proposition \ref{A1511231707}, we get that for all $\vect{m}\in\mathbb{N}^{\mathsf{d}}$ and $\mathsf{k}=\mathsf{k}(\vect{m})$ we have
\begin{equation}\label{eqth6}
  \|F_{\mathsf{k},(r,s)}^{(\vect{m})}\|_{L^1([-\pi,\pi)^{\mathsf{d}})}
\lesssim \ln \left(m_{\mathsf{k}}+1\right)
\|D_{(r,s)}^{(m_1,\ldots,m_{\mathsf{k}-1},m_{\mathsf{k}+1},\ldots,m_{\mathsf{d}})}\|_{L^1([-\pi,\pi)^{\mathsf{d}-1})}.
\end{equation}
Combining \eqref{eqth3}, \eqref{eqth4}, \eqref{eqth5}, and \eqref{eqth6}, we get that for all $\vect{m}\in\mathbb{N}^{\mathsf{d}}$ and $\mathsf{k}=\mathsf{k}(\vect{m})$
\[ \|D_{(r,s)}^{(\vect{m})}\|_{L^1([-\pi,\pi)^{\mathsf{d}})}\lesssim \ln \left(m_{\mathsf{k}}+1\right)
\|D_{(r,s)}^{(m_1,\ldots,m_{\mathsf{k}-1},m_{\mathsf{k}+1},\ldots,m_{\mathsf{d}})}\|_{L^1([-\pi,\pi)^{\mathsf{d}-1})}.\]
Because of \eqref{1505202327},
we get the assertion \eqref{eqth1} by a simple induction argument.
\end{proofof}

\subsection{Proof of Theorem \ref{1602041200}}\label{1602041256}

Let $r,s\in\mathbb{R}$, $s\geq r\geq 0$,  $\vect{m}=(m_1,\ldots,m_{\mathsf{d}})\in (0,\infty)^{\mathsf{d}}$, $\mathsf{d}\in\mathbb{N}$. Let  $S_{\mathsf{d}}$
be the set of all permutations  of $\{1,\ldots,\mathsf{d}\}$, i.e. the set of bijections from $\{1,\ldots,\mathsf{d}\}$ onto $\{1,\ldots,\mathsf{d}\}$.
For $\sigma\in S_{\mathsf{d}}$ and
$(\triangleleft_0,\ldots,\triangleleft_{\mathsf{d}})\in\{\leq,<,=\}^{\mathsf{d}+1}$, let
\begin{equation}\label{1601032151}
\vect{\Xi}_{(r,s),\sigma,(\triangleleft_0,\ldots,\triangleleft_{\mathsf{d}})}^{(\vect{m})}=\left\{\,\vectgamma\in \mathbb{Z}^{\mathsf{d}}\,\left|\ r\triangleleft_{\mathsf{d}} \dfrac{\gamma_{\sigma(\mathsf{d})}}{m_{\sigma(\mathsf{d})}}\triangleleft_{\mathsf{d}-1} \cdots \triangleleft_1  \dfrac{\gamma_{\sigma(1)}}{m_{\sigma(1)}}\triangleleft_0 s\right.\right\}.
\end{equation}

\begin{proposition} \label{1601031215} Let $r,s\in\mathbb{R}$, $s\geq r\geq 0$, be fixed.
Then, for  $\vect{m}\in\mathbb{N}^{\mathsf{d}}$, $\sigma\in S_{\mathsf{d}}$, and  $(\triangleleft_0,\ldots,\triangleleft_{\mathsf{d}})\in\{\leq,<,=\}^{\mathsf{d}+1}$, we have
\begin{equation}\label{1601032151121}
\FL\left(\vect{\Xi}_{(r,s),\sigma,(\triangleleft_0,\ldots,\triangleleft_{\mathsf{d}})}^{(\vect{m})}\right)\lesssim \tprod_{\mathsf{i}=1}^{\mathsf{d}}\ln (m_{\mathsf{i}}+1).
\end{equation}
\end{proposition}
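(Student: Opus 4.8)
The plan is to induct on the dimension $\mathsf{d}$, reducing at each step either to Theorem~\ref{A1511231653} or to a lower-dimensional instance of the present statement. First I would exploit that $\FL$ is invariant under a simultaneous permutation of the coordinates of $\vectgamma$ and of $\vect{t}$, and that $\prod_{\mathsf{i}=1}^{\mathsf{d}}\ln(m_{\mathsf{i}}+1)$ is symmetric in the $m_{\mathsf{i}}$, to assume $\sigma=\id$. The base case $\mathsf{d}=1$ is the classical bound for the $L^1$-norm of a truncated Dirichlet kernel: the set $\vect{\Xi}$ is then an interval of $\lesssim m_1$ consecutive integers, or a single point when $\triangleleft_0$ or $\triangleleft_1$ is ``$=$'', cf.\ \eqref{1505202327}. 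For $\mathsf{d}\geq 2$ I would split according to whether some relation $\triangleleft_{\mathsf{j}}$ equals ``$=$''.

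If $\triangleleft_0$ is ``$=$'', the chain forces $\gamma_1=sm_1$: either $sm_1\notin\mathbb{Z}$ and $\vect{\Xi}=\emptyset$, or $\gamma_1$ is fixed and $\sum_{\vectgamma\in\vect{\Xi}}\mathrm{e}^{\mathbf{i}(\vectgamma,\vect{t})}$ equals $\mathrm{e}^{\mathbf{i}sm_1t_1}$ times the exponential sum of a set of type~\eqref{1601032151} in the variables $t_2,\ldots,t_{\mathsf{d}}$ with parameters $m_2,\ldots,m_{\mathsf{d}}$; since the extra factor has modulus $1$ and $t_1$ integrates out, $\FL(\vect{\Xi})$ equals the Lebesgue constant of that $(\mathsf{d}-1)$-dimensional set, to which the induction hypothesis applies. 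The case $\triangleleft_{\mathsf{d}}=$``$=$'', which fixes $\gamma_{\mathsf{d}}=rm_{\mathsf{d}}$, is symmetric.

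The main point is an interior ``$=$'', say $\triangleleft_{\mathsf{j}}$ with $1\leq\mathsf{j}\leq\mathsf{d}-1$, forcing $\gamma_{\mathsf{j}}/m_{\mathsf{j}}=\gamma_{\mathsf{j}+1}/m_{\mathsf{j}+1}$. Here I would set $g=\gcd(m_{\mathsf{j}},m_{\mathsf{j}+1})$, $a=m_{\mathsf{j}}/g$, $b=m_{\mathsf{j}+1}/g$, note that $\gcd(a,b)=1$ and that $(\gamma_{\mathsf{j}},\gamma_{\mathsf{j}+1})$ then runs exactly over $\{(ka,kb):k\in\mathbb{Z}\}$ with common value $k/g$, so that $\gamma_{\mathsf{j}}t_{\mathsf{j}}+\gamma_{\mathsf{j}+1}t_{\mathsf{j}+1}=k(at_{\mathsf{j}}+bt_{\mathsf{j}+1})$ and the kernel depends on $(t_{\mathsf{j}},t_{\mathsf{j}+1})$ only through $w:=at_{\mathsf{j}}+bt_{\mathsf{j}+1}$; in the remaining variables together with $w$ it is precisely the kernel of a set of type~\eqref{1601032151} in $\mathsf{d}-1$ variables with parameters $m_1,\ldots,m_{\mathsf{j}-1},g,m_{\mathsf{j}+2},\ldots,m_{\mathsf{d}}$ (the two chain nodes $\gamma_{\mathsf{j}}/m_{\mathsf{j}},\gamma_{\mathsf{j}+1}/m_{\mathsf{j}+1}$ being merged into $k/g$). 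Completing $(a,b)$ to a matrix of $\mathrm{SL}_2(\mathbb{Z})$ gives a measure-preserving automorphism of the torus $(\mathbb{R}/2\pi\mathbb{Z})^2$ under which the integrand does not involve the complementary variable, so $\FL(\vect{\Xi})$ equals the Lebesgue constant of that $(\mathsf{d}-1)$-dimensional set; by the induction hypothesis this is $\lesssim\ln(g+1)\prod_{\mathsf{i}\neq\mathsf{j},\mathsf{j}+1}\ln(m_{\mathsf{i}}+1)$, and $g\leq\min(m_{\mathsf{j}},m_{\mathsf{j}+1})$ together with $\ln(m_{\mathsf{j}+1}+1)\geq\ln 2$ turns this into the claimed bound. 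Checking these two facts --- that the forced relation parametrizes the lattice points by a single integer of denominator $g$, and that the accompanying unimodular substitution leaves the relevant $L^1$-norm unchanged --- is where the real work lies.

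Finally I would handle the case in which no $\triangleleft_{\mathsf{j}}$ is ``$=$'', i.e.\ each is ``$\leq$'' or ``$<$''. Using that at every position with ``$<$'' the set obtained by relaxing ``$<$'' to ``$\leq$'' is the disjoint union of the original set and of the set obtained by replacing ``$<$'' by ``$=$'', I would expand the indicator $\mathbf{1}_{\vect{\Xi}}$ as a signed sum of at most $2^{\mathsf{d}+1}$ indicators of type-\eqref{1601032151} sets whose relations all lie in $\{\leq,=\}$; since $\FL(\vect{\Gamma})=\|\sum_{\vectgamma}\mathbf{1}_{\vect{\Gamma}}(\vectgamma)\,\mathrm{e}^{\mathbf{i}(\vectgamma,\vect{t})}\|_{L^1([-\pi,\pi)^{\mathsf{d}})}$ is linear in the indicator, the triangle inequality reduces the estimate to those finitely many sets. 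The unique all-``$\leq$'' term is $\vect{\Xi}_{(r,s)}^{(\vect{m})}$ (a single point or $\emptyset$ when $r=s$), bounded by Theorem~\ref{A1511231653}; every other term contains a ``$=$'' and is bounded by the two preceding cases. Summing the $\lesssim 2^{\mathsf{d}+1}$ contributions --- a number depending only on the fixed $\mathsf{d}$ --- closes the induction.
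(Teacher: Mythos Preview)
Your proof is correct and takes a genuinely different route from the paper. The paper simply observes that one may assume $\sigma=\id$ (as you do), that one may restrict to $(\triangleleft_0,\ldots,\triangleleft_{\mathsf{d}})\in\{\leq,<\}^{\mathsf{d}+1}$, and that for such tuples the entire machinery of Propositions~\ref{1601051328}--\ref{A1511231707} and the proof of Theorem~\ref{A1511231653} goes through verbatim, since none of the estimates for the $G$-, $H$-, $F$-pieces depend on whether range endpoints are attained. You instead treat Theorem~\ref{A1511231653} as a black box: an inclusion--exclusion on the strict inequalities reduces the $\{\leq,<\}$ case to the all-$\leq$ case plus finitely many sets containing an ``$=$'', and every ``$=$'' is eliminated by the unimodular substitution $w=at_{\mathsf{j}}+bt_{\mathsf{j}+1}$, which collapses the two tied coordinates into one with parameter $g=\gcd(m_{\mathsf{j}},m_{\mathsf{j}+1})$ and drops the dimension. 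Your argument is more self-contained and yields the pleasant structural fact that $\FL(\vect{\Xi})$ \emph{equals} a lower-dimensional Lebesgue constant via the $\mathrm{SL}_2(\mathbb{Z})$ automorphism of the torus; the paper's route is shorter to state but asks the reader to mentally re-walk Subsection~\ref{1601030918}. One small wording point: it is the kernel $\sum_{\indexvectgamma}\mathbf{1}_{\vect{\Gamma}}(\vectgamma)\,\mathrm{e}^{\mathbf{i}(\indexvectgamma,\vect{t})}$, not $\FL$ itself, that is linear in the indicator; your subsequent appeal to the triangle inequality is exactly what is needed.
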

\begin{proof} Note that if \eqref{1601032151121}  is proved for the identity permutation $\sigma=\id|_{\{1,\ldots,\mathsf{d}\}}$, then  \eqref{1601032151121} immediately follows for all $\sigma\in S_{\mathsf{d}}$. Furthermore, we can restrict the considerations to $(\triangleleft_0,\ldots,\triangleleft_{\mathsf{d}})\in\{\leq,<\}^{\mathsf{d}+1}$. Thus,
the proof follows the lines of the proof of Theorem~\ref{A1511231653} in an obvious way.
\end{proof}

We will use sets of the form \eqref{1601032151} as building blocks in order to prove the upper estimate in Theorem \ref{1602041200}. Let us formulate a technical auxiliary statement.

\begin{lemma} \label{1605182149}
Let $\mathfrak{X}^{(\vect{m})}$ be a set of subsets of $\mathbb{R}^{\mathsf{d}}$ and $\vect{m}\in \mathbb{N}^{\mathsf{d}}$. For   $N\in\mathbb{N}$ we denote
\begin{align}
\mathfrak{X}^{(\vect{m})}_{\cap, N}&=\left\{\,\vect{\Xi}_1\cap\ldots\cap\vect{\Xi}_j\,\left|\,\vect{\Xi}_1,\ldots,\vect{\Xi}_j\in \mathfrak{X}^{(\vect{m})},\,j\in\{1,\ldots,N\}\,\right.\right\}, \label{1605182200}\\
\mathfrak{X}^{(\vect{m})}_{\cup, N}&=\left\{\,\vect{\Xi}_1\cup\ldots\cup\vect{\Xi}_j\,\left|\,\vect{\Xi}_1,\ldots,\vect{\Xi}_j\in \mathfrak{X}^{(\vect{m})},\,j\in\{1,\ldots,N\}\,\right.\right\}.  \nonumber
\end{align}
Assume that $N\in\mathbb{N}$ is fixed and that for $\vect{m}\in \mathbb{N}^{\mathsf{d}}$ and all $\vect{\Xi}\in \mathfrak{X}^{(\vect{m})}_{\cap, N}$ we have
\begin{equation}
\label{1605182207}
\FL\left(\vect{\Xi}\right)\lesssim \tprod_{\mathsf{i}=1}^{\mathsf{d}}\ln (m_{\mathsf{i}}+1).
\end{equation}
Then, for the fixed $N\in\mathbb{N}$, the estimate \eqref{1605182207} holds also for all
$\vect{\Xi}\in \mathfrak{X}^{(\vect{m})}_{\cup, N}$.
\end{lemma}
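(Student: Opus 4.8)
The plan is to reduce the assertion for unions to the hypothesis for intersections by means of the inclusion--exclusion principle applied to the associated Dirichlet-type kernels. For a finite set $\vect{\Xi}\subset\mathbb{Z}^{\mathsf{d}}$ put $D_{\vect{\Xi}}(\vect{t})=\tsum_{\indexvectgamma\in\vect{\Xi}}\mathrm{e}^{\mathbf{i}(\indexvectgamma,\vect{t})}$, so that by the definition of $\FL$ one has $\FL(\vect{\Xi})=\|D_{\vect{\Xi}}\|_{L^1([-\pi,\pi)^{\mathsf{d}})}$; for a subset of $\mathbb{R}^{\mathsf{d}}$ the quantity $\FL$ refers as usual to its set of integer points, and $(\bigcup_{\mathsf{i}}\vect{\Xi}_{\mathsf{i}})\cap\mathbb{Z}^{\mathsf{d}}=\bigcup_{\mathsf{i}}(\vect{\Xi}_{\mathsf{i}}\cap\mathbb{Z}^{\mathsf{d}})$ and similarly for intersections, so we may work with the integer points throughout. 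Fix $N\in\mathbb{N}$ as in the statement and let $\vect{\Xi}_1,\ldots,\vect{\Xi}_j\in\mathfrak{X}^{(\vect{m})}$ with $1\leq j\leq N$ be arbitrary, representing a generic element $\vect{\Xi}_1\cup\cdots\cup\vect{\Xi}_j$ of $\mathfrak{X}^{(\vect{m})}_{\cup,N}$.

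First I would apply inclusion--exclusion to the indicator functions of $\vect{\Xi}_1,\ldots,\vect{\Xi}_j$, multiply by $\mathrm{e}^{\mathbf{i}(\indexvectgamma,\vect{t})}$ and sum over $\indexvectgamma\in\mathbb{Z}^{\mathsf{d}}$ (a finite sum), obtaining the kernel identity
\[D_{\vect{\Xi}_1\cup\cdots\cup\vect{\Xi}_j}(\vect{t})=\sum_{\emptyset\neq S\subseteq\{1,\ldots,j\}}(-1)^{|S|-1}\,D_{\bigcap_{\mathsf{i}\in S}\vect{\Xi}_{\mathsf{i}}}(\vect{t}).\]
Taking absolute values, integrating over $[-\pi,\pi)^{\mathsf{d}}$ and using the triangle inequality then gives
\[\FL\left(\vect{\Xi}_1\cup\cdots\cup\vect{\Xi}_j\right)\leq\sum_{\emptyset\neq S\subseteq\{1,\ldots,j\}}\FL\left(\bigcap_{\mathsf{i}\in S}\vect{\Xi}_{\mathsf{i}}\right).\]
For every nonempty $S\subseteq\{1,\ldots,j\}$ the set $\bigcap_{\mathsf{i}\in S}\vect{\Xi}_{\mathsf{i}}$ is an intersection of at most $|S|\leq j\leq N$ members of $\mathfrak{X}^{(\vect{m})}$, hence lies in $\mathfrak{X}^{(\vect{m})}_{\cap,N}$, so by the hypothesis \eqref{1605182207} each such term is $\lesssim\tprod_{\mathsf{i}=1}^{\mathsf{d}}\ln(m_{\mathsf{i}}+1)$ with an implied constant independent of $\vect{m}$ and of the chosen sets. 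Since there are $2^j-1\leq 2^N-1$ summands and $N$ is fixed, summing these estimates yields $\FL(\vect{\Xi}_1\cup\cdots\cup\vect{\Xi}_j)\lesssim\tprod_{\mathsf{i}=1}^{\mathsf{d}}\ln(m_{\mathsf{i}}+1)$, which is exactly \eqref{1605182207} for the arbitrary element of $\mathfrak{X}^{(\vect{m})}_{\cup,N}$.

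There is no genuinely hard step here; the points that need a word of care are that all sets occurring meet $\mathbb{Z}^{\mathsf{d}}$ in finite sets, so that the $D_{\vect{\Xi}}$ are honest trigonometric polynomials and the inclusion--exclusion identity is a finite one, with the convention $\FL(\emptyset)=0$ covering possibly empty intersections, and that the implied constant stays uniform --- which it does precisely because $N$, and therefore the number $2^N-1$ of terms produced by inclusion--exclusion, is fixed.
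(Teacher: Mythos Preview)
Your proof is correct and follows essentially the same approach as the paper: both use the inclusion--exclusion principle to bound $\FL(\vect{\Xi}_1\cup\cdots\cup\vect{\Xi}_j)$ by the sum of $\FL$ over all nonempty intersections, then invoke the hypothesis on $\mathfrak{X}^{(\vect{m})}_{\cap,N}$ together with the fact that the number of terms depends only on the fixed $N$. Your version is more detailed (spelling out the kernel identity and the finiteness issues), but the argument is the same.
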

\begin{proof}
The well-known inclusion–exclusion principle yields
\[\FL\left(\vect{\Xi}_1\cup\ldots\cup\vect{\Xi}_j\right) \leq  \tsum_{k = 1}^j \left(\tsum_{1 \leq l_{1} < \cdots < l_{k} \leq j} \FL\left(\vect{\Xi}_{l_1} \cap \cdots \cap \vect{\Xi}_{l_k}\right)\right). \]
Since $N$ is fixed, we conclude the assertion.
\end{proof}

For  $\vect{m}\in\mathbb{N}^{\mathsf{d}}$, we  consider the sets \[\vect{\Gamma}^{(\vect{m})}_0=\left\{\,\vectgamma\in \mathbb{N}_0^{\mathsf{d}}\,\left|\,\forall\,\mathsf{i}:\ 2\gamma_{\mathsf{i}}\leq m_{\mathsf{i}}\right.\right\},\quad
\vect{\Gamma}^{(\vect{m})}_1=\left\{\,\vectgamma\in \mathbb{N}_0^{\mathsf{d}}\,\left|\,\forall\,\mathsf{i}:\ 2\gamma_{\mathsf{i}}<m_{\mathsf{i}}\right.\right\},\]
and we  use the notation
\begin{equation}\label{1601031444} \mathsf{K}^{(\vect{m})}[\vectgamma]=\{\,\mathsf{i}\in\{1,\ldots,\mathsf{d}\}\,|\,\gamma_{\mathsf{i}}/m_{\mathsf{i}}=\max\nolimits^{(\vect{m})}[\vectgamma]\,\}
\end{equation}
with $\max\nolimits^{(\vect{m})}[\vectgamma]=\max\{\gamma_{\mathsf{i}}/m_{\mathsf{i}}\,|\,\mathsf{i}\in\{1,\ldots,\mathsf{d}\}\}$, and for $\emptyset\neq\mathsf{K}\subseteq \{1,\ldots,\mathsf{d}\}$, we denote
\[\vect{\Gamma}^{(\vect{m}),\mathsf{K}}_1=\left\{\,\vectgamma\in \vect{\Gamma}^{(\vect{m})}_1\,\left|\,\mathsf{K}^{(\vect{m})}[\vectgamma]=\mathsf{K}\right. \right\}.\]

\begin{proposition}\label{1605201512} Let $\mathsf{d}\geq 2$ and $\emptyset \neq \mathsf{K}=\{\mathsf{k}_1,\ldots,\mathsf{k}_h\} \subsetneqq \{1,\ldots,\mathsf{d}\}$, $\mathsf{k}_1<\ldots<\mathsf{k}_h$.
Then $\vect{\Gamma}^{(\vect{m}),\mathsf{K}}_1$ is equal to
\begin{equation}\label{1605201918}\bigcup_{\sigma\in S_{\mathsf{d},\mathsf{K}}}\left\{\,\vectgamma\in \mathbb{N}_0^{\mathsf{d}}\,\left|\ 0\leq \dfrac{\gamma_{\sigma(\mathsf{d})}}{m_{\sigma(\mathsf{d})}}\leq\ldots\leq  \dfrac{\gamma_{\sigma(h+1)}}{m_{\sigma(h)}}
< \dfrac{\gamma_{\sigma(h)}}{m_{\sigma(h)}}=\ldots=\dfrac{\gamma_{\sigma(1)}}{m_{\sigma(1)}}<\dfrac12\right.\right\},
\end{equation}
where $S_{\mathsf{d},\mathsf{K}}=\{\,\sigma\in S_{\mathsf{d}}\,|\,\sigma(1)=\mathsf{k}_1,\ldots,\sigma(h)=\mathsf{k}_h\,\}$.
\end{proposition}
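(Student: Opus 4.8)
The statement is an equality of subsets of $\mathbb{N}_0^{\mathsf{d}}$, so the plan is to unwind the definition \eqref{1601031444} of $\mathsf{K}^{(\vect{m})}[\vectgamma]$ and verify the two inclusions separately. Under the hypotheses $\emptyset\neq\mathsf{K}\subsetneqq\{1,\ldots,\mathsf{d}\}$ we have $1\leq h\leq\mathsf{d}-1$, so there is at least one index outside $\mathsf{K}$; and for any $\vectgamma$ with $\mathsf{K}^{(\vect{m})}[\vectgamma]=\mathsf{K}$ the quotients $\gamma_{\mathsf{i}}/m_{\mathsf{i}}$ with $\mathsf{i}\in\mathsf{K}$ all equal $M:=\max\nolimits^{(\vect{m})}[\vectgamma]$, while those with $\mathsf{i}\notin\mathsf{K}$ are strictly smaller than $M$ (so in particular $M>0$). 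This gap is exactly what produces the strict inequality in the middle of the chain displayed in \eqref{1605201918}.

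To show that $\vect{\Gamma}^{(\vect{m}),\mathsf{K}}_1$ is contained in the union \eqref{1605201918}, I would start from $\vectgamma\in\vect{\Gamma}^{(\vect{m}),\mathsf{K}}_1$, so that $\gamma_{\mathsf{i}}/m_{\mathsf{i}}=M$ for $\mathsf{i}\in\mathsf{K}$, $\gamma_{\mathsf{i}}/m_{\mathsf{i}}<M$ for $\mathsf{i}\notin\mathsf{K}$, and $\gamma_{\mathsf{i}}/m_{\mathsf{i}}<\tfrac12$ for all $\mathsf{i}$. I would then enumerate $\{1,\ldots,\mathsf{d}\}\setminus\mathsf{K}$ as $j_{h+1},\ldots,j_{\mathsf{d}}$ with $\gamma_{j_{h+1}}/m_{j_{h+1}}\geq\cdots\geq\gamma_{j_{\mathsf{d}}}/m_{j_{\mathsf{d}}}$ (ties broken arbitrarily) and define $\sigma$ by $\sigma(\ell)=\mathsf{k}_{\ell}$ for $\ell\leq h$ and $\sigma(\ell)=j_{\ell}$ for $\ell>h$, so that $\sigma\in S_{\mathsf{d},\mathsf{K}}$. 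One then checks directly that $\vectgamma$ lies in the set indexed by this $\sigma$: the first $h$ quotients all equal $M$; the step from position $h$ to position $h+1$ is strict because $j_{h+1}\notin\mathsf{K}$ forces $\gamma_{j_{h+1}}/m_{j_{h+1}}<M$; the remaining inequalities hold by the chosen ordering; and $0\leq\gamma_{\sigma(\mathsf{d})}/m_{\sigma(\mathsf{d})}$ together with $\gamma_{\sigma(1)}/m_{\sigma(1)}=M<\tfrac12$ finishes the chain.

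For the reverse inclusion I would take $\vectgamma\in\mathbb{N}_0^{\mathsf{d}}$ satisfying the chain in \eqref{1605201918} for some $\sigma\in S_{\mathsf{d},\mathsf{K}}$. Since $\sigma$ is a bijection, every $\gamma_{\mathsf{i}}/m_{\mathsf{i}}$ occurs in that chain and hence is $<\tfrac12$, so $2\gamma_{\mathsf{i}}<m_{\mathsf{i}}$ for all $\mathsf{i}$ and $\vectgamma\in\vect{\Gamma}^{(\vect{m})}_1$. The chain also shows that $\gamma_{\sigma(1)}/m_{\sigma(1)}=\cdots=\gamma_{\sigma(h)}/m_{\sigma(h)}$ is the maximum $\max\nolimits^{(\vect{m})}[\vectgamma]$ and is strictly larger than $\gamma_{\sigma(\ell)}/m_{\sigma(\ell)}$ for every $\ell>h$; therefore $\mathsf{K}^{(\vect{m})}[\vectgamma]=\{\sigma(1),\ldots,\sigma(h)\}=\{\mathsf{k}_1,\ldots,\mathsf{k}_h\}=\mathsf{K}$, i.e. $\vectgamma\in\vect{\Gamma}^{(\vect{m}),\mathsf{K}}_1$. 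The whole argument is bookkeeping; the only point needing a moment's care --- the ``main obstacle'', such as it is --- is ensuring in the forward direction that a single permutation realizes simultaneously the equalities on $\mathsf{K}$, the strict drop immediately below $\mathsf{K}$, and the weak monotonicity further down. This is automatic because every index outside $\mathsf{K}$ has quotient strictly below $M$: listing the indices of $\mathsf{K}$ first and sorting the remaining indices by decreasing quotient yields precisely the pattern of $h-1$ equalities, one strict inequality, and then weak inequalities.
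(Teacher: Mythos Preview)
Your proof is correct and follows essentially the same approach as the paper: both arguments unwind the definition of $\mathsf{K}^{(\vect{m})}[\vectgamma]$ to the characterization ``$\gamma_{\mathsf{j}}/m_{\mathsf{j}}<\gamma_{\mathsf{k}_1}/m_{\mathsf{k}_1}}=\cdots=\gamma_{\mathsf{k}_h}/m_{\mathsf{k}_h}<\tfrac12$ for all $\mathsf{j}\notin\mathsf{K}$'', then for the forward inclusion sort the indices outside $\mathsf{K}$ by decreasing quotient to produce the required $\sigma\in S_{\mathsf{d},\mathsf{K}}$, and for the reverse inclusion read off from the chain that every quotient is $<\tfrac12$ and that the argmax set is exactly $\{\sigma(1),\ldots,\sigma(h)\}=\mathsf{K}$. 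The only cosmetic difference is that the paper records the intermediate characterization explicitly as a displayed formula before doing the two inclusions.
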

\begin{proof}
By the definition, we have
\begin{equation}\label{1605201922}\vect{\Gamma}^{(\vect{m}),\mathsf{K}}_1=\left\{\,\vectgamma\in \mathbb{N}_0^{\mathsf{d}}\,\left|\ \forall\, \mathsf{j}\notin \mathsf{K}: \,0\leq \dfrac{\gamma_{\mathsf{j}}}{m_{\mathsf{j}}}
< \dfrac{\gamma_{\mathsf{k}_h}}{m_{\mathsf{k}_h}}=\ldots=\dfrac{\gamma_{\mathsf{k}_1}}{m_{\mathsf{k}_1}}<\dfrac12\right.\right\}.
\end{equation}
Since for $\sigma\in S_{\mathsf{d},\mathsf{K}}$ we have $\sigma(1)=\mathsf{k}_1,\ldots,\sigma(h)=\mathsf{k}_h$ and $\sigma(h+1),\ldots,\sigma(\mathsf{d})\notin \mathsf{K}$, we conclude that \eqref{1605201918} is a subset of \eqref{1605201922}.

Now, let $\vectgamma$ be an element of \eqref{1605201922}. Then, there exist $\mathsf{j}_{h+1},\ldots,\mathsf{j}_{\mathsf{d}}$ such that
\[ \{\,\mathsf{j}_{h+1},\ldots,\mathsf{j}_{\mathsf{d}}\}=\{1,\ldots,\mathsf{d}\}\setminus\mathsf{K}\quad \text{and}\quad \dfrac{\gamma_{\mathsf{j}_{\mathsf{d}}}}{m_{\mathsf{j}_{\mathsf{d}}}}\leq\ldots\leq  \dfrac{\gamma_{\mathsf{j}_{h+1}}}{m_{\mathsf{j}_{h+1}}}.\]
We set $\sigma(1)=\mathsf{k}_1,\ldots,\sigma(h)=\mathsf{k}_h$ and $\sigma(h+1)=\mathsf{j}_{h+1},\ldots,\sigma(\mathsf{d})=\mathsf{j}_{\mathsf{d}}$. Then, $\sigma\in S_{\mathsf{d},\mathsf{K}}$ and $\vectgamma$ is an element of the
corresponding set in the union \eqref{1605201918}.
\end{proof}

\begin{corollary}\label{1605182040} Let $\emptyset\neq \mathsf{K}\subseteq \{1,\ldots,\mathsf{d}\}$. Then,
for all $\vect{m}\in\mathbb{N}^{\mathsf{d}}$,  we have
\begin{equation}\label{1602280228}\FL\left(\vect{\Gamma}^{(\vect{m}),\mathsf{K}}_1\right)\lesssim \tprod_{\mathsf{i}=1}^{\mathsf{d}}\ln (m_{\mathsf{i}}+1).
\end{equation}
\end{corollary}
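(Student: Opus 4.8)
The plan is to split into the two cases $\mathsf{K}=\{1,\ldots,\mathsf{d}\}$ and $\emptyset\neq\mathsf{K}\subsetneqq\{1,\ldots,\mathsf{d}\}$. In the first case one simply observes that $\vect{\Gamma}^{(\vect{m}),\mathsf{K}}_1$ consists of exactly those $\vectgamma\in\mathbb{N}_0^{\mathsf{d}}$ with $\gamma_1/m_1=\cdots=\gamma_{\mathsf{d}}/m_{\mathsf{d}}<1/2$; this is precisely the set $\vect{\Xi}_{(0,1/2),\id,(\triangleleft_0,\ldots,\triangleleft_{\mathsf{d}})}^{(\vect{m})}$ from \eqref{1601032151} with $\triangleleft_0={<}$, $\triangleleft_1=\cdots=\triangleleft_{\mathsf{d}-1}={=}$ and $\triangleleft_{\mathsf{d}}={\leq}$, so \eqref{1602280228} follows at once from Proposition \ref{1601031215}.

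For $\emptyset\neq\mathsf{K}=\{\mathsf{k}_1,\ldots,\mathsf{k}_h\}\subsetneqq\{1,\ldots,\mathsf{d}\}$ (so $\mathsf{d}\geq 2$), Proposition \ref{1605201512} already exhibits $\vect{\Gamma}^{(\vect{m}),\mathsf{K}}_1$ as the union of the $(\mathsf{d}-h)!$ sets in \eqref{1605201918}, each of which is a set of the form \eqref{1601032151}. I would take $\mathfrak{X}^{(\vect{m})}$ to be this finite collection and apply Lemma \ref{1605182149} with $N=(\mathsf{d}-h)!$; since $\vect{\Gamma}^{(\vect{m}),\mathsf{K}}_1\in\mathfrak{X}^{(\vect{m})}_{\cup,N}$, it only remains to verify the hypothesis \eqref{1605182207} for every $\vect{\Xi}\in\mathfrak{X}^{(\vect{m})}_{\cap,N}$, i.e.\ for every intersection of at most $(\mathsf{d}-h)!$ of the sets \eqref{1605201918}.

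Such an intersection $\vect{\Xi}$ is the set of all $\vectgamma\in\mathbb{N}_0^{\mathsf{d}}$ whose coordinates indexed by $\mathsf{K}$ attain the common strict maximum of $\gamma_1/m_1,\ldots,\gamma_{\mathsf{d}}/m_{\mathsf{d}}$ (a value $<1/2$), while the remaining $\mathsf{d}-h$ coordinates obey a fixed finite system of inequalities $\gamma_{\mathsf{i}}/m_{\mathsf{i}}\leq\gamma_{\mathsf{j}}/m_{\mathsf{j}}$, that is, a partial preorder $\preceq$ on $\{1,\ldots,\mathsf{d}\}\setminus\mathsf{K}$. The key step is to decompose $\vect{\Xi}$ according to the total preorder a lattice point actually induces on these coordinates: for each total preorder $\tau$ on $\{1,\ldots,\mathsf{d}\}\setminus\mathsf{K}$ containing $\preceq$, the part of $\vect{\Xi}$ on which $\gamma_{\mathsf{i}}/m_{\mathsf{i}}\leq\gamma_{\mathsf{j}}/m_{\mathsf{j}}\Leftrightarrow\mathsf{i}\preceq_\tau\mathsf{j}$ is again a set of the form \eqref{1601032151} (list the indices in $\mathsf{K}$ first, joined by ``$=$'', then the non-$\mathsf{K}$ indices in $\tau$-decreasing order, joined by ``$=$'' or ``$<$'' according to $\tau$, with ``$<$'' separating the $\mathsf{K}$-block from the rest; the outermost relations are ``$<$'' towards $1/2$ and ``$\leq$'' towards $0$). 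These parts are pairwise disjoint, their union is $\vect{\Xi}$, and their number is at most the number of total preorders of a $(\mathsf{d}-h)$-element set, hence bounded by a constant depending only on $\mathsf{d}$. Because $\FL$ is subadditive under disjoint unions (the exponential sum over a disjoint union is the sum of the exponential sums, so the triangle inequality under the integral applies), Proposition \ref{1601031215} gives $\FL(\vect{\Xi})\lesssim\prod_{\mathsf{i}=1}^{\mathsf{d}}\ln(m_{\mathsf{i}}+1)$, which is \eqref{1605182207}; Lemma \ref{1605182149} then yields \eqref{1602280228}. The only point that is not bookkeeping is precisely this combinatorial decomposition: an intersection of chain-type sets \eqref{1601032151} describes in general only a partial preorder, and one must see that it splits into a number of genuine chain-type ``cells'' that stays bounded as $\vect{m}$ varies; everything else is a direct appeal to the results already established.
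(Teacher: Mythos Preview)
Your proof is correct and follows essentially the same route as the paper: the same case split, the same appeal to Proposition~\ref{1605201512}, Lemma~\ref{1605182149}, and Proposition~\ref{1601031215}. The only difference is that the paper observes directly that every intersection of the chain-sets in $\mathfrak{X}^{(\vect{m})}$ is again a single set of the form~\eqref{1601032151} (the preorder generated by a union of total orders is automatically total, since it already contains one of them), so your further disjoint decomposition into total-preorder cells, while correct, is unnecessary.
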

\begin{proof} For $\mathsf{K}=\{1,\ldots,\mathsf{d}\}$, we have
\[\vect{\Gamma}^{(\vect{m}),\{1,\ldots,\mathsf{d}\}}_1=\left\{\,\vectgamma\in \mathbb{N}_0^{\mathsf{d}}\,\left|\ 0\leq  \dfrac{\gamma_{\mathsf{d}}}{m_{\mathsf{d}}}=\ldots=\dfrac{\gamma_1}{m_1}<\dfrac12\right.\right\}=\vect{\Xi}_{(0,1/2),\id,(\leq,=,\ldots,=,<)}^{(\vect{m})}.\]
Thus, Proposition  \ref{1601031215} implies \eqref{1602280228}.

Let us now consider the case $\mathsf{d}\geq 2$ with a non-empty set $\emptyset\neq \mathsf{K}\subsetneqq \{1,\ldots,\mathsf{d}\}$. Let
$h \in \{1, \ldots, \mathsf{d}\}$ be the same as in Proposition \ref{1605201512}.
Let $\triangleleft_0$, $\triangleleft_{h}$ be the relation $<$.
If $h\geq 2$, let further $(\triangleleft_1,\ldots,\triangleleft_{h-1})=(=,\ldots,=)$ and
$(\triangleleft_{h+1},\ldots,\triangleleft_{\mathsf{d}})=(\leq,\ldots,\leq)$. With this notation Proposition \ref{1605201512} implies that
 \begin{equation}\label{1605201706}
\vect{\Gamma}^{(\vect{m}),\mathsf{K}}_1=\displaystyle \bigcup_{\sigma\in S_{\mathsf{d},\mathsf{K}}}\vect{\Xi}_{(0,1/2),\sigma,(\triangleleft_0,\ldots,\triangleleft_{\mathsf{d}})}^{(\vect{m})}.
\end{equation}
Let $\mathfrak{X}^{(\vect{m})}=\left\{\,\vect{\Xi}_{(0,1/2),\sigma,(\triangleleft_0,\ldots,\triangleleft_{\mathsf{d}})}^{(\vect{m})}\,\left|\,\sigma\in S_{\mathsf{d},\mathsf{K}}\right.\,\right\}$ and $N=(\mathsf{d}-h)!$.  Then,  \eqref{1605182200} equals
\[\mathfrak{X}^{(\vect{m})}_{\cap, N} = \left\{\,\vect{\Xi}_{(0,1/2),\sigma,(\triangleleft'_0,\ldots,\triangleleft'_{\mathsf{d}})}^{(\vect{m})}\,\left|\,\sigma\in S_{\mathsf{d},\mathsf{K}}\right., \text{$\triangleleft_j'\in\{\leq,=\}$ if $h+1\leq j\leq \mathsf{d}-1$,
$\triangleleft_j'=\triangleleft_j$ else}\,\right\}.\]
Since Proposition  \ref{1601031215} implies \eqref{1605182207} for all sets in $\mathfrak{X}^{(\vect{m})}_{\cap, N}$, Lemma \ref{1605182149} yields \eqref{1605182207} for all sets in $\mathfrak{X}^{(\vect{m})}_{\cup, N}$. Now, taking into account
that by \eqref{1605201706} we have $\vect{\Gamma}^{(\vect{m}),\mathsf{K}}_1\in \mathfrak{X}^{(\vect{m})}_{\cup, N}$, we obtain the assertion \eqref{1602280228}.\end{proof}

For  $\mathsf{k}\in\{1,\ldots,\mathsf{d}\}$, we define
\begin{equation}\label{eq:201607121320}\mathfrak{s}^{(\vect{m})}_{\mathsf{k}}(\vectgamma)= \left(
\gamma_1, \ldots, \gamma_{\mathsf{k}-1}, m_{\mathsf{k}}-\gamma_{\mathsf{k}},\gamma_{\mathsf{k}+1}, \ldots, \gamma_{\mathsf{d}}
 \right).\end{equation}

\begin{proposition} \label{1605182041} For $\vect{m}\in\mathbb{N}^{\mathsf{d}}$, we have
\begin{equation}\label{1602011534}
\displaystyle \overline{\vect{\Gamma}}^{(\vect{m})}=
\vect{\Gamma}^{(\vect{m})}_0\cup\bigcup_{\emptyset\neq \mathsf{K}\subseteq
\{1,\ldots,\mathsf{d}\}}\bigcup_{\mathsf{k}\in \mathsf{K}}\mathfrak{s}^{(\vect{m})}_{\mathsf{k}}\left(\vect{\Gamma}^{(\vect{m}),\mathsf{K}}_1\right)
\end{equation}
and, furthermore, the right hand side of \eqref{1602011534}  is  a  union of pairwise disjoint sets.
\end{proposition}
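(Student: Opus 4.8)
The plan is to prove the set identity by two inclusions and then verify pairwise disjointness, working throughout with the normalized coordinates $a_{\mathsf{i}}=\gamma_{\mathsf{i}}/m_{\mathsf{i}}$. Two elementary facts drive everything. First, any $\vectgamma\in\overline{\vect{\Gamma}}^{(\vect{m})}$ has \emph{at most one} index $\mathsf{k}$ with $a_{\mathsf{k}}>1/2$: if $a_{\mathsf{k}}>1/2$ and $a_{\mathsf{j}}>1/2$ with $\mathsf{k}\neq\mathsf{j}$, then $a_{\mathsf{k}}+a_{\mathsf{j}}>1$, contradicting the defining inequality of $\overline{\vect{\Gamma}}^{(\vect{m})}$. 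Second, $\mathfrak{s}^{(\vect{m})}_{\mathsf{k}}$ is an involution (since $m_{\mathsf{k}}-(m_{\mathsf{k}}-\gamma_{\mathsf{k}})=\gamma_{\mathsf{k}}$), so applying it replaces the $\mathsf{k}$-th normalized coordinate $a_{\mathsf{k}}$ by $1-a_{\mathsf{k}}$ and leaves the other coordinates untouched.

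For $\overline{\vect{\Gamma}}^{(\vect{m})}\subseteq(\text{RHS})$ I would take $\vectgamma\in\overline{\vect{\Gamma}}^{(\vect{m})}$. If $a_{\mathsf{i}}\leq1/2$ for all $\mathsf{i}$, then $\vectgamma\in\vect{\Gamma}^{(\vect{m})}_0$. Otherwise there is a unique $\mathsf{k}$ with $a_{\mathsf{k}}>1/2$, and for every $\mathsf{i}\neq\mathsf{k}$ the inequality $a_{\mathsf{i}}+a_{\mathsf{k}}\leq1$ together with $a_{\mathsf{k}}>1/2$ forces $a_{\mathsf{i}}<1/2$. Put $\vect{\delta}=\mathfrak{s}^{(\vect{m})}_{\mathsf{k}}(\vectgamma)$; its $\mathsf{k}$-th normalized coordinate is $1-a_{\mathsf{k}}<1/2$ and all other normalized coordinates are $<1/2$, so $\vect{\delta}\in\vect{\Gamma}^{(\vect{m})}_1$. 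Moreover $a_{\mathsf{i}}\leq1-a_{\mathsf{k}}$ for all $\mathsf{i}$ (again from $a_{\mathsf{i}}+a_{\mathsf{k}}\leq1$), so $\mathsf{k}\in\mathsf{K}^{(\vect{m})}[\vect{\delta}]=:\mathsf{K}$, hence $\vect{\delta}\in\vect{\Gamma}^{(\vect{m}),\mathsf{K}}_1$ and, by the involution property, $\vectgamma=\mathfrak{s}^{(\vect{m})}_{\mathsf{k}}(\vect{\delta})\in\mathfrak{s}^{(\vect{m})}_{\mathsf{k}}(\vect{\Gamma}^{(\vect{m}),\mathsf{K}}_1)$ with $\mathsf{k}\in\mathsf{K}$.

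For the reverse inclusion, $\vect{\Gamma}^{(\vect{m})}_0\subseteq\overline{\vect{\Gamma}}^{(\vect{m})}$ is immediate since $a_{\mathsf{i}},a_{\mathsf{j}}\leq1/2$ gives $a_{\mathsf{i}}\leq1$ and $a_{\mathsf{i}}+a_{\mathsf{j}}\leq1$. Next fix $\mathsf{K}$, an index $\mathsf{k}\in\mathsf{K}$ and $\vect{\delta}\in\vect{\Gamma}^{(\vect{m}),\mathsf{K}}_1$, and set $\vectgamma=\mathfrak{s}^{(\vect{m})}_{\mathsf{k}}(\vect{\delta})$. Writing $b_{\mathsf{i}}=\delta_{\mathsf{i}}/m_{\mathsf{i}}$, we have $b_{\mathsf{i}}<1/2$ for all $\mathsf{i}$ and $b_{\mathsf{k}}=\max_{\mathsf{i}}b_{\mathsf{i}}$, while $a_{\mathsf{k}}=1-b_{\mathsf{k}}\in(1/2,1]$ and $a_{\mathsf{i}}=b_{\mathsf{i}}$ for $\mathsf{i}\neq\mathsf{k}$; in particular $\vectgamma\in\mathbb{N}_0^{\mathsf{d}}$. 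Then $a_{\mathsf{i}}\leq1$ holds coordinatewise, $a_{\mathsf{i}}+a_{\mathsf{j}}=b_{\mathsf{i}}+b_{\mathsf{j}}<1$ when $\mathsf{i},\mathsf{j}\neq\mathsf{k}$, and $a_{\mathsf{k}}+a_{\mathsf{j}}=1-(b_{\mathsf{k}}-b_{\mathsf{j}})\leq1$ for $\mathsf{j}\neq\mathsf{k}$ because $b_{\mathsf{k}}\geq b_{\mathsf{j}}$. Hence $\vectgamma\in\overline{\vect{\Gamma}}^{(\vect{m})}$.

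It remains to check that the pieces on the right are pairwise disjoint. An element $\vectgamma\in\mathfrak{s}^{(\vect{m})}_{\mathsf{k}}(\vect{\Gamma}^{(\vect{m}),\mathsf{K}}_1)$ has $a_{\mathsf{k}}>1/2$ and $a_{\mathsf{i}}<1/2$ for every $\mathsf{i}\neq\mathsf{k}$, so it lies in no $\vect{\Gamma}^{(\vect{m})}_0$; and if it also lies in $\mathfrak{s}^{(\vect{m})}_{\mathsf{k}'}(\vect{\Gamma}^{(\vect{m}),\mathsf{K}'}_1)$, the same description with $\mathsf{k}'$ forces $\mathsf{k}=\mathsf{k}'$ (otherwise $a_{\mathsf{k}'}$ would be both $<1/2$ and $>1/2$). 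Applying the involution $\mathfrak{s}^{(\vect{m})}_{\mathsf{k}}$ then produces a single $\vect{\delta}$ lying in both $\vect{\Gamma}^{(\vect{m}),\mathsf{K}}_1$ and $\vect{\Gamma}^{(\vect{m}),\mathsf{K}'}_1$, which forces $\mathsf{K}=\mathsf{K}^{(\vect{m})}[\vect{\delta}]=\mathsf{K}'$; this also disposes of the subcase $\mathsf{k},\mathsf{k}'\in\mathsf{K}=\mathsf{K}'$ with $\mathsf{k}\neq\mathsf{k}'$. There is no deep difficulty in this proposition; the one point that genuinely needs care — and the place a careless argument could go wrong — is the bookkeeping of strict versus non-strict inequalities at the threshold $a_{\mathsf{i}}=1/2$, which is precisely why $\vect{\Gamma}^{(\vect{m})}_0$ is defined with $\leq$ and $\vect{\Gamma}^{(\vect{m})}_1$ with $<$.
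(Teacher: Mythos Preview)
Your proof is correct and follows essentially the same approach as the paper's: both establish the two inclusions via the involution $\mathfrak{s}^{(\vect{m})}_{\mathsf{k}}$ and the observation that a point of $\overline{\vect{\Gamma}}^{(\vect{m})}$ has at most one coordinate with $a_{\mathsf{i}}>1/2$, and both deduce disjointness from the fact that the index $\mathsf{k}$ with $a_{\mathsf{k}}>1/2$ is unique and determines $\mathsf{K}$ via $\mathsf{K}^{(\vect{m})}[\mathfrak{s}^{(\vect{m})}_{\mathsf{k}}(\vectgamma)]$. The only differences are cosmetic (your normalized-coordinate notation $a_{\mathsf{i}},b_{\mathsf{i}}$ and the explicit remark on strict versus non-strict inequalities at $1/2$).
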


\begin{proof}
Let $\vectgamma\in\overline{\vect{\Gamma}}^{(\vect{m})}$. We will show that  $\vectgamma$ belongs to the right hand side of \eqref{1602011534}. Since this is clear if $\vectgamma\in\vect{\Gamma}^{(\vect{m})}_0$, we assume that $\vectgamma\notin\vect{\Gamma}^{(\vect{m})}_0$. Since $\vectgamma\notin\vect{\Gamma}^{(\vect{m})}_0$, there exists $\mathsf{k}$ such that $\gamma_{\mathsf{k}}/m_{\mathsf{k}}> 1/2$. Therefore, by the definition of $\overline{\vect{\Gamma}}^{(\vect{m})}$, we have
\begin{equation}\label{1603131453}
\forall\,\mathsf{i}\in \{1,\ldots,\mathsf{d}\}\setminus\{\mathsf{k}\}:\quad  \gamma_{\mathsf{i}}/m_{\mathsf{i}}<1/2.
\end{equation}
Let $\vectgamma'=\mathfrak{s}^{(\vect{m})}_{\mathsf{k}}(\vectgamma)$. Since  $\gamma_{\mathsf{k}}/m_{\mathsf{k}}> 1/2$, we have
$\gamma'_{\mathsf{k}}/m_{\mathsf{k}}<1/2$. Thus, since for $\mathsf{i}\neq \mathsf{k}$ we have $\gamma'_{\mathsf{i}}=\gamma_{\mathsf{i}}$, we get from \eqref{1603131453} that $\vectgamma'\in\vect{\Gamma}^{(\vect{m})}_1$.
By the definition of  $\overline{\vect{\Gamma}}^{(\vect{m})}$, we have
\[\forall\,\mathsf{i}\in \{1,\ldots,\mathsf{d}\}\setminus\{\mathsf{k}\}:\quad \gamma'_{\mathsf{i}}/m_{\mathsf{i}}-\gamma'_{\mathsf{k}}/m_{\mathsf{k}}=\gamma_{\mathsf{i}}/m_{\mathsf{i}}+\gamma_{\mathsf{k}}/m_{\mathsf{k}}-1\leq 1-1=0.\]
Thus, by the definition in \eqref{1601031444}, we have $\mathsf{k}\in \mathsf{K}^{(\vect{m})}[\vectgamma']$. Obviously $\vectgamma'=\mathfrak{s}^{(\vect{m})}_{\mathsf{k}}(\vectgamma)$ implies $\vectgamma=\mathfrak{s}^{(\vect{m})}_{\mathsf{k}}(\vectgamma')$. Thus,  we have $\vectgamma\in \mathfrak{s}^{(\vect{m})}_{\mathsf{k}}\left(\vect{\Gamma}^{(\vect{m}),\mathsf{K}}_1\right)$ with $\mathsf{K}= \mathsf{K}^{(\vect{m})}[\vectgamma']$ and $\mathsf{k}\in \mathsf{K}$.

Now, let $\vectgamma$ belong to the right hand side of \eqref{1602011534}. We will show that $\vectgamma\in\overline{\vect{\Gamma}}^{(\vect{m})}$. This is clear if $\vectgamma\in\vect{\Gamma}^{(\vect{m})}_0$. Suppose $\vectgamma\in \mathfrak{s}^{(\vect{m})}_{\mathsf{k}}\left(\vect{\Gamma}^{(\vect{m}),\mathsf{K}}_1\right)$ with $\mathsf{K}\subseteq \{1,\ldots,\mathsf{d}\}$ and $\mathsf{k}\in \mathsf{K}$.
There is $\vectgamma'\in \vect{\Gamma}^{(\vect{m}),\mathsf{K}}_1$ with $\vectgamma=\mathfrak{s}^{(\vect{m})}_{\mathsf{k}}(\vectgamma')$ and,
by the definition of $\vect{\Gamma}^{(\vect{m}),\mathsf{K}}_1$, we have  $\mathsf{K}= \mathsf{K}^{(\vect{m})}[\vectgamma']$.
Since $\mathsf{k}\in \mathsf{K}= \mathsf{K}^{(\vect{m})}[\vectgamma']$, we have $\gamma'_{\mathsf{i}}/m_{\mathsf{i}}\leq \gamma'_{\mathsf{k}}/m_{\mathsf{k}}$, $\mathsf{i} \in \{1,\ldots,\mathsf{d}\}$, thus
\begin{equation}\label{1603131454}
\forall\,\mathsf{i}\in \{1,\ldots,\mathsf{d}\}\setminus\{\mathsf{k}\}:\quad \gamma_{\mathsf{i}}/m_{\mathsf{i}}+\gamma_{\mathsf{k}}/m_{\mathsf{k}}= \gamma'_{\mathsf{i}}/m_{\mathsf{i}}-\gamma'_{\mathsf{k}}/m_{\mathsf{k}}+1\leq 1.
\end{equation}
Since  for $\mathsf{j}\neq \mathsf{k}$ we have $\gamma_{\mathsf{j}}=\gamma'_{\mathsf{j}}$, and since $\vectgamma'\in \vect{\Gamma}^{(\vect{m})}_1$, we have \eqref{1603131453}, and therefore
\begin{equation}\label{1603131503}
\forall\,\mathsf{i},\mathsf{j}\in \{1,\ldots,\mathsf{d}\}\setminus\{\mathsf{k}\}:\quad\gamma_{\mathsf{i}}/m_{\mathsf{i}}+\gamma_{\mathsf{j}}/m_{\mathsf{j}}<1.
\end{equation}
Combining \eqref{1603131454} and  \eqref{1603131503} yields $\vectgamma\in \overline{\vect{\Gamma}}^{(\vect{m})}$.

Finally, to  complete the proof, we show that the right hand side of  \eqref{1602011534} is the union of pairwise disjoint sets. Let $\vectgamma\in\mathfrak{s}^{(\vect{m})}_{\mathsf{k}}\left(\vect{\Gamma}^{(\vect{m}),\mathsf{K}}_1\right)$ and $\mathsf{k}\in\mathsf{K}$. Then, $\gamma_{\mathsf{k}}>m_{\mathsf{k}}/2$ and therefore $\vectgamma\notin \vect{\Gamma}^{(\vect{m})}_0$. Let furthermore $\vectgamma\in\mathfrak{s}^{(\vect{m})}_{\mathsf{k}'}\left(\vect{\Gamma}^{(\vect{m}),\mathsf{K}'}_1\right)$. Then, $\gamma_{\mathsf{k}'}>m_{\mathsf{k}'}/2$. Therefore, since $\vectgamma\in \overline{\vect{\Gamma}}^{(\vect{m})}$,  we have $\mathsf{k}'=\mathsf{k}$, for otherwise $1<\gamma_{\mathsf{k}}/m_{\mathsf{k}}+\gamma_{\mathsf{k}'}/m_{\mathsf{k}'}\leq 1$. We have $\vectgamma=\mathfrak{s}^{(\vect{m})}_{\mathsf{k}}(\vectgamma')$ for some $\vectgamma'$ that is uniquely determined by $\vectgamma'=\mathfrak{s}^{(\vect{m})}_{\mathsf{k}}(\vectgamma)$. Therefore,  $\vectgamma'\in \vect{\Gamma}^{(\vect{m}),\mathsf{K}}_1$ and $\vectgamma'\in \vect{\Gamma}^{(\vect{m}),\mathsf{K}'}_1$, and we conclude $\mathsf{K}'=\mathsf{K}^{(\vect{m})}[\vectgamma']=\mathsf{K}$.
\end{proof}
 \begin{corollary}\label{1602280259}
For  all $\vect{m}\in\mathbb{N}^{\mathsf{d}}$, we have
 \[\FL\left(\overline{\vect{\Gamma}}^{(\vect{m})}\right)\lesssim \tprod_{\mathsf{i}=1}^{\mathsf{d}}\ln (m_{\mathsf{i}}+1). \]
\end{corollary}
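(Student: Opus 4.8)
The plan is to derive the bound directly from the disjoint decomposition of $\overline{\vect{\Gamma}}^{(\vect{m})}$ provided by Proposition~\ref{1605182041}, inserting the estimate of Corollary~\ref{1605182040} and using two elementary facts: the Lebesgue constant $\FL$ is subadditive over finite unions, and it is invariant under the coordinate reflections $\mathfrak{s}^{(\vect{m})}_{\mathsf{k}}$.

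First I would record the reflection invariance. For any finite $\vect{\Gamma}\subseteq\mathbb{Z}^{\mathsf{d}}$ and any $\mathsf{k}\in\{1,\ldots,\mathsf{d}\}$ one has
\[
\tsum_{\indexvectgamma\in\mathfrak{s}^{(\vect{m})}_{\mathsf{k}}(\vect{\Gamma})}\mathrm{e}^{\mathbf{i}(\indexvectgamma,\vect{t})}
=\mathrm{e}^{\mathbf{i}m_{\mathsf{k}}t_{\mathsf{k}}}\,\tsum_{\indexvectgamma\in\vect{\Gamma}}\mathrm{e}^{\mathbf{i}(\gamma_1t_1+\cdots-\gamma_{\mathsf{k}}t_{\mathsf{k}}+\cdots+\gamma_{\mathsf{d}}t_{\mathsf{d}})},
\]
so, taking moduli and applying the measure-preserving change of variable $t_{\mathsf{k}}\mapsto-t_{\mathsf{k}}$ on $[-\pi,\pi)$, we obtain $\FL(\mathfrak{s}^{(\vect{m})}_{\mathsf{k}}(\vect{\Gamma}))=\FL(\vect{\Gamma})$. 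Combined with Corollary~\ref{1605182040}, this yields $\FL(\mathfrak{s}^{(\vect{m})}_{\mathsf{k}}(\vect{\Gamma}^{(\vect{m}),\mathsf{K}}_1))\lesssim\tprod_{\mathsf{i}=1}^{\mathsf{d}}\ln(m_{\mathsf{i}}+1)$ for every $\emptyset\neq\mathsf{K}\subseteq\{1,\ldots,\mathsf{d}\}$ and every $\mathsf{k}\in\mathsf{K}$. Next I would note that $\vect{\Gamma}^{(\vect{m})}_0=\{0,\ldots,\lfloor m_1/2\rfloor\}\times\cdots\times\{0,\ldots,\lfloor m_{\mathsf{d}}/2\rfloor\}$ is a box, so its Dirichlet kernel factors over the coordinates and the one-dimensional estimate (cf.\ \eqref{1606171646}) gives $\FL(\vect{\Gamma}^{(\vect{m})}_0)\lesssim\tprod_{\mathsf{i}=1}^{\mathsf{d}}\ln(m_{\mathsf{i}}+1)$.

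Finally, Proposition~\ref{1605182041} exhibits $\overline{\vect{\Gamma}}^{(\vect{m})}$ as the disjoint union of $\vect{\Gamma}^{(\vect{m})}_0$ together with the sets $\mathfrak{s}^{(\vect{m})}_{\mathsf{k}}(\vect{\Gamma}^{(\vect{m}),\mathsf{K}}_1)$ ranging over all pairs $(\mathsf{K},\mathsf{k})$ with $\emptyset\neq\mathsf{K}\subseteq\{1,\ldots,\mathsf{d}\}$ and $\mathsf{k}\in\mathsf{K}$; there are exactly $\mathsf{d}\,2^{\mathsf{d}-1}$ such pairs, a number depending only on $\mathsf{d}$. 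Since the exponential sum over a disjoint union splits additively, the triangle inequality in $L^1$ gives
\[
\FL(\overline{\vect{\Gamma}}^{(\vect{m})})\leq\FL(\vect{\Gamma}^{(\vect{m})}_0)+\tsum_{\emptyset\neq\mathsf{K}\subseteq\{1,\ldots,\mathsf{d}\}}\ \tsum_{\mathsf{k}\in\mathsf{K}}\FL(\mathfrak{s}^{(\vect{m})}_{\mathsf{k}}(\vect{\Gamma}^{(\vect{m}),\mathsf{K}}_1)),
\]
and bounding each of these boundedly many terms by the estimates above finishes the proof. There is essentially no obstacle left: the real work sits in the combinatorial decomposition of Proposition~\ref{1605182041} and in Corollary~\ref{1605182040}; the only points needing a moment's care are the reflection invariance of $\FL$ and the fact that the number of pieces in the decomposition is bounded independently of $\vect{m}$, which keeps the implicit constant uniform.
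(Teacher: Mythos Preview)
Your proof is correct and follows essentially the same route as the paper: you use the disjoint decomposition from Proposition~\ref{1605182041}, the triangle inequality for $\FL$ over that union, the reflection invariance $\FL(\mathfrak{s}^{(\vect{m})}_{\mathsf{k}}(\vect{\Gamma}))=\FL(\vect{\Gamma})$, the box estimate for $\vect{\Gamma}^{(\vect{m})}_0$, and Corollary~\ref{1605182040} for the remaining pieces. The only difference is that you spell out the reflection-invariance computation and the count $\mathsf{d}\,2^{\mathsf{d}-1}$ of pieces, whereas the paper records these as ``clearly'' and leaves the uniformity in $\vect{m}$ implicit.
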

\begin{proof} By Proposition \ref{1605182041}, the  right hand side of \eqref{1602011534}  is  a  union of pairwise disjoint sets. Therefore, \eqref{1602011534}  implies
\begin{equation}\label{1605201434}\FL\left(\overline{\vect{\Gamma}}^{(\vect{m})}\right)\leq \FL\left(\vect{\Gamma}^{(\vect{m})}_0\right)+\tsum_{\emptyset\neq \mathsf{K}\subseteq \{1,\ldots,\mathsf{d}\}}\tsum_{\mathsf{k}\in\mathsf{K}}\FL\left(\mathfrak{s}^{(\vect{m})}_{\mathsf{k}}\left(\vect{\Gamma}^{(\vect{m}),\mathsf{K}}_1\right)\right).
\end{equation}
Clearly
\begin{equation}\label{1605201435}\FL\left(\mathfrak{s}^{(\vect{m})}_{\mathsf{k}}\left(\vect{\Gamma}^{(\vect{m}),\mathsf{K}}_1\right)\right)=\FL\left(\vect{\Gamma}^{(\vect{m}),\mathsf{K}}_1\right)
\end{equation}
and the cross product structure of $\vect{\Gamma}^{(\vect{m})}_0$ implies
\begin{equation}\label{1602280227}
\FL\left(\vect{\Gamma}^{(\vect{m})}_0\right)\lesssim \tprod_{\mathsf{i}=1}^{\mathsf{d}}\ln (m_{\mathsf{i}}+1).
\end{equation}
Combining  \eqref{1605201434}, \eqref{1605201435}, Corollary \ref{1605182040}, and  \eqref{1602280227}
 yields the assertion.
\end{proof}

 \begin{corollary} \label{1501051402}
For  all $\vect{m}\in\mathbb{N}^{\mathsf{d}}$, we have
\[\FL\left(\overline{\vect{\Gamma}}^{(\vect{m}),\ast}\right)\lesssim \tprod_{\mathsf{i}=1}^{\mathsf{d}}\ln (m_{\mathsf{i}}+1).\]
\end{corollary}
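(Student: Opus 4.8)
The plan is to deduce the bound from Corollary~\ref{1602280259} by exhibiting $\overline{\vect{\Gamma}}^{(\vect{m}),\ast}$ as a finite disjoint union of reflected subsets of $\overline{\vect{\Gamma}}^{(\vect{m})}$. For $S\subseteq\{1,\ldots,\mathsf{d}\}$, let $R_S$ be the reflection of $\mathbb{R}^{\mathsf{d}}$ negating the coordinates indexed by $S$ and fixing the others, and put $\overline{\vect{\Gamma}}^{(\vect{m})}_S=\{\,\vectgamma\in\overline{\vect{\Gamma}}^{(\vect{m})}:\gamma_{\mathsf{i}}\geq 1\ \text{for all}\ \mathsf{i}\in S\,\}$. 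First I would check, by classifying a point of $\overline{\vect{\Gamma}}^{(\vect{m}),\ast}$ according to the index set $S$ of its negative coordinates, that
\[\overline{\vect{\Gamma}}^{(\vect{m}),\ast}=\bigcup_{S\subseteq\{1,\ldots,\mathsf{d}\}}R_S\!\left(\overline{\vect{\Gamma}}^{(\vect{m})}_S\right),\]
with the union disjoint — the strict requirement $\gamma_{\mathsf{i}}\geq 1$ on $S$ (rather than $\gamma_{\mathsf{i}}\geq 0$) being exactly what makes $S$ recoverable from the reflected point. Since $\FL$ is invariant under each $R_S$ (substitute $t_{\mathsf{i}}\mapsto-t_{\mathsf{i}}$, $\mathsf{i}\in S$, in the defining integral), the triangle inequality in $L^1([-\pi,\pi)^{\mathsf{d}})$ then gives
\[\FL\left(\overline{\vect{\Gamma}}^{(\vect{m}),\ast}\right)\leq\sum_{S\subseteq\{1,\ldots,\mathsf{d}\}}\FL\left(\overline{\vect{\Gamma}}^{(\vect{m})}_S\right),\]
and since $\mathsf{d}$ is fixed it remains to bound each of these finitely many summands by $\prod_{\mathsf{i}=1}^{\mathsf{d}}\ln(m_{\mathsf{i}}+1)$.

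For a fixed $S$ I would write $\overline{\vect{\Gamma}}^{(\vect{m})}_S=\overline{\vect{\Gamma}}^{(\vect{m})}\setminus\bigcup_{\mathsf{i}\in S}H_{\mathsf{i}}$, where $H_{\mathsf{i}}=\{\,\vectgamma\in\overline{\vect{\Gamma}}^{(\vect{m})}:\gamma_{\mathsf{i}}=0\,\}$, and apply the inclusion--exclusion principle to the exponential sums; this reduces the task to bounding $\FL(\bigcap_{\mathsf{i}\in T}H_{\mathsf{i}})$ for all $T\subseteq S$. The key observation is that, upon deleting the identically vanishing coordinates indexed by $T$, the set $\bigcap_{\mathsf{i}\in T}H_{\mathsf{i}}$ is again a set of exactly the same type in dimension $\mathsf{d}-|T|$: for $\mathsf{i}\in T$ the constraint $\gamma_{\mathsf{i}}/m_{\mathsf{i}}\leq 1$ becomes trivial, and a mixed constraint $\gamma_{\mathsf{i}}/m_{\mathsf{i}}+\gamma_{\mathsf{j}}/m_{\mathsf{j}}\leq 1$ with $\mathsf{i}\in T$, $\mathsf{j}\notin T$ collapses to the already-present constraint $\gamma_{\mathsf{j}}/m_{\mathsf{j}}\leq 1$. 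Hence $\FL(\bigcap_{\mathsf{i}\in T}H_{\mathsf{i}})=\FL(\overline{\vect{\Gamma}}^{(\vect{m}')})$, with $\vect{m}'$ the subtuple of $\vect{m}$ indexed by $\{1,\ldots,\mathsf{d}\}\setminus T$, and Corollary~\ref{1602280259}, applied in dimension $\mathsf{d}-|T|$, bounds this by $\prod_{\mathsf{i}\notin T}\ln(m_{\mathsf{i}}+1)\lesssim\prod_{\mathsf{i}=1}^{\mathsf{d}}\ln(m_{\mathsf{i}}+1)$, the last step since $\ln(m_{\mathsf{i}}+1)\geq\ln 2$ for every $\mathsf{i}$. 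Summing over the finitely many $T\subseteq S$, and then over $S$, then finishes the argument.

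The main obstacle --- though it is more a bookkeeping subtlety than a real difficulty --- is the presence of elements of $\overline{\vect{\Gamma}}^{(\vect{m})}$ on the coordinate hyperplanes: such an element has a sign-change orbit of size smaller than $2^{\mathsf{d}}$, so the naive identity $\sum_{\indexvectgamma\in\overline{\vect{\Gamma}}^{(\vect{m}),\ast}}\mathrm{e}^{\mathbf{i}(\indexvectgamma,\vect{t})}=\sum_{\indexvectgamma\in\overline{\vect{\Gamma}}^{(\vect{m})}}\prod_{\mathsf{i}=1}^{\mathsf{d}}2\cos(\gamma_{\mathsf{i}}t_{\mathsf{i}})$ over-counts; the strict inequalities defining $\overline{\vect{\Gamma}}^{(\vect{m})}_S$, together with the inclusion--exclusion step, are precisely the device that funnels this over-counting into the lower-dimensional instances of Corollary~\ref{1602280259}. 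Everything else is a finite-sum triangle-inequality argument with $\mathsf{d}$ fixed. As an alternative I could instead symmetrize the disjoint decomposition of Proposition~\ref{1605182041} --- symmetrization of a disjoint union of subsets of $\mathbb{N}_0^{\mathsf{d}}$ stays disjoint --- which reduces matters to $\FL$ of the symmetrization of the box $\vect{\Gamma}^{(\vect{m})}_0$ (a product of one-dimensional Dirichlet-kernel constants) and to $\FL$ of the symmetrizations of the sets $\mathfrak{s}^{(\vect{m})}_{\mathsf{k}}(\vect{\Gamma}^{(\vect{m}),\mathsf{K}}_1)$; but the reflection argument above is shorter.
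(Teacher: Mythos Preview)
Your proof is correct and follows essentially the same approach as the paper: both arguments cover $\overline{\vect{\Gamma}}^{(\vect{m}),\ast}$ by $2^{\mathsf{d}}$ sign-reflected copies of $\overline{\vect{\Gamma}}^{(\vect{m})}$, use inclusion--exclusion to deal with the overlaps on the coordinate hyperplanes, observe that the resulting pieces are lower-dimensional $\overline{\vect{\Gamma}}^{(\vect{m}')}$'s, and finish with Corollary~\ref{1602280259}. The only organizational difference is that the paper takes the overlapping cover $\overline{\vect{\Gamma}}^{(\vect{m}),\ast}=\bigcup_{\vect{u}\in\{-1,1\}^{\mathsf{d}}}\overline{\vect{\Gamma}}^{(\vect{m})}_{\vect{u}}$ and invokes the general Lemma~\ref{1605182149} (bounding $\FL$ of a union via $\FL$ of all intersections, which here are exactly your lower-dimensional $\overline{\vect{\Gamma}}^{(\vect{m}')}$'s), whereas you first pass to a disjoint cover and then apply inclusion--exclusion inside each piece; the net computation is the same.
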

\begin{proof}
For $\vect{u}\in \{-1,1\}^{\mathsf{d}}$, we denote $\overline{\vect{\Gamma}}^{(\vect{m})}_{\vect{u}}=\left\{\, (u_1\gamma_1,\ldots,u_{\mathsf{d}}\gamma_{\mathsf{d}})\,\left|\,\vectgamma\in\overline{\vect{\Gamma}}^{(\vect{m})}\right.\right\}$.
Consider $\mathfrak{X}^{(\vect{m})}=\left\{\,\overline{\vect{\Gamma}}^{(\vect{m})}_{\vect{u}}\,|\,\vect{u}\in\{-1,1\}^{\mathsf{d}}\,\right\}$ and $N=2^{\mathsf{d}}$. Then, it is clear that
\begin{equation}\label{eq:201607121208}
\overline{\vect{\Gamma}}^{(\vect{m}),\ast}=\!\!\!\!\displaystyle \bigcup_{\vect{u}\in\{-1,1\}^{\mathsf{d}}}\overline{\vect{\Gamma}}^{(\vect{m})}_{\vect{u}}\in \mathfrak{X}^{(\vect{m})}_{\cup, N}.
\end{equation}

Let $\vect{u}^{(1)},\ldots, \vect{u}^{(j)}\in \{-1,1\}^{\mathsf{d}}$, and \mbox{$\mathsf{M}=\{\,\mathsf{i}\in \{1,\dots,\mathsf{d}\}\,|\,u^{(1)}_{\mathsf{i}}= u^{(2)}_{\mathsf{i}}=\ldots=u^{(j)}_{\mathsf{i}}\,\}$}.
We have $\displaystyle \bigcap_{l=1}^j\overline{\vect{\Gamma}}^{(\vect{m})}_{\vect{u}^{(l)}}=\left\{\,\vectgamma\in \overline{\vect{\Gamma}}^{(\vect{m})}_{\vect{u}^{(1)}}\,\left|\,\gamma_{\mathsf{i}}=0 \ \text{for all}\ \mathsf{i}\notin\mathsf{M} \right.\right\}$.
If  $\emptyset \neq \mathsf{M}=\{\mathsf{i}_1,\ldots,\mathsf{i}_h\}$, \mbox{$\mathsf{i}_1<\ldots<\mathsf{i}_h$},
$(m'_1,\ldots,m'_h)=(m_{\mathsf{i}_1},\ldots,m_{\mathsf{i}_h})$, $(u'_1,\ldots,u'_h)=(u^{(1)}_{\mathsf{i}_1},\ldots,u^{(1)}_{\mathsf{i}_h})$, then
\begin{equation}\label{17052016222}
\FL\left(\,\bigcap_{l=1}^j\overline{\vect{\Gamma}}^{(\vect{m})}_{\vect{u}^{(l)}}\right)=\FL\left(\overline{\vect{\Gamma}}_{(u'_1,\ldots,u'_h)}^{(m'_1,\ldots,m'_h)}\right)=\FL\left(\overline{\vect{\Gamma}}^{(m'_1,\ldots,m'_h)}\right).
\end{equation}
At the same time, if $\mathsf{M}=\emptyset$, then the left hand side in \eqref{17052016222} is $\FL\left(\left\{\vect{0}\right\}\right) = 1$.

Note that $\tprod_{l=1}^{h}\ln (m'_l+1)\leq \tprod_{\mathsf{i}=1}^{\mathsf{d}}\ln (m_{\mathsf{i}}+1)$ for $\mathsf{M}\neq \emptyset$.
Thus, using Corollary  \ref{1602280259} we conclude that for all $\vect{m}\in\mathbb{N}^{\mathsf{d}}$ we have
\begin{equation}\label{1605182243}
\FL\left(\,\bigcap_{l=1}^j\overline{\vect{\Gamma}}^{(\vect{m})}_{\vect{u}^{(l)}}\right)\lesssim \tprod_{\mathsf{i}=1}^{\mathsf{d}}\ln (m_{\mathsf{i}}+1).
\end{equation}
Now, \eqref{1605182243} implies that the assumption \eqref{1605182207} is satisfied and, therefore, taking into account \eqref{eq:201607121208} and Lemma \ref{1605182149} we get the assertion.
\end{proof}
\begin{lemma} \label{1508071608} For $z>0$ and $a\in(0,1]$, we have
\begin{equation}  \label{15080907}
\max\{\ln(az),1\}\geq \max\{a\ln z,1\}\geq a\max\{\ln z,1\}\geq a\ln z.
\end{equation}
\end{lemma}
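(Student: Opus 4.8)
The plan is to establish the chain $\max\{\ln(az),1\}\geq\max\{a\ln z,1\}\geq a\max\{\ln z,1\}\geq a\ln z$ inequality by inequality, starting from the right. The two rightmost inequalities are essentially trivial: $a\max\{\ln z,1\}\geq a\ln z$ holds because $a>0$ and $\max\{\ln z,1\}\geq\ln z$, while the middle inequality follows by writing $a\max\{\ln z,1\}=\max\{a\ln z,a\}$ (valid since $a>0$) and then using $1\geq a$ together with the monotonicity of $\max$ in each of its arguments. So the only inequality that really needs an argument is the leftmost one, $\max\{\ln(az),1\}\geq\max\{a\ln z,1\}$.

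For that I would split into cases according to the size of $a\ln z$. If $a\ln z\leq 1$ — in particular whenever $z\leq 1$, since then $\ln z\leq 0$ — the right-hand side equals $1$, and the left-hand side is at least $1$ by definition, so the inequality holds. If $a\ln z>1$ (so necessarily $\ln z>1/a\geq 1$), it suffices to prove $\ln(az)\geq a\ln z$, because then the left-hand side equals $\ln(az)\geq a\ln z>1$, which is exactly the right-hand side. The inequality $\ln(az)\geq a\ln z$ is equivalent to $\ln a\geq(a-1)\ln z$; for $a=1$ it is an equality, and for $a<1$ we have $1-a>0$, so from $\ln z>1/a$ we get $(a-1)\ln z=-(1-a)\ln z<-(1-a)/a=1-1/a$, and it then remains to verify $\ln a\geq 1-1/a$.

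The estimate $\ln a\geq 1-1/a$ for $a\in(0,1)$ is, after the substitution $b=1/a>1$, just the familiar bound $\ln b\leq b-1$, so this is where the (very mild) analytic content sits; everything else is bookkeeping with $\max$ and the hypothesis $a\in(0,1]$. I do not expect a genuine obstacle here — the main point to be careful about is to route the case $a\ln z\leq 1$ (which silently absorbs all $z\leq 1$) through the trivial bound $\max\{X,1\}\geq 1$, rather than attempting to compare $\ln(az)$ and $a\ln z$ directly, since $\ln(az)\geq a\ln z$ can fail when $a\ln z$ is small (for instance $a$ tiny and $z$ only moderately large).
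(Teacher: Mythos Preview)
Your proof is correct and follows essentially the same approach as the paper: both reduce the leftmost inequality to the elementary estimate $a(1-\ln a)<1$ for $a\in(0,1)$ (which you phrase as $\ln b\le b-1$ after the substitution $b=1/a$). The only cosmetic difference is the case split---the paper splits according to whether $\ln(az)\ge 1$, whereas you split according to whether $a\ln z\le 1$---but the analytic content is identical.
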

\begin{proof} The assertion is trivial for $a=1$. Let $a\in(0,1)$. The function $h:\,(0,1]\to\mathbb{R}$, $h(u)= u(1-\ln u)$, $u\in(0,1]$, is increasing, thus $a(1-\ln a)< h(1)=1$.

  We conclude $\ln a> (a-1)(1-\ln a)$, i.e. $\ln a> (a-1)\ln (\mathrm{e}/a)$. Thus, since $a-1<0$, for $z\geq \mathrm{e}/a$ we have  $\ln a> (a-1)\ln z$, i.e. $\ln(az)> a\ln z$.
For $0<z< \mathrm{e}/a$, we conclude  $a\ln z<a\ln(\mathrm{e}/a)=a(1-\ln a)< 1$. We have shown: if $\ln(az)\geq 1$, then we have $\ln(az)> a\ln z$, and
if $\ln(az)<1$, then we have also $a\ln z< 1$.
\end{proof}
\begin{proposition} \label{1602171132} There are $\alpha_{\mathsf{d}}$, $\beta_{\mathsf{d}}>0$ such that for all
$\vect{m}\in\mathbb{N}^{\mathsf{d}}$, we have
\begin{equation}\label{1602291718}
  \FL\left(\overline{\vect{\Gamma}}^{(\vect{m})}\right)\geq \alpha_{\mathsf{d}}\tprod_{\mathsf{i}=1}^{\mathsf{d}}\ln (m_{\mathsf{i}}+1),\qquad
\FL\left(\overline{\vect{\Gamma}}^{(\vect{m}),\ast}\right)\geq \beta_{\mathsf{d}}\tprod_{\mathsf{i}=1}^{\mathsf{d}}\ln (m_{\mathsf{i}}+1).
\end{equation}
\end{proposition}
\begin{proof} We use the   following Hardy-Littlewood  inequality, see\cite[p. 286]{Zygmund}:
\begin{equation}\label{1512202341}
\dfrac1{2}\int_{[-\pi,\pi)}\left|\tsum_{\gamma=0}^{N}c_{\gamma}\mathrm{e}^{\mathbf{i}\gamma t}\right|\,\mathrm{d}t \geq \tsum_{\gamma=0}^{N}\dfrac{|c_{\gamma}|}{\gamma+1},\quad N\in \mathbb{N}_0,\ c_0,\ldots,c_N\in\mathbb{C}.
\end{equation}
By the induction argument from  \cite[p. 69]{Rudin1969}, we get for  $N_1,\ldots,N_{\mathsf{d}}\in\mathbb{N}_0$, $c_{\indexvectgamma}\in\mathbb{C}^{\mathsf{d}}$:
\begin{equation}\label{Hardy-Littlewood}
  \dfrac1{2^{\mathsf{d}}}\int_{[-\pi,\pi)^{\mathsf{d}}}
\left|\tsum_{\gamma_1=0}^{N_1}\cdots \tsum_{\gamma_{\mathsf{d}}=0}^{N_{\mathsf{d}}}c_{\indexvectgamma}\,
\mathrm{e}^{\mathbf{i}(\indexvectgamma,\vect{t})}\right|\,\mathrm{d}\vect{t}\geq \tsum_{\gamma_1=0}^{N_1}\cdots \tsum_{\gamma_{\mathsf{d}}=0}^{N_{\mathsf{d}}}\dfrac{|c_{\indexvectgamma}|}{(\gamma_1+1)\cdot\ldots\cdot(\gamma_{\mathsf{d}}+1)}.
\end{equation}
Using an appropriate shifting and orthogonality, we obtain
\begin{equation}\label{1608070954}
\FL\left(\vect{\Gamma}\right)\geq \dfrac1{\pi^{\mathsf{d}}}\quad \text{for all finite $\emptyset\neq \vect{\Gamma}\subset \mathbb{Z}^{\mathsf{d}}$}.
\end{equation}
By \eqref{Hardy-Littlewood}, we get for $\FL\left(\overline{\vect{\Gamma}}^{(\vect{m})}\right)$ the lower bounds
\[
 \dfrac1{\pi^{\mathsf{d}}}\tsum_{\indexvectgamma\in \overline{\vect{\Gamma}}^{(\vect{m})}}\dfrac1{(\gamma_1+1)\cdot\ldots\cdot(\gamma_{\mathsf{d}}+1)}\geq
\dfrac1{\pi^{\mathsf{d}}} \tsum_{\gamma_1=0}^{\lfloor m_1/2\rfloor}\cdots \tsum_{\gamma_{\mathsf{d}}=0}^{\lfloor m_{\mathsf{d}}/2\rfloor}\dfrac1{(\gamma_1+1)\cdot\ldots\cdot(\gamma_{\mathsf{d}}+1)}.\]
Since
\begin{equation}\label{1605201209}
 \tsum_{\gamma=0}^{\lfloor x\rfloor}\dfrac1{\gamma+1}\geq \max\{\ln (x+1),1\},\qquad x\geq 0,\end{equation}
we  have
  \[\FL\left(\overline{\vect{\Gamma}}^{(\vect{m})}\right)\geq \dfrac1{\pi^{\mathsf{d}}} \tprod_{\mathsf{i}=1}^{\mathsf{d}}\max\{\ln\left(m_{\mathsf{i}}/2+1\right),1\}\geq \dfrac1{\pi^{\mathsf{d}}} \tprod_{\mathsf{i}=1}^{\mathsf{d}}\max\{\ln\left((m_{\mathsf{i}}+1)/2\right),1\},\]
and now Lemma \ref{1508071608} implies the first inequality in \eqref{1602291718} with $\alpha_{\mathsf{d}}=(2\pi)^{-\mathsf{d}}$.

\medskip

Since $\FL\left(\{\gamma\in\mathbb{Z}\,|\,|\gamma|\leq x\}\right)=\FL\left(\{\gamma\in\mathbb{Z}\,|\,0\leq \gamma \leq 2\lfloor x\rfloor\}\right)$, $x\geq 0$, the Hardy-Littlewood inequality \eqref{1512202341} and \eqref{1605201209} imply
\begin{equation}\label{1602161746}
  \FL\left(\{\gamma\in\mathbb{Z}\,|\,|\gamma|\leq x\}\right)\geq \dfrac1{\pi}\tsum_{\gamma=0}^{2\lfloor x\rfloor}\dfrac1{\gamma+1}\geq  \dfrac1{\pi}\max(\ln(x+1),1),\qquad x\geq 0.
\end{equation}

Thus, for $\mathsf{d}=1$,  \eqref{1602161746}  yields  the second inequality in \eqref{1602291718}  with $\beta_1=\pi^{-1}$.
Let us prove this inequality for $\mathsf{d}\geq 2$. We adapt the decomposition approach from \cite{Yudin1979}.  For $\mathsf{j}\in \{1,\ldots,\mathsf{d}\}$, we denote
 \[
 \overline{\vect{\Gamma}}^{(\vect{m}),\ast}_{\mathsf{j},(\gamma)}=\left\{(\gamma_1,\ldots,\gamma_{\mathsf{j}-1},
 \gamma_{\mathsf{j}+1},\ldots,\gamma_{\mathsf{d}})\,|\,(\gamma_1,\ldots,\gamma_{\mathsf{j}-1},\gamma,\gamma_{\mathsf{j}+1},\ldots,\gamma_{\mathsf{d}})\in \overline{\vect{\Gamma}}^{(\vect{m}),\ast}\right\},
 \]
\[
a_{\mathsf{j},(\gamma)}^{(\vect{m})}(t_1,\ldots,t_{\mathsf{j}-1},t_{\mathsf{j}+1},\ldots,t_{\mathsf{d}})
=\!\!\!\!\!\!\!
\tsum_{(\gamma_1,\ldots,\gamma_{\mathsf{j}-1},\gamma_{\mathsf{j}+1},\ldots,\gamma_{\mathsf{d}})\in \overline{\vect{\Gamma}}^{(\vect{m}),\ast}_{\mathsf{j},(\gamma)}}\mathrm{e}^{\gamma_1t_1+\ldots+\gamma_{\mathsf{j}-1}
t_{\mathsf{j}-1}+\gamma_{\mathsf{j}+1}t_{\mathsf{j}+1}+\ldots+\gamma_{\mathsf{d}}t_{\mathsf{d}}}.
\]
 Using the following equality $$
 \tsum_{\indexvectgamma\in\overline{\vect{\Gamma}}^{(\vect{m}),\ast}}\mathrm{e}^{\mathbf{i}(\indexvectgamma,\vect{t})}=\mathrm{e}^{-\mathbf{i} m_{\mathsf{j}}t_{\mathsf{j}}}\tsum_{\gamma=0}^{2m_{\mathsf{j}}}\mathrm{e}^{\mathbf{i}\gamma t_{\mathsf{j}}}a_{\mathsf{j},(\gamma-m_{\mathsf{j}})}^{(\vect{m})}(t_1,\ldots,t_{\mathsf{j}-1},t_{\mathsf{j}+1},\ldots,t_{\mathsf{d}})
 $$
and  \eqref{1512202341}, we get
\[
\dfrac1{2}\int_{[-\pi,\pi)} \left|\tsum_{\indexvectgamma\in\overline{\vect{\Gamma}}^{(\vect{m}),\ast}}
\mathrm{e}^{\mathbf{i}(\indexvectgamma,\vect{t})}\right|\mathrm{d}t_{\mathsf{j}}\geq \tsum_{\gamma=0}^{2m_{\mathsf{j}}}\dfrac1{\gamma+1}|a_{\mathsf{j},(\gamma-m_{\mathsf{j}})}^{(\vect{m})}(t_1,\ldots,t_{\mathsf{j}-1},t_{\mathsf{j}+1},\ldots,t_{\mathsf{d}})|
\]
and, therefore, we derive
\begin{equation}\label{1602270506}
\FL\left(\overline{\vect{\Gamma}}^{(\vect{m}),\ast}\right)\geq \dfrac1{\pi} \tsum_{\gamma=0}^{\lfloor m_{\mathsf{j}}/2\rfloor}\dfrac1{\gamma+1}\FL\left(\overline{\vect{\Gamma}}^{(\vect{m}),\ast}_{\mathsf{j},(m_{\mathsf{j}}-\gamma)}\right).
\end{equation}
Denote $\mathsf{K}=\{\,\mathsf{i}\in\{1,\ldots,\mathsf{d}\}\,|\,\ln((m_{\mathsf{i}}+1)/(4\mathrm{e}^{\mathsf{d}}))\geq 1\}$. Having in mind \eqref{1608070954}, we can assume without restriction  $\mathsf{K}\neq\emptyset$, since we can ensure $\beta_{\mathsf{d}}\in(0,\pi^{-\mathsf{d}}]$.

Let $\mathsf{j}\in\mathsf{K}$.  For all $\gamma\in \{0,\ldots,\lfloor m_{\mathsf{j}}/2\rfloor\}$, we have  the cross product structure
\[\overline{\vect{\Gamma}}^{(\vect{m}),\ast}_{\mathsf{j},(m_{\mathsf{j}}-\gamma)}
=\left\{\,(\gamma_1,\ldots,\gamma_{\mathsf{j}-1},\gamma_{\mathsf{j}+1},\ldots,\gamma_{\mathsf{d}})\,\left|\;\forall\,\mathsf{i}\in\{1,\ldots,\mathsf{d}\}\setminus\{\;\!\mathsf{j}\;\!\}:\;\!|\gamma_{\mathsf{i}}|\leq \dfrac{m_{\mathsf{i}}}{m_{\mathsf{j}}}\gamma\right.\,\right\}.\]
Thus, for all $\gamma\in \{0,\ldots,\lfloor m_{\mathsf{j}}/2\rfloor\}$ the inequality \eqref{1602161746} implies
\begin{equation}\label{1608071001}
\displaystyle\FL\left(\overline{\vect{\Gamma}}^{(\vect{m}),\ast}_{\mathsf{j},(m_{\mathsf{j}}-\gamma)}\right) \geq \dfrac1{\pi^{\mathsf{d}-1}}\tprod_{\mathsf{i}\in\mathsf{K}\setminus\{\mathsf{j}\}}
\ln\left(\dfrac{m_{\mathsf{i}}}{m_{\mathsf{j}}}\gamma+1\right).
\end{equation}
Note that  the product over the empty set $\mathsf{K}\setminus\{\mathsf{j}\}= \emptyset$  is considered as $1$. In  this case, by \eqref{1608070954}, the inequality \eqref{1608071001} is satisfied.
Now, \eqref{1602270506} yields
\[\FL\left(\overline{\vect{\Gamma}}^{(\vect{m}),\ast}\right)\geq \dfrac1{\pi^{\mathsf{d}}}\tsum_{\gamma=1}^{\lfloor m_{\mathsf{j}}/2\rfloor}\dfrac1{\gamma+1}\tprod_{\mathsf{i}\in\mathsf{K}\setminus\{\mathsf{j}\}}
\ln\left(\dfrac12\dfrac{m_{\mathsf{i}}}{m_{\mathsf{j}}}(2\gamma)+1\right).\]
For $\gamma\geq 1$, we have $2\gamma\geq \gamma+1$. Further,  we have $4\leq 2m_{\mathsf{j}}$. We conclude
\begin{align}
\nonumber \pi^{\mathsf{d}}\FL\left(\overline{\vect{\Gamma}}^{(\vect{m}),\ast}\right)&+\tprod_{\mathsf{i}\in\mathsf{K}\setminus\{\mathsf{j}\}}\ln\left(\dfrac{m_{\mathsf{i}}}{4}+1\right)
 \geq \displaystyle\dfrac1{\pi^{\mathsf{d}}}\tsum_{\gamma=0}^{\lfloor m_{\mathsf{j}}/2\rfloor}\dfrac1{\gamma+1}\tprod_{\mathsf{i}\in\mathsf{K}\setminus\{\mathsf{j}\}}
\ln\left(\dfrac12\dfrac{m_{\mathsf{i}}}{m_{\mathsf{j}}}(\gamma+1)+1\right)\\
\label{1608051912}\geq &\displaystyle\int_{0}^{m_{\mathsf{j}}/2} \dfrac{1}{v+1} \tprod_{\mathsf{i}\in\mathsf{K}\setminus\{\mathsf{j}\}} \ln \left( \dfrac12\dfrac{m_{\mathsf{i}}}{m_{\mathsf{j}}}v+1\right)\mathrm{d} v.
\end{align}
Next, for $r>0$, we derive
\begin{align*}
\tsum_{\mathsf{j}\in\mathsf{K}}\displaystyle\int_0^{rm_{\mathsf{j}}}&\dfrac{1}{v+1}\tprod_{\mathsf{i}\in\mathsf{K}\setminus\{\mathsf{j}\}}\ln\left(\dfrac12\dfrac{m_{\mathsf{i}}}{m_{\mathsf{j}}}v +1\right)\mathrm{d}v=\tsum_{\mathsf{j}\in\mathsf{K}}\displaystyle \int_0^{r/2} \dfrac{m_{\mathsf{j}}}{m_{\mathsf{j}}\tau +\frac12}\tprod_{\mathsf{i}\in\mathsf{K}\setminus\{\mathsf{j}\}}\ln\left(m_{\mathsf{i}}\tau+1\right)\mathrm{d}\tau\\
&\geq \displaystyle \int_0^{r/2}\tsum_{\mathsf{j}\in\mathsf{K}}\dfrac{m_{\mathsf{j}}}{m_{\mathsf{j}}\tau +1}\tprod_{\mathsf{i}\in\mathsf{K}\setminus\{\mathsf{j}\}}\ln\left(m_{\mathsf{i}}\tau+1\right)\mathrm{d}\tau=\tprod_{\mathsf{i}\in\mathsf{K}}\ln\left(\dfrac12rm_{\mathsf{i}} +1\right)
\end{align*}
and, therefore, there exists  $\mathsf{k}\in\mathsf{K}$ such that
\begin{equation}\label{1608051847}
\displaystyle\int_0^{rm_{\mathsf{k}}}\dfrac{1}{v+1}\tprod_{\mathsf{i}\in\mathsf{K}\setminus\{\mathsf{k}\}}\ln\left(\dfrac12\dfrac{m_{\mathsf{i}}}{m_{\mathsf{k}}}v +1\right)\mathrm{d}v \geq \dfrac1{|\mathsf{K}|}\tprod_{\mathsf{i}\in\mathsf{K}}\ln\left(\dfrac12rm_{\mathsf{i}} +1\right).
\end{equation}
Using \eqref{1608051847} with $r=1/2$ and \eqref{1608051912} with $\mathsf{j}=\mathsf{k}$ and taking into account the definition of $\mathsf{K}$,  we obtain
\begin{align}\label{1608061356}\pi^{\mathsf{d}}\FL\left(\overline{\vect{\Gamma}}^{(\vect{m}),\ast}\right)&\geq \dfrac1{\mathsf{d}}\left(\ln\left(\,\dfrac14 m_{\mathsf{k}}+1\right)-\ln \left(\mathrm{e}^{\mathsf{d}}\right)\right)\tprod_{\mathsf{i}\in\mathsf{K}\setminus\{\mathsf{k}\}}\ln\left(\,\dfrac14 m_{\mathsf{i}}+1\right)\\
\nonumber &\geq \dfrac1{\mathsf{d}}\tprod_{\mathsf{i}\in\mathsf{K}}\ln((m_{\mathsf{i}}+1)/(4\mathrm{e}^{\mathsf{d}}))= \dfrac1{\mathsf{d}}\tprod_{\mathsf{i}=1}^{\mathsf{d}}\max\{\ln((m_{\mathsf{i}}+1)/(4\mathrm{e}^{\mathsf{d}})),1\}.
\end{align}
Now, Lemma  \ref{1508071608} implies  the assertion with $\beta_{\mathsf{d}}=\mathsf{d}^{-1}\pi^{-\mathsf{d}}(4\mathrm{e}^{\mathsf{d}})^{-\mathsf{d}}\in(0,\pi^{-\mathsf{d}}]$.
\end{proof}

\medskip

\begin{proofof}{Theorem \ref{1602041200}}
The statement follows  immediately  by combining Corollary~\ref{1602280259}, Corollary~\ref{1501051402}, and Proposition~\ref{1602171132}.
\end{proofof}

\subsection{Proof of Theorem \ref{1511231653}}\label{1601031532}

Let $\mathsf{d}\geq 2$, $\vect{m}\in (0,\infty)^{\mathsf{d}}$, and $r > 0$. Denote $D^{(\vect{m})}_{\Sigma,r}(\vect{t})=\tsum_{\indexvectgamma\in  \vect{\Sigma}^{(\vect{m})}_{r}}\mathrm{e}^{\mathbf{i}(\indexvectgamma,\vect{t})}$
and
$$
\displaystyle \lambda_r^{(m_1,\ldots,m_{\mathsf{j}})}(\gamma_1,\ldots,\gamma_{\mathsf{j}-1})
=m_{\mathsf{j}}\left(r-\tsum_{\mathsf{i}=1}^{\mathsf{j}-1}\dfrac{\gamma_{\mathsf{i}}}{m_{\mathsf{i}}}\right),\quad \mathsf{j}=2,\ldots,\mathsf{d}.
$$
It is easy to see that
\[D^{(\vect{m})}_{\Sigma,r}(\vect{t})=\tsum_{\gamma_1=0}^{\lfloor rm_1\rfloor}\mathrm{e}^{\mathbf{i}\gamma_1t_1}\tsum_{\gamma_2=0}^{\lfloor\lambda_r^{(m_1,m_2)}(\gamma_1)\rfloor}\mathrm{e}^{\mathbf{i}\gamma_2t_2}\ \ \cdots\quad \tsum_{\gamma_{\mathsf{d}}=0}^{\lfloor\lambda_r^{(m_1,\ldots,m_{\mathsf{d}})}(\gamma_1,\ldots,\gamma_{\mathsf{d}-1})\rfloor}\mathrm{e}^{\mathbf{i}\gamma_{\mathsf{d}}t_{\mathsf{d}}}.\]

In what follows, we will need several auxiliary functions given by
 \begin{equation}\label{1511231605}
 \begin{split}
F^{(\vect{m})}_{\Sigma,r}(\vect{t})=&
\tsum_{\gamma_1=0}^{\lfloor rm_1\rfloor}\mathrm{e}^{\mathbf{i}\gamma_1(t_1-m_{\mathsf{d}} t_{\mathsf{d}}/m_1)}\tsum_{\gamma_2=0}^{\lfloor\lambda_r^{(m_1,m_2)}(\gamma_1)\rfloor}\mathrm{e}^{\mathbf{i}\gamma_2(t_2-m_{\mathsf{d}} t_{\mathsf{d}}/m_2)}\ \ \cdots\\
&\cdots \tsum_{\gamma_{\mathsf{d}-1}=0}^{\lfloor\lambda_r^{(m_1,\ldots,m_{\mathsf{d}-1})}(\gamma_1,\ldots,\gamma_{\mathsf{d}-2})\rfloor} \mathrm{e}^{\mathbf{i}\gamma_{\mathsf{d}-1}(t_{\mathsf{d}-1}-m_{\mathsf{d}} t_{\mathsf{d}}/m_{\mathsf{d}-1})}f^{(\vect{m})}_{\Sigma,r}(\gamma_1,\ldots,\gamma_{\mathsf{d}-1},t_{\mathsf{d}}),
\end{split}
\end{equation}
 \begin{equation}\label{1505202044}
f^{(\vect{m})}_{\Sigma,r}(\gamma_1,\ldots,\gamma_{\mathsf{d}-1},t_{\mathsf{d}})=\mathrm{e}^{\mathbf{i}(rm_{\mathsf{d}}+1)
t_{\mathsf{d}}}
\dfrac{\mathrm{e}^{-\mathbf{i}\llfloor \lambda_r^{(m_1,\ldots,m_{\mathsf{d}})}(\gamma_1,\ldots,\gamma_{\mathsf{d}-1})\rrfloor t_{\mathsf{d}}}-1}{\mathrm{e}^{\mathbf{i}t_{\mathsf{d}}}-1},
\end{equation}
and
\begin{align}
\nonumber D^{\circ,(\vect{m})}_{\Sigma,\mathsf{d},r}(\vect{t})&=D^{(m_1,\ldots,m_{\mathsf{d}-1})}_{\Sigma,r}(t_1,\ldots,t_{\mathsf{d}-1}),\\
\nonumber D^{(\vect{m})}_{\Sigma,\mathsf{d},r}(\vect{t})&=D^{(\vect{m})}_{\Sigma,r}
(t_1-m_{\mathsf{d}}t_{\mathsf{d}}/m_1,\ldots,t_{\mathsf{d}-1}-m_{\mathsf{d}}t_{\mathsf{d}}/m_{\mathsf{d}-1}),\\
\displaystyle G^{(\vect{m})}_{\Sigma,r}(\vect{t})&=\dfrac1{\mathrm{e}^{\mathbf{i}t_{\mathsf{d}}}-1}
\label{1605202157}\left(\mathrm{e}^{\mathbf{i}(rm_{\mathsf{d}}+1)t_{\mathsf{d}}}D^{(\vect{m})}_{\Sigma,\mathsf{d},r}(\vect{t})
-D^{\circ,(\vect{m})}_{\Sigma,\mathsf{d},r}(\vect{t})\right).
\end{align}

\begin{proposition} \label{1608081106} We have
$D_{\Sigma,r}^{(\vect{m})}(\vect{t})=G_{\Sigma,r}^{(\vect{m})}(\vect{t})+F_{\Sigma,r}^{(\vect{m})}(\vect{t})$.
\end{proposition}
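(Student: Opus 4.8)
The plan is to prove the identity by a direct computation in the last coordinate: write $D^{(\vect{m})}_{\Sigma,r}$ in its iterated form, sum the innermost geometric series over $\gamma_{\mathsf{d}}$ in closed form, and split the result into the two pieces that reproduce $G^{(\vect{m})}_{\Sigma,r}$ and $F^{(\vect{m})}_{\Sigma,r}$. Concretely, I would first record that
\[
D^{(\vect{m})}_{\Sigma,r}(\vect{t})=\tsum_{(\gamma_1,\ldots,\gamma_{\mathsf{d}-1})\in\vect{\Sigma}^{(m_1,\ldots,m_{\mathsf{d}-1})}_{r}}\mathrm{e}^{\mathbf{i}(\gamma_1t_1+\ldots+\gamma_{\mathsf{d}-1}t_{\mathsf{d}-1})}\tsum_{\gamma_{\mathsf{d}}=0}^{\lfloor\lambda\rfloor}\mathrm{e}^{\mathbf{i}\gamma_{\mathsf{d}}t_{\mathsf{d}}},
\]
where $\lambda=\lambda_r^{(m_1,\ldots,m_{\mathsf{d}})}(\gamma_1,\ldots,\gamma_{\mathsf{d}-1})=rm_{\mathsf{d}}-\sum_{\mathsf{i}=1}^{\mathsf{d}-1}\gamma_{\mathsf{i}}m_{\mathsf{d}}/m_{\mathsf{i}}$, and observe that $\lambda\geq 0$ on the range of summation, so the inner sum equals $(\mathrm{e}^{\mathbf{i}(\lfloor\lambda\rfloor+1)t_{\mathsf{d}}}-1)/(\mathrm{e}^{\mathbf{i}t_{\mathsf{d}}}-1)$ for $t_{\mathsf{d}}\notin 2\pi\mathbb{Z}$.

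Next I would use $\lfloor\lambda\rfloor+1=\lambda+1-\llfloor\lambda\rrfloor$ together with the explicit value of $\lambda$ to write $\mathrm{e}^{\mathbf{i}(\lfloor\lambda\rfloor+1)t_{\mathsf{d}}}=\mathrm{e}^{\mathbf{i}(rm_{\mathsf{d}}+1)t_{\mathsf{d}}}\,\mathrm{e}^{-\mathbf{i}\sum_{\mathsf{i}=1}^{\mathsf{d}-1}\gamma_{\mathsf{i}}m_{\mathsf{d}}t_{\mathsf{d}}/m_{\mathsf{i}}}\,\mathrm{e}^{-\mathbf{i}\llfloor\lambda\rrfloor t_{\mathsf{d}}}$, and then split the numerator $\mathrm{e}^{\mathbf{i}(\lfloor\lambda\rfloor+1)t_{\mathsf{d}}}-1$ as the sum of
\[
\mathrm{e}^{\mathbf{i}(rm_{\mathsf{d}}+1)t_{\mathsf{d}}}\mathrm{e}^{-\mathbf{i}\sum_{\mathsf{i}=1}^{\mathsf{d}-1}\gamma_{\mathsf{i}}m_{\mathsf{d}}t_{\mathsf{d}}/m_{\mathsf{i}}}\bigl(\mathrm{e}^{-\mathbf{i}\llfloor\lambda\rrfloor t_{\mathsf{d}}}-1\bigr)\quad\text{and}\quad \mathrm{e}^{\mathbf{i}(rm_{\mathsf{d}}+1)t_{\mathsf{d}}}\mathrm{e}^{-\mathbf{i}\sum_{\mathsf{i}=1}^{\mathsf{d}-1}\gamma_{\mathsf{i}}m_{\mathsf{d}}t_{\mathsf{d}}/m_{\mathsf{i}}}-1 .
\]
After dividing by $\mathrm{e}^{\mathbf{i}t_{\mathsf{d}}}-1$, substituting into the iterated sum and absorbing the phases through $\mathrm{e}^{\mathbf{i}\gamma_{\mathsf{j}}t_{\mathsf{j}}}\mathrm{e}^{-\mathbf{i}\gamma_{\mathsf{j}}m_{\mathsf{d}}t_{\mathsf{d}}/m_{\mathsf{j}}}=\mathrm{e}^{\mathbf{i}\gamma_{\mathsf{j}}(t_{\mathsf{j}}-m_{\mathsf{d}}t_{\mathsf{d}}/m_{\mathsf{j}})}$ for $\mathsf{j}=1,\ldots,\mathsf{d}-1$, the contribution of the first bracket is precisely $\sum_{(\gamma_1,\ldots,\gamma_{\mathsf{d}-1})}\mathrm{e}^{\mathbf{i}\sum_{\mathsf{j}}\gamma_{\mathsf{j}}(t_{\mathsf{j}}-m_{\mathsf{d}}t_{\mathsf{d}}/m_{\mathsf{j}})}f^{(\vect{m})}_{\Sigma,r}(\gamma_1,\ldots,\gamma_{\mathsf{d}-1},t_{\mathsf{d}})$, which by the definitions \eqref{1505202044} and \eqref{1511231605} is exactly $F^{(\vect{m})}_{\Sigma,r}(\vect{t})$. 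The contribution of the second bracket equals $(\mathrm{e}^{\mathbf{i}t_{\mathsf{d}}}-1)^{-1}$ times $\mathrm{e}^{\mathbf{i}(rm_{\mathsf{d}}+1)t_{\mathsf{d}}}\sum_{(\gamma_1,\ldots,\gamma_{\mathsf{d}-1})}\mathrm{e}^{\mathbf{i}\sum_{\mathsf{j}}\gamma_{\mathsf{j}}(t_{\mathsf{j}}-m_{\mathsf{d}}t_{\mathsf{d}}/m_{\mathsf{j}})}-\sum_{(\gamma_1,\ldots,\gamma_{\mathsf{d}-1})}\mathrm{e}^{\mathbf{i}\sum_{\mathsf{j}}\gamma_{\mathsf{j}}t_{\mathsf{j}}}$, which by the very definitions of $D^{(\vect{m})}_{\Sigma,\mathsf{d},r}$, $D^{\circ,(\vect{m})}_{\Sigma,\mathsf{d},r}$ and of $G^{(\vect{m})}_{\Sigma,r}$ in \eqref{1605202157} is exactly $G^{(\vect{m})}_{\Sigma,r}(\vect{t})$. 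Adding the two contributions yields $D^{(\vect{m})}_{\Sigma,r}=G^{(\vect{m})}_{\Sigma,r}+F^{(\vect{m})}_{\Sigma,r}$.

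The computation is elementary and I do not expect a genuine obstacle; the only points that need care are the floor bookkeeping (the identity $\lfloor\lambda\rfloor=\lambda-\llfloor\lambda\rrfloor$ combined with the explicit $\lambda=rm_{\mathsf{d}}-\sum_{\mathsf{i}}\gamma_{\mathsf{i}}m_{\mathsf{d}}/m_{\mathsf{i}}$), the positivity $\lambda\geq 0$ on the summation range which legitimizes the geometric-sum formula, and the understanding that the stated identity holds for $t_{\mathsf{d}}\notin 2\pi\mathbb{Z}$ (equivalently, as an identity in $L^1$), the exceptional set having measure zero. This proposition is the $\vect{\Sigma}$-analogue of Proposition~\ref{1601051328}, so if one preferred a more uniform presentation the same result could instead be obtained by peeling off the $\gamma_{\mathsf{d}}$-summation via a single step of the recursion used there; the direct splitting above is shorter.
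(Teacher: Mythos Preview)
Your proof is correct. The key identity you use---writing $\lfloor\lambda\rfloor+1=(rm_{\mathsf{d}}+1)-\sum_{\mathsf{j}}\gamma_{\mathsf{j}}m_{\mathsf{d}}/m_{\mathsf{j}}-\llfloor\lambda\rrfloor$ and splitting the numerator of the geometric sum accordingly---is exactly the algebraic content of the paper's argument, and your identification of the two resulting pieces with $F^{(\vect{m})}_{\Sigma,r}$ and $G^{(\vect{m})}_{\Sigma,r}$ is accurate.

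The organization differs from the paper's. The paper proceeds inductively: it records the descending recursion
\[
S_{\Sigma,r}^{(m_{\mathsf{i}},\ldots,m_{\mathsf{d}})}(t_{\mathsf{i}},\ldots,t_{\mathsf{d}})
=\tsum_{\gamma_{\mathsf{i}}=0}^{\lfloor rm_{\mathsf{i}}\rfloor}\mathrm{e}^{\mathbf{i}\gamma_{\mathsf{i}}t_{\mathsf{i}}}
S_{\Sigma,(r-\gamma_{\mathsf{i}}/m_{\mathsf{i}})}^{(m_{\mathsf{i}+1},\ldots,m_{\mathsf{d}})}(t_{\mathsf{i}+1},\ldots,t_{\mathsf{d}}),\quad 1\leq \mathsf{i}\leq \mathsf{d}-2,
\]
for $S\in\{D,F\}$, checks that $G$ inherits the same recursion from that of $D$, verifies the two-variable base case $D_{\Sigma,r}^{(m_{\mathsf{d}-1},m_{\mathsf{d}})}=G_{\Sigma,r}^{(m_{\mathsf{d}-1},m_{\mathsf{d}})}+F_{\Sigma,r}^{(m_{\mathsf{d}-1},m_{\mathsf{d}})}$ via the same exponential manipulation you perform, and then lets the recursion carry the identity up to dimension $\mathsf{d}$. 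Your one-shot computation bypasses the induction by summing the full $(\gamma_1,\ldots,\gamma_{\mathsf{d}-1})$ range at once; this is shorter here, while the paper's recursive packaging keeps the $\vect{\Sigma}$ case parallel to the treatment of Proposition~\ref{1601051328}. Substantively the two arguments are the same.
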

\begin{proof}
We have for  $\mathsf{i}\in\{1,\ldots,\mathsf{d}-2\}$ the recursive relation
\begin{equation}\label{1511230927}
S_{\Sigma,r}^{(m_{\mathsf{i}},\ldots,m_{\mathsf{d}})}(t_{\mathsf{i}},\ldots,t_{\mathsf{d}})
=\tsum_{\gamma_{\mathsf{i}}=0}^{\lfloor rm_{\mathsf{i}}\rfloor}\mathrm{e}^{\mathbf{i}\gamma_{\mathsf{i}}t_{\mathsf{i}}}
S_{\Sigma,(r-\gamma_{\mathsf{i}}/m_{\mathsf{i}})}^{(m_{\mathsf{i}+1},\ldots,m_{\mathsf{d}})}(t_{\mathsf{i}+1},\ldots,t_{\mathsf{d}}),
\end{equation}
where  $S\in \{D,F\}$. Note that
\begin{align*}
G_{\Sigma,r}^{(m_{\mathsf{d}-1},m_{\mathsf{d}})}(t_{\mathsf{d}-1},t_{\mathsf{d}})&=\dfrac1{\mathrm{e}^{\mathbf{i}t_{\mathsf{d}}}-1}\left(\mathrm{e}^{\mathbf{i}(rm_{\mathsf{d}}+1)t_{\mathsf{d}}}D_{\Sigma,r}^{(m_{\mathsf{d}-1})}(t_{\mathsf{d}-1}-m_{\mathsf{d}} t_{\mathsf{d}}/m_{\mathsf{d}-1})-D_{\Sigma,r}^{(m_{\mathsf{d}-1})}(t_{\mathsf{d}-1})\right),\\
F_{\Sigma,r}^{(m_{\mathsf{d}-1},m_{\mathsf{d}})}(t_{\mathsf{d}-1},t_{\mathsf{d}})&=\dfrac{\mathrm{e}^{\mathbf{i}(m_{\mathsf{d}}+1)t_{\mathsf{d}}}}{\mathrm{e}^{\mathbf{i}t_{\mathsf{d}}}-1}\tsum_{\gamma_{\mathsf{d}-1}=0}^{\lfloor rm_{\mathsf{d}-1}\rfloor}\mathrm{e}^{\mathbf{i}\gamma_{\mathsf{d}-1}(t_{\mathsf{d}-1}-m_{\mathsf{d}} t_{\mathsf{d}}/m_{\mathsf{d}-1})}\left(\mathrm{e}^{-\mathbf{i}\llfloor m_{\mathsf{d}}(r-\gamma_{\mathsf{d}-1}/m_{\mathsf{d}-1})\rrfloor t_{\mathsf{d}}}-1\right).
\end{align*}
Thus, from \eqref{1511230927}  for $S=D$, we  immediately get  the same  recursive relation \eqref{1511230927}   for the function corresponding to the symbol $S=G$.

Next, using the equality
\[\mathrm{e}^{\mathbf{i}\gamma_{\mathsf{d}-1}t_{\mathsf{d}-1}}\mathrm{e}^{\mathbf{i}(\lfloor m_{\mathsf{d}}(r-\gamma_{\mathsf{d}-1}/m_{\mathsf{d}-1})\rfloor+1) t_{\mathsf{d}}}=\mathrm{e}^{\mathbf{i}(rm_{\mathsf{d}}+1)t_{\mathsf{d}}}\mathrm{e}^{\mathbf{i}\gamma_{\mathsf{d}-1}(t_{\mathsf{d}-1}-m_{\mathsf{d}} t_{\mathsf{d}}/m_{\mathsf{d}-1})}\mathrm{e}^{-\mathbf{i}\llfloor m_{\mathsf{d}}(r-\gamma_{\mathsf{d}-1}/m_{\mathsf{d}-1})\rrfloor t_{\mathsf{d}}},\]
we conclude  that $D_{\Sigma,r}^{(m_{\mathsf{d}-1},m_{\mathsf{d}})}=G_{\Sigma,r}^{(m_{\mathsf{d}-1},m_{\mathsf{d}})}+F_{\Sigma,r}^{(m_{\mathsf{d}-1},m_{\mathsf{d}})}$.
Thus, applying the relations \eqref{1511230927} to  $S\in \{D,G,F\}$, we obtain the assertion.
\end{proof}
\begin{proposition} \label{1511231706}
Let $r\in (0,\infty)$. For all $\vect{m}\in [1,\infty)^{\mathsf{d}}$, we have
\[\|G^{(\vect{m})}_{\Sigma,r}\|_{L^1([-\pi,\pi)^{\mathsf{d}})}\lesssim \ln (m_{\mathsf{d}}+1) \|D_{\Sigma,r}^{(m_1,\ldots,m_{\mathsf{d}-1})}\|_{L^1([-\pi,\pi)^{\mathsf{d}-1})}.\]
\end{proposition}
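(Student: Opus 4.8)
The plan is to repeat, \emph{mutatis mutandis}, the argument used in the proof of Proposition~\ref{A1511231706}. Throughout write $D^{\circ}=D^{(m_1,\ldots,m_{\mathsf{d}-1})}_{\Sigma,r}$; this is a trigonometric polynomial in $(t_1,\ldots,t_{\mathsf{d}-1})$ whose degree in $t_{\mathsf{j}}$ is at most $\lfloor r m_{\mathsf{j}}\rfloor\le r m_{\mathsf{j}}$ for each $\mathsf{j}\in\{1,\ldots,\mathsf{d}-1\}$, since there $\gamma_{\mathsf{j}}$ ranges over $0,\ldots,\lfloor\lambda_r^{(m_1,\ldots,m_{\mathsf{j}})}(\gamma_1,\ldots,\gamma_{\mathsf{j}-1})\rfloor$ and $\lambda_r^{(m_1,\ldots,m_{\mathsf{j}})}\le rm_{\mathsf{j}}$. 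Recall that by definition $D^{\circ,(\vect{m})}_{\Sigma,\mathsf{d},r}(\vect{t})=D^{\circ}(t_1,\ldots,t_{\mathsf{d}-1})$, while $D^{(\vect{m})}_{\Sigma,\mathsf{d},r}(\vect{t})$ is the translate $D^{\circ}(t_1-m_{\mathsf{d}}t_{\mathsf{d}}/m_1,\ldots,t_{\mathsf{d}-1}-m_{\mathsf{d}}t_{\mathsf{d}}/m_{\mathsf{d}-1})$. Starting from \eqref{1605202157}, the first step is the algebraic splitting, with $\Delta(\vect{t}):=D^{(\vect{m})}_{\Sigma,\mathsf{d},r}(\vect{t})-D^{\circ,(\vect{m})}_{\Sigma,\mathsf{d},r}(\vect{t})$,
\[
G^{(\vect{m})}_{\Sigma,r}(\vect{t})
=\frac{\mathrm{e}^{\mathbf{i}(rm_{\mathsf{d}}+1)t_{\mathsf{d}}}}{\mathrm{e}^{\mathbf{i}t_{\mathsf{d}}}-1}\,\Delta(\vect{t})
+\frac{\mathrm{e}^{\mathbf{i}(rm_{\mathsf{d}}+1)t_{\mathsf{d}}}-1}{\mathrm{e}^{\mathbf{i}t_{\mathsf{d}}}-1}\,D^{\circ}(t_1,\ldots,t_{\mathsf{d}-1}),
\]
so that it suffices to bound the $L^1$-norm of each summand on the right by $\ln(m_{\mathsf{d}}+1)\,\|D^{\circ}\|_{L^1([-\pi,\pi)^{\mathsf{d}-1})}$.

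For the second summand, $D^{\circ}$ does not depend on $t_{\mathsf{d}}$, so Fubini's theorem reduces the task to estimating $\int_{[-\pi,\pi)}\bigl|(\mathrm{e}^{\mathbf{i}(rm_{\mathsf{d}}+1)t_{\mathsf{d}}}-1)/(\mathrm{e}^{\mathbf{i}t_{\mathsf{d}}}-1)\bigr|\,\mathrm{d}t_{\mathsf{d}}$. Using $|\mathrm{e}^{\mathbf{i}\theta t}-1|\le\min\{2,|\theta|\,|t|\}$ together with \eqref{star2}, this integral is $\lesssim\ln(rm_{\mathsf{d}}+2)\lesssim\ln(m_{\mathsf{d}}+1)$ for fixed $r$; this is the exact analogue of \eqref{kv1}--\eqref{kv2}, and it remains valid although $rm_{\mathsf{d}}+1$ need not be an integer here.

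For the first summand the exponential prefactor has modulus $1$, so it remains to bound $\int_{[-\pi,\pi)^{\mathsf{d}}}|\Delta(\vect{t})|/|\mathrm{e}^{\mathbf{i}t_{\mathsf{d}}}-1|\,\mathrm{d}\vect{t}$. I split the $t_{\mathsf{d}}$-range into $\vect{A}_{\mathsf{d}}(m_{\mathsf{d}})$ and $\vect{B}_{\mathsf{d}}(m_{\mathsf{d}})$ as in \eqref{1502270544}. On $\vect{B}_{\mathsf{d}}(m_{\mathsf{d}})$ I bound $|\Delta|$ by the sum of the two terms defining it and apply \eqref{star2}; the inner integral of each term over $(t_1,\ldots,t_{\mathsf{d}-1})\in[-\pi,\pi)^{\mathsf{d}-1}$ equals $\|D^{\circ}\|_{L^1([-\pi,\pi)^{\mathsf{d}-1})}$ by translation invariance of the normalized $L^1$-norm over a full period, and the remaining $\int_{1/(m_{\mathsf{d}}+1)<|t_{\mathsf{d}}|<\pi}|t_{\mathsf{d}}|^{-1}\,\mathrm{d}t_{\mathsf{d}}$ supplies the factor $\ln(m_{\mathsf{d}}+1)$. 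On $\vect{A}_{\mathsf{d}}(m_{\mathsf{d}})$ I again use \eqref{star2} and then realise the shift $t_{\mathsf{j}}\mapsto t_{\mathsf{j}}-m_{\mathsf{d}}t_{\mathsf{d}}/m_{\mathsf{j}}$ one coordinate at a time: freezing the other variables and applying \eqref{1601041903} and \eqref{1501041845} to $D^{\circ}$ in the single variable $t_{\mathsf{j}}$, each such step alters the $L^1$-norm by at most $(m_{\mathsf{d}}|t_{\mathsf{d}}|/m_{\mathsf{j}})\cdot r m_{\mathsf{j}}\cdot\|D^{\circ}\|_{L^1}=r\,m_{\mathsf{d}}\,|t_{\mathsf{d}}|\,\|D^{\circ}\|_{L^1}$, whence $\int_{[-\pi,\pi)^{\mathsf{d}-1}}|\Delta(\vect{t})|\,\mathrm{d}(t_1,\ldots,t_{\mathsf{d}-1})\lesssim m_{\mathsf{d}}|t_{\mathsf{d}}|\,\|D^{\circ}\|_{L^1}$; dividing by $|t_{\mathsf{d}}|$ and integrating over $|t_{\mathsf{d}}|\le1/(m_{\mathsf{d}}+1)$ leaves $\lesssim\|D^{\circ}\|_{L^1}$. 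Adding the $\vect{A}_{\mathsf{d}}$- and $\vect{B}_{\mathsf{d}}$-contributions and then the estimate for the second summand gives the assertion.

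The only point requiring a little care is the telescoping on $\vect{A}_{\mathsf{d}}(m_{\mathsf{d}})$: one must observe that translating one coordinate of $D^{\circ}$ neither increases its $L^1$-norm over a period nor raises its degree in the remaining coordinates, so that the bound $\lfloor rm_{\mathsf{j}}\rfloor\le rm_{\mathsf{j}}$ on the degree in $t_{\mathsf{j}}$ may be invoked at every step (and the Bernstein inequality \eqref{1501041845}, stated for one variable, is applied in the variable $t_{\mathsf{j}}$ after freezing the others and then integrated). All remaining computations merely reproduce those already carried out in the proof of Proposition~\ref{A1511231706}.
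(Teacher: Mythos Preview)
Your proof is correct and follows essentially the same approach as the paper: the splitting of $G^{(\vect{m})}_{\Sigma,r}$ into a $\Delta/(\mathrm{e}^{\mathbf{i}t_{\mathsf{d}}}-1)$ part and an $L_r^{(m_{\mathsf{d}})}$-part, the $\vect{A}_{\mathsf{d}}/\vect{B}_{\mathsf{d}}$ decomposition, and the telescoping combined with \eqref{1601041903}--\eqref{1501041845} on $\vect{A}_{\mathsf{d}}$ are exactly what the paper does. The only (cosmetic) difference is that you attach the factor $(\mathrm{e}^{\mathbf{i}(rm_{\mathsf{d}}+1)t_{\mathsf{d}}}-1)/(\mathrm{e}^{\mathbf{i}t_{\mathsf{d}}}-1)$ to the unshifted $D^{\circ}$ rather than to its translate $D^{(\vect{m})}_{\Sigma,\mathsf{d},r}$, which is immaterial by periodicity.
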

\begin{proof}
We have
\[
G_{\Sigma,r}^{(\vect{m})}(\vect{t})=\dfrac{\Delta_{\Sigma,r}^{(\vect{m})}
(\vect{t})}{\mathrm{e}^{\mathbf{i}t_{\mathsf{d}}}-1}
+L_r^{(m_{\mathsf{d}})}(t_{\mathsf{d}})D_{\Sigma,r}^{(m_1,\ldots,m_{\mathsf{d}-1})}
(t_1-m_{\mathsf{d}}t_{\mathsf{d}}/m_1,\ldots,t_{\mathsf{d}-1}-m_{\mathsf{d}}t_{\mathsf{d}}/m_{\mathsf{d}-1}),
\]
where
$L_r^{(m)}(t)=\dfrac{\mathrm{e}^{\mathbf{i}(rm+1)t}-1}{\mathrm{e}^{\mathbf{i}t}-1}$
and
$\Delta_{\Sigma,r}^{(\vect{m})}(\vect{t})=D^{(\vect{m})}_{\Sigma,\mathsf{d},r}(\vect{t})-D^{\circ,(\vect{m})}_{\Sigma,\mathsf{d},r}(\vect{t})$.

Moreover, by the telescoping sum decomposition, we derive
\begin{equation}\label{DD}
  \Delta_{\Sigma,r}^{(\vect{m})}(\vect{t})=\tsum_{\mathsf{i}=1}^{\mathsf{d}-1}\Delta^{(\vect{m})}_{\Sigma,r,\mathsf{i}}(\vect{t}),
\end{equation}
where
\begin{align*}
\Delta^{(\vect{m})}_{\Sigma,r,\mathsf{i}}(\vect{t})
=&D^{(m_1,\ldots,m_{\mathsf{d}-1})}_{\Sigma,r}
(t_1,\ldots,t_{\mathsf{i}-1},t_{\mathsf{i}}-m_{\mathsf{d}}t_{\mathsf{d}}/m_{\mathsf{i}},\ldots,t_{\mathsf{d}-1}-m_{\mathsf{d}}t_{\mathsf{d}}/m_{\mathsf{d}-1})\\
&-D^{(m_1,\ldots,m_{\mathsf{d}-1})}_{\Sigma,r}(t_1,\ldots,t_{\mathsf{i}-1},t_{\mathsf{i}},t_{\mathsf{i}+1}-m_{\mathsf{d}}t_{\mathsf{d}}/m_{\mathsf{i}+1},\ldots,t_{\mathsf{d}-1}-m_{\mathsf{d}}t_{\mathsf{d}}/m_{\mathsf{d}-1}).
\end{align*}
Using \eqref{DD}, \eqref{1605202157},  and the sets  \eqref{1502270544},  we get
\[\int_{[-\pi,\pi)} |G_{\Sigma,r}^{(\vect{m})}|\,\mathsf{d}\vect{t}\leq \tsum_{\mathsf{i}=1}^{\mathsf{d}} I_{\mathsf{i}} + J,\]
where
\[\displaystyle I_{\mathsf{i}}=\int_{\vect{A}_{\mathsf{d}}(m_{\mathsf{d}})}\dfrac{|\Delta^{(\vect{m})}_{\Sigma,r,\mathsf{i}}(\vect{t})|}{\left|\mathrm{e}^{\mathbf{i}t_{\mathsf{d}}}-1\right|}\,\mathsf{d}\vect{t}\lesssim\int_{\vect{A}_{\mathsf{d}}(m_{\mathsf{d}})}\dfrac1{|t_{\mathsf{d}}|}|\Delta^{(\vect{m})}_{\Sigma,r,\mathsf{i}}(\vect{t})|\,\mathsf{d}\vect{t},\]
and
\[J=\int_{\vect{B}_{\mathsf{d}}(m_{\mathsf{d}})}\dfrac{\left|D^{(\vect{m})}_{\Sigma,\mathsf{d},r}(\vect{t})\right|+\left|D^{\circ,(\vect{m})}_{\Sigma,\mathsf{d},r}(\vect{t})\right|}{\left|\mathrm{e}^{\mathbf{i}t_{\mathsf{d}}}-1\right|}\,\mathsf{d}\vect{t}+\int_{\vect{A}_{\mathsf{d}}(m_{\mathsf{d}})}\left|L_r^{m_{\mathsf{d}}}(t_{\mathsf{d}})D^{(\vect{m})}_{\Sigma,\mathsf{d},r}(\vect{t})\right|\,\mathsf{d}\vect{t}.\]
By \eqref{star2}, we  easily get
$J \lesssim \ln (m_{\mathsf{d}}+1) \|D_{\Sigma,r}^{(m_1,\ldots,m_{\mathsf{d}-1})}\|_{L^1([-\pi,\pi)^{\mathsf{d}-1})}$. Further, we have
\begin{equation}\label{1602281253}
\displaystyle I_{\mathsf{i}}\lesssim m_{\mathsf{d}}\displaystyle\int\nolimits_{-1/(m_{\mathsf{d}}+1)}^{1/(m_{\mathsf{d}}+1)}\mathrm{d}t_{\mathsf{d}}\,
\|D_{\Sigma,r}^{(m_1,\ldots,m_{\mathsf{d}-1})}\|_{L^1([-\pi,\pi)^{\mathsf{d}-1})}
\lesssim\|D_{\Sigma,r}^{(m_1,\ldots,m_{\mathsf{d}-1})}\|_{L^1([-\pi,\pi)^{\mathsf{d}-1})}.
\end{equation}
Indeed, using \eqref{1601041903} and \eqref{1501041845}, we obtain  for $\mathsf{i}\in\{1,\ldots,\mathsf{d}-1\}$ that
\[\displaystyle \int_{[-\pi,\pi)} |\Delta^{(\vect{m})}_{\Sigma,r,\mathsf{i}}(\vect{t})|\,\mathsf{d}t_{\mathsf{i}}\leq  |t_{\mathsf{i}}| \dfrac{m_{\mathsf{d}}}{m_{\mathsf{i}}} rm_{\mathsf{i}} \int_{[-\pi,\pi)}\left|D^{(m_1,\ldots,m_{\mathsf{d}-1})}_{\Sigma,r}(t_1,\ldots,t_{\mathsf{d}-1})\right|\mathrm{d}t_{\mathsf{i}},\]
and, therefore \eqref{1602281253}.
\end{proof}

\begin{proposition}\label{1511231707} Let $r=p/q$ with $p,q\in\mathbb{N}$. For all $\vect{m}\in \mathbb{N}^{\mathsf{d}}$, we have
\[\|F_{\Sigma,r}^{(\vect{m})}\|_{L^1([-\pi,\pi)^{\mathsf{d}})}\lesssim \ln \left(\lcm(q,m_1,\ldots,m_{\mathsf{d}-1})+1\right) \|D^{(m_1,\ldots,m_{\mathsf{d}-1})}_{\Sigma,r}\|_{L^1([-\pi,\pi)^{\mathsf{d}-1})},\]
where  $\lcm(q,m_1,\ldots,m_{\mathsf{d}-1})$ denotes the least common multiple of $q,m_1,\ldots,m_{\mathsf{d}-1}$.
\end{proposition}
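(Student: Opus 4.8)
The plan is to follow the proof of Proposition~\ref{A1511231707} closely, the one new feature being that the ``fractional‑part'' factor is now $\llfloor\lambda_r^{(m_1,\ldots,m_{\mathsf{d}})}(\gamma_1,\ldots,\gamma_{\mathsf{d}-1})\rrfloor$, an affine combination of \emph{all} the variables $\gamma_1,\ldots,\gamma_{\mathsf{d}-1}$. Put $L=\lcm(q,m_1,\ldots,m_{\mathsf{d}-1})$ and, for $\indexvectgamma\in\vect{\Sigma}^{(m_1,\ldots,m_{\mathsf{d}-1})}_r$, abbreviate
\[\lambda=\lambda_r^{(m_1,\ldots,m_{\mathsf{d}})}(\gamma_1,\ldots,\gamma_{\mathsf{d}-1})=\frac{m_{\mathsf{d}}p}{q}-\tsum_{\mathsf{i}=1}^{\mathsf{d}-1}\frac{m_{\mathsf{d}}\gamma_{\mathsf{i}}}{m_{\mathsf{i}}}.\]
First I would expand $f^{(\vect{m})}_{\Sigma,r}$ from \eqref{1505202044} by means of $\tfrac1{t_{\mathsf{d}}}(\mathrm{e}^{-\mathbf{i}\llfloor\lambda\rrfloor t_{\mathsf{d}}}-1)=\tsum_{\nu\geq 1}\tfrac{(-\mathbf{i})^\nu}{\nu!}\llfloor\lambda\rrfloor^\nu t_{\mathsf{d}}^{\nu-1}$ together with \eqref{star2}, to get, uniformly on $[-\pi,\pi)^{\mathsf{d}}$,
\[\bigl|F^{(\vect{m})}_{\Sigma,r}(\vect{t})\bigr|\lesssim\tsum_{\nu\geq 1}\frac{\pi^{\nu-1}}{\nu!}\bigl|Q^{(\vect{m})}_\nu\bigl(t_1-m_{\mathsf{d}}t_{\mathsf{d}}/m_1,\ldots,t_{\mathsf{d}-1}-m_{\mathsf{d}}t_{\mathsf{d}}/m_{\mathsf{d}-1}\bigr)\bigr|,\]
where $Q^{(\vect{m})}_\nu(s_1,\ldots,s_{\mathsf{d}-1})=\tsum_{\indexvectgamma\in\vect{\Sigma}^{(m_1,\ldots,m_{\mathsf{d}-1})}_r}\mathrm{e}^{\mathbf{i}(\gamma_1s_1+\ldots+\gamma_{\mathsf{d}-1}s_{\mathsf{d}-1})}\llfloor\lambda\rrfloor^\nu$. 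Taking absolute values and integrating over $\vect{t}$, the linear substitutions $t_{\mathsf{i}}\mapsto t_{\mathsf{i}}-m_{\mathsf{d}}t_{\mathsf{d}}/m_{\mathsf{i}}$ drop out by $2\pi$‑periodicity, so $\|F^{(\vect{m})}_{\Sigma,r}\|_{L^1([-\pi,\pi)^{\mathsf{d}})}\lesssim\tsum_{\nu\geq 1}\tfrac{\pi^\nu}{\nu!}\|Q^{(\vect{m})}_\nu\|_{L^1([-\pi,\pi)^{\mathsf{d}-1})}$, and it remains to prove, for all $\nu\geq 1$,
\[\|Q^{(\vect{m})}_\nu\|_{L^1([-\pi,\pi)^{\mathsf{d}-1})}\lesssim\ln(L\nu+1)\,\|D^{(m_1,\ldots,m_{\mathsf{d}-1})}_{\Sigma,r}\|_{L^1([-\pi,\pi)^{\mathsf{d}-1})}.\]

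The key point is a divisibility observation. Since $r=p/q$ and $m_1,\ldots,m_{\mathsf{d}}\in\mathbb{N}$, the number $L\lambda=m_{\mathsf{d}}\bigl(Lp/q-\tsum_{\mathsf{i}=1}^{\mathsf{d}-1}L\gamma_{\mathsf{i}}/m_{\mathsf{i}}\bigr)$ is an integer, because $q\mid L$ and $m_{\mathsf{i}}\mid L$ for $\mathsf{i}\leq\mathsf{d}-1$; hence $\llfloor\lambda\rrfloor\in\{0,1/L,\ldots,1-1/L\}\subseteq[0,1-L^{-1}]$. Consequently the $1$‑periodic function $h_{\nu,L}$ of \eqref{fuctionh} (with $m=L$) satisfies $h_{\nu,L}(\lambda)=\llfloor\lambda\rrfloor^\nu$ exactly as in \eqref{1606182100}. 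Next I would insert the Fourier expansion \eqref{1606182105} of $h_{\nu,L}$ — which converges absolutely by \eqref{this} — into $Q^{(\vect{m})}_\nu$; since $\mathrm{e}^{2\pi\mathbf{i}\mu\lambda}=\mathrm{e}^{2\pi\mathbf{i}\mu m_{\mathsf{d}}p/q}\tprod_{\mathsf{i}=1}^{\mathsf{d}-1}\mathrm{e}^{-2\pi\mathbf{i}\mu m_{\mathsf{d}}\gamma_{\mathsf{i}}/m_{\mathsf{i}}}$, the $\mu$‑th term peels off a global phase and the remaining factors are absorbed by the exponentials $\mathrm{e}^{\mathbf{i}\gamma_{\mathsf{i}}s_{\mathsf{i}}}$ as simultaneous shifts $s_{\mathsf{i}}\mapsto s_{\mathsf{i}}-2\pi\mu m_{\mathsf{d}}/m_{\mathsf{i}}$. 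Interchanging the finite sum over $\indexvectgamma$ with the absolutely convergent sum over $\mu$ gives, in $L^1([-\pi,\pi)^{\mathsf{d}-1})$,
\begin{align*}
Q^{(\vect{m})}_\nu(s_1,\ldots,s_{\mathsf{d}-1})=\tsum_{\mu\in\mathbb{Z}}&\widehat{h}_{\nu,L}(\mu)\,\mathrm{e}^{2\pi\mathbf{i}\mu m_{\mathsf{d}}p/q}\\
&\times D^{(m_1,\ldots,m_{\mathsf{d}-1})}_{\Sigma,r}\bigl(s_1-2\pi\mu m_{\mathsf{d}}/m_1,\ldots,s_{\mathsf{d}-1}-2\pi\mu m_{\mathsf{d}}/m_{\mathsf{d}-1}\bigr).
\end{align*}

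Finally, taking $L^1$‑norms, using translation invariance and $|\mathrm{e}^{2\pi\mathbf{i}\mu m_{\mathsf{d}}p/q}|=1$, I would obtain $\|Q^{(\vect{m})}_\nu\|_{L^1([-\pi,\pi)^{\mathsf{d}-1})}\leq\|D^{(m_1,\ldots,m_{\mathsf{d}-1})}_{\Sigma,r}\|_{L^1([-\pi,\pi)^{\mathsf{d}-1})}\tsum_{\mu\in\mathbb{Z}}|\widehat{h}_{\nu,L}(\mu)|$, and \eqref{this} with $m=L$ gives the required bound $\lesssim\ln(L\nu+1)\,\|D^{(m_1,\ldots,m_{\mathsf{d}-1})}_{\Sigma,r}\|_{L^1([-\pi,\pi)^{\mathsf{d}-1})}$. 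Since $L\geq 1$ and $\ln(L\nu+1)\leq\ln\nu+\ln(L+1)$ for $\nu\geq 1$, one has $\tsum_{\nu\geq 1}\tfrac{\pi^\nu}{\nu!}\ln(L\nu+1)\lesssim\ln(L+1)$, and combining this with the reduction of the first paragraph yields the assertion. The main obstacle is exactly the bookkeeping in this divisibility step: recognising that although the argument of the floor is now an affine combination of all of $\gamma_1,\ldots,\gamma_{\mathsf{d}-1}$, it still lies in a single lattice $L^{-1}\mathbb{Z}$ — which is precisely what forces $L=\lcm(q,m_1,\ldots,m_{\mathsf{d}-1})$ into the estimate — and that the resulting Fourier expansion shifts every variable of $D^{(m_1,\ldots,m_{\mathsf{d}-1})}_{\Sigma,r}$ at once, each such shift being harmless for the $L^1$‑norm.
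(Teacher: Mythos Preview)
Your argument is correct and follows essentially the same route as the paper's own proof: expand $f^{(\vect{m})}_{\Sigma,r}$ in powers of $t_{\mathsf{d}}$, reduce to estimating $\|Q^{(\vect{m})}_\nu\|_{L^1}$, use the divisibility observation $\llfloor\lambda\rrfloor\in L^{-1}\mathbb{Z}\cap[0,1-L^{-1}]$ with $L=\lcm(q,m_1,\ldots,m_{\mathsf{d}-1})$ to replace $\llfloor\lambda\rrfloor^\nu$ by $h_{\nu,L}(\lambda)$, and then apply the Fourier expansion \eqref{1606182105} together with \eqref{this}. Your write-up is in fact slightly more explicit than the paper's (you spell out the divisibility and the final summation $\sum_{\nu}\tfrac{\pi^\nu}{\nu!}\ln(L\nu+1)\lesssim\ln(L+1)$), but there is no substantive difference.
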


\begin{proof}
The proposition can be proved by repeating the proof of Proposition \ref{A1511231707}.
Thus, let us present the sketch of the proof.

Using \eqref{1511231605}, \eqref{1505202044}, and  \eqref{star2}, we get as in the proof of Proposition \ref{A1511231707} that
\[
\|F_{\Sigma,r}^{(\vect{m})}\|_{L^1([-\pi,\pi)^{\mathsf{d}})}
\lesssim \sum_{\nu=1}^{\infty}\dfrac{\pi^{\nu}}{\nu!}
\|Q^{(\vect{m})}_{\Sigma,\nu}\|_{L^1([-\pi,\pi)^{\mathsf{d}-1})},
\]
where
\[Q^{(\vect{m})}_{\Sigma,\nu}(t_1,\ldots,t_{\mathsf{d}-1})=\tsum_{\gamma_1=0}^{\lfloor rm_1\rfloor}\tsum_{\gamma_2=0}^{\lfloor\lambda(\gamma_1)\rfloor}\ldots
\tsum_{\gamma_{\mathsf{d}-1}=0}^{\lfloor\lambda(\gamma_1,\ldots,\gamma_{\mathsf{d}-2})\rfloor}\mathrm{e}^{\mathbf{i}
(t_1\gamma_1+\ldots+t_{\mathsf{d}-1}\gamma_{\mathsf{d}-1})}\llfloor\lambda(\gamma_1,\ldots,\gamma_{\mathsf{d}-1})\rrfloor^{\nu}\]
and $\lambda(\gamma_1,\ldots,\gamma_{\mathsf{j}-1})=\lambda_r^{(m_1,\ldots,m_{\mathsf{j}})}(\gamma_1,\ldots,\gamma_{\mathsf{j}-1})$.

Thus, to finish the proof it is sufficient to verify that for all $\nu\geq 1$ we have
\begin{equation}
\label{1511231603}
\|Q^{(\vect{m})}_{\Sigma,\nu}\|_{L^1([-\pi,\pi)^{\mathsf{d}-1})}
\lesssim \ln ( M\nu+1)\|D_{\Sigma,r}^{(m_1,\ldots,m_{\mathsf{d}-1})}\|_{L^1([-\pi,\pi)^{\mathsf{d}-1})},
\end{equation}
where $M=\lcm(q,m_1,\ldots,m_{\mathsf{d}-1})$.

Taking into account that \text{$0\leq \llfloor\lambda(\gamma_1,\ldots,\gamma_{\mathsf{d}-1})\rrfloor\leq 1-M^{-1}$},
we get in the same way as in the proof of Proposition \ref{A1511231707} that
in $L^1([-\pi,\pi)^{\mathsf{d}-1})$
\[  \begin{split}
&Q^{(\vect{m})}_{\Sigma,\nu}(t_1,\ldots,t_{\mathsf{d}-1})\\
&=\tsum_{\gamma_1=0}^{\lfloor rm_1\rfloor}\ldots\tsum_{\gamma_{\mathsf{d}-1}=0}^{\lfloor\lambda(\gamma_1,\ldots,\gamma_{\mathsf{d}-2})\rfloor}\mathrm{e}^{\mathbf{i}(t_1\gamma_1+\ldots+t_{\mathsf{d}-1}\gamma_{\mathsf{d}-1})}\displaystyle\tsum_{\mu\in\mathbb{Z}}\widehat{h}_{\nu,M}(\mu)\mathrm{e}^{2\pi\mathbf{i} \mu \lambda(\gamma_1,\ldots,\gamma_{\mathsf{d}-1})}\\
&=\sum_{\mu\in\mathbb{Z}}\widehat{h}_{\nu,M}(\mu)\mathrm{e}^{2\pi\mathbf{i}\mu m_{\mathsf{d}}}D_{\Sigma,r}^{(m_1,\ldots,m_{\mathsf{d}-1})}(t_1-2\pi\mu m_{\mathsf{d}}/m_1,\ldots,t_{\mathsf{d}-1}-2\pi\mu m_{\mathsf{d}} /m_{\mathsf{d}-1}),
  \end{split}\]
where the function ${h}_{\nu,M}$ is given by \eqref{fuctionh}.
Thus, using \eqref{this}, we get \eqref{1511231603}.
\end{proof}

\bigskip

\begin{proofof}{Theorem \ref{1511231653}}
The statement of the theorem is well-known for $\mathsf{d}=1$. Remark also that the case $\mathsf{d}=2$ with $r = 1$ is already considered in Theorem~\ref{1602041200}.

Let us prove the upper estimates for $\mathsf{d}\geq 2$ in \eqref{estimatesS}.
Without loss of generality we can assume that $m_1\leq \ldots\leq m_{\mathsf{d}}$. The  upper estimate for $\FL\left( \vect{\Sigma}^{(\vect{m})}_{r}\right)=\|D_{\Sigma,r}^{(\vect{m})}\|_{L^1([-\pi,\pi)^{\mathsf{d}})}$ can now be easily obtained by
using Proposition \ref{1608081106}, Proposition~\ref{1511231706}, Proposition~\ref{1511231707} and the induction argument. Using this, we can conclude  the  upper estimate for  $\FL\left(\vect{\Sigma}^{(\vect{m}),\ast}_r\right)$ in  the same way as in the proof of Corollary \ref{1501051402}.

Let us consider the lower bounds.
As in  the proof of Proposition \ref{1602171132}, we  get
\begin{equation*}
  \begin{split}
\FL\left(\vect{\Sigma}^{(\vect{m})}_{r}\right)&\geq\dfrac1{\pi^{\mathsf{d}}}
 \tsum_{\gamma_1=0}^{\lfloor rm_1/\mathsf{d}\rfloor}\cdots \tsum_{\gamma_{\mathsf{d}}=0}^{\lfloor rm_{\mathsf{d}}/\mathsf{d}\rfloor}\dfrac1{(\gamma_1+1)\cdot\ldots\cdot(\gamma_{\mathsf{d}}+1)}\\
 &\geq \dfrac1{\pi^{\mathsf{d}}} (\min\{r/\mathsf{d},1\})^{\mathsf{d}} \tprod_{\mathsf{i}=1}^{\mathsf{d}}\ln\left(m_{\mathsf{i}}+1\right).
   \end{split}
\end{equation*}

To show the lower bounds for the sets $\vect{\Sigma}^{(\vect{m}),\ast}_{r}$, it is sufficient to prove that there exists  $\kappa_{\mathsf{d}}\in(0,\pi^{-\mathsf{d}}]$ such that for  all $r>0$ and all $\vect{m}\in\mathbb{N}^{\mathsf{d}}$,  we have
\begin{equation}
\label{1602270715}
\FL\left(\vect{\Sigma}^{(\vect{m}),\ast}_{r}\right)\geq \kappa_{\mathsf{d}}  \tprod_{\mathsf{i}=1}^{\mathsf{d}}\max\left\{\ln\left(rm_{\mathsf{i}}+1\right),1\right\},
\end{equation}
since by Lemma  \ref{1508071608} we will
have $\FL\left(\vect{\Sigma}^{(\vect{m}),\ast}_{r}\right)\geq \kappa_{\mathsf{d}} (\min\{r,1\})^{\mathsf{d}} \tprod_{\mathsf{i}=1}^{\mathsf{d}}\ln\left(m_{\mathsf{i}}+1\right)$.

We use the induction argument.  By \eqref{1602161746}, we can choose $\kappa_1=\pi^{-1}$. Let $\mathsf{d}\geq 2$. By analogy with the proof  of  the second inequality in \eqref{1602291718},  we get
\begin{equation}
\label{1602292033}\FL\left(\vect{\Sigma}^{(\vect{m}),\ast}_{r}\right)\geq \dfrac1{\pi}\tsum_{\gamma=0}^{\lfloor rm_{\mathsf{j}}\rfloor}\dfrac1{\gamma+1}\FL\left(\vect{\Sigma}^{(\vect{m}),\ast}_{r,\;\!\mathsf{j},(\lfloor rm_{\mathsf{j}}\rfloor-\gamma)}\right),
\end{equation}
where
 $\displaystyle \vect{\Sigma}^{(\vect{m}),\ast}_{r,\;\!\mathsf{j},(\gamma)}=\left\{(\gamma_1,\ldots,\gamma_{\mathsf{j}-1},\gamma_{\mathsf{j}+1},\ldots,\gamma_{\mathsf{d}})\,|\,(\gamma_1,\ldots,\gamma_{\mathsf{j}-1},\gamma,\gamma_{\mathsf{j}+1},\ldots,\gamma_{\mathsf{d}})\in \vect{\Sigma}^{(\vect{m}),\ast}_{r}\right\}$.
Denote $\mathsf{K}=\{\,\mathsf{i}\in\{1,\ldots,\mathsf{d}\}\,|\,\ln((rm_{\mathsf{i}}+1)/(4\mathrm{e}^{\mathsf{d}}))\geq 1\}$.  We can   assume  without restriction that $\mathsf{K}\neq \emptyset$, since by \eqref{1608070954} the number $\kappa_{\mathsf{d}}$ can be chosen from $(0,\pi^{-\mathsf{d}}]$.

 Let $\mathsf{j}\in \mathsf{K}$. For $\gamma\in \{0,\ldots,\lfloor rm_{\mathsf{j}}\rfloor\}$, we have
\[\vect{\Sigma}^{(\vect{m}),\ast}_{r,\;\!\mathsf{j},(\lfloor rm_{\mathsf{j}}\rfloor-\gamma)}=
\vect{\Sigma}^{(m_1,\ldots,m_{\mathsf{j}-1},m_{\mathsf{j}+1},\ldots,m_{\mathsf{d}}),\ast}_{r-r/m_{\mathsf{j}}+\gamma/m_{\mathsf{j}}}.\] Thus, since $r-r/m_{\mathsf{j}}+\gamma/m_{\mathsf{j}}\geq\gamma/m_{\mathsf{j}}$, using  the induction argument yields
\[\FL\left(\vect{\Sigma}^{(\vect{m}),\ast}_{r,\;\!\mathsf{j},(\lfloor rm_{\mathsf{j}}\rfloor-\gamma)}\right) \geq \kappa_{\mathsf{d}-1}  \tprod_{\substack{\mathsf{i}=1\\\mathsf{i}\neq\mathsf{j}}}^{\mathsf{d}}\max\{\ln\left((\gamma/m_{\mathsf{j}})m_{\mathsf{i}}+1\right),1\}
 \geq \kappa_{\mathsf{d}-1}  \tprod_{\mathsf{i}\in\mathsf{K}\setminus\{\mathsf{j}\}}  \ln((\gamma/m_{\mathsf{j}})m_{\mathsf{i}}+1).\]
Note again that  the product over the empty set  $\mathsf{K}\setminus\{\mathsf{j}\}= \emptyset$  is considered as $1$, and that in this case by  \eqref{1608070954} the last inequality  is satisfied with $\kappa_{\mathsf{d}-1}\in(0,\pi^{-(\mathsf{d}-1)}]$.

We have $r\geq 1/m_{\mathsf{j}}$. By analogy with \eqref{1608051912}, using  \eqref{1602292033} implies
\begin{align*}
\FL\left(\vect{\Sigma}^{(\vect{m}),\ast}_{r}\right)+&\dfrac{\kappa_{\mathsf{d}-1}}{\pi} \tprod_{\mathsf{i}\in\mathsf{K}\setminus\{\mathsf{j}\}}\ln\left(\dfrac12rm_{\mathsf{i}}+1\right)\\
&\geq\displaystyle \dfrac{\kappa_{\mathsf{d}-1}}{\pi}\int_0^{rm_{\mathsf{j}}}\dfrac{1}{v+1}\tprod_{\mathsf{i}\in\mathsf{K}\setminus\{\mathsf{j}\}}\ln\left(\dfrac12\dfrac{m_{\mathsf{i}}}{m_{\mathsf{j}}}v +1\right)\mathrm{d}v.
\end{align*}
There is  $\mathsf{k}\in\mathsf{K}$ satisfying \eqref{1608051847}. Using the first and second inequality in \eqref{15080907}, by analogy with \eqref{1608061356}, we get   \eqref{1602270715} for $\kappa_{\mathsf{d}}=\mathsf{d}^{-1}\pi^{-1}(4\mathrm{e}^{\mathsf{d}})^{-\mathsf{d}}\kappa_{\mathsf{d}-1}\in (0,\pi^{-\mathsf{d}}]$.
\end{proofof}

%%% Local Variables:
%%% TeX-master: "master_lebesgue"
%%% End:

\section{Interpolation on Lissajous-Chebyshev nodes}\label{1502171626}

We first describe the solution of the interpolation problem \eqref{1502182305} in more detail and collect some notation from \cite{DenckerErb2015a}.

\medskip

Let us consider for $\vectgamma\in\mathbb{N}_0^{\mathsf{d}}$
the  $\mathsf{d}$-variate Chebyshev polynomials
\[T_{\indexvectgamma}(\vect{x})=  T_{\gamma_1}(x_1) \cdot \ldots \cdot T_{\gamma_{\mathsf{d}}}(x_{\mathsf{d}}),\quad \vect{x}\in[-1,1]^{\mathsf{d}},\]
where $T_{\gamma}(x) = \cos (\gamma \arccos x)$.
The  Chebyshev polynomials form an orthogonal basis of the polynomial space
$\Pi^{\mathsf{d}} = \mathrm{span} \{T_{\indexvectgamma}\,|\,\vectgamma\in\mathbb{N}_0^{\mathsf{d}} \}$
with respect to the inner product
\[  \langle f,g \rangle_{w_{\mathsf{d}}} = \dfrac{1}{\pi^{\mathsf{d}}} \int_{[-1,1]^{\mathsf{d}}} f(\vect{x}) \overline{g(\vect{x})} w_{\mathsf{d}}(\vect{x})\,\mathrm{d}\vect{x}, \quad w_{\mathsf{d}}(\vect{x})  = \tprod_{\mathsf{i} = 1}^{\mathsf{d}}  \displaystyle\dfrac{1}{\sqrt{1-x_{\mathsf{i}}^2}}.\]
The corresponding norms of these basis elements are
\[\|T_{\indexvectgamma}\|^2_{w_{\mathsf{d}},2}  = 2^{-\mathfrak{e}(\indexvectgamma)}, \quad \text{where}\quad \mathfrak{e}(\vectgamma)=\#\{\,\mathsf{i}\in\{1,\ldots,\mathsf{d}\}\,|\,\gamma_{\mathsf{i}}\neq 0\,\}.\]

We define
\[\mathcal{N}_{\mathsf{d}}=\{\,\vect{n}=(n_1,\ldots,n_{\mathsf{d}})\in\mathbb{N}^{\mathsf{d}}\,|\,n_1,\ldots,n_{\mathsf{d}}\;  \text{are pairwise relatively prime}\,\}.\]
In the following, let $\vect{n}\in \mathcal{N}_{\mathsf{d}}$, $\epsilon\in\{1,2\}$, and $\vect{\kappa}\in\mathbb{Z}^{\mathsf{d}}$.
For the index set $\vect{\Gamma}^{(\epsilon\vect{n})}_{\vect{\kappa}}$ from the introduction, we define the polynomial vector space
\[\Pi^{(\epsilon\vect{n})}_{\vect{\kappa}} = \vspan \left\{\, T_{\indexvectgamma}\,\left|\, \vectgamma \in \vect{\Gamma}^{(\epsilon\vect{n})}_{\vect{\kappa}} \right. \right\}.\]
Clearly, the system $\{ T_{\indexvectgamma} \left| \, \vectgamma \in \vect{\Gamma}^{(\epsilon\vect{n})}_{\vect{\kappa}} \right.\}$ forms an orthogonal basis of $\Pi^{(\epsilon\vect{n})}_{\vect{\kappa}}$.\\

To introduce the sets $\LC^{(\epsilon\vect{n})}_{\vect{\kappa}}$, we define
\[
\I^{(\epsilon\vect{n})}_{\vect{\kappa}}=\I^{(\epsilon\vect{n})}_{\vect{\kappa},0}\cup \I^{(\epsilon\vect{n})}_{\vect{\kappa},1},
\]
where the sets $\I^{(\epsilon\vect{n})}_{\vect{\kappa},\mathfrak{r}}$, $\mathfrak{r}\in\{0,1\}$, are given by
\[
\I^{(\epsilon\vect{n})}_{\vect{\kappa},\mathfrak{r}}=\left\{\,\vect{i}\in\mathbb{N}_0^{\mathsf{d}}\,\left|\, 0\leq i_{\mathsf{i}}\leq \epsilon n_{\mathsf{i}} \ \text{and} \  i_{\mathsf{i}}\equiv \kappa_{\mathsf{i}}+\mathfrak{r} \tmod 2 \ \text{for all} \ \mathsf{i} \in \{1, \ldots, \mathsf{d}\} \right.\,\right\}.\]
Then, using the notation
\[\vect{z}^{(\epsilon\vect{n})}_{\vect{i}}=\left(z^{(\epsilon n_1)}_{i_1},\ldots,z^{(\epsilon n_{\mathsf{d}})}_{i_{\mathsf{d}}}\right),\quad  z^{(\epsilon n)}_{i}=\cos\left(i\pi/(\epsilon n)\right),\]
the Lissajous-Chebyshev node sets are defined as
\begin{equation} \label{eq:201605170810}
\LC^{(\epsilon\vect{n})}_{\vect{\kappa}}=\left\{\, \vect{z}^{(\epsilon\vect{n})}_{\vect{i}}\,\left|\,\vect{i}\in \I^{(\epsilon\vect{n})}_{\vect{\kappa}} \right.\right\}.
\end{equation}
Note that the mapping $\vect{i}\mapsto \vect{z}^{(\epsilon\vect{n})}_{\vect{i}}$ is
a bijection from $\I^{(\epsilon\vect{n})}_{\vect{\kappa}}$ onto $\LC^{(\epsilon\vect{n})}_{\vect{\kappa}}$.

Further, for $\vect{i}\in \I^{(\epsilon\vect{n})}_{\vect{\kappa}}$, we introduce  the weight $\mathfrak{w}^{(\epsilon\vect{n})}_{\vect{i}}$ by
\begin{equation}\label{1601131435}
\mathfrak{w}^{(\epsilon\vect{n})}_{\vect{i}}=\displaystyle 2^{\#\{\,\mathsf{i}\,|\,0<i_{\mathsf{i}}<\epsilon n_{\mathsf{i}}\,\}}\Bigl/\left(2\,\epsilon^{\mathsf{d}}\! \tprod_{\mathsf{i}=1}^{\mathsf{d}} n_{\mathsf{i}}\right),
\end{equation}
and for $\vectgamma\in \vect{\Gamma}^{(\epsilon\vect{n})}_{\vect{\kappa}}$, we use the notation
\[\mathfrak{f}^{(\epsilon\vect{n})}(\vectgamma)= \max \big\{
\#\{\,\mathsf{i} \in \{1,\ldots, \mathsf{d} \} \,|\,2\gamma_{\mathsf{i}}=\epsilon n_{\mathsf{i}}\,\}-1, 0 \big\}. \]
Note that in the case $\epsilon=1$, we have $\mathfrak{f}^{(\vect{n})}(\vectgamma)=0$ for all $\vectgamma\in \vect{\Gamma}^{(\vect{n})}_{\vect{\kappa}}$.

Finally, for $\vect{i} \in \I^{(\epsilon\vect{n})}_{\vect{\kappa}}$, we introduce on $[-1,1]^{\mathsf{d}}$ the polynomials
\begin{equation}\label{1601161944}
  L^{(\epsilon\vect{n})}_{\vect{\kappa},\vect{i}}(\vect{x}) = \mathfrak{w}^{(\epsilon\vect{n})}_{\vect{i}} \Bigl( \
 \tsum_{\substack{\indexvectgamma \in \vect{\Gamma}^{(\epsilon\vect{n})}_{\vect{\kappa}}}}\!\!2^{\mathfrak{e}(\indexvectgamma)-\mathfrak{f}^{(\epsilon\vect{n})}(\indexvectgamma)} T_{\indexvectgamma}(\vect{z}_{\vect{i}}^{(\epsilon\vect{n})})T_{\indexvectgamma}(\vect{x})  \ - \ T_{\epsilon n_{\mathsf{d}}}(z^{(\epsilon n_{\mathsf{d}})}_{i_{\mathsf{d}}}) \;\!  T_{\epsilon n_{\mathsf{d}}}(x_{\mathsf{d}})\Bigr)
\end{equation}
that by definition belong to the space $\Pi^{(\epsilon\vect{n})}_{\vect{\kappa}}$.

The existence and uniqueness of a solution of the interpolation problem \eqref{1502182305} are guaranteed by the following theorem.
\begin{theorem} \label{thm:201605090601}
For  $f:\,[-1,1]^{\mathsf{d}}\to\mathbb{R}$ the unique solution to the interpolation problem \eqref{1502182305} in the space $\Pi^{(\epsilon\vect{n})}_{\vect{\kappa}}$ is given by the polynomial
\[P^{(\epsilon\vect{n})}_{\!\vect{\kappa}}f(\vect{x}) = \tsum_{\vect{i} \in \I^{(\epsilon\vect{n})}_{\vect{\kappa}}}
f(\vect{z}^{(\epsilon\vect{n})}_{\vect{i}}) L^{(\epsilon\vect{n})}_{\vect{\kappa},\vect{i}}(\vect{x}).
\]
\end{theorem}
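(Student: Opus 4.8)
The plan is to deduce both assertions from the fact that the polynomials $L^{(\epsilon\vect{n})}_{\vect{\kappa},\vect{i}}$ from \eqref{1601161944} form a \emph{Lagrange basis}, that is,
\[
L^{(\epsilon\vect{n})}_{\vect{\kappa},\vect{i}}\bigl(\vect{z}^{(\epsilon\vect{n})}_{\vect{j}}\bigr)=\delta_{\vect{i},\vect{j}}\qquad\text{for all }\vect{i},\vect{j}\in\I^{(\epsilon\vect{n})}_{\vect{\kappa}}.
\]
Granting this, existence is immediate, since each $L^{(\epsilon\vect{n})}_{\vect{\kappa},\vect{i}}$ lies in $\Pi^{(\epsilon\vect{n})}_{\vect{\kappa}}$ by construction and $P^{(\epsilon\vect{n})}_{\!\vect{\kappa}}f(\vect{z}^{(\epsilon\vect{n})}_{\vect{j}})=\sum_{\vect{i}\in\I^{(\epsilon\vect{n})}_{\vect{\kappa}}}f(\vect{z}^{(\epsilon\vect{n})}_{\vect{i}})\,\delta_{\vect{i},\vect{j}}=f(\vect{z}^{(\epsilon\vect{n})}_{\vect{j}})$. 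For uniqueness one needs $\dim\Pi^{(\epsilon\vect{n})}_{\vect{\kappa}}=\#\I^{(\epsilon\vect{n})}_{\vect{\kappa}}$; since $\{T_{\indexvectgamma}\}_{\indexvectgamma\in\vect{\Gamma}^{(\epsilon\vect{n})}_{\vect{\kappa}}}$ is a basis of $\Pi^{(\epsilon\vect{n})}_{\vect{\kappa}}$, this amounts to the cardinality identity $\#\vect{\Gamma}^{(\epsilon\vect{n})}_{\vect{\kappa}}=\#\I^{(\epsilon\vect{n})}_{\vect{\kappa}}$, a lattice-point count which I would take over from \cite{DenckerErb2015a}. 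Combined with the Lagrange property, which forces the $\#\I^{(\epsilon\vect{n})}_{\vect{\kappa}}$ polynomials $L^{(\epsilon\vect{n})}_{\vect{\kappa},\vect{i}}$ to be linearly independent, this makes the evaluation map $\Pi^{(\epsilon\vect{n})}_{\vect{\kappa}}\to\mathbb{R}^{\#\I^{(\epsilon\vect{n})}_{\vect{\kappa}}}$, $P\mapsto\bigl(P(\vect{z}^{(\epsilon\vect{n})}_{\vect{i}})\bigr)_{\vect{i}\in\I^{(\epsilon\vect{n})}_{\vect{\kappa}}}$, a bijection, which is precisely the asserted existence and uniqueness.

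The heart of the argument is therefore the Lagrange identity, and for it the natural tool is the \emph{discrete orthogonality} of the Chebyshev polynomials on $\LC^{(\epsilon\vect{n})}_{\vect{\kappa}}$. Introduce the discrete bilinear form $\langle p,q\rangle^{(\epsilon\vect{n})}_{\vect{\kappa}}=\sum_{\vect{i}\in\I^{(\epsilon\vect{n})}_{\vect{\kappa}}}\mathfrak{w}^{(\epsilon\vect{n})}_{\vect{i}}\,p(\vect{z}^{(\epsilon\vect{n})}_{\vect{i}})\,q(\vect{z}^{(\epsilon\vect{n})}_{\vect{i}})$ and substitute $x_{\mathsf{k}}=\cos\theta_{\mathsf{k}}$, so that $T_{\indexvectgamma}(\vect{z}^{(\epsilon\vect{n})}_{\vect{i}})=\prod_{\mathsf{k}=1}^{\mathsf{d}}\cos\!\bigl(\gamma_{\mathsf{k}}i_{\mathsf{k}}\pi/(\epsilon n_{\mathsf{k}})\bigr)$ and every coordinate factor sum over $i_{\mathsf{k}}$ (restricted by the parities that define $\I^{(\epsilon\vect{n})}_{\vect{\kappa},0}$ and $\I^{(\epsilon\vect{n})}_{\vect{\kappa},1}$) becomes an elementary geometric/Dirichlet-kernel sum. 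Expanding products of cosines into $\cos\!\bigl((\gamma_{\mathsf{k}}\pm\gamma'_{\mathsf{k}})\theta_{\mathsf{k}}\bigr)$ and summing, $\langle T_{\indexvectgamma},T_{\indexvectgamma'}\rangle^{(\epsilon\vect{n})}_{\vect{\kappa}}$ becomes a signed sum of indicator contributions of the simultaneous congruences $\gamma_{\mathsf{k}}\pm\gamma'_{\mathsf{k}}\equiv 0\pmod{2\epsilon n_{\mathsf{k}}}$, the signs being governed by the parities of $\vect{\kappa}$. The defining relations of $\vect{\Gamma}^{(\epsilon\vect{n})}_{\vect{\kappa}}$ — the strict bounds $\gamma_{\mathsf{i}}/n_{\mathsf{i}}<\epsilon$, the bounds $\gamma_{\mathsf{i}}/n_{\mathsf{i}}+\gamma_{\mathsf{j}}/n_{\mathsf{j}}\le\epsilon$ (strict when $\kappa_{\mathsf{i}}\not\equiv\kappa_{\mathsf{j}}\bmod 2$) and the appended point $(0,\dots,0,\epsilon n_{\mathsf{d}})$ — are tailored precisely so that, using the pairwise coprimality of $n_1,\dots,n_{\mathsf{d}}$ from \eqref{1608042338} to decouple the congruences, the only surviving collision for $\vectgamma,\vectgamma'\in\vect{\Gamma}^{(\epsilon\vect{n})}_{\vect{\kappa}}$ is $\vectgamma=\vectgamma'$, up to controlled boundary anomalies coming from coordinates with $2\gamma_{\mathsf{i}}=\epsilon n_{\mathsf{i}}$ and from the appended point. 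The weight $2^{\mathfrak{e}(\indexvectgamma)-\mathfrak{f}^{(\epsilon\vect{n})}(\indexvectgamma)}\mathfrak{w}^{(\epsilon\vect{n})}_{\vect{i}}$ appearing in \eqref{1601161944} is exactly the bookkeeping constant that normalises the diagonal term and absorbs the anomalies with $2\gamma_{\mathsf{i}}=\epsilon n_{\mathsf{i}}$, while the extra summand $-T_{\epsilon n_{\mathsf{d}}}(z^{(\epsilon n_{\mathsf{d}})}_{i_{\mathsf{d}}})\,T_{\epsilon n_{\mathsf{d}}}(x_{\mathsf{d}})$ cancels the spurious contribution of the appended point. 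Evaluating the resulting reproduction identity at $\vect{x}=\vect{z}^{(\epsilon\vect{n})}_{\vect{j}}$ and using that $\{T_{\indexvectgamma}\}$, $\indexvectgamma\in\vect{\Gamma}^{(\epsilon\vect{n})}_{\vect{\kappa}}$, spans $\Pi^{(\epsilon\vect{n})}_{\vect{\kappa}}$ then yields $L^{(\epsilon\vect{n})}_{\vect{\kappa},\vect{i}}(\vect{z}^{(\epsilon\vect{n})}_{\vect{j}})=\delta_{\vect{i},\vect{j}}$.

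I expect the main obstacle to be exactly this accounting of the aliasing terms: showing that no pair of congruences $\gamma_{\mathsf{k}}+\gamma'_{\mathsf{k}}\equiv 0$ or $\gamma_{\mathsf{k}}-\gamma'_{\mathsf{k}}\equiv 0\pmod{2\epsilon n_{\mathsf{k}}}$ can hold in all $\mathsf{d}$ coordinates for distinct $\vectgamma,\vectgamma'\in\vect{\Gamma}^{(\epsilon\vect{n})}_{\vect{\kappa}}$ except for the single anomaly, and checking that the parity constraints defining $\I^{(\epsilon\vect{n})}_{\vect{\kappa},\mathfrak{r}}$ together with the third clause in the definition of $\vect{\Gamma}^{(\epsilon\vect{n})}_{\vect{\kappa}}$ make the signs cancel the right way. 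This is where the coprimality hypothesis \eqref{1608042338} and the $\vect{\kappa}$-parity bookkeeping genuinely enter; by contrast the torus reduction, the geometric-sum evaluations, the cardinality count and the concluding linear-algebra step are routine. Since this discrete orthogonality on the node sets $\LC^{(\epsilon\vect{n})}_{\vect{\kappa}}$ is essentially the content of \cite{DenckerErb2015a}, in the write-up I would either quote it and then assemble the Lagrange identity and the bijectivity argument as above, or — for a self-contained exposition — carry out the short but bookkeeping-heavy torus computation explicitly.
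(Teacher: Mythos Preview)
Your proposal is correct and aligned with the paper's treatment: the paper does not give a proof here but simply cites \cite{DenckerErb2015a} (adding only the remark that the general $\vect{\kappa}$ in the case $\epsilon=1$ follows from $\vect{\kappa}=\vect{0}$ by coordinate reflections). The argument you outline---discrete orthogonality of the $T_{\indexvectgamma}$ on $\LC^{(\epsilon\vect{n})}_{\vect{\kappa}}$ yielding the Lagrange property of the $L^{(\epsilon\vect{n})}_{\vect{\kappa},\vect{i}}$, combined with the cardinality identity $\#\vect{\Gamma}^{(\epsilon\vect{n})}_{\vect{\kappa}}=\#\I^{(\epsilon\vect{n})}_{\vect{\kappa}}$---is exactly the strategy of that reference, so your plan to either quote it or spell out the torus computation is appropriate.
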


The proof of this result is given in \cite{DenckerErb2015a}. Note that for $\epsilon=1$ only the case $\vect{\kappa}=\vect{0}$ was treated.
However, since the general node sets $\LC^{(\vect{n})}_{\vect{\kappa}}$ differ from $\LC^{(\vect{n})}_{\vect{0}}$ only in terms of reflections with
respect to the coordinate axis, the corresponding results can be transferred immediately.

By Theorem \ref{thm:201605090601}, the discrete Lebesgue constant $\Lambda^{(\epsilon\vect{n})}_{\vect{\kappa}}$ introduced in \eqref{eq:201605281730} can be reformulated as
\begin{equation}\label{1602171546}
\Lambda^{(\epsilon\vect{n})}_{\vect{\kappa}} = \max_{\vect{x} \in [-1,1]^{\mathsf{d}}}\tsum_{\vect{i} \in \I^{(\epsilon\vect{n})}_{\vect{\kappa}}} \left|L^{(\epsilon\vect{n})}_{\vect{\kappa},\vect{i}}(\vect{x})\right|.
\end{equation}
As a first auxiliary result to estimate this constant, we prove the following Marcinkiewicz-Zygmund-type inequality.

\begin{proposition} \label{1602162324}
Let $\vect{\kappa} \in \mathbb{Z}^{\mathsf{d}}$, $\epsilon \in \{1,2\}$,
and  $0 < p < \infty$ be fixed.
For all $\vect{n} \in \mathcal{N}_{\mathsf{d}}$ and all  $P\in \Pi^{(\epsilon\vect{n})}_{\vect{\kappa}}$, we have
\begin{equation}\label{1501062106}
\tsum_{\vect{i}\in \I^{(\epsilon\vect{n})}_{\vect{\kappa}}}\mathfrak{w}^{(\epsilon\vect{n})}_{\vect{i}}\left|P\left(\vect{z}^{(\epsilon\vect{n})}_{\vect{i}}\right)\right|^p \displaystyle
\lesssim \|P\|_{w_{\mathsf{d}},\;\;\!\!\!p}^p=\frac{1}{\pi^{\mathsf{d}}} \int_{[-1,1]^{\mathsf{d}}} |P(\vect{x})|^p w_{\mathsf{d}}(\vect{x})\,\mathrm{d}\vect{x}.
\end{equation}
\end{proposition}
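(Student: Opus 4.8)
The plan is to deduce the multivariate estimate from the classical one‑dimensional Marcinkiewicz--Zygmund inequality, exploiting the tensor‑product structure of the grid containing $\I^{(\epsilon\vect{n})}_{\vect{\kappa}}$ together with the fact that the spectral set sits inside a box. Indeed, the defining inequalities $\gamma_{\mathsf{i}}/n_{\mathsf{i}}<\epsilon$ together with the single extra point $(0,\ldots,0,\epsilon n_{\mathsf{d}})$ give
\[\vect{\Gamma}^{(\epsilon\vect{n})}_{\vect{\kappa}}\subseteq\{0,1,\ldots,\epsilon n_1\}\times\cdots\times\{0,1,\ldots,\epsilon n_{\mathsf{d}}\},\]
so for every $\mathsf{j}\in\{1,\ldots,\mathsf{d}\}$ and every fixed choice of the remaining coordinates the section $x_{\mathsf{j}}\mapsto P(\ldots,x_{\mathsf{j}},\ldots)$ is an algebraic polynomial of degree at most $\epsilon n_{\mathsf{j}}$.

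First I would rewrite the weights. Setting $c^{(m)}_i=1/m$ for $0<i<m$ and $c^{(m)}_i=1/(2m)$ for $i\in\{0,m\}$, the definition \eqref{1601131435} yields the product identity $\mathfrak{w}^{(\epsilon\vect{n})}_{\vect{i}}=2^{\mathsf{d}-1}\tprod_{\mathsf{j}=1}^{\mathsf{d}}c^{(\epsilon n_{\mathsf{j}})}_{i_{\mathsf{j}}}$ for $\vect{i}\in\I^{(\epsilon\vect{n})}_{\vect{\kappa}}$. Since $\I^{(\epsilon\vect{n})}_{\vect{\kappa}}\subseteq\{0,\ldots,\epsilon n_1\}\times\cdots\times\{0,\ldots,\epsilon n_{\mathsf{d}}\}$ and every summand is nonnegative, this gives
\[\tsum_{\vect{i}\in\I^{(\epsilon\vect{n})}_{\vect{\kappa}}}\mathfrak{w}^{(\epsilon\vect{n})}_{\vect{i}}\bigl|P(\vect{z}^{(\epsilon\vect{n})}_{\vect{i}})\bigr|^p\leq 2^{\mathsf{d}-1}\tsum_{i_1=0}^{\epsilon n_1}\cdots\tsum_{i_{\mathsf{d}}=0}^{\epsilon n_{\mathsf{d}}}\Bigl(\tprod_{\mathsf{j}=1}^{\mathsf{d}}c^{(\epsilon n_{\mathsf{j}})}_{i_{\mathsf{j}}}\Bigr)\bigl|P\bigl(z^{(\epsilon n_1)}_{i_1},\ldots,z^{(\epsilon n_{\mathsf{d}})}_{i_{\mathsf{d}}}\bigr)\bigr|^p.\]

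Next I would invoke the univariate Chebyshev--Gauss--Lobatto Marcinkiewicz--Zygmund inequality: for fixed $0<p<\infty$ there is a constant $C_p>0$, independent of $m$, such that for every algebraic polynomial $q$ of degree at most $m$
\[\tsum_{i=0}^{m}c^{(m)}_i\,\bigl|q(\cos(i\pi/m))\bigr|^p\leq C_p\,\frac1{\pi}\int_{-1}^{1}|q(x)|^p\,\dfrac{\mathrm{d}x}{\sqrt{1-x^2}}.\]
Under $x=\cos t$ this is precisely the Marcinkiewicz--Zygmund inequality for the $2m$ equispaced points of a period applied to the cosine polynomial $q(\cos\cdot)$ of degree $\le m$, with quadrature weights comparable to $1/m$ (see \cite{Zygmund}). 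Applying this bound successively in $i_{\mathsf{d}},i_{\mathsf{d}-1},\ldots,i_1$ — each time using the box containment to certify the required one‑variable degree and Tonelli's theorem to pull the newly created integral past the remaining sums — the right‑hand side above is at most
\[2^{\mathsf{d}-1}C_p^{\mathsf{d}}\,\frac1{\pi^{\mathsf{d}}}\int_{[-1,1]^{\mathsf{d}}}|P(\vect{x})|^p\,w_{\mathsf{d}}(\vect{x})\,\mathrm{d}\vect{x}=2^{\mathsf{d}-1}C_p^{\mathsf{d}}\,\|P\|_{w_{\mathsf{d}},p}^p,\]
which is \eqref{1501062106} with an implicit constant $2^{\mathsf{d}-1}C_p^{\mathsf{d}}$ depending only on $p$ and $\mathsf{d}$, and in particular not on $\vect{n}$.

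I do not expect a genuine obstacle: once the weight identity and the containment $\vect{\Gamma}^{(\epsilon\vect{n})}_{\vect{\kappa}}\subseteq\prod_{\mathsf{j}}\{0,\ldots,\epsilon n_{\mathsf{j}}\}$ are in place, the argument is a routine tensorization of a classical one‑dimensional estimate. The only point deserving care is the univariate Marcinkiewicz--Zygmund inequality with a constant uniform in $m$ and valid for all $0<p<\infty$ — in particular for $p<1$ and for the (nearly) interpolatory node count that occurs in the last variable, where the degree may equal $\epsilon n_{\mathsf{d}}$ while there are only $\epsilon n_{\mathsf{d}}+1$ nodes (for the variables $\mathsf{j}<\mathsf{d}$ the degree is at most $\epsilon n_{\mathsf{j}}-1$, so there is even a safety margin). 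This is classical and should be referenced precisely, or else proved by the standard cell estimate $|q(\cos(i\pi/m))|^p\lesssim \frac{m}{\pi}\int_{I_i}|q(\cos t)|^p\,\mathrm{d}t$ over an arc $I_i$ of length $\pi/m$ about $i\pi/m$, summed using the bounded overlap of these arcs.
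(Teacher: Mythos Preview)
Your proposal is correct and follows essentially the same strategy as the paper: reduce to a one–dimensional Marcinkiewicz–Zygmund inequality (the paper cites Lubinsky–Mat\'e–Nevai \cite{LubinskyMateNevai1987} for this) and tensorize. The only bookkeeping difference is that the paper splits $\I^{(\epsilon\vect{n})}_{\vect{\kappa}}=\I^{(\epsilon\vect{n})}_{\vect{\kappa},0}\cup\I^{(\epsilon\vect{n})}_{\vect{\kappa},1}$ into the two parity sublattices, each of which is already a cross product, and applies the $1$D bound on each; you instead enlarge $\I^{(\epsilon\vect{n})}_{\vect{\kappa}}$ to the full Chebyshev–Lobatto tensor grid using nonnegativity and your clean weight factorization $\mathfrak{w}^{(\epsilon\vect{n})}_{\vect{i}}=2^{\mathsf{d}-1}\prod_{\mathsf{j}}c^{(\epsilon n_{\mathsf{j}})}_{i_{\mathsf{j}}}$ before tensorizing—both routes are equivalent up to constants.
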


\begin{proof}
The proof is based on the idea given in \cite{Xu1996}. We proceed as in the proof of \cite[Lemma 3]{Erb2015} and use the following one-dimensional result from \cite[Theorem 2]{LubinskyMateNevai1987}:

\vspace{1mm}

\noindent \emph{For all  $M\in\mathbb{N}$, $0 \leq \theta_1  < \cdots < \theta_M < 2  \pi$, and for all
univariate trigonometric polynomials $q_m$ of degree at most $m \in \mathbb{N}$, we have the inequality
\begin{equation} \label{eq:201605231009}
\tsum_{{\nu} =1}^M |q_m(\theta_{\nu})|^p \leq \displaystyle\left(m+\dfrac1{2 \eta }\right) \dfrac{(p+1)e}{2\pi} \int_{0}^{2\pi} |q_m(\theta)|^p \mathrm{d} \theta,\end{equation}
where $\eta =\min(\theta_2-\theta_1,\ldots,\theta_M-\theta_{M-1},2\pi-(\theta_M-\theta_1))$}.

\vspace{1mm}

\noindent For  $m\in\mathbb{N}$, $\mathfrak{r} \in \{0,1\}$, we consider the sets $J_{\mathfrak{r}}^{(m)} = \{\,i \in \mathbb{N}_0\,|\ i < 2m,\ i \equiv \mathfrak{r} \tmod 2\,\}$. Suppose that
$i_1,\ldots,i_m \in J_{\mathfrak{r}}^{(m)}$ with $0\leq i_1<\ldots<i_m < 2 m$. Setting $M = m$ and
$\theta_{\nu} = i_{\nu}\pi/m$, we obtain $\eta = 2\pi/m$. Using \eqref{eq:201605231009}, we get for all univariate polynomials $Q$ of degree at most $m$ the inequality
\begin{align} \label{1603091815}
\dfrac1m &\tsum_{i \in J_{\mathfrak{r}}^{(m)}:\, i \leq m}
\hspace{-4mm} \left( 2 - \delta_{0,i} - \delta_{0,m}\right) \left|Q\left(z^{(m)}_{i}\right)\right|^p
= \dfrac1m\tsum_{i \in J_{\mathfrak{r}}^{(m)}} \hspace{-1mm}  \left|Q\left(\textstyle \cos \frac{i \pi}{m}\right)\right|^p
= \dfrac1m\tsum_{\nu = 1}^m \left|Q(\cos \theta_{\nu})\right|^p \notag  \\
&\qquad \leq \left( \textstyle 1+\dfrac1{4\pi }\right) \frac{(p+1)e}{2\pi} \int_{0}^{2\pi} |Q(\cos \theta)|^p \mathrm{d} \theta \leq \frac{3(p+1)}{\pi}\int_{-1}^1  \frac{|Q(x)|^p}{\sqrt{1-x^2}} \mathrm{d}x,
\end{align}
where $\delta_{i,j}$ denotes the Kronecker delta.  

Let $P\in \Pi^{(\epsilon\vect{n})}_{\vect{\kappa}}$. The degree of the univariate polynomial $z_i\mapsto P(z_1,\ldots,z_{\mathsf{d}})$ is at most $\epsilon n_i$.
Now, taking into account the cross product structure of $\I^{(\epsilon\vect{n})}_{\vect{\kappa},\mathfrak{r}}$, $\mathfrak{r}\in \{0,1\}$, the weights defined in \eqref{1601131435},
and applying $\mathsf{d}$ times inequality \eqref{1603091815},
we obtain
\begin{equation} \label{eq:201605291044}
\tsum_{\vect{i}\in \I^{(\epsilon\vect{n})}_{\vect{\kappa},\mathfrak{r}}}\mathfrak{w}^{(\epsilon\vect{n})}_{\vect{i}}
\left|P\left(\vect{z}^{(\epsilon\vect{n})}_{\vect{i}}\right)\right|^p \lesssim \|P\|_{w_{\mathsf{d}},\;\;\!\!\!p}^p\,, \quad \mathfrak{r} \in \{0,1\},
\end{equation}
for $\vect{n} \in \mathcal{N}_{\mathsf{d}}$ and $P\in \Pi^{(\epsilon\vect{n})}_{\vect{\kappa}}$. Since
$\I^{(\epsilon\vect{n})}_{\vect{\kappa}} = \I^{(\epsilon\vect{n})}_{\vect{\kappa},0} \cup \I^{(\epsilon\vect{n})}_{\vect{\kappa},1}$, inequality \eqref{eq:201605291044} yields \eqref{1501062106}.
\end{proof}

\medskip

A slight adaption of the proof of Theorem \ref{1602041200} gives the following result.
 \begin{corollary} \label{1502171206} Let $\epsilon\in\{1,2\}$ and $\vect{\kappa}\in \mathbb{Z}^{\mathsf{d}}$ be fixed.
For  all $\vect{n}\in\mathcal{N}_{\mathsf{d}}$, we have
 \[\FL\left(\vect{\Gamma}^{(\epsilon \vect{n})}_{\vect{\kappa}}\right)\asymp  \FL\left(\vect{\Gamma}^{(\epsilon\vect{n}),\ast}_{\vect{\kappa}}\right)\asymp \tprod_{\mathsf{i}=1}^{\mathsf{d}}\ln (n_{\mathsf{i}}+1).\]
\end{corollary}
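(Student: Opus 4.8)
The plan is to mimic the proof of Theorem~\ref{1602041200} from Subsection~\ref{1602041256}, taking $\vect{m}=\epsilon\vect{n}$. The set $\vect{\Gamma}^{(\epsilon\vect{n})}_{\vect{\kappa}}$ differs from $\overline{\vect{\Gamma}}^{(\epsilon\vect{n})}$ only in that some of the defining non-strict inequalities are tightened to strict ones — the axis conditions $\gamma_{\mathsf{i}}/n_{\mathsf{i}}<\epsilon$, and, for the pairs with $\kappa_{\mathsf{i}}\not\equiv\kappa_{\mathsf{j}}\tmod 2$, the edge conditions — and in that the single point $\vect{p}=(0,\ldots,0,\epsilon n_{\mathsf{d}})$ is adjoined. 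Since $\epsilon\in\{1,2\}$ is fixed, $\ln(\epsilon n_{\mathsf{i}}+1)\asymp\ln(n_{\mathsf{i}}+1)$, so it suffices to prove the two-sided estimate with $n_{\mathsf{i}}$ replaced by $\epsilon n_{\mathsf{i}}$; once both $\FL(\vect{\Gamma}^{(\epsilon\vect{n})}_{\vect{\kappa}})$ and $\FL(\vect{\Gamma}^{(\epsilon\vect{n}),\ast}_{\vect{\kappa}})$ are shown to be $\asymp\tprod_{\mathsf{i}}\ln(n_{\mathsf{i}}+1)$, their equivalence is immediate.

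For the upper bound on $\FL(\vect{\Gamma}^{(\epsilon\vect{n})}_{\vect{\kappa}})$ I would repeat the decomposition of Proposition~\ref{1605182041}: at most one index $\mathsf{k}$ can have $2\gamma_{\mathsf{k}}>\epsilon n_{\mathsf{k}}$, so $\vect{\Gamma}^{(\epsilon\vect{n})}_{\vect{\kappa}}$ splits into a ``central'' part contained in the box $\{0\le\gamma_{\mathsf{i}}\le\lfloor\epsilon n_{\mathsf{i}}/2\rfloor\}$, finitely many images under the sign-flip maps $\mathfrak{s}^{(\vect{m})}_{\mathsf{k}}$ (which preserve $\FL$) of sets of the type $\vect{\Gamma}^{(\vect{m}),\mathsf{K}}_1$ intersected with further strict conditions, and the singleton $\{\vect{p}\}$. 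The tightened inequalities only turn some of the relations $\triangleleft_j$ in the building blocks \eqref{1601032151} from $\le$ into $<$, which is allowed in Proposition~\ref{1601031215}; thus, exactly as in Corollary~\ref{1605182040} (through Lemma~\ref{1605182149}), each such piece contributes $\lesssim\tprod_{\mathsf{i}}\ln(\epsilon n_{\mathsf{i}}+1)$, the central part is handled by \eqref{1602280227} up to finitely many lower-dimensional pieces removed by the strict conditions, and $\FL(\{\vect{p}\})=1$. For $\vect{\Gamma}^{(\epsilon\vect{n}),\ast}_{\vect{\kappa}}$ I would copy Corollary~\ref{1501051402}: it is the union of the $2^{\mathsf{d}}$ sign-reflected copies of $\vect{\Gamma}^{(\epsilon\vect{n})}_{\vect{\kappa}}$, any intersection of such copies becomes, after deleting the coordinates forced to $0$, a lower-dimensional instance of the same type, and Lemma~\ref{1605182149} closes the bound.

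For the lower bounds I would follow Proposition~\ref{1602171132}. Since $\vect{\Gamma}^{(\epsilon\vect{n})}_{\vect{\kappa}}\subset\mathbb{N}_0^{\mathsf{d}}$, the multivariate Hardy--Littlewood inequality \eqref{Hardy-Littlewood} applies, and restricting the resulting sum to the sub-box $\{\gamma_{\mathsf{i}}\le\lfloor\epsilon n_{\mathsf{i}}/4\rfloor\ \forall\,\mathsf{i}\}$ — which lies in $\vect{\Gamma}^{(\epsilon\vect{n})}_{\vect{\kappa}}$ because there $\gamma_{\mathsf{i}}/n_{\mathsf{i}}+\gamma_{\mathsf{j}}/n_{\mathsf{j}}\le\epsilon/2<\epsilon$, so every strict and parity condition holds — together with \eqref{1605201209} and Lemma~\ref{1508071608}, gives $\FL(\vect{\Gamma}^{(\epsilon\vect{n})}_{\vect{\kappa}})\gtrsim\tprod_{\mathsf{i}}\ln(n_{\mathsf{i}}+1)$. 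For $\FL(\vect{\Gamma}^{(\epsilon\vect{n}),\ast}_{\vect{\kappa}})$ I would reproduce the slicing argument \eqref{1602270506}--\eqref{1608061356}: fixing a coordinate $\mathsf{j}$ and applying \eqref{1512202341} in $t_{\mathsf{j}}$ reduces matters to the fact that, for $1\le\gamma<\epsilon n_{\mathsf{j}}/2$, the slice of $\vect{\Gamma}^{(\epsilon\vect{n}),\ast}_{\vect{\kappa}}$ at $\gamma_{\mathsf{j}}=\epsilon n_{\mathsf{j}}-\gamma$ is exactly the box $\{|\gamma_{\mathsf{i}}|\le(n_{\mathsf{i}}/n_{\mathsf{j}})\gamma\}$, to which \eqref{1602161746} applies coordinatewise, after which \eqref{1608051847}--\eqref{1608061356} transfer verbatim with $m_{\mathsf{i}}$ replaced by $n_{\mathsf{i}}$. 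The main obstacle, though routine, is the bookkeeping: one has to verify that every set arising in the decomposition is of one of the polyhedral types treated in Section~\ref{1606171653}, that the chosen sub-boxes really are contained in the index sets despite the strict and parity-dependent conditions, and that the adjoined point $\vect{p}$ — which lies outside every slice used — perturbs each Lebesgue constant only by $O(1)$.
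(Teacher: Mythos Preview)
Your proposal is correct and follows essentially the same route as the paper: decompose $\vect{\Gamma}^{(\epsilon\vect{n})}_{\vect{\kappa}}$ into a central box-type piece plus images under the flips $\mathfrak{s}^{(\epsilon\vect{n})}_{\mathsf{k}}$ of sets of the form $\vect{\Gamma}^{(\vect{m}),\mathsf{K}}_1$ (with some relations tightened to $<$), and then run the arguments of Subsection~\ref{1602041256} verbatim. The only cosmetic difference is that the paper imports the decomposition $\vect{\Gamma}^{(\epsilon\vect{n})}_{\vect{\kappa}}=\vect{\Gamma}^{(\epsilon\vect{n})}_{\vect{\kappa},0}\cup\bigcup_{\mathsf{K}}\mathfrak{s}^{(\epsilon\vect{n})}_{\max\mathsf{K}}\bigl(\vect{\Gamma}^{(\epsilon\vect{n}),\mathsf{K}}_{\vect{\kappa},1}\bigr)$ from \cite{DenckerErb2015a} rather than rederiving it from Proposition~\ref{1605182041}.
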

\begin{proof}
We use the notation
\[\vect{\Gamma}^{(\epsilon\vect{n})}_{\vect{\kappa},\mathfrak{r}} =
\left\{\vectgamma\in\mathbb{N}_0^{\mathsf{d}}\left|\begin {array}{l}
\text{
$\gamma_{\mathsf{i}}/n_{\mathsf{i}}\leq \epsilon /2$\quad $\forall\,\mathsf{i} \in \{1, \ldots, \mathsf{d}\}$ with $\kappa_{\mathsf{i}}\equiv \mathfrak{r}\tmod 2$,} \\
\text{ $\gamma_{\mathsf{i}}/n_{\mathsf{i}} < \epsilon /2$\quad $\forall\,\mathsf{i} \in \{1, \ldots, \mathsf{d}\}$ with  $\kappa_{\mathsf{i}}\not \equiv \mathfrak{r}\tmod 2$}
\end{array}\right.
\right\}. \]
 Further, using \eqref{1601031444}, let $\vect{\Gamma}^{(\epsilon\vect{n}),\mathsf{K}}_{\vect{\kappa},1}=\left\{\,\vectgamma\in \vect{\Gamma}^{(\epsilon\vect{n})}_{\vect{\kappa},1}\,\left|\,\mathsf{K}^{(\epsilon\vect{n})}[\vectgamma]=\mathsf{K}\right. \right\}$ for $\emptyset\neq \mathsf{K}\subseteq\{1,\ldots,\mathsf{d}\}$, and, using  \eqref{eq:201607121320}, define the mapping $\mathfrak{s}^{(\epsilon\vect{n})}$ by
$\mathfrak{s}^{(\epsilon\vect{n})}(\vectgamma) = \mathfrak{s}^{(\epsilon\vect{n})}_{\mathsf{k}}(\vectgamma)$, $\mathsf{k} = \max \mathsf{K}^{(\epsilon\vect{n})}[\vectgamma]$.  Employing the statements from the proof of \cite[Proposition 2.6 and Proposition 3.8]{DenckerErb2015a} we have
$\vect{\Gamma}^{(\epsilon\vect{n})}_{\vect{\kappa}}=\vect{\Gamma}^{(\epsilon\vect{n})}_{\vect{\kappa},0}\cup  \left\{\,\mathfrak{s}^{(\epsilon\vect{n})}(\vectgamma)\,\left|\,\vectgamma\in\vect{\Gamma}^{(\epsilon\vect{n})}_{\vect{\kappa},1}\right. \right\}$. This equality can be written as 
\[\displaystyle \vect{\Gamma}^{(\epsilon\vect{n})}_{\vect{\kappa}}=
\vect{\Gamma}^{(\epsilon\vect{n})}_{\vect{\kappa},0}\cup\bigcup_{\emptyset\neq \mathsf{K}\subseteq
\{1,\ldots,\mathsf{d}\}}\mathfrak{s}^{(\epsilon\vect{n})}_{\max \mathsf{K}}\left(\vect{\Gamma}^{(\epsilon\vect{n}),\mathsf{K}}_{\vect{\kappa},1}\right)\]
and can be considered as an  analog of \eqref{1602011534}.
Thus, substituting  in Subsection \ref{1602041256} the symbols $\overline{\vect{\Gamma}}^{(\vect{m})}$, $\vect{\Gamma}^{(\vect{m})}_0$, $\vect{\Gamma}^{(\vect{m})}_1$
by $\vect{\Gamma}^{(\epsilon\vect{n})}_{\vect{\kappa}}$, $\vect{\Gamma}^{(\epsilon\vect{n})}_{\vect{\kappa},0}$, $\vect{\Gamma}^{(\epsilon\vect{n})}_{\vect{\kappa},1}$,
respectively, the proof of Corollary~\ref{1502171206} follows by the same lines of argumentation as the proof of Theorem~\ref{1602041200}.
\end{proof}

We obtain the following estimates of the discrete Lebesgue constants.

\begin{theorem} \label{1503120230}  Let $\epsilon\in\{1,2\}$ and $\vect{\kappa}\in \mathbb{Z}^{\mathsf{d}}$ be fixed.
For all $\vect{n}\in\mathcal{N}_{\mathsf{d}}$, we have
\begin{equation}\label{201607121419}
\Lambda^{(\epsilon\vect{n})}_{\vect{\kappa}} \asymp \tprod_{\mathsf{i}=1}^{\mathsf{d}}\ln (n_{\mathsf{i}}+1).
\end{equation}
\end{theorem}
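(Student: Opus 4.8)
The plan is to establish the two comparisons collected in \eqref{1606180824}, namely
\[
\Lambda^{(\epsilon\vect{n})}_{\vect{\kappa}} \lesssim \FL\left(\vect{\Gamma}^{(\epsilon\vect{n}),\ast}_{\vect{\kappa}}\right)+\tprod_{\mathsf{i}=1}^{\mathsf{d}}\ln (n_{\mathsf{i}}+1),\qquad \FL\left(\vect{\Gamma}^{(\epsilon\vect{n}),\ast}_{\vect{\kappa}}\right)\lesssim \Lambda^{(\epsilon\vect{n})}_{\vect{\kappa}},
\]
and then to combine them with the equivalence $\FL(\vect{\Gamma}^{(\epsilon\vect{n}),\ast}_{\vect{\kappa}})\asymp\tprod_{\mathsf{i}}\ln(n_{\mathsf{i}}+1)$ from Corollary~\ref{1502171206}, which then immediately yields \eqref{201607121419}. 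Throughout I use the kernel $\Phi(\vect{x},\vect{y})=\tsum_{\indexvectgamma\in\vect{\Gamma}^{(\epsilon\vect{n})}_{\vect{\kappa}}}2^{\mathfrak{e}(\indexvectgamma)-\mathfrak{f}^{(\epsilon\vect{n})}(\indexvectgamma)}T_{\indexvectgamma}(\vect{x})T_{\indexvectgamma}(\vect{y})-T_{\epsilon n_{\mathsf{d}}}(x_{\mathsf{d}})T_{\epsilon n_{\mathsf{d}}}(y_{\mathsf{d}})$, so that by \eqref{1601161944} one has $L^{(\epsilon\vect{n})}_{\vect{\kappa},\vect{i}}(\vect{x})=\mathfrak{w}^{(\epsilon\vect{n})}_{\vect{i}}\,\Phi(\vect{x},\vect{z}^{(\epsilon\vect{n})}_{\vect{i}})$, and the following transference identity: writing $\vect{x}=\cos\vect{\theta}$, $\vect{y}=\cos\vect{\phi}$ and using $2\cos a\cos b=\cos(a-b)+\cos(a+b)$ together with the definition \eqref{201606171550} of the symmetrization, the leading part of $\Phi$ equals $2^{-\mathsf{d}}\tsum_{\vect{s}\in\{-1,1\}^{\mathsf{d}}}D_{\vect{\Gamma}^{(\epsilon\vect{n}),\ast}_{\vect{\kappa}}}\bigl((\phi_{\mathsf{j}}+s_{\mathsf{j}}\theta_{\mathsf{j}})_{\mathsf{j}}\bigr)$, where $D_{\vect{\Gamma}}(\vect{t})=\tsum_{\indexvectgamma\in\vect{\Gamma}}\mathrm{e}^{\mathbf{i}(\indexvectgamma,\vect{t})}$; and the difference between $\Phi$ and this leading part involves only the $\mathfrak{f}$-boundary terms (which, by pairwise coprimality, are supported on the $O(1)$-size set of $\indexvectgamma$ with all $\gamma_{\mathsf{j}}\in\{0,n_{\mathsf{j}}\}$) plus the rank-one term $T_{\epsilon n_{\mathsf{d}}}(x_{\mathsf{d}})T_{\epsilon n_{\mathsf{d}}}(y_{\mathsf{d}})$, hence is uniformly bounded.

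For the upper bound I start from \eqref{1602171546}. For fixed $\vect{x}$, the function $\Phi(\vect{x},\cdot)$ belongs to $\Pi^{(\epsilon\vect{n})}_{\vect{\kappa}}$ (this uses $(0,\dots,0,\epsilon n_{\mathsf{d}})\in\vect{\Gamma}^{(\epsilon\vect{n})}_{\vect{\kappa}}$ to accommodate its last term), so the Marcinkiewicz--Zygmund inequality of Proposition~\ref{1602162324} with $p=1$ gives
\[
\tsum_{\vect{i}\in\I^{(\epsilon\vect{n})}_{\vect{\kappa}}}\bigl|L^{(\epsilon\vect{n})}_{\vect{\kappa},\vect{i}}(\vect{x})\bigr|=\tsum_{\vect{i}}\mathfrak{w}^{(\epsilon\vect{n})}_{\vect{i}}\bigl|\Phi(\vect{x},\vect{z}^{(\epsilon\vect{n})}_{\vect{i}})\bigr|\lesssim\|\Phi(\vect{x},\cdot)\|_{w_{\mathsf{d}},1},
\]
uniformly in $\vect{x}$. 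Passing to $\vect{\theta},\vect{\phi}$ as above and extending the integrand evenly to $[-\pi,\pi)^{\mathsf{d}}$, the right-hand side equals $\tfrac1{(2\pi)^{\mathsf{d}}}\int_{[-\pi,\pi)^{\mathsf{d}}}\bigl|2^{-\mathsf{d}}\tsum_{\vect{s}}D_{\vect{\Gamma}^{(\epsilon\vect{n}),\ast}_{\vect{\kappa}}}\bigl((\phi_{\mathsf{j}}+s_{\mathsf{j}}\theta_{\mathsf{j}})_{\mathsf{j}}\bigr)+(\text{bounded})\bigr|\,\mathrm{d}\vect{\phi}$; by the triangle inequality and the translation invariance of the $L^1$-norm on the torus the leading part contributes at most $\FL(\vect{\Gamma}^{(\epsilon\vect{n}),\ast}_{\vect{\kappa}})$, and the bounded part contributes $O(1)$. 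Since $\tprod_{\mathsf{i}}\ln(n_{\mathsf{i}}+1)\geq(\ln 2)^{\mathsf{d}}$, the $O(1)$ is absorbed and the first comparison in \eqref{1606180824} follows.

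For the lower bound I estimate the Lebesgue function from below on average. Since $\tfrac1{\pi^{\mathsf{d}}}\int_{[-1,1]^{\mathsf{d}}}w_{\mathsf{d}}=1$, formula \eqref{1602171546} gives
\[
\Lambda^{(\epsilon\vect{n})}_{\vect{\kappa}}\geq\tfrac1{\pi^{\mathsf{d}}}\int_{[-1,1]^{\mathsf{d}}}\tsum_{\vect{i}\in\I^{(\epsilon\vect{n})}_{\vect{\kappa}}}\bigl|L^{(\epsilon\vect{n})}_{\vect{\kappa},\vect{i}}(\vect{x})\bigr|\,w_{\mathsf{d}}(\vect{x})\,\mathrm{d}\vect{x}=\tsum_{\vect{i}\in\I^{(\epsilon\vect{n})}_{\vect{\kappa}}}\mathfrak{w}^{(\epsilon\vect{n})}_{\vect{i}}\,\|\Phi(\cdot,\vect{z}^{(\epsilon\vect{n})}_{\vect{i}})\|_{w_{\mathsf{d}},1}.
\]
Discarding the uniformly bounded difference between $\Phi$ and its leading part and transferring to Fourier, one gets $\|\Phi(\cdot,\vect{z}^{(\epsilon\vect{n})}_{\vect{i}})\|_{w_{\mathsf{d}},1}\geq\tfrac1{(2\pi)^{\mathsf{d}}}\int_{[-\pi,\pi)^{\mathsf{d}}}\bigl|2^{-\mathsf{d}}\tsum_{\vect{s}}D_{\vect{\Gamma}^{(\epsilon\vect{n}),\ast}_{\vect{\kappa}}}\bigl((\phi_{\mathsf{j}}+s_{\mathsf{j}}\psi_{\vect{i},\mathsf{j}})_{\mathsf{j}}\bigr)\bigr|\,\mathrm{d}\vect{\phi}-O(1)$, with $\psi_{\vect{i},\mathsf{j}}=i_{\mathsf{j}}\pi/(\epsilon n_{\mathsf{j}})$; note that for $\vect{i}=\vect{0}$ this $L^1$-norm equals $\FL(\vect{\Gamma}^{(\epsilon\vect{n}),\ast}_{\vect{\kappa}})$ exactly. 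The crux is to show that this quantity remains $\gtrsim\FL(\vect{\Gamma}^{(\epsilon\vect{n}),\ast}_{\vect{\kappa}})$ for $\vect{i}$ ranging over a set of total $\mathfrak{w}^{(\epsilon\vect{n})}_{\vect{i}}$-mass bounded below, which I would do via the Hardy--Littlewood inequality \eqref{1512202341} and a slicing/induction on the dimension, in the spirit of — and for $\vect{\psi}_{\vect{i}}=\vect{0}$ reducing to — the lower bound in Proposition~\ref{1602171132}, with the pairwise coprimality of $n_1,\dots,n_{\mathsf{d}}$ entering in the counting of the relevant index sets. Since $\tsum_{\vect{i}}\mathfrak{w}^{(\epsilon\vect{n})}_{\vect{i}}\lesssim1$ (apply Proposition~\ref{1602162324} to $P\equiv1$) and $\FL(\vect{\Gamma}^{(\epsilon\vect{n}),\ast}_{\vect{\kappa}})\gtrsim1$, summing then yields the second comparison $\FL(\vect{\Gamma}^{(\epsilon\vect{n}),\ast}_{\vect{\kappa}})\lesssim\Lambda^{(\epsilon\vect{n})}_{\vect{\kappa}}$.

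Finally, combining the two comparisons in \eqref{1606180824} with Corollary~\ref{1502171206} gives \eqref{201607121419}. I expect the uniform lower bound for the $L^1$-norm of the superposed Dirichlet kernels over a positive-mass family of parameters $\vect{\psi}_{\vect{i}}$ to be the main obstacle of the proof; the remaining ingredients are either elementary or direct adaptations of the transference and Marcinkiewicz--Zygmund machinery already in place.
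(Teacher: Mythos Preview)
Your upper-bound argument follows the paper's route closely: apply the Marcinkiewicz--Zygmund inequality of Proposition~\ref{1602162324} with $p=1$ to $\Phi(\vect{x},\cdot)\in\Pi^{(\epsilon\vect{n})}_{\vect{\kappa}}$, then use the cosine product identity to pass to $D_{\vect{\Gamma}^{(\epsilon\vect{n}),\ast}_{\vect{\kappa}}}$. However, your assertion that the $\mathfrak{f}$-correction is supported on an $O(1)$-size set is incorrect for $\epsilon=2$ and $\mathsf{d}\geq 3$: if $\gamma_{\mathsf{i}}=n_{\mathsf{i}}$ and $\gamma_{\mathsf{j}}=n_{\mathsf{j}}$ (so that $\mathfrak{f}^{(\epsilon\vect{n})}(\vectgamma)\geq 1$), the remaining coordinates $\gamma_{\mathsf{k}}$ are only constrained to lie in $[0,n_{\mathsf{k}}]$; pairwise coprimality does not force $\gamma_{\mathsf{k}}\in\{0,n_{\mathsf{k}}\}$. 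The paper handles this by stratifying over $\mathsf{K}=\{\mathsf{i}:2\gamma_{\mathsf{i}}=\epsilon n_{\mathsf{i}}\}$, exploiting the tensor-product structure of each stratum, and bounding its $L^1$-contribution by $\prod_{\mathsf{i}\notin\mathsf{K}}\ln(n_{\mathsf{i}}+1)$ (see \eqref{201607121444}); this still yields the first inequality in \eqref{1606180824}, so the defect is easily repaired.

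For the lower bound your approach differs substantially from the paper's and is, as you yourself flag, incomplete at precisely the decisive step. The paper does not attempt to lower-bound the $L^1$-norm of the superposed Dirichlet kernel over a positive-mass family of shifts $\vect{\psi}_{\vect{i}}$; instead it invokes directly a general result of Subbotin \cite[Theorem~1]{Subbotin1983}, valid for any interpolation projector onto the polynomial space with spectral index set $\vect{\Gamma}^{(\epsilon\vect{n})}_{\vect{\kappa}}$, which gives $\Lambda^{(\epsilon\vect{n})}_{\vect{\kappa}}\geq 3^{-\mathsf{d}}\,\FL(\vect{\Gamma}^{(\epsilon\vect{n}),\ast}_{\vect{\kappa}})$ outright. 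Your proposed route faces a genuine obstacle: the computation \eqref{1602171145} shows that the $L^1$-norm of $2^{-\mathsf{d}}\sum_{\vect{s}}D_{\vect{\Gamma}^{(\epsilon\vect{n}),\ast}_{\vect{\kappa}}}(\cdot+\vect{s}\odot\vect{\psi})$ is always \emph{at most} $\FL(\vect{\Gamma}^{(\epsilon\vect{n}),\ast}_{\vect{\kappa}})$, with equality only at $\vect{\psi}=\vect{0}$, and neither the Hardy--Littlewood inequality \eqref{1512202341} nor the slicing argument of Proposition~\ref{1602171132} supplies a lower bound that survives the averaging over $2^{\mathsf{d}}$ sign-reflected copies (the Fourier coefficients of the superposition are $\prod_{\mathsf{j}}\cos(\gamma_{\mathsf{j}}\psi_{\mathsf{j}})$, which may vanish). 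Moreover, the single node with $\vect{\psi}_{\vect{i}}=\vect{0}$, when present, carries weight $\mathfrak{w}^{(\epsilon\vect{n})}_{\vect{0}}\asymp(\prod_{\mathsf{i}}n_{\mathsf{i}})^{-1}$, so its contribution alone is negligible.
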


\begin{proof} We  introduce \[
\widetilde{K}^{(\epsilon\vect{n})}_{\vect{\kappa}}(\vect{x},\vect{x}')= \!\! \tsum_{\substack{\indexvectgamma \in \vect{\Gamma}^{(\epsilon\vect{n})}_{\vect{\kappa}}}}\!\!
2^{\mathfrak{e}(\indexvectgamma)-\mathfrak{f}^{(\epsilon\vect{n})}(\indexvectgamma)} T_{\indexvectgamma}(\vect{x})\;\!T_{\indexvectgamma}(\vect{x}'),\quad
K^{(\epsilon\vect{n})}_{\vect{\kappa}}(\vect{x},\vect{x}') = \!\! \tsum_{\substack{\indexvectgamma \in \vect{\Gamma}^{(\epsilon\vect{n})}_{\vect{\kappa}}}}\!\!2^{\mathfrak{e}(\indexvectgamma)} T_{\indexvectgamma}(\vect{x})\;\!T_{\indexvectgamma}(\vect{x}').\]
From \eqref{1601161944}, \eqref{1602171546}, and Proposition \ref{1602162324}, we get for all $\vect{n}\in\mathcal{N}_{\mathsf{d}}$ that
\begin{align} \label{1502180020}
\Lambda^{(\epsilon\vect{n})}_{\vect{\kappa}} &=
\max_{\vect{x} \in [-1,1]^{\mathsf{d}}}\tsum_{\vect{i}\in \I^{(\epsilon\vect{n})}_{\vect{\kappa}}}\mathfrak{w}^{(\epsilon\vect{n})}_{\vect{i}}
\left|\widetilde{K}^{(\epsilon\vect{n})}_{\vect{\kappa}}\left(\vect{x},\vect{z}^{(\epsilon\vect{n})}_{\vect{i}}\right)
- T_{\epsilon n_{\mathsf{d}}}(z^{(\epsilon n_{\mathsf{d}})}_{i_{\mathsf{d}}}) \;\!  T_{\epsilon n_{\mathsf{d}}}(x_{\mathsf{d}})\right| \\
\nonumber & \leq \max_{\vect{x} \in [-1,1]^{\mathsf{d}}}\tsum_{\vect{i}\in \I^{(\epsilon\vect{n})}_{\vect{\kappa}}}\mathfrak{w}^{(\epsilon\vect{n})}_{\vect{i}}
\left|\widetilde{K}^{(\epsilon\vect{n})}_{\vect{\kappa}}\left(\vect{x},\vect{z}^{(\epsilon\vect{n})}_{\vect{i}}\right)\right|+1 \lesssim \max\limits_{\vect{x} \in [-1,1]^{\mathsf{d}}}\|\widetilde{K}^{(\epsilon\vect{n})}_{\vect{\kappa}}(\vect{x},\,\cdot\,)\|_{w_{\mathsf{d}},1}+1.
\end{align}
Using the  well-known  relation $\tprod_{\mathsf{i}=1}^{\mathsf{r}}\cos(\vartheta_{\mathsf{i}})=
\dfrac1{2^{\mathsf{r}}}\tsum_{\vect{v}\in\{-1,1\}^{\mathsf{r}}}\cos(v_1\vartheta_1+\cdots+v_{\mathsf{r}}\vartheta_{\mathsf{r}})$, $\mathsf{r} \in \mathbb{N}$,
we get $\tprod_{\mathsf{i}=1}^{\mathsf{d}} \cos(\gamma_{\mathsf{i}}s_{\mathsf{i}})\cos(\gamma_{\mathsf{i}}t_{\mathsf{i}})
= \dfrac1{2^{2\mathsf{d}}}\tsum_{\vect{v},\vect{w}\in \{-1,1\}^{\mathsf{d}}}\cos\left(\tsum_{\mathsf{i} = 1 }^{\mathsf{d}}(v_{\mathsf{i}}\gamma_{\mathsf{i}}s_{\mathsf{i}}+w_{\mathsf{i}}\gamma_{\mathsf{i}}t_{\mathsf{i}})\right)$.

\medskip

\medskip Then, for all $\vect{x}=(\cos s_1,\ldots,\cos s_{\mathsf{d}})$, we get
\begin{align}
&\|K^{(\epsilon\vect{n})}_{\vect{\kappa}}(\vect{x},\,\cdot\,)\|_{w_{\mathsf{d}},1}=\dfrac1{(2\pi)^{\mathsf{d}}}\int_{[-\pi,\pi)^{\mathsf{d}}}\left|\tsum_{\indexvectgamma \in \vect{\Gamma}^{(\epsilon\vect{n})}_{\vect{\kappa}}} 2^{\mathfrak{e}(\indexvectgamma)}\tprod_{\mathsf{i}=1}^{\mathsf{d}} \cos(\gamma_{\mathsf{i}}s_{\mathsf{i}})\cos(\gamma_{\mathsf{i}}t_{\mathsf{i}})\right|\mathsf{d}\vect{t}\label{1602171145}\\
&=\dfrac1{(2\pi)^{\mathsf{d}}}\int_{[-\pi,\pi)^{\mathsf{d}}}\left|\dfrac1{2^{2\mathsf{d}}}\tsum_{\vect{w} \in \{-1,1\}^{\mathsf{d}}}\tsum_{\indexvectgamma \in \vect{\Gamma}^{(\epsilon\vect{n})}_{\vect{\kappa}}} 2^{\mathfrak{e}(\indexvectgamma)} \tsum_{\vect{v}\in \{-1,1\}^{\mathsf{d}}}\cos\left(\tsum_{\mathsf{i}=1}^{\mathsf{d}}(v_{\mathsf{i}}\gamma_{\mathsf{i}}s_{\mathsf{i}}+w_{\mathsf{i}}v_{\mathsf{i}}
\gamma_{\mathsf{i}}t_{\mathsf{i}})\right)\right|\mathsf{d}\vect{t}\nonumber\\
&\leq \dfrac1{2^{\mathsf{d}}} \tsum_{\vect{w}\in \{-1,1\}^{\mathsf{d}}}\dfrac1{(2\pi)^{\mathsf{d}}} \displaystyle \int_{[-\pi,\pi)^{\mathsf{d}}}\left|\tsum_{\indexvectgamma \in \vect{\Gamma}^{(\epsilon\vect{n})}_{\vect{\kappa}}} 2^{\mathfrak{e}(\indexvectgamma)-\mathsf{d}} \tsum_{\vect{v}\in \{-1,1\}^{\mathsf{d}}}\cos\left(\tsum_{\mathsf{i}=1}^{\mathsf{d}}v_{\mathsf{i}}(\gamma_{\mathsf{i}}s_{\mathsf{i}}+w_{\mathsf{i}}
\gamma_{\mathsf{i}}t_{\mathsf{i}})\right)\right|\mathsf{d}\vect{t}\nonumber\\
&= \dfrac1{(2\pi)^{\mathsf{d}}} \displaystyle \int_{[-\pi,\pi)^{\mathsf{d}}}\left|\tsum_{\indexvectgamma \in \vect{\Gamma}^{(\epsilon\vect{n})}_{\vect{\kappa}}} 2^{\mathfrak{e}(\indexvectgamma)-\mathsf{d}} \tsum_{\vect{v}\in \{-1,1\}^{\mathsf{d}}}\cos\left(\tsum_{\mathsf{i}=1}^{\mathsf{d}}v_{\mathsf{i}}\gamma_{\mathsf{i}}(s_{\mathsf{i}}+ t_{\mathsf{i}})\right)\right|\mathsf{d}\vect{t}
=\FL\left(\vect{\Gamma}^{(\epsilon\vect{n}),\ast}_{\vect{\kappa}}\right).\nonumber
\end{align}
Note that $\FL\left(\vect{\Gamma}^{(\epsilon\vect{n}),\ast}_{\vect{\kappa}}
\right) = \|K^{(\epsilon\vect{n})}_{\vect{\kappa}}(\vect{1},\,\cdot\,)\|_{w_{\mathsf{d}},1}$.
In the same way as in \eqref{1602171145}, we get
\begin{equation*}
\|\widetilde{K}^{(\epsilon\vect{n})}_{\vect{\kappa}}(\vect{x},\,\cdot\,)\|_{w_{\mathsf{d}},1} \leq
\|\widetilde{K}^{(\epsilon\vect{n})}_{\vect{\kappa}}(\vect{1},\,\cdot\,)\|_{w_{\mathsf{d}},1} \quad \text{for all}\ \vect{x} \in [-1,1]^{\mathsf{d}}.
\end{equation*}
Thus, by \eqref{1502180020}, we obtain for all $\vect{n}\in\mathcal{N}_{\mathsf{d}}$ the upper estimate
\begin{equation} \label{1602171146} \Lambda^{(\epsilon\vect{n})}_{\vect{\kappa}}\lesssim \|\widetilde{K}^{(\epsilon\vect{n})}_{\vect{\kappa}}(\vect{1},\,\cdot\,)\|_{w_{\mathsf{d}},1}+1.\end{equation}

For $\mathsf{K} \subseteq \{1, \ldots, \mathsf{d} \}$, we denote $\vect{\Gamma}^{(\epsilon\vect{n})}_{\vect{\kappa},\mathsf{K}}=
\left\{\,\vectgamma\in \vect{\Gamma}^{(\epsilon\vect{n})}_{\vect{\kappa}}\,\left|\,2\gamma_{\mathsf{i}}=\epsilon n_{\mathsf{i}}\
\Leftrightarrow \mathsf{i}\in\mathsf{K}\,\right. \right\}$.
Then, for all $\vect{x}=(\cos t_1,\ldots,\cos t_{\mathsf{d}})$, we have
\begin{equation} \label{1602171147}
\widetilde{K}^{(\epsilon\vect{n})}_{\vect{\kappa}}(\vect{1},\,\vect{x})=
K^{(\epsilon\vect{n})}_{\vect{\kappa}}(\vect{1},\,\vect{x}) -
\tsum_{\emptyset\neq \mathsf{K}\subseteq \{1,\ldots,\mathsf{d}\}}(1-2^{-\#\mathsf{K}+1})
\displaystyle\tsum_{\indexvectgamma \in \vect{\Gamma}^{(\epsilon\vect{n})}_{\vect{\kappa},\mathsf{K}}}2^{\mathfrak{e}(\indexvectgamma)}\cos(\gamma_{\mathsf{i}}t_{\mathsf{i}}).
\end{equation}
If $\mathsf{K}\neq \emptyset$ and $\vect{\Gamma}^{(\epsilon\vect{n})}_{\vect{\kappa},\mathsf{K}}\neq\emptyset$, then $\vect{\Gamma}^{(\epsilon\vect{n})}_{\vect{\kappa},\mathsf{K}}\subseteq \vect{\Gamma}^{(\epsilon\vect{n})}_{\vect{\kappa},\mathfrak{r}}$
for some $\mathfrak{r} \in \{0,1\}$. Therefore, the sets $\vect{\Gamma}^{(\epsilon\vect{n})}_{\vect{\kappa},\mathsf{K}}$ have a
cross product structure and we get
 \[\left|\tsum_{\indexvectgamma \in \vect{\Gamma}^{(\epsilon\vect{n})}_{\vect{\kappa},\mathsf{K}}}2^{\mathfrak{e}(\indexvectgamma)}
 \tprod_{\mathsf{i}=1}^{\mathsf{d}}\cos(\gamma_{\mathsf{i}}t_{\mathsf{i}})\right| \lesssim
 \tprod_{\mathsf{i}\in\{1,\ldots,\mathsf{d}\}\setminus \mathsf{K}}
 \left|\tsum_{ \gamma_{\mathsf{i}} = - \lceil \epsilon n_{\mathsf{i}}/2 \rceil+1}^{\lceil \epsilon n_{\mathsf{i}}/2 \rceil-1}
 \mathrm{e}^{ \mathbf{i} \gamma_{\mathsf{i}} t_{\mathsf{i}}} \right|.\]
Thus, for  $\mathsf{K}\neq \emptyset$ and all $\vect{n}\in\mathcal{N}_{\mathsf{d}}$, we have
\begin{equation} \label{201607121444}
\int_{[-\pi,\pi)^{\mathsf{d}}}\left|\tsum_{\indexvectgamma \in \vect{\Gamma}^{(\epsilon\vect{n})}_{\vect{\kappa},\mathsf{K}}}2^{\mathfrak{e}(\indexvectgamma)}
\tprod_{\mathsf{i}=1}^{\mathsf{d}}\cos(\gamma_{\mathsf{i}}t_{\mathsf{i}})\right|\mathsf{d}\vect{t}\lesssim \tprod_{\mathsf{i}\in\{1,\ldots,\mathsf{d}\}\setminus \mathsf{K}} \ln(n_{\mathsf{i}}+1).
\end{equation}
Now, combining \eqref{1602171145}, \eqref{1602171146}, \eqref{1602171147}, and \eqref{201607121444} gives
the first inequality in \eqref{1606180824}. Finally, Corollary \ref{1502171206} implies for  $\vect{n}\in\mathcal{N}_{\mathsf{d}}$ the estimate from above in \eqref{201607121419}.

\medskip

We turn to the lower bound in \eqref{201607121419}.
Let $\epsilon\in \{1,2\}$, $\vect{\kappa}\in\mathbb{Z}^{\mathsf{d}}$, and $\vect{n}\in\mathcal{N}_{\mathsf{d}}$. By \cite[Theorem 1]{Subbotin1983}, we have

\vspace{-4mm}

\begin{equation} \label{1606191403}  \Lambda^{(\epsilon\vect{n})}_{\vect{\kappa}} \geq \dfrac1{3^{\mathsf{d}}} \FL\left(\vect{\Gamma}^{(\epsilon\vect{n}),\ast}_{\vect{\kappa}}\right).
\end{equation}
Note that in \cite{Subbotin1983}
the $L^\infty$-$L^\infty$-operator norm of the partial Fourier sum operator with respect to the set $\vect{\Gamma}^{(\epsilon\vect{n})}_{\vect{\kappa}}$
is used to characterize the Lebesgue constant related to Fourier sums. This characterization is identical to the definition given in this article, see \cite{Liflyand2006} and the
references therein. The relation \eqref{1606191403} and Corollary \ref{1502171206} immediately imply that for  $\vect{n}\in\mathcal{N}_{\mathsf{d}}$ we have the estimate
from below in \eqref{201607121419}.
\end{proof}

\medskip

To estimate the approximation error $\|f - P^{(\epsilon\vect{n})}_{\!\vect{\kappa}} f \|_\infty$ for a continuous function $f \in C([-1,1]^{\mathsf{d}})$, let us consider the error of the best approximation given by
\[E^{(\epsilon\vect{n})}_{\!\vect{\kappa}}(f)=\min\limits_{P\in \Pi^{(\epsilon\vect{n})}_{\vect{\kappa}}}\|f-P\|_{\infty}.\]
Further, let $P^*\in \Pi^{(\epsilon\vect{n})}_{\!\vect{\kappa}}$ be such that  $E^{(\epsilon\vect{n})}_{\!\vect{\kappa}}(f)=\|f-P^*\|_\infty$.  By Theorem
\ref{1503120230}, we get
\begin{align} \notag \|f - P^{(\epsilon\vect{n})}_{\!\vect{\kappa}} f \|_\infty
&\leq  \| P^{(\epsilon\vect{n})}_{\!\vect{\kappa}} (P^*-f) \|_\infty +\|f - P^* \|_\infty\\
&\leq (\Lambda^{(\epsilon\vect{n})}_{\vect{\kappa}} +1)E^{(\epsilon\vect{n})}_{\!\vect{\kappa}}(f) \lesssim
\left( \tprod_{\mathsf{i}=1}^{\mathsf{d}}\ln (n_{\mathsf{i}}+1) \right) E^{(\epsilon\vect{n})}_{\!\vect{\kappa}}(f).
\label{eq:000001}
\end{align}

Now, using \eqref{eq:000001} and a multivariate version of Jackson's inequality (see \cite[Section 5.3.2]{Timan1960}) to estimate $E^{(\epsilon\vect{n})}_{\!\vect{\kappa}}(f)$, we obtain
the following result.

\begin{corollary} \label{cor-dinilipschitz}
Let $\epsilon\in\{1,2\}$ and $\vect{\kappa}\in \mathbb{Z}^{\mathsf{d}}$ be fixed.
Let also $\vect{s} \in \mathbb{N}_0^{\mathsf{d}}$ and \[\dfrac{ \partial^{\!\;s_{\mathsf{j}}} f}{{\partial x_{\mathsf{j}}}^{\!\!s_{\mathsf{j}}}} \in C([-1,1]^{\mathsf{d}}),
\quad \mathsf{j} \in\{1, \ldots, \mathsf{d}\}.\] Then, for $\vect{n}\in\mathcal{N}_{\mathsf{d}}$, we have
\[ \|f - P^{(\epsilon\vect{n})}_{\vect{\kappa}} f \|_\infty \lesssim \left(
\tprod_{\mathsf{i}=1}^{\mathsf{d}}\ln (n_{\mathsf{i}}+1)\right) \, \tsum_{\mathrm{j} = 1}^{\mathsf{d}}
\dfrac1{(n_{\mathsf{j}}+1)^{s_{\mathsf{j}}}} \, \omega
\left(\dfrac{ \partial^{\!\;s_{\mathsf{j}}} f}{{\partial x_{\mathsf{j}}}^{\!\!s_{\mathsf{j}}}} ; 0, \ldots, 0, \dfrac{1}{n_{\mathsf{j}}+1},0, \ldots, 0 \right), \]
where
\vspace{-1mm}
\[\omega(f; \vect{u}) = \sup_{\substack{\vect{x}, \vect{x}'\in [-1,1]^{\mathsf{d}}\\\forall\,
\mathsf{i} \;\!\in\{1, \ldots, \mathsf{d} \}:\,|x'{}_{\;\!\!\!\!\mathsf{i}}-x_{\mathsf{i}}| \leq u_{\mathsf{i}}}} |f(\vect{x}')-f(\vect{x})| \]

\noindent denotes the modulus of continuity of $f$ on $[-1,1]^{\mathsf{d}}$ (see  \cite[Section 6.3]{Timan1960}).
\end{corollary}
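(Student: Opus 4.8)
The plan is to combine the estimate \eqref{eq:000001} --- which already reduces the assertion to an upper bound for the best-approximation error $E^{(\epsilon\vect{n})}_{\!\vect{\kappa}}(f)$ --- with a classical multivariate Jackson inequality. Since $E^{(\epsilon\vect{n})}_{\!\vect{\kappa}}(f)=\inf\{\|f-P\|_\infty: P\in\Pi^{(\epsilon\vect{n})}_{\vect{\kappa}}\}$, this quantity is bounded above by the error of best approximation over \emph{any} linear subspace of $\Pi^{(\epsilon\vect{n})}_{\vect{\kappa}}$, and the key observation is that $\Pi^{(\epsilon\vect{n})}_{\vect{\kappa}}$ contains a full tensor-product polynomial space whose coordinate degrees are comparable to $n_1,\ldots,n_{\mathsf{d}}$. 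Once this is available, the logarithmic factor --- already packaged into $\Lambda^{(\epsilon\vect{n})}_{\vect{\kappa}}\asymp\prod_{\mathsf{i}}\ln(n_{\mathsf{i}}+1)$ via Theorem~\ref{1503120230} --- plays no further role, and the remaining step is pure tensorized one-dimensional Jackson theory.

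First I would identify the inscribed tensor box. Let $m_{\mathsf{i}}$ be the largest integer with $2m_{\mathsf{i}}<\epsilon n_{\mathsf{i}}$, so that $m_{\mathsf{i}}+1=\lceil\epsilon n_{\mathsf{i}}/2\rceil$ and hence $(n_{\mathsf{i}}+1)/4\leq m_{\mathsf{i}}+1\leq n_{\mathsf{i}}+1$ for every $\mathsf{i}$. For $\vectgamma\in\mathbb{N}_0^{\mathsf{d}}$ with $\gamma_{\mathsf{i}}\leq m_{\mathsf{i}}$ for all $\mathsf{i}$ one has $\gamma_{\mathsf{i}}/n_{\mathsf{i}}<\epsilon/2$, whence $\gamma_{\mathsf{i}}/n_{\mathsf{i}}<\epsilon$ and $\gamma_{\mathsf{i}}/n_{\mathsf{i}}+\gamma_{\mathsf{j}}/n_{\mathsf{j}}<\epsilon$ for all $\mathsf{i}\neq\mathsf{j}$, so $\vectgamma\in\vect{\Gamma}^{(\epsilon\vect{n})}_{\vect{\kappa}}$. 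Since $T_0,\ldots,T_m$ form a basis of the univariate algebraic polynomials of degree at most $m$, it follows that $\Pi^{(\epsilon\vect{n})}_{\vect{\kappa}}$ contains the tensor product of the spaces of polynomials of degree $\leq m_{\mathsf{i}}$ in the variable $x_{\mathsf{i}}$, $\mathsf{i}=1,\ldots,\mathsf{d}$.

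Next I would invoke the multivariate Jackson inequality of \cite[Section~5.3.2]{Timan1960} for approximation by this tensor-product space: the smoothness hypotheses $\partial^{s_{\mathsf{j}}}f/\partial x_{\mathsf{j}}^{s_{\mathsf{j}}}\in C([-1,1]^{\mathsf{d}})$, applied coordinatewise, give $E^{(\epsilon\vect{n})}_{\!\vect{\kappa}}(f)\lesssim\sum_{\mathsf{j}=1}^{\mathsf{d}}(m_{\mathsf{j}}+1)^{-s_{\mathsf{j}}}\,\omega\big(\partial^{s_{\mathsf{j}}}f/\partial x_{\mathsf{j}}^{s_{\mathsf{j}}};0,\ldots,0,(m_{\mathsf{j}}+1)^{-1},0,\ldots,0\big)$ with a constant depending only on $\mathsf{d}$ and $\vect{s}$. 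It then remains to pass from $m_{\mathsf{j}}$ to $n_{\mathsf{j}}$: the bound $m_{\mathsf{j}}+1\geq(n_{\mathsf{j}}+1)/4$ yields $(m_{\mathsf{j}}+1)^{-s_{\mathsf{j}}}\leq 4^{s_{\mathsf{j}}}(n_{\mathsf{j}}+1)^{-s_{\mathsf{j}}}$ and $(m_{\mathsf{j}}+1)^{-1}\leq 4(n_{\mathsf{j}}+1)^{-1}$, and the latter together with the subadditivity $\omega(g;\lambda\vect{u})\leq(\lambda+1)\,\omega(g;\vect{u})$ of the modulus of continuity (with $\lambda=4$) gives $\omega\big(\partial^{s_{\mathsf{j}}}f/\partial x_{\mathsf{j}}^{s_{\mathsf{j}}};0,\ldots,(m_{\mathsf{j}}+1)^{-1},\ldots,0\big)\leq 5\,\omega\big(\partial^{s_{\mathsf{j}}}f/\partial x_{\mathsf{j}}^{s_{\mathsf{j}}};0,\ldots,(n_{\mathsf{j}}+1)^{-1},\ldots,0\big)$. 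Feeding the resulting estimate for $E^{(\epsilon\vect{n})}_{\!\vect{\kappa}}(f)$ into \eqref{eq:000001} completes the argument.

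The verification of the polyhedral inequalities for the box and the rescaling of the modulus of continuity are routine; the one step requiring an idea --- and hence the main obstacle --- is recognizing that $E^{(\epsilon\vect{n})}_{\!\vect{\kappa}}(f)$ can be controlled by approximation over an inscribed tensor-product polynomial subspace of the polyhedral space $\Pi^{(\epsilon\vect{n})}_{\vect{\kappa}}$, after which the multivariate Jackson theorem applies off the shelf and the logarithmic blow-up is supplied entirely by Theorem~\ref{1503120230}.
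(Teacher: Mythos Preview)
Your proof is correct and follows essentially the same approach as the paper: reduce via \eqref{eq:000001} to estimating $E^{(\epsilon\vect{n})}_{\!\vect{\kappa}}(f)$, observe that $\vect{\Gamma}^{(\epsilon\vect{n})}_{\vect{\kappa}}$ contains a tensor-product box of side lengths comparable to the $n_{\mathsf{i}}$, and apply the multivariate Jackson inequality from \cite[Section~5.3.2]{Timan1960} followed by a routine rescaling of the modulus of continuity. The only cosmetic difference is that the paper uses the already-introduced set $\vect{\Gamma}^{(\epsilon\vect{n})}_{\vect{\kappa},0}$ (from the proof of Corollary~\ref{1502171206}) as the inscribed tensor box, whereas you construct it by hand via $2m_{\mathsf{i}}<\epsilon n_{\mathsf{i}}$.
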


\begin{proof}
In view of \eqref{eq:000001}, we only need to give a proper estimate of the best approximation $E^{(\epsilon\vect{n})}_{\!\vect{\kappa}}(f)$.
Since $\vect{\Gamma}^{(\epsilon\vect{n})}_{\vect{\kappa},0} \subseteq \vect{\Gamma}^{(\epsilon\vect{n})}_{\vect{\kappa}}$, we have
$E^{(\epsilon\vect{n})}_{\!\vect{\kappa}}(f) \leq E^{(\epsilon\vect{n})}_{\!\vect{\kappa},0}(f)$, where $E^{(\epsilon\vect{n})}_{\!\vect{\kappa},0}(f)$
denotes the error of the best approximation in the space spanned by  $T_{\indexvectgamma}$, $\vectgamma \in \vect{\Gamma}^{(\epsilon\vect{n})}_{\vect{\kappa},0}$.
Since $\vect{\Gamma}^{(\epsilon\vect{n})}_{\vect{\kappa},0}$ has a tensor-product structure, we  obtain
\begin{align*}
E^{(\epsilon\vect{n})}_{\!\vect{\kappa}}(f)  \leq E^{(\epsilon\vect{n})}_{\!\vect{\kappa},0}(f) &\lesssim
\tsum_{\mathrm{j} = 1}^{\mathsf{d}}
\dfrac{2^{s_{\mathsf{j}}} \omega
\left(\frac{ \partial^{\;\!s_{\mathsf{j}}} f}{{\partial x_{\mathsf{j}}}^{\!\!s_{\mathsf{j}}}} ; 0, \ldots, 0, \frac{2}{\epsilon n_{\mathsf{j}}+1},0\ldots, 0 \right)}{(\epsilon n_{\mathsf{j}}+1)^{s_{\mathsf{j}}}} \\
&\lesssim
\tsum_{\mathrm{j} = 1}^{\mathsf{d}}
\dfrac{\omega
\left(\frac{ \partial^{\!\;s_{\mathsf{j}}} f}{{\partial x_{\mathsf{j}}}^{\!\!s_{\mathsf{j}}}} ; 0, \ldots, 0, \frac{1}{n_{\mathsf{j}}+1},0\ldots, 0 \right)}{(n_{\mathsf{j}}+1)^{s_{\mathsf{j}}}}
\end{align*}

\noindent by using  the estimates from \cite[Section 5.3.2]{Timan1960}.
\end{proof}

\medskip

Similar as stated in \cite[Theorem 4.1]{Mason1980} for the tensor-product case, we can also give a Dini-Lipschitz criterion for the uniform convergence of the error
$\|f - P^{(\epsilon\vect{n})}_{\!\vect{\kappa}} f \|_\infty$.

 If $f \in C([-1,1]^{\mathsf{d}})$ satisfies the condition
\[ \omega(f; \vect{u})  \tprod_{\mathsf{i}=1}^{\mathsf{d}} \ln u_{\mathsf{i}} \to 0 \quad \text{as}\quad \vect{u} \to \vect{0},\]
then the polynomials $P^{(\epsilon\vect{n})}_{\!\vect{\kappa}} f$ converge in the $L^{\infty}$-norm to $f$ as $\displaystyle \min_{\mathsf{i}=1,\ldots,\mathsf{d}}n_{\mathsf{i}}\to\infty$.

%%% Local Variables:
%%% TeX-master: "master_lebesgue"
%%% End:


\begin{thebibliography}{10}

\bibitem{Alimov1992}
{\sc Alimov, S.~A., Ashurov, R.~R., and Pulatov, A.~K.}
\newblock {\em Multiple Fourier Series and Fourier Integrals}.
\newblock Springer, Berlin, Heidelberg, 1992, pp.~1--95.

\bibitem{Ash2010}
{\sc {Ash}, M.}
\newblock {Triangular Dirichlet kernels and growth of $L^{p}$ Lebesgue
  constants.}
\newblock {\em {J.~Fourier Anal. Appl.} 16}, 6 (2010), 1053--1069.

\bibitem{Baiborodov1982}
{\sc Baiborodov, S.~P.}
\newblock Lebesgue constants of polyhedra.
\newblock {\em Mathematical notes of the Academy of Sciences of the USSR 32}, 6
  (1982), 895--898.

\bibitem{Belinskii1977}
{\sc Belinskii, E.~S.}
\newblock {Behavior of Lebesgue constants in certain methods of summation of
  multiple Fourier series}.
\newblock In {\em {Metric questions of the theory of functions and mappings}\/}
  (1977), vol.~166, ''Naukova Dumka'', Kiev, pp.~19--39.
\newblock (Russian).

\bibitem{BosDeMarchiVianelloXu2006}
{\sc Bos, L., Caliari, M., De~Marchi, S., Vianello, M., and Xu, Y.}
\newblock {Bivariate Lagrange interpolation at the Padua points: the generating
  curve approach.}
\newblock {\em J. Approx. Theory 143}, 1 (2006), 15--25.


\bibitem{BosDeMarchiVianello2017}
{\sc Bos, L., De~Marchi, S., and Vianello, M.}
\newblock {Trivariate polynomial approximation on
Lissajous curves.}
\newblock {\em IMA J. Numer. Anal. 37}, 1 (2017), 519--541.

\bibitem{BosDeMarchiVianello2017b}
{\sc Bos, L., De~Marchi, S., and Vianello, M.}
\newblock {Polynomial approximation on Lissajous curves in the d-cube.}
\newblock {\em Appl. Numer. Math. 116\/} (2017), 47--56.

\bibitem{Brutman1997}
{\sc Brutman, L.}
\newblock {Lebesgue functions for polynomial interpolation – a survey}.
\newblock {\em Ann. Numer. Math 14}, 4 (1997), 111--127.

\bibitem{CaliariDeMarchiVianello2008}
{\sc Caliari, M., De~Marchi, S., and Vianello, M.}
\newblock {Bivariate Lagrange interpolation at the Padua points: Computational
  aspects.}
\newblock {\em J. Comput. Appl. Math. 221}, 2 (2008), 284--292.

\bibitem{VecchiaMastroianniVertesi2009}
{\sc {Della Vecchia}, B., {Mastroianni}, G., and {V\'ertesi}, P.}
\newblock {Exact order of the Lebesgue constants for bivariate Lagrange
  interpolation at certain node-systems.}
\newblock {\em {Stud. Sci. Math. Hung.} 46}, 1 (2009), 97--102.

\bibitem{DenckerErb2015a}
{\sc Dencker, P., and Erb, W.}
\newblock {Multivariate polynomial interpolation on Lissajous-Chebyshev nodes}.
\newblock {\em J. Approx. Theory 219\/} (2017), 15--45.

\bibitem{DeuflhardHohmann2003}
{\sc Deuflhard, P., and Hohmann, A.}
\newblock {\em Numerical analysis in modern scientific computing: an
  introduction, second edition}.
\newblock Springer-Verlag, New York, 2003.

\bibitem{DevoreLorentz1993}
{\sc {DeVore}, R.~A., and {Lorentz}, G.~G.}
\newblock {\em {Constructive approximation.}}
\newblock Berlin: Springer-Verlag, 1993.

\bibitem{Erb2015}
{\sc Erb, W.}
\newblock {Bivariate Lagrange interpolation at the node points of Lissajous
  curves - the degenerate case}.
\newblock {\em Appl. Math. Comput 289\/} (2016), 409--425.

\bibitem{ErbKaethnerAhlborgBuzug2015}
{\sc Erb, W., Kaethner, C., Ahlborg, M., and Buzug, T.~M.}
\newblock {Bivariate Lagrange interpolation at the node points of
  non-degenerate Lissajous curves}.
\newblock {\em Numer. Math. 133}, 1 (2016), 685--705.

\bibitem{ErbKaethnerDenckerAhlborg2015}
{\sc Erb, W., Kaethner, C., Dencker, P., and Ahlborg, M.}
\newblock {A survey on bivariate Lagrange interpolatio1n on Lissajous nodes}.
\newblock {\em Dolomites Research Notes on Approximation 8\/} (2015), 23--36.

\bibitem{KaethnerErbAhlborg2016}
{\sc Kaethner C., Erb W., Ahlborg M., Szwargulski P., Knopp T. and Buzug T. M.}
\newblock {Non-Equispaced System Matrix Acquisition for Magnetic Particle Imaging based on Lissajous Node Points}.
\newblock {\em IEEE Transactions on Medical Imaging}, 35(11):2476--2485, Nov. 2016.

\bibitem{BuzugKnopp2012}
{\sc Knopp, T., and Buzug, T.~M.}
\newblock {\em Magnetic Particle Imaging: An Introduction to Imaging Principles
  and Scanner Instrumentation}.
\newblock Springer, 2012.

\bibitem{Kuznetsova1977}
{\sc {Kuznetsova}, O.}
\newblock {The asymptotic behavior of the Lebesgue constants for a sequence of
  triangular partial sums of double Fourier series.}
\newblock {\em {Sib. Math. J.} 18\/} (1977), 449--454.

\bibitem{Liflyand2006}
{\sc {Liflyand}, E.}
\newblock {Lebesgue constants of multiple Fourier series.}
\newblock {\em {Online J. Anal. Comb.} 1\/} (2006), 112.

\bibitem{LubinskyMateNevai1987}
{\sc Lubinsky, D.-S., Mate, A., and Nevai, P.}
\newblock Quadrature sums involving pth powers of polynomials.
\newblock {\em SIAM J. Math. Anal. 18}, 2 (1987), 531--544.

\bibitem{Mason1980}
{\sc Mason, J.~C.}
\newblock Near-best multivariate approximation by {Fourier} series, {Chebyshev}
  series and {Chebyshev} interpolation.
\newblock {\em J. Approx. Theory 28}, 4 (1980), 349--358.

\bibitem{Podkorytov1981}
{\sc Podkorytov, A.~N.}
\newblock {Summation of multiple Fourier series over polyhedra}.
\newblock {\em Vestnik Leningrad Univ. Math. 13\/} (1981), 69--77.
\newblock (English translation).

\bibitem{Rudin1969}
{\sc {Rudin}, W.}
\newblock {\em {Function theory in polydiscs}}.
\newblock {W.A. Benjamin, Inc.}, 1969.

\bibitem{Subbotin1983}
{\sc Subbotin, Y.~N.}
\newblock The Lebesgue constants of certain $m$-dimensional interpolation
  polynomials.
\newblock {\em Math. USSR Sbornik 46}, 4 (1983), 561--570.

\bibitem{Timan1960}
{\sc Timan, A.~F.}
\newblock {\em Theory of approximation of functions of a real variable
  (translated by J. Berry)}.
\newblock Pergamon Press, Oxford, 1963.

\bibitem{Travaglini1993}
{\sc {Travaglini}, G.}
\newblock {Polyhedral summability of multiple Fourier series (and explicit
  formulas for Dirichlet kernels on $\mathbb T\sp n$ and on compact Lie
  groups)}.
\newblock {\em {Colloq. Math.} 65}, 1 (1993), 103--116.

\bibitem{Weisz2012}
{\sc Weisz, F.}
\newblock {Summability of multi-dimensional trigonometric Fourier series}.
\newblock {\em Surveys in Approximation Theory 7\/} (2012), 1--179.

\bibitem{Xu1996}
{\sc Xu, Y.}
\newblock {Lagrange interpolation on Chebyshev points of two variables.}
\newblock {\em J. Approx. Theory 87}, 2 (1996), 220--238.

\bibitem{YudinYudin1985}
{\sc Yudin, A., and Yudin, V.}
\newblock Polygonal {D}irichlet kernels and growth of {L}ebesgue constants.
\newblock {\em Mathematical notes of the Academy of Sciences of the USSR 37}, 2
  (1985), 124--135.

\bibitem{Yudin1979}
{\sc Yudin, V.}
\newblock A lower bound for {L}ebesgue constants.
\newblock {\em Mathematical notes of the Academy of Sciences of the USSR 25}, 1
  (1979), 63--65.

\bibitem{Zygmund}
{\sc Zygmund, A.}
\newblock {\em {Trigonometric series, third edition, Volume I \& II combined
  (Cambridge Mathematical Library)}}.
\newblock Cambridge University Press, 2002.

\end{thebibliography}
\end{document}